\documentclass[11pt]{article}
\usepackage[tbtags]{amsmath}    %%% cette option permet que le numero des equations `splittees'
                                %%% figure sur la derniere ligne (et pas au milieu)
\usepackage{amsfonts,amssymb,amsthm,euscript,makeidx,pifont,boxedminipage,multicol,fullpage}
\usepackage{color}
\usepackage[colorlinks,linkcolor=blue,citecolor=magenta,anchorcolor=magenta]{hyperref}
\usepackage{graphicx}

% equations

\def\be{\begin{eqnarray}}
\def\ee{\end{eqnarray}}

\def\b*{\begin{eqnarray*}}
\def\e*{\end{eqnarray*}}

%\reversemarginpar
\newtheorem{Theorem}{Theorem}[section]
\newtheorem{Lemma}[Theorem]{Lemma}
\newtheorem{Proposition}[Theorem]{Proposition}

\newtheorem{Definition}[Theorem]{Definition}
\newtheorem{Remark}[Theorem]{Remark}

\newtheorem{Assumption}[Theorem]{Assumption}
\newtheorem{Condition}[Theorem]{Condition}

%%%% mathbb%%%%%%%%%%
\def \D{\mathbb{D}}
\def \E{\mathbb{E}}
\def \F{\mathbb{F}}

\def \P{\mathbb{P}}

\def \R{\mathbb{R}}

\def \N{\mathbb{N}}

%%% cali %%%%

\def\Bc{{\cal B}}

\def\Dc{{\cal D}}
\def\Ec{{\cal E}}
\def\Fc{{\cal F}}

\def\Hc{{\cal H}}
\def\Ic{{\cal I}}

\def\Lc{{\cal L}}
\def\Mc{{\cal M}}

\def\Pc{{\cal P}}

\def\Sc{{\cal S}}

%%% tilde %%%%

%%% divers %%%%
\def \Om{\Omega}

\def \0{\mathbf{0}}
\def \1{\mathbf{1}}

\def\NA2{\mathrm{NA}_2}
\def\NA{\mathrm{NA}}

\newcommand{\Leb}{\mbox{Leb}}

\title{Supermartingale Brenier's Theorem with full-marginals constraint\thanks{The authors are grateful to Xiaolu Tan for fruitful discussions.}}

\author{
Erhan Bayraktar \thanks{Deptartment of Mathematics, University of Michigan, Ann Arbor, Email: erhan@umich.edu. E.B. is partially supported by the National Science Foundation under grant DMS-2106556 and by the Susan
M. Smith chair.}
\and Shuoqing Deng \thanks{Deptartment of Mathematics, The Hong Kong University of Science and Technology, Clear Water Bay, Hong Kong.
	Email: masdeng@ust.hk. S.D. is partially supported by the Start-up Grant of HKUST.}
\and Dominykas Norgilas \thanks{Deptartment of Mathematics, University of Michigan, Ann Arbor. Email: dnorgila@umich.edu.}
}

\date{}

\begin{document}
\bibliographystyle{plain}

\maketitle

\abstract{

We explicitly construct the supermartingale version of the Fr{\'e}chet-Hoeffding coupling in the setting with infinitely many marginal constraints. This extends the results of Henry-Labord{\`e}re et al. \cite{HLTanTouzi} obtained in the martingale setting. Our construction is based on the Markovian iteration of one-period optimal supermartingale couplings. In the limit, as the number of iterations goes to infinity, we obtain a pure jump process that belongs to a family of local L{\'e}vy models introduced by Carr et al. \cite{CarrGemanMadanYor}. We show that the constructed processes solve the continuous-time supermartingale optimal transport problem for a particular family of (path-dependent) cost functions. The explicit computations are provided in the following three cases: the uniform case, the Bachelier model and the Geometric Brownian Motion case.

%  We construct explicitly the supermartingale version of the Fr{\'e}chet-Hoeffding coupling in the infinitely many marginals constraints setting. In the martingale setting, this is obtained by Henri-Labord{\`e}re, Tan and Touzi \cite{HLTanTouzi}, and the limiting process is a pure jump process in the spirit of the local L{\'e}vy models introduced by Carr, Geman, Madan and Yor \cite{CarrGemanMadanYor}.
%  
%In the supermartingale case, the construction is significantly different from the martingale case in two aspects: first, the increasing supermartingale transport and decreasing supermartingale transport demonstrate some totally different figures, while in the martingale case, left-curtain and right-curtain are symmetric. Second, in case of the decreasing supermartingale coupling, there exists a phase transition curve for the limiting process, which divides the right-curtain coupling and the quantile coupling region. We also provide the optimality of the limiting process as the solution to the supermartingale transport problem for a class of cost functions.
%  
%In this paper, we concentrate on the construction in three specific examples: the uniform case, the Bachelier case and the Geometric Brownian Motion case.
}

\vspace{2mm}

\noindent {\bf Key words.} Supermartingale optimal transport, Brenier's Theorem, PCOCD.

\vspace{2mm}

\noindent {\bf MSC (2010).} Primary:  60G40, 60G05; Secondary: 49M29.

\tableofcontents

%\vspace{2mm}

%\textbf{MSC 2010.} Primary ; secondary .

%%%%%%%%%%%%%%%%%%%%%%%%%%%%%%%%%%%%%

\section{Introduction}

The classical optimal transport (OT) problem, first introduced by Monge, and then relaxed by Kantorovich, has the following formulation: given two probability measures $\mu_0$, $\mu_1$ on $\R$ and a cost (or payoff) function $c:\R \times \R \to \R$, the goal is to minimize (or maximize) the value $\E^{\P}[c(X_0, X_1)]$ among all probability measures $\P$ such that $\P \circ X_0^{-1}=\mu_0$ and $\P \circ X_1^{-1}= \mu_1$. Under the Spence-Mirrlees condition $c_{xy}>0$, the maximizing transport plan is characterized by the Brenier's Theorem (see Brenier \cite{brenier1991polar} and Rachev and R\"uschendorf \cite{rachev1998mass}) and corresponds to the Fr{\'e}chet-Hoeffding (or quantile) coupling $\pi^{FH}$. 

Motivated by the applications in financial mathematics, the Martingale Optimal Transport (MOT) problem was introduced by Beiglb{\"o}ck et al. \cite{BHLP13} (in the disrete-time setting) and Galichon et al. \cite{GHLT} (in the the continuous-time setting), and has been widely studied since then. Given two probability measures $\mu_0, \mu_1$ which are increasing in convex order, the problem consists in minimizing (or maximizing) $\E^{\P}[c(X_0, X_1)]$ among all martingale measures $\P$ such that $\P \circ X_0^{-1}=\mu_0$ and $\P \circ X_1^{-1}= \mu_1$. Under the so-called martingale Spence-Mirrlees condition $c_{xyy}>0$, the maximizer corresponds to the so-called left-curtain martingale coupling $\pi^{lc}$, introduced by Beiglb\"ock and Juillet \cite{BeiglbockJuillet:16}. $\pi^{lc}$ was explicitly constructed by Henry-Labord{\`e}re and Touzi \cite{HLTouzi16} using ODE arguments (and under some technical assumptions). The general construction was later obtained by Hobson and Norgilas \cite{HobsonNorgilas:21} using geometric arguments and properties of the potential functions of marginals.

The financial motivation of MOT comes from the robust sub/super-replication of financial derivatives in a market where the underlying asset and the corresponding vanilla options with certain maturities are available for trading. From the well-known Breenden-Litzenberger formula (see \cite{breeden1978prices}), the marginal distributions of the underlying asset can be then obtained using the prices of vanilla options. Each sensible pricing model should be calibrated to the given market data, and thus should produce the same marginal distributions at fixed times. The search of a model that produces the highest no-arbitrage price of an exotic claim, among all calibrated models, then naturally corresponds to the MOT problem. For more recent developments of MOT problems, see for example Acciaio et al. \cite{acciaio2021weak}, Backhoff-Veraguas et al. \cite{backhoff2020martingale,backhoff2022stability}, Beiglb\"ock et al. \cite{BCH17, BNT17,BCH19, beiglbock2017monotone, beiglbock2022potential, beiglbock2021approximation}, Beiglb\"ock and Juillet \cite{BeiglbockJuillet:16,beiglbock2021shadow}, Br\"uckerhoff et al. \cite{BHJ.20}, Campi et al. \cite{campi2017change}, De Marco and Henry-Labord\`ere \cite{de2015linking}, Dolinsky and Soner \cite{dolinsky2014martingale}, Fahim nad Huang \cite{fahim2016model}, Gaoyue et al. \cite{guo2016optimal}, Hobson and Neuberger \cite{HobsonNeuberger}, Hobson and Klimmek \cite{HobsonKlimmek}, Hobson and Norgilas \cite{hobson2019robust}, Nutz et al. \cite{NutzStebeggTan.17}, Wiesel \cite{wiesel2019continuity}.

%Using Dubins-Schwarz theorem, the MOT problem was originally studied by Hobson \cite{Hobson} in the Skorokhod Embedding Problem (SEP) approach, which consists in finding a stopping time $\tau$ of a Brownian motion $B$, such that $B_{\tau}$ satisfies the given distribution. 
The problem of finding the models that give robust no-arbitrage bounds for the prices of exotic derivatives was initially studied in the seminal work of Hobson \cite{Hobson} by means of the Skorokhod Embedding Problem (SEP). SEP consists in finding a stopping time $\tau$ of a Brownian motion $B$, such that $B_{\tau}$ has a prescribed law. This approach generated developments in many (probabilistic and financially motivated) directions, see, for example, Brown et. al \cite{brown2001robust}, Madan and Yor \cite{madan2002making}, Cox et al. \cite{cox2008pathwise,cox2019root}, Cox and Ob{\l}{\'o}j \cite{cox2011robust,cox2015joint}, Davis et al. \cite{davis2014arbitrage}. See also the survey papers on SEP by Ob{\l}{\'o}j \cite{obloj2004skorokhod} and Hobson \cite{Hobson2}.

More recently, the Supermartingale Optimal Transport(SOT) problem was introduced by Nutz and Stebegg \cite{NutzStebegg.18}. Given two probability measures $\mu_0, \mu_1$ which are increasing in convex-decreasing order, the problem consists in minimizing (or maximizing) $\E^{\P}[c(X_0, X_1)]$ among all supermartingale measures $\P$ such that $\P \circ X_0^{-1}=\mu_0$ and $\P \circ X_1^{-1}= \mu_1$. In particular, the authors of \cite{NutzStebegg.18} introduced two canonical supermartingale couplings, namely the increasing supermartingale coupling $\pi^I$ and the decreasing supermartingale coupling $\pi^D$. These transport plans are canonical in the sense that they can be equivalently characterised by any, and then all of the following properties: the optimality (in terms of minimization or maximization problem) for a large class of cost functions, the monotonicity of the support, and the order-theoretic minimality. (The optimal couplings in the OT and MOT settings have similar characterizations.)

   In this paper, we shall focus on the continuous-time case, which corresponds to a suitable limit of the multi-marginal SOT problem. Given  a continuous family of marginal distributions $(\mu_t)_{t\in[0,1]}$, which is non-decreasing in convex-decreasing order, we call a stochastic process that is calibrated to all marginals $(\mu_t)_{t\in[0,1]}$ a \textit{PCOCD}(``Processus Croissant pour l'Ordre Convexe D{\'e}croissant" in French). Notice that in the martingale case such process is baptized as a peacock (\textit{PCOC} for short) and is extensively studied in the book of Hirch et al. \cite{HPRY}. 
            
We will closely follow Henri-Labord{\`e}re et al. \cite{HLTanTouzi}: in the martingale setting, by considering a Markovian iteration of the one-period left-curtain martingale couplings $\pi^{lc}$, the authors obtained a pure jump process in the spirit of the local L{\'e}vy models introduced by Carr et al. \cite{CarrGemanMadanYor} and showed that it solves a particular continuous-time MOT problem. Our main goal is to extend the results of Henri-Labord{\`e}re et al. \cite{HLTanTouzi} to the supermartingale setting.
  
In the supermartingale case, the construction is different in the following aspects. First, in the martingale case, the left-curtain and the right-curtain transport plans are symmetric, see Henry-Labord\`re et al. \cite[Remark 3.12]{HLTanTouzi}. More precisely, the limiting process, associated to the left-curtain coupling, is driven by downward jumps and upward drift, while in the right-curtain case it is the upward jumps and downward drift that drive the limiting process.  On the other hand, in the supermartingale case, the increasing supermartingale transport plan and the decreasing supermartingale transport plan demonstrate different behaviour see Bayraktar et al. \cite{BayDengNorgilas}, \cite{BayDengNorgilas2}. In the increasing case, the limiting process is a martingale in the interior of the support of marginals, and possesses strict supermartingale characteristics when it escapes to the upper boundary of the support (it jumps from the upper all the way down to the lower boundary). In the case of the decreasing supermartingale coupling, there are martingale and supermartingale regions in the interior of the support. Once the boundary curve (that separates these two regions) is hit, the process becomes deterministic and follows a decreasing (w.r.t. time) curve.

Second, in case of the the decreasing supermartingale coupling, there might exist multiple \textit{phase transition curves} for the limiting process, that divide the martingale and strict supermartingale regions. To obtain explicit solutions, in this article we will impose certain conditions that will guarantee the uniqueness of the phase transition curve. On the other hand, if the increasing supermartingale coupling is used for construction, then the regime switching boundary is unique, and is always given by the upper boundary of the support of the marginal distributions.

In financial terms, the dual SOT problem corresponds to the super-replication of an exotic payoff using semi-static strategies with no short-selling constraints. 
In order to prove the optimality of the limiting process, we will modify the arguments of Henry-Labord\`ere et al. \cite{HLTouzi16} and explicitly construct a candidate optimal dual strategy, for a large (but very particular) class of cost functions. The choice of dual variables is closely related to the martingale and strict supermartingale regions of a a candidate optimal transport plan. In the case of the decreasing supermartingale coupling, the dual methods are related to those of the dual MOT problem when the right-curtain martingale coupling is optimal, and also the classical OT duality when the quantile coupling is an optimizer. The strategies must be modified accordingly, if the construction of the limiting process is based on the increasing supermartingale coupling.      
     
The rest of the paper is organized as follows. In Section \ref{sec:SMOT}, we recall the formulation of the discrete-time supermartingale transport problem and introduce the continuous-time SOT problem. In Section \ref{sec:main_results}, we provide the convergence theorem which gives the characterization of the limiting process, and the optimality theorem which shows that the limiting process solves a continuous-time SOT problem for a class of costs functions. In Section \ref{sec:examples}, we explicitly obtain the limiting process when the marginals correspond to uniform measures on bounded support, Normal distributions with decreasing means (Brownian motion with drift) and log-Normal distributions (geometric Brownian motion). In Appendix \ref{sec:Brenier} and \ref{sec:Decreasing_SMOT_one_period} we collect some important results regarding one-period OT, MOT and SOT problems.
	
%	\paragraph{Notation.} 	

\section{Supermartingale optimal transport problem}	\label{sec:SMOT}

\subsection{Discrete-time supermartigale optimal transport}\label{subsec:discreteSOT}
Let $\Mc$ (resp. $\Pc$) be the set of finite (resp. probability) Borel measures on $\R$ with finite first moments. The support of $\eta\in\Mc$, denoted by supp$(\eta)$, is the smallest closed set $I\subseteq\R$ with $\eta(I)=\eta(\R)$. We set $\ell_\eta:=\inf\{k\in\textrm{supp}(\eta)\}\in[-\infty,\infty)$ and $r_\eta:=\sup\{k\in\textrm{supp}(\eta)\}\in(-\infty,\infty]$. The mean of $\eta\in\Mc$ is denoted by $\overline\eta$, so that the barycenter of $\eta$ is given by $\overline\eta/\eta(\R)$.

Throughout the paper, for any random variable $\xi$, the expectation of $\xi$ is defined as $\E[\xi]:=\E[\xi^+] - \E[\xi^-]$ with the convention $\infty-\infty=-\infty$.

The canonical process on $\R^2$ is denoted by $(X_0,X_1)$, so that, for $i=0,1$, $X_i(x_0, x_1)=x_i$ for all $(x_0,x_1) \in \R^2$. For $\mu_0,\mu_1\in\Pc$, let $\Pc(\mu_0,\mu_1)$ be the set of (Borel) probability measures on $\R^2$ with first and second marginals $\mu_0$ and $\mu_1$, respectively. (Each $\P\in\Pc(\mu_0,\mu_1)$ is often called a transport plan or coupling of $\mu_0$ and $\mu_1$.) Then $X_0 \sim^{\P} \mu_0, ~ X_1 \sim^{\P} \mu_1$ for all $\P\in\Pc(\mu_0,\mu_1)$. We also introduce the set of supermartingale couplings of $\mu_0$ and $\mu_1$:
$$
\Sc_2(\mu_0, \mu_1) := \{ \P \in \Pc(\mu_0,\mu_1):\E^{\P} [ X_1| X_0 ] \leq X_0~\P\textrm{-a.s.}\}.
$$

By the classical result of Strassen \cite{strassen1965existence}, $\Sc_2(\mu_0,\mu_1)$ is non-empty if and only if $\mu_0\leq_{cd}\mu_1$, i.e., $\mu_0$ is smaller than $\mu_1$ in convex-decreasing order. Recall that $\mu_0\leq_{cd}\mu_1$ if $\mu_0(\phi)\leq\mu_1(\phi)$ for all convex and non-increasing $\phi:\R\to\R$, where for a measurable $f:\R\to\R$ and $\eta\in\Mc$ we write $\eta(f)=\int_\R f d\eta$. Note that $\overline\mu_0\leq\overline\mu_1$ whenever $\mu_0\leq_{cd}\mu_1$. On the other hand, if $\mu_0\leq_{cd}\mu_1$ and $\overline\mu_0=\overline{\mu}_1$, then $\Sc_2(\mu_0, \mu_1)$ reduces to the set of martingale couplings of $\mu_0$ and $\mu_1$. Indeed, every supermartingale with constant mean is a martingale.

For a (Borel) measurable reward (or cost) function $c: \R^2 \to \R$, a two-marginal supermartingale transport problem is defined by 
\begin{equation} \label{eq:SMOT_onePrimal}
\mathbf{P}_2(\mu_0, \mu_1) : = \sup_{\P \in \Sc_2(\mu_0, \mu_1)} \E^{\P} \left[ c(X_0, X_1) \right].
\end{equation}

Suppose that there exists $a_0,a_1:\R\to\R$ that are $\mu_0$ and $\mu_1$-integrable, respectively, and such that $\lvert c(x,y)\lvert  \leq a_0(x) + a_1(y)$, $x,y\in\R$. Then in the case $c$ satisfies the following \textit{supermartingale Spence-Mirrlees} condition
$$
c(x',\cdot)-c(x,\cdot)\textrm{ is decreasing and convex for all }x<x',
$$
Nutz and Stebegg \cite{NutzStebegg.18} proved that the supremum in \eqref{eq:SMOT_onePrimal} is attained by the so-called \textit{increasing} supermartingale coupling $\pi^I$. Note that, in the case $c$ is smooth, supermartingale Spence-Mirrlees condition can be equivalently stated in terms of cross-derivatives: $c_{xy}\leq0$ and $c_{xyy}\geq0$. On the other hand, if $-c$ is supermartingale Spence-Mirrlees, then the optimizer is given by the \textit{decreasing} supermartingale coupling $\pi^D$. The main focus of this paper is on $\pi^D$ (see Appendix \ref{sec:Decreasing_SMOT_one_period_primal}). 

We now describe the dual problem that is associated to \eqref{eq:SMOT_onePrimal}. We write $(\phi,\psi,h)\in\Dc_2(\mu_0,\mu_1)$ if $\phi,\psi,h:\R\to\R$ are (Borel) measurable functions such that $(\int_\R(\phi\vee0)d\mu_0)\vee(\int_\R(\psi\vee0)d\mu_1)<\infty$, $h\geq 0$ and $$c(x,y)\leq\phi(x)+\psi(y)+h(x)(y-x),\quad x,y\in\R.$$  Then the weak duality holds
\begin{equation}  \label{eq:SMOT_oneDual}
\mathbf{P}_2(\mu_0, \mu_1)\leq\mathbf{D}_2(\mu_0, \mu_1) : = \inf_{(\varphi, \psi, h) \in \Dc_2(\mu_0,\mu_1)} \{ \mu_0(\varphi) + \mu_1(\psi) \}.
\end{equation}
Nutz and Stebegg \cite{NutzStebegg.18} showed that the strong duality (i.e., equality in \eqref{eq:SMOT_oneDual}) holds under some additional assumptions (for example, if $\mu_0\leq_{cd}\mu_1$ are \textit{irreducible}). In general (and without the irreducibility condition), the strong duality and the existence of dual optimizers can still be obtained, provided one agrees to switch from a point-wise to a quasi-sure formulation of the dual problem. An explicit construction of optimal dual variables is (under some further assumptions) provided in Appendix \ref{sec:SMOTdual}.

The problem \eqref{eq:SMOT_onePrimal} can be easily extended to a $T$-period (or $(T+1)$-marginal) setting if we restrict to cost functions $c:\R^{T+1}\to\R$ of the form $c(x_0,\ldots,x_{T})=\sum_{i=1}^{T} c_i(x_{i-1}, x_{i})$, $(x_0,\ldots,x_{T})\in\R^{T+1}$, for a collection of one-period (Borel) cost functions $c_i:\R\to\R$, $i=1,...,T$. More precisely, we are given a sequence of probability measures $\mu_0, \ldots, \mu_T\in\Pc$ satisfying $\mu_0\leq_{cd}\ldots\leq_{cd} \mu_T $, and the goal is to maximize
\begin{equation}\label{eq:multiSOT}
\E^\P[c(X_0,\ldots,X_T)]=\sum^{T}_{i=1}\E^\P[c_i(X_{i-1},X_{i})]
\end{equation}
over all (Borel) probability measures $\P$ on $\R^{T+1}$ such that $X_i\sim^\P\mu_i$, $i=0,...,T$, and satisfying $\E^\P[X_i\lvert X_0,\ldots,X_{i-1}]\leq X_{i-1}$ $\P$-a.s., for all $i=1,...,T$. (For the martingale version with general cost functions $c:\R^{T+1}\to\R$, see Nutz et al. \cite{NutzStebeggTan.17}, while a corresponding continuous-time extension is given by Juillet et al. \cite{BHJ.20}.) Then, if each $c_i$ (or $-c_i$) is supermartingale Spence-Mirrlees, the optimal supermartingale coupling (with fixed $T+1$ marginals) can be obtained by a Markovian iteration of the increasing (or decreasing) one-period supermartingale transport plans.

\subsection{Continuous-time supermartigale optimal transport}\label{sec:cont_smot}

In this section we introduce a continuous-time supermartingale optimal transport problem under full marginals constraint, as the limit of the multi-period supermartingale optimal transport introduced at the end of Section \ref{subsec:discreteSOT}. 

Let $\Om:= \D([0,1],\R)$ be the canonical space of all c{\`a}dl{\`a}g paths on $[0,1]$, $X$ the canonical process and $\F=(\Fc_t)_{0 \leq t \leq 1}$ the canonical filtration generated by $X$, i.e. $\Fc_t:=\sigma(\{ X_s, 0 \leq s \leq t \})$. We denote by $\Sc_{\infty}$ the collection of all supermartingale measures on $\Om$, i.e., under each $\P\in\Sc_\infty$ the canonical process $X$ is a supermartingale. We equip $\Sc_{\infty}$ with the weak convergence topology throughout the paper. By the calssical results (see, for example, Karandikar \cite{Karandikar}), there is a non-decreasing process $([X]_t)_{t \in [0,1]}$, defined on $\Om$, and which coincides with the $\P$-quadratic variation of $X$, $\P$-\mbox{a.s.} for every supermartingale measure $\P \in \Sc_{\infty}$. Denote by $[X]^c_.$ the continuous part of $[X]_.$.  

Given a family of (integrable) probability measures $\mu=(\mu_t)_{0 \leq t \leq 1}$, we denote by $\Sc_{\infty}(\mu) \subset \Sc_{\infty}$ the collection of all supermartingale measures on $\Om$ such that $X_t \sim^{\P} \mu_t$ for all $t \in [0,1]$. In particular, from Ewald and Yor \cite{ewald2018peacocks}, we know that $\Sc_{\infty}(\mu)$ is non-empty if and only if the family $(\mu_t)_{0 \leq t \leq 1}$ is non-decreasing in convex-decreasing order and $t \mapsto \mu_t$ is right-continuous.

For all $t \in [0,1]$, we denote by $-\infty \leq \ell_t:=\ell_{\mu_t} \leq r_{\mu_t}=:r_t \leq \infty$ the left and right extreme boundaries of supp$(\mu_t)$. Suppose that $\mu_s\leq_{cd}\mu_t$ for all $0\leq s\leq t\leq 1$. Then since $x\mapsto p_k(x):=(k-x)^+$ is convex and non-increasing for all $k\in\R$, we have that $\mu_s(p_k)\leq\mu_t(p_k)$ for $k\in\R$. In particular, $\mu_s((-\infty,k))>0$ implies that $\mu_t((-\infty,k))>0$, from which it follows that $t\mapsto \ell_t$ is non-increasing. Note that, in general, $t\mapsto r_t$ may fail to be non-decreasing. In addition, if $\mu_t$ admits a density function for all $t \in [0,1]$ and $t \mapsto \mu_t$ is continuous w.r.t. the weak convergence topology, then $t\mapsto l_t$ and $t\mapsto r_t$ are continuous (see Henry-Labord\`ere and Touzi \cite[p. 2805]{HLTouzi16}). 

As in Henry-Labord\`{e}re et al. \cite{HLTanTouzi} (see also Hobson and Klimmek \cite{HobsonKlimmek}), our continuous-time SOT problem arises as a limit of the multi-period SOT problems, by considering the limit of the reward function $\sum_{i=1}^{T} c_i(x_{t_{i-1}}, x_{t_{i}})$ as in \eqref{eq:multiSOT}, where $(t_i)_{0\leq i \leq T}$ is a partition of $[0,1]$ with $\max_{1\leq i \leq T}\lvert t_i-t_{i-1}\lvert\to0$ as $T\to\infty$. To obtain the convergence we need the pathwise It\^{o} calculus. 

\begin{Definition}[F{\"o}llmer \cite{Follmer}] Let $\pi_n=(0=t_0^n < \cdots < t_n^n=1)$, $n \geq 1$ be partitions of $[0,1]$ with $|\pi_n|:=\max_{1 \leq k \leq n} | t_k^n - t_{k-1}^n | \to 0$ as $n \to \infty$.
	A c{\`a}dl{\`a}g path $\mathbf{x}: [0,1] \to \R$ has a finite quadratic variation along $(\pi_n)_{n \geq 1}$ if the sequence of measures on $[0,1]$,
	$$
	\sum_{1 \leq k \leq n} ( \mathbf{x}_{t_k^n} - \mathbf{x}_{t_{k-1}^n} )^2 \delta_{\{ t_{k-1} \}} (dt),
	$$
	converges weakly to a measure $[\mathbf{x}]^F$ on $[0,1]$. For each $t\in[0,1]$, set $[\mathbf{x}]_t^F := [\mathbf{x}]^F([0,t])$, and let $[\mathbf{x}]^{F,c}_.([0,t])$ be the continuous part of this non-decreasing path.
\end{Definition}

The following will be one of our standing assumptions.

\begin{Assumption} \label{Assump:CostFunction}
The cost function $c: \R^2 \to \R$ is in $C^3((l_1, \max_{0 \leq t \leq 1} r_t) \times (l_1, \max_{0 \leq t \leq 1} r_t))$ and satisfies
$$c(x,x)=c_y(x,x)=0,~ c_{xyy}<0,~ c_{xy}>0, {\quad(x,y) \in (l_1, \max_{0 \leq t \leq 1} r_t) \times (l_1, \max_{0 \leq t \leq 1} r_t).}$$
\end{Assumption}

Following Hobson and Klimmek \cite{HobsonKlimmek} we then have (in fact, a weaker assumption on $c$ would suffice; see Touzi et al. \cite{HLTouzi16})

\begin{Lemma} \label{Lemma:Quadratic_Variation}
Let Assumption \ref{Assump:CostFunction} hold ture. Then for every path $\mathbf{x} \in \Om$ with finite quadratic variation $[\mathbf{x}]^F$ along a sequence of partitions $(\pi_n)_{n \geq 1}$, we have
$$
\sum_{k=0}^{n-1} c( \mathbf{x}_{t_k^n}, \mathbf{x}_{t_{k-1}^n} ) \to \frac{1}{2} \int_0^1 c_{yy}( \mathbf{x}_t, \mathbf{x}_t ) d [\mathbf{x}]^{F,c}_t + \sum_{0 \leq t \leq 1} c(\mathbf{x}_{t-}, \mathbf{x}_t ).
$$
\end{Lemma}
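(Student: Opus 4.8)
The plan is to split the sum $\sum_{k=0}^{n-1}c(\mathbf{x}_{t^n_k},\mathbf{x}_{t^n_{k-1}})$ (one should presumably read this as $\sum_{k=1}^{n}c(\mathbf{x}_{t^n_{k-1}},\mathbf{x}_{t^n_k})$, consistently with \eqref{eq:multiSOT}) into the contributions coming from ``small'' increments and those coming from ``large'' increments, i.e. from the jumps of $\mathbf{x}$. First I would fix $\eps>0$ and recall that a c\`adl\`ag path on $[0,1]$ has at most finitely many jumps of size exceeding $\eps$; let $J_\eps=\{s\in[0,1]:|\mathbf{x}_s-\mathbf{x}_{s-}|>\eps\}$ be that finite set. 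For $n$ large enough each subinterval $(t^n_{k-1},t^n_k]$ contains at most one point of $J_\eps$, which partitions the index set into ``jump indices'' and the rest.

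Second, on the jump indices, by right-continuity of $\mathbf{x}$ and the definition of $[\mathbf{x}]^F$ one has $\mathbf{x}_{t^n_{k-1}}\to\mathbf{x}_{s-}$ and $\mathbf{x}_{t^n_k}\to\mathbf{x}_s$ as $n\to\infty$ for the index $k=k(n,s)$ straddling $s\in J_\eps$; since $c$ is continuous, the jump contribution converges to $\sum_{s\in J_\eps}c(\mathbf{x}_{s-},\mathbf{x}_s)$. Third, on the non-jump indices, I would Taylor expand $c$ around the diagonal: writing $\delta_k=\mathbf{x}_{t^n_k}-\mathbf{x}_{t^n_{k-1}}$ and using $c(x,x)=c_y(x,x)=0$ from Assumption \ref{Assump:CostFunction}, a second-order Taylor expansion in the $y$-variable (the $x$-variable being held at $\mathbf{x}_{t^n_{k-1}}$) gives
$$
c(\mathbf{x}_{t^n_{k-1}},\mathbf{x}_{t^n_k})=\tfrac12 c_{yy}(\mathbf{x}_{t^n_{k-1}},\mathbf{x}_{t^n_{k-1}})\,\delta_k^2+r_k,
$$
with an error $r_k$ controlled by $C(\eps)|\delta_k|^2$ times the modulus of continuity of $c_{yy}$ evaluated at increments of size $\le\eps$ (using $c\in C^3$ and, if needed, also expanding in the $x$-direction which contributes at order $|\delta_k|^3$). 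Summing the leading term and invoking weak convergence of $\sum_k\delta_k^2\delta_{\{t_{k-1}\}}$ to $[\mathbf{x}]^F$, together with the fact that $s\mapsto c_{yy}(\mathbf{x}_s,\mathbf{x}_s)$ is bounded and a.e.-$[\mathbf{x}]^F$ continuous, yields $\tfrac12\int_0^1 c_{yy}(\mathbf{x}_t,\mathbf{x}_t)\,d[\mathbf{x}]^F_t$; the atoms of $[\mathbf{x}]^F$ (which sit at the jump times of $\mathbf{x}$) must be subtracted off, because those subintervals were excluded, leaving exactly the continuous part $[\mathbf{x}]^{F,c}$. The error sum $\sum_k r_k$ is bounded by $C(\eps)$ (the modulus) times $\sum_k\delta_k^2$, which stays bounded, so it vanishes as $\eps\downarrow0$ after $n\to\infty$.

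Finally I would let $\eps\downarrow0$: the jump sum $\sum_{s\in J_\eps}c(\mathbf{x}_{s-},\mathbf{x}_s)$ increases to $\sum_{0\le s\le1}c(\mathbf{x}_{s-},\mathbf{x}_s)$ (this series converges absolutely because $|c(\mathbf{x}_{s-},\mathbf{x}_s)|\le C|\mathbf{x}_s-\mathbf{x}_{s-}|^2$ near the diagonal, and $\sum_s|\mathbf{x}_s-\mathbf{x}_{s-}|^2<\infty$ since $[\mathbf{x}]^F$ is finite), and the error term vanishes, giving the claimed identity. The main obstacle I expect is the bookkeeping in the third step: making the separation between ``small'' and ``large'' increments uniform in $n$, controlling the Taylor remainder globally (the domain $(l_1,\max_t r_t)$ may be unbounded, so one needs that $\mathbf{x}([0,1])$ is contained in a compact subinterval on which the $C^3$ bounds are uniform), and correctly matching the atoms of $[\mathbf{x}]^F$ with the jump times of $\mathbf{x}$ so that precisely the continuous part survives in the integral. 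This is exactly the content of F\"ollmer's pathwise It\^o formula adapted to a function vanishing to first order on the diagonal, so I would lean on Hobson and Klimmek \cite{HobsonKlimmek} and Föllmer \cite{Follmer} for the delicate parts rather than reproving them.
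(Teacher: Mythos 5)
Your sketch is correct and takes the same route the paper does: the paper gives no proof of Lemma \ref{Lemma:Quadratic_Variation} beyond citing Hobson and Klimmek \cite{HobsonKlimmek} (and, for a weaker hypothesis on $c$, Henry-Labord\`ere and Touzi \cite{HLTouzi16}), and your outline is an accurate reconstruction of the F\"ollmer-style argument those references carry out, adapted to a cost vanishing to first order on the diagonal. The one issue you flag (uniform $C^3$ control on a compact set containing the range of $\mathbf{x}$, and matching the atoms of $[\mathbf{x}]^F$ with the jump times so that only $[\mathbf{x}]^{F,c}$ survives) is exactly the bookkeeping handled in those references, so deferring to them is appropriate.
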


The non-decreasing process $[X]_\cdot$, as in Karandikar \cite{Karandikar}, is defined for every c\`adl\`ag path $\mathbf{x}$ and coincides $\P$-a.s. with both, the quadratic variation and $[\mathbf{x}]^F$, for every $\P\in\Sc_\infty$. This and Lemma \ref{Lemma:Quadratic_Variation} motivates us to define the following continuous-time reward function:
$$
C(\mathbf{x}): = \frac{1}{2} \int_0^1 c_{yy}( \mathbf{x}_t, \mathbf{x}_t ) d [\mathbf{x}]^{c}_t + \sum_{0 \leq t \leq 1} c(\mathbf{x}_{t-}, \mathbf{x}_t ), ~~~\mbox{for all } \mathbf{x} \in \Om,
$$
where the integral and the sum are defined using positive and negative parts with a convention $\infty-\infty=-\infty$.

The continuous-time supermartingale optimal transport problem is now given by
\begin{equation} \label{eq:Primal_Full}
\mathbf{P}_{\infty}(\mu) : = \sup_{\P \in \Sc_{\infty}(\mu)} \E^{\P} \left[ C(X) \right].
\end{equation}
Note that $\sum_{0\leq t\leq 1}c(X_{t-},X_t)<\infty$ $\P$-a.s. for all $\P\in\Sc_\infty$; see Henry-Labord\`er\`e et al. \cite[Remark 2.5]{HLTanTouzi}.

We now follow Henry-Labord\`er\`e et al. \cite{HLTanTouzi} and introduce the dual formulation of the problem \eqref{eq:Primal_Full}. First we introduce the set of admissible semi-static trading strategies. Let $\mathbb{H}_0$ be the class of $\F$-predictable and locally bounded processes $H: [0,1] \times \Om \to \R$, i.e., there exists an increasing family of $\F$-stopping times $(\tau_n)_{n\geq1}$ taking values in $[0,1]\cup\{\infty\}$ such that $H_{\cdot\wedge\tau_n}$ is bounded for all $n\geq1$ and $\tau_n\to\infty$ as $n\to\infty$. Then for all $H \in \mathbb{H}_0$ and under every supermartingale measure $\P \in \Sc_{\infty}$, one can define the integral, denoted by $H \cdot X$, see Jacod and Shiryaev \cite[Chapter I.4]{JacodShiryaev}. Define
$$
\Hc:= \{ H \in \mathbb{H}_0: H \geq 0 \mbox{ and } H\cdot X \mbox{ is a } \P\mbox{-supermartingale for every } \P \in \Sc_{\infty} \}.
$$
For the static strategy, we denote by $M([0,1])$ the space of all finite signed measures on $[0,1]$ which is a Polish space under the weak convergence topology, and by $\Lambda$ the class of all measurable maps $\lambda: \R \to M([0,1])$ which admit a representation $\lambda(x,dt)=\lambda_0(t,x) \gamma (dt)$ for some finite non-negative measure $\gamma$ on $[0,1]$ and some measurable function $\lambda_0:[0,1]\times \R \to \R$ which is bounded on $[0,1] \times K$ for all compact $K$ of $\R$. We then denote
$$
\Lambda(\mu):=\{ \lambda \in \Lambda :\mu(\lvert\lambda\lvert)<\infty \}, ~~\mbox{where } \mu(|\lambda|):=\int_0^1 \int_\R |\lambda_0(t,x)| \mu_t(dx) \gamma(dt).
$$
We also introduce a family of random measures $\delta^X=(\delta_t^X)_{0 \leq t \leq 1}$, associated to the canonical process $X$, by setting $\delta_t^X(dx):=\delta_{X_t}(dx)$. In particular, one has
$$
\delta^X(\lambda) = \int_0^1 \lambda(X_t, dt) = \int_0^1 \lambda_0(t,X_t)\gamma(dt).
$$

The set of admissible superhedging strategies with no short-selling constraint is defined by
$$
\Dc_{\infty}:= \{ (H, \lambda) \in \Hc \times \Lambda(\mu): \delta^X(\lambda) + (H \cdot X)_1 \geq C(X_\cdot), \P\mbox{-a.s.}, \forall \P \in \Sc_{\infty} \},
$$
and then the dual problem is given by 
\begin{equation} \label{eq:Dual_Full}
\mathbf{D}_{\infty}(\mu) : = \inf_{(H, \lambda) \in \Dc_{\infty}(\mu)} \mu(\lambda).
\end{equation}
Furthermore, the weak duality holds: $\mathbf{P}_{\infty}(\mu)\leq\mathbf{D}_{\infty}(\mu)$.
\section{Main results}\label{sec:main_results}
Our contribution splits into three parts. First we construct a continuous-time supermartingale with given marginals $(\mu_t)_{t\in[0,1]}$. This is achieved as an accumulation point of a sequence $(\P^n)_{n}$ of solutions of $n$-period supermartingale transport problems. We then characterize the limiting law further and show that it corresponds to the distribution of a local L\'evy process. Finally we prove that this limit solves the continuous-time supermartingale optimal transport problem \eqref{eq:Primal_Full}. The last point is achieved by proving the strong duality $\mathbf{P}_{\infty}(\mu)=\mathbf{D}_{\infty}(\mu)$, and explicitly constructing an optimal superhedging strategy.

\subsection{Convergence of the sequence $(\P^n)_{n \geq 1}$}
For every $t \in [0,1]$, we denote by $\R\ni x\mapsto F(t,x)$ and $[0,1]\ni u\mapsto F^{-1}(t,u)$ the cumulative distribution function and the corresponding right-continuous inverse (w.r.t. the $x$-variable) of the probability measure $\mu_t$. For $t\in[0,1)$, $\epsilon\in(0,1-t]$ and $x,y\in\R$ let
\begin{align*}
\delta^\epsilon F(t,x)&:=F(t+\epsilon,x)-F(t,x)
\end{align*}
and define
$$
E:=\{ (t,x): t \in [0,1], x \in (l_t, r_t) \}.
$$
\begin{Assumption} \label{Assump:1}
	\begin{enumerate} 
		\item[\rm{(i)}]  The marginal distributions $\mu=(\mu_t)_{t \in [0,1]}$ belong to $\Pc$, are non-decreasing in convex-decreasing and $t \mapsto \mu_t$ is continuous w.r.t. the weak convergence topology.

		\item[\rm{(ii)}]  $F \in C_b^4(E)$. In particular, for all $t \in [0,1]$, the measure $\mu_t$ has a density function $f(t,\cdot)$ such that $f(t,\cdot)>0$ on $(\ell_t,r_t)$.
		
		\item[\rm{(iii)}] There exists $\epsilon_1\in(0,1]$ such that for all $t \in [0,1)$ and $0<\epsilon\leq \epsilon_1\wedge(1-t)$, $\mu_t$ and $\mu_{t+\epsilon}$ satisfy the Dispersion Assumption:
		\begin{itemize}
			\item There exists ${m}_{\epsilon}(t)<{m}^{\epsilon}(t)$ such that  $\ell_{t+\epsilon}\leq\ell_t\leq{m}_{\epsilon}(t)<{m}^{\epsilon}(t)\leq r_t\leq r_{t+\epsilon}$, with $\ell_{t+\epsilon}<\ell_t\wedge{m}_{\epsilon}(t)$ and ${m}^{\epsilon}(t)\vee r_t< r_{t+\epsilon}$.
		\item Furthermore, $f(t+\epsilon,\cdot)<f(t,\cdot)$ on $(m_\epsilon(t),m^\epsilon(t))$, $f(t+\epsilon,x)=f(t,x)$ for $x\in\{m_\epsilon(t),m^\epsilon(t)\}$,  and $f(t+\epsilon,\cdot)>f(t,\cdot)$ on $(\ell_{t+\epsilon},m_\epsilon(t))\cup(m^\epsilon(t),r_{t+\epsilon})$.
		\end{itemize}
	\end{enumerate}
\end{Assumption}
Note that Assumption \ref{Assump:1}(iii) is precisely Assumption \ref{ass:Dispersion_c_appendix}, and thus all the results of Appendix \ref{sec:Decreasing_SMOT_one_period} are valid.

\begin{Remark} \label{rem:TouziAssumption3.1}
Under Assumption \ref{Assump:1} several further implications follow. First, since, for all $t \in [0,1]$, $f(t,\cdot)>0$ on $(\ell_t,r_t)$, we have that $\inf_{x\in[-K,K]\cap(\ell_t,r_t)}f(t,x)>0$ for all $K>0$. Furthermore, due to the Dispersion Assumption (see Assumption \ref{Assump:1}(iii)), for every $t \in [0,1)$ and sufficiently small $\epsilon\in(0,1-t]$, the function $x\mapsto\delta^\epsilon F(t,x)$ attains the (unique) maximum and minimum at $m_\epsilon(t)$ and $m^\epsilon(t)$, respectively. It is also easy to see that, for all $x\in(\ell_t,r_t)$, $\int^\infty_x(y-x)\delta^\epsilon F(t,dy)>0$. In particular, Assumption \ref{Assump:1} implies Henry-Labord\`ere et al. \cite[Assumption 3.1]{HLTanTouzi}.
\end{Remark}

In the rest of this paper we will always work with $(\mu_t)_{t\in[0,1]}$ satisfying Assumption \ref{Assump:1}.

Fix $t\in[0,1)$ and $\epsilon\in(0,1-t]$, and consider a pair of marginals $(\mu_t, \mu_{t+\epsilon})$. Then the corresponding decreasing supermartingale coupling can be determined by a pair of functions $(T_d^\epsilon(t,\cdot)=\hat T_d^{\mu_t,\mu_{t+\epsilon}}(\cdot), T_u^\epsilon(t,\cdot)=\hat T_u^{\mu_t,\mu_{t+\epsilon}}(\cdot))$, where $(\hat T_d^{\mu_t,\mu_{t+\epsilon}}(\cdot), T_u^\epsilon(t,\cdot))$ are as in Appendix \ref{sec:Decreasing_SMOT_one_period_primal}. Furthermore, set $x_1^\epsilon(t):=x_1^{\mu_t,\mu_{t+\epsilon}}$ and  $y_1^\epsilon(t):=y_1^{\mu_t,\mu_{t+\epsilon}}$, where $x_1^{\mu_t,\mu_{t+\epsilon}}$ is defined by \eqref{eq:x_1Appendix}, while $y_1^{\mu_t,\mu_{t+\epsilon}}$ is given by \eqref{eq:y_1Appendix}.  
 
Recall that the canonical space of c{\`a}dl{\`a}g paths $\Om:= \D([0,1] \R)$, is a Polish space equipped with the Skorokhod topology, while $X$ denotes the canonical process. Let $(\pi_n)_{n \geq 1}$ be a sequence of partitions of $[0,1]$, so that each $\pi_n=(t_k^n)_{0 \leq k \leq n}$ is such that $0 = t_0^n < \cdots < t_n^n = 1$. Suppose in addition that $|\pi_n|:=\max_{1 \leq k \leq n} (t_k^n - t_{k-1}^n) \to 0$ as $n\to\infty$. Then for every partition $\pi_n$, by considering the marginal distributions $(\mu_{t_k^n})_{0 \leq k \leq n}$, one obtains an $(n+1)$-marginal (or $n$-period) SOT problem, where the goal is to maximize 
$$
\E \left[ \sum_{0 \leq k \leq n-1} c(\tilde{X}_k^n, \tilde{X}_{k+1}^n) \right]
$$
among all discrete-time supermartingales $\tilde{X}^n=(\tilde{X}^n_k)_{0 \leq k \leq n}$ satisfying marginal constraints. Under  Assumptions \ref{Assump:CostFunction} and \ref{Assump:1}, the iterated $n$-period decreasing supermartingale coupling is the solution to the above supermartingale transport problem. Let $\Om^{*,n}:= \R^{n+1}$ be the canonical space of discrete-time process, and $X^n= (X_k^n)_{0 \leq k \leq n}$ be the canonical process on $\Om^{*,n}$. Then under the optimal supermartingale measure $\P^{*,n}$, $X^n$ is a discrete-time supermartingale and at the same time a Markov chain, characterized by $T_u^{\Delta t^n_{k+1}}(t^n_k,\cdot)$, $T_d^{\Delta t^n_{k+1}}(t^n_k,\cdot)$ with $\Delta t_{k+1}^n:= t_{k+1}^n-t_k^n$, induced by the two marginals $(\mu_{t_k^n}, \mu_{t_{k+1}^n})$; see Appendix \ref{sec:Decreasing_SMOT_one_period_primal}. We then extend the Markov chain $X^n$ to a continuous-time c{\`a}dl{\`a}g process $X^{*,n}=(X^{*,n}_t)_{0 \leq t \leq 1}$ defined by 
$$
X_t^{*,n}:= \sum_{k=1}^{n} X_{k-1}^n \mathbf{1}_{[t_{k-1}^{n}, t_k^n)}(t), ~~t \in [0,1],
$$
and define the probability measure $\P^n:= \P^{*,n} \circ (X^{*,n})^{-1}$ on $\Om$.

We further denote by $\mathbf{m} (\delta^{\epsilon} F(t, \cdot))$ (resp. $\mathbf{m} (\partial_t F(t, \cdot))$) the set of all local minimizers of functions $\delta^{\epsilon} F(t, \cdot))$ (resp. $\partial F(t, \cdot)))$. Let $\mathbf{M} (\partial_t F(t, \cdot))$ be the set of local maximizers of $\partial F(t, \cdot))$. Note that, by Assumption \ref{Assump:1}, $\mathbf{m} (\delta^{\epsilon} F(t, \cdot))=\{m^\epsilon(t)\}$ and $\mathbf{M} (\delta^{\epsilon} F(t, \cdot))=\{m_\epsilon(t)\}$ for all $0<\epsilon\leq \epsilon_1\wedge(1-t)$.
\begin{Assumption} \label{Assump:2}
	\begin{itemize}
		\item[\rm(i)]  There is a constant $\epsilon_2>0$ such that, for all $t \in [0,1]$ and $0 < \epsilon \leq \epsilon_2 \wedge (1-t)$, we have $\mathbf{m} (\partial_t F(t, \cdot))=\{ m_t \}$ and $\mathbf{M} (\partial_t F(t, \cdot))=\{\tilde{m}_t\}$.	
		\item[\rm(ii)] Let $m^0(t)=m_t$. Then the map $(t,\epsilon) \mapsto m^{\epsilon}(t)$ is continuous (and hence uniformly continuous with continuity modulus $\rho_0$) on $\{ (t,\epsilon): 0 \leq \epsilon \leq \epsilon_2, 0\leq t \leq 1-\epsilon \}$.\end{itemize}\end{Assumption}

\begin{Proposition} \label{Prop::Tightness}
	Suppose that Assumptions \ref{Assump:1} and \ref{Assump:2} are  valid. Then the sequence $(\P^n)_{n \geq 1}$ is tight w.r.t. the Skorokhod topology on $\Om$. Moreover, every limit point $\P^0$ satisfies $\P^0 \in \Sc_{\infty}(\mu)$.
\end{Proposition}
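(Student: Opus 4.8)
The plan is to verify Aldous' tightness criterion (see, e.g., \cite[Ch.~VI]{JacodShiryaev}) for $(\P^n)_{n\ge 1}$ on $\Om=\D([0,1],\R)$ with the Skorokhod topology, and then to identify every weak limit point as a supermartingale measure with the prescribed marginals. Write $\kappa_n(t):=\max\{t^n_k:t^n_k\le t\}$, so that under $\P^n$ the canonical process obeys $X_t\sim\mu_{\kappa_n(t)}$ and $\kappa_n(t)\to t$ as $n\to\infty$. Consider the Doob decomposition along $\pi_n$ of the discrete supermartingale, $X^{*,n}=M^{*,n}-A^{*,n}$, with $M^{*,n}$ a martingale and $A^{*,n}$ nondecreasing, adapted, $A^{*,n}_0=0$, each extended to $[0,1]$ piecewise constantly; here $A^{*,n}_{t^n_{k+1}}-A^{*,n}_{t^n_k}=X^n_k-\E^{\P^{*,n}}[X^n_{k+1}\mid X^n_k]\ge 0$ and $\E^{\P^{*,n}}\big[(M^{*,n}_{t^n_{k+1}}-M^{*,n}_{t^n_k})^2\mid \Fc_{t^n_k}\big]=\mathrm{Var}^{\P^{*,n}}(X^n_{k+1}\mid X^n_k)$. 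Since $\E[A^{*,n}_1]=\overline\mu_0-\overline\mu_1$ and $\E[(M^{*,n}_1)^+]\le\mu_1(|\cdot|)+\overline\mu_0-\overline\mu_1$, Doob's maximal inequality applied to the submartingales $(M^{*,n})^+$ and $(X^{*,n})^-$ gives $\sup_n\P^n(\|X\|_\infty\ge\lambda)\le(\mu_1(|\cdot|)+\mu_1(|\cdot|)+\overline\mu_0-\overline\mu_1)/\lambda$: this is the compact containment condition, and it also yields $\lim_{K\to\infty}\limsup_n\P^n(\tilde\tau_K<1)=0$ for the grid exit times $\tilde\tau_K:=\inf\{t^n_k:|X^n_k|\ge K\}$.

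The engine of the proof is a pair of one-period estimates for the decreasing supermartingale coupling of nearby marginals $(\mu_t,\mu_{t+\epsilon})$, which I would read off from the explicit one-period construction of Appendix \ref{sec:Decreasing_SMOT_one_period_primal} (in terms of $\hat T^{\mu_t,\mu_{t+\epsilon}}_d$, $T^\epsilon_u$, $x^\epsilon_1$, $y^\epsilon_1$) using $F\in C^4_b(E)$ and the Dispersion Assumption \ref{Assump:1}(iii): for each $K>0$ there is $C_K<\infty$ such that, uniformly over the admissible pairs $(t,\epsilon)$ and over $x\in[-K,K]$,
\[
0\ \le\ x-\E^{\P^{*,n}}\big[X^n_{k+1}\,\big|\,X^n_k=x\big]\ \le\ C_K\,\Delta t^n_{k+1},
\qquad
\mathrm{Var}^{\P^{*,n}}\big(X^n_{k+1}\,\big|\,X^n_k=x\big)\ \le\ C_K\,\Delta t^n_{k+1};
\]
in words, both the one-step conditional mean deficit (the jump of $A^{*,n}$) and the one-step conditional variance of the martingale increment are $O(\Delta t)$, locally uniformly. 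Because the supports may be unbounded and second moments may be infinite, these bounds are used only after stopping $M^{*,n}$ and $A^{*,n}$ at $\tilde\tau_K$; denote the stopped processes $M^{*,n,\tilde\tau_K}$ and $A^{*,n,\tilde\tau_K}$. Now fix $\eta>0$, an $\F^n$-stopping time $\tau$ and $\sigma\le\delta$. Summing the two estimates over the grid steps contained in $(\tau,(\tau+\sigma)\wedge 1]$ — whose lengths total at most $\sigma+|\pi_n|$ — bounds, conditionally on $\Fc_\tau$, the corresponding increment of $A^{*,n,\tilde\tau_K}$ and of $[M^{*,n,\tilde\tau_K}]$ by $C_K(\sigma+|\pi_n|)$; Markov's inequality for the compensator, Chebyshev's inequality together with $\E[(\Delta M^{*,n,\tilde\tau_K})^2\mid\Fc_\tau]=\E[\Delta[M^{*,n,\tilde\tau_K}]\mid\Fc_\tau]$ for the martingale part, and $X^{*,n}=M^{*,n}-A^{*,n}$ then give
\[
\P^n\big(|X_{(\tau+\sigma)\wedge 1}-X_\tau|>\eta\big)\ \le\ \P^n(\tilde\tau_K<1)+\frac{2C_K(\delta+|\pi_n|)}{\eta}+\frac{4C_K(\delta+|\pi_n|)}{\eta^2}.
\]
Taking the supremum over $(\tau,\sigma)$, then $\limsup_n$, then $\delta\to 0$, and finally $K\to\infty$, the right-hand side vanishes; with compact containment, Aldous' criterion yields tightness of $(\P^n)_{n\ge1}$.

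It remains to identify a weak limit point $\P^0$ of a subsequence $\P^{n_j}$. For every $t$ outside the (at most countable) set of fixed discontinuity times of $\P^0$ the map $X\mapsto X_t$ is $\P^0$-a.s.\ continuous, so $\mu_{\kappa_{n_j}(t)}$ converges both to $\mu_t$ (by weak continuity of $t\mapsto\mu_t$) and to $\P^0\circ X_t^{-1}$; hence $X_t\sim^{\P^0}\mu_t$ for all $t$ in a co-countable set, and by right-continuity of the c\`adl\`ag paths together with weak continuity of $t\mapsto\mu_t$ this extends to every $t\in[0,1]$. To see that $X$ is a $\P^0$-supermartingale, fix $s<t$, continuity times $r_1<\cdots<r_m\le s$ and $0\le g\in C_b(\R^m)$; the discrete supermartingale property gives $\E^{\P^{n_j}}[(X_t-X_s)\,g(X_{r_1},\dots,X_{r_m})]=\E^{\P^{n_j}}\big[\big(\E[X_t\mid\Fc_{\kappa_{n_j}(s)}]-X_s\big)g(X_{r_1},\dots,X_{r_m})\big]\le 0$. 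The functional is $\P^0$-a.s.\ continuous, and the sequence $(|X_t-X_s|)$ under $(\P^{n_j})_j$ is uniformly integrable because $\{\mu_r:r\in[0,1]\}$ is uniformly integrable under Assumption \ref{Assump:1}; truncating $X_t-X_s$ at level $N$, passing to the weak limit, and letting $N\to\infty$ gives $\E^{\P^0}[(X_t-X_s)\,g(X_{r_1},\dots,X_{r_m})]\le 0$. A routine right-continuity argument removes the restriction to continuity times, and a functional monotone class argument upgrades this to $\E^{\P^0}[X_t\mid\Fc_s]\le X_s$; since also $X_t\sim^{\P^0}\mu_t$ for all $t$, we conclude $\P^0\in\Sc_\infty(\mu)$.

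The main obstacle is the derivation of the two $O(\Delta t)$ one-period bounds displayed above, with constants locally uniform in $t$ and in the current state, from the explicit decreasing one-period coupling of Appendix \ref{sec:Decreasing_SMOT_one_period_primal}; the careful treatment of the boundary behaviour of the densities and of the possibly infinite second moments (handled by the localization at $\tilde\tau_K$) is the accompanying technical point. Everything else is the standard tightness-and-identification scheme, following Henry-Labord\`ere et al.\ \cite{HLTanTouzi}.
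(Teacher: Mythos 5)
Your overall scheme (compact containment plus an Aldous-type modulus of continuity, then identification of limit points via continuity sets and uniform integrability) is the right one and matches the paper in spirit; the compact containment via Doob decomposition and the identification step are both essentially sound alternatives to what the paper does. The genuine gap is the central estimate: the claim that
\[
\mathrm{Var}^{\P^{*,n}}\bigl(X^n_{k+1}\mid X^n_k=x\bigr)\ \le\ C_K\,\Delta t^n_{k+1}
\]
holds locally uniformly in $x\in[-K,K]$. In the martingale region the one-step law is a two-point distribution supported on $\{T_d^\epsilon(t,x),\,T_u^\epsilon(t,x)\}$, and the martingale constraint $q^\epsilon J_u^\epsilon=(1-q^\epsilon)J_d^\epsilon$ gives
\[
\mathrm{Var}=(1-q^\epsilon)\,J_d^\epsilon\bigl(J_u^\epsilon+J_d^\epsilon\bigr).
\]
What the paper's Lemma~\ref{Lemma:uniform_boudedness} controls is $J_d^\epsilon+q^\epsilon\le C\epsilon$; it says nothing about $J_u^\epsilon$, which is a jump of size $O(1)$ (this is the whole point of the local L\'evy limit), and which in fact is \emph{unbounded} as $x\downarrow x_1^\epsilon(t)$: there $T_u^\epsilon(t,x)\uparrow r_{t+\epsilon}$, which can be $+\infty$ (Bachelier, GBM). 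Since $x_1^\epsilon(t)\in[-K,K]$ for $K$ large, your supremum over $x\in[-K,K]$ picks up arbitrarily large $J_u^\epsilon$, and $J_d^\epsilon$ does not vanish there, so $J_d^\epsilon J_u^\epsilon$ is unbounded. Worse, the standing hypotheses only impose finite \emph{first} moments on $(\mu_t)$, so the conditional variance may simply be infinite; no amount of stopping at $\tilde\tau_K$ repairs a bound that is false pointwise in $x$. Consequently the Chebyshev step in your displayed estimate for $\P^n\bigl(|X_{(\tau+\sigma)\wedge1}-X_\tau|>\eta\bigr)$ is not justified.

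The paper's proof deliberately avoids any second-moment input. Its observation (inherited from \cite[Proposition~3.2]{HLTanTouzi}) is that the \emph{only} large one-step moves are the upward jumps, and these occur with conditional probability $q^\epsilon\le C\epsilon$; the downward displacement per step is $\le C\epsilon$, and the supermartingale branch ($x\le x_1^\epsilon$) also only moves down by $\le C\epsilon$. Hence, on the event that no upward jump occurs in a window of length $\theta$ — an event of probability at least $\mathcal E_n(C,\theta)=\prod_i(1-C\Delta t_i)\to e^{-C\theta}$ — the total displacement is $\le C\theta<\delta$, giving $\P^{*,n}[\,\sup|X^{*,n}|\le K,\ |X^{*,n}_T-X^{*,n}_S|\ge\delta\,]\le 1-\mathcal E_n(C,\theta)$ directly. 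This combinatorial/Borel--Cantelli flavoured bound is exactly what replaces your Chebyshev/quadratic-variation step, and it is the ingredient you should add; with it, the rest of your argument (and the identification of the limit) goes through. Your identification step is a perfectly good variant of the paper's truncation-plus-dominated-convergence argument, and both ultimately rest on Lemma~\ref{lemma:uniform_integral}.
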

The proof of Proposition \ref{Prop::Tightness} requires two additional lemmas. 

\begin{Lemma} \label{lemma:uniform_integral}
	Let $(\mu_t)_{t \in [0,1]}$ be a family of probability measures in $\Pc$ which are increasing in convex-decreasing order. Then $(\mu_t)_{t \in [0,1]}$ is uniformly integrable, i.e.
	$$
	\lim_{K \to +\infty}\sup_{0 \leq t \leq 1} \int |x| \mathbf{1}_{\{ |x| \geq K \}} \mu_t(dx) = 0.
	$$
\end{Lemma}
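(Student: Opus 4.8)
The plan is to establish the claimed uniform integrability directly from the convex-decreasing ordering of the family $(\mu_t)_{t\in[0,1]}$, by comparing every $\mu_t$ against the two extreme marginals $\mu_0$ and $\mu_1$. The key observation is that $\mu_s\leq_{cd}\mu_t$ for $s\leq t$ controls the left tails (via the non-increasing convex test functions $p_k(x)=(k-x)^+$), while the reverse comparison $\mu_t\leq_{cd}\mu_1$ does not immediately control the right tails; however, the supermartingale ordering also gives $\overline{\mu}_s\leq\overline{\mu}_t$, i.e. the means are non-decreasing, and this, combined with the left-tail control, will pin down the right tails as well.

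First I would recall that a family $\{\nu\}$ of measures in $\Pc$ is uniformly integrable if and only if the truncated first absolute moments $\int |x|\mathbf 1_{\{|x|\ge K\}}\,d\nu$ tend to $0$ uniformly as $K\to\infty$, and that this can be split into the positive part $\int x^+\mathbf 1_{\{x\ge K\}}\,d\nu$ and the negative part $\int x^-\mathbf 1_{\{x\le -K\}}\,d\nu$. For the negative part: since $x\mapsto (k-x)^+$ is convex and non-increasing, $\mu_t\leq_{cd}\mu_1$ gives $\int (k-x)^+\,d\mu_t\leq\int(k-x)^+\,d\mu_1$ for all $k$, and standard layer-cake/tail estimates convert this into a uniform bound $\sup_t\int x^-\mathbf 1_{\{x\le -K\}}\,d\mu_t\leq \varphi_1(K)$ with $\varphi_1(K)\to 0$, where $\varphi_1$ depends only on $\mu_1$. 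For the positive part: apply the same convex-decreasing comparison in the other direction, $\mu_0\leq_{cd}\mu_t$, to the functions $x\mapsto (x-k)^+ = (k-x)^+ + (x-k)$; here $(x-k)^+$ is convex but \emph{increasing}, so I cannot test it directly. Instead I would write $\int (x-k)^+\,d\mu_t = \int (k-x)^+\,d\mu_t + \overline{\mu}_t - k$, use $\overline{\mu}_t\le\overline{\mu}_1$ (from the convex-decreasing order together with the mean monotonicity it implies) and $\int (k-x)^+\,d\mu_t\le\int(k-x)^+\,d\mu_1$ for $k\le 0$ (respectively $\le \int(k-x)^+\,d\mu_0$ or an explicit bound for $k$ large), obtaining $\int(x-k)^+\,d\mu_t\leq \int(k-x)^+\,d\mu_1 + \overline{\mu}_1 - k = \int(x-k)^+\,d\mu_1$. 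Thus $\sup_t\int(x-k)^+\,d\mu_t\le\int(x-k)^+\,d\mu_1$, and again a layer-cake estimate yields $\sup_t\int x^+\mathbf 1_{\{x\ge K\}}\,d\mu_t\le\varphi_2(K)\to 0$ with $\varphi_2$ depending only on $\mu_1$.

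Combining, $\sup_{0\le t\le 1}\int |x|\mathbf 1_{\{|x|\ge K\}}\,d\mu_t \le \varphi_1(K)+\varphi_2(K)\to 0$ as $K\to\infty$, which is exactly the claim. I would carry out the two tail estimates in parallel, being careful with the sign conventions for the test functions and with the elementary inequality relating $\int |x|\mathbf 1_{\{|x|\ge K\}}\,d\nu$ to $\int(|x|-K/2)^+\,d\nu$ (or to $\int(|x|-k)^+\,d\nu$ for suitable $k<K$), which is what lets one pass from convex-function comparisons to truncated-moment comparisons.

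The main obstacle is the right tail: the convex-decreasing order is genuinely one-sided and does not directly dominate increasing convex functions, so the argument must route through the identity $(x-k)^+=(k-x)^++(x-k)$ and crucially exploit that the means $\overline{\mu}_t$ are bounded above by $\overline{\mu}_1$ — a consequence of $\mu_t\le_{cd}\mu_1$. Once one notices that this turns the right-tail comparison back into a left-tail comparison plus a mean comparison, the rest is routine; but getting the inequality $\int(x-k)^+\,d\mu_t\le\int(x-k)^+\,d\mu_1$ cleanly (and uniformly in $t$ and in the relevant range of $k$) is the one step that requires care.
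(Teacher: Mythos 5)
The step ``use $\overline{\mu}_t\leq\overline{\mu}_1$'' has the wrong sign, and this cannot be patched. Testing the convex non-increasing function $\phi(x)=-x$ against $\mu_t\leq_{cd}\mu_1$ gives $-\overline\mu_t\leq-\overline\mu_1$, i.e.\ $\overline\mu_t\geq\overline\mu_1$: in the supermartingale setting the means are non-increasing in $t$, and the correct two-sided bound is $\overline\mu_1\leq\overline\mu_t\leq\overline\mu_0$. (The sentence ``$\overline\mu_0\leq\overline\mu_1$ whenever $\mu_0\leq_{cd}\mu_1$'' appearing shortly before the lemma is a typo; it contradicts the observation that follows it, that equal means reduce the supermartingale coupling to a martingale coupling.) Substituting the correct inequality $\overline\mu_t\leq\overline\mu_0$ into your put--call identity yields $\int(x-k)^+\,d\mu_t \leq P_{\mu_1}(k)+\overline\mu_0-k = C_{\mu_1}(k)+(\overline\mu_0-\overline\mu_1)$, and this does \emph{not} vanish as $k\to\infty$ unless $\overline\mu_0=\overline\mu_1$, i.e.\ unless the family is actually a martingale family. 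In particular the final identity you record, $\int(x-k)^+\,d\mu_t\leq\int(x-k)^+\,d\mu_1$, is false in general: $C_{\mu_t}(k)-C_{\mu_1}(k)=\bigl(P_{\mu_t}(k)-P_{\mu_1}(k)\bigr)+\bigl(\overline\mu_t-\overline\mu_1\bigr)$, with the first bracket non-positive and the second non-negative, so the sign is indeterminate. You flagged the right tail as the delicate step, and it is exactly there that the argument breaks.

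The paper's proof handles the right tail differently: rather than comparing $C_{\mu_t}$ pointwise to a single extreme call function, it observes that $k\mapsto\sup_t C_{\mu_t}(k)$ is convex (pointwise sup of convex functions) and then tries to identify it with the call function $C_{\tilde\mu}$ of some probability measure $\tilde\mu$ with mass and mean those of $\mu_0$, after which $C_{\tilde\mu}(K)\to 0$ is automatic. Your route is genuinely more elementary and would be the natural first attempt, but both approaches face the same obstacle: along a convex-decreasing chain the put $P_{\mu_t}(k)$ increases while the mean $\overline\mu_t$ decreases, so the call $C_{\mu_t}(k)$ need not be monotone in $t$ and is not dominated by $C_{\mu_0}$ or $C_{\mu_1}$; controlling the asymptotics of $\sup_t C_{\mu_t}(k)$ as $k\to\infty$ therefore requires an input beyond the simple comparison you set up.
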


\begin{proof}
	Using the inequality $|x| \mathbf{1}_{\{|x| \geq K\}} \leq 2(|x|- \frac{K}{2}) \mathbf{1}_{\{|x| \geq \frac{K}{2}\}}$, we have that
	$$
	\begin{aligned}
	\sup_{0 \leq t \leq 1} \int |x| \mathbf{1}_{\{ |x| \geq K \}} \mu_t(dx) &=  \sup_{0 \leq t \leq 1} \int 2(|x|- \frac{K}{2}) \mathbf{1}_{\{|x| \geq \frac{K}{2}\}} \mu_t(dx) \\
	&\leq 2  \sup_{0 \leq t \leq 1} \int (x- \frac{K}{2}) \mathbf{1}_{\{x \geq \frac{K}{2}\}} \mu_t(dx) + 2  \sup_{0 \leq t \leq 1} \int (-x- \frac{K}{2}) \mathbf{1}_{\{x \leq -\frac{K}{2}\}} \mu_t(dx) \\
	& = 2  \sup_{0 \leq t \leq 1} C_{\mu_t} (\frac{K}{2}) + 2  \sup_{0 \leq t \leq 1} P_{\mu_t} (-\frac{K}{2}) \\
	&= 2  C_{\tilde{\mu}} (\frac{K}{2}) + 2 P_{\mu_1} (-\frac{K}{2}) \to 0, \mbox{   when } K \to \infty,
	\end{aligned}
	$$
	where $\tilde{\mu}$ is some probability measure on $\R$ with the same mass and mean with $\mu_0$. We now explain the last equality. First, $x \mapsto (-x-\frac{K}{2})^+$ is convex and non-increasing, and thus using the convex-decreasing ordering of $(\mu_t)_{t\in[0,1]}$ we obtain that $\sup_{0 \leq t \leq 1} P_{\mu_t} (-\frac{K}{2})=P_{\mu_1} (-\frac{K}{2})$. On the other hand, $ \sup_{0 \leq t \leq 1} C_{\mu_t} (\frac{K}{2}) = C_{\tilde{\mu}} (\frac{K}{2})$ follows from the fact that the point-wise supremum of a family of convex functions is convex, and $\lim_{\lvert x\lvert \to \infty} \{\sup_{0 \leq t \leq 1} C_{\mu_t} (x)-\{\overline{\mu_0}-\mu_0(\R)x\}\} =0$. Therefore, there exists some probability measure $\tilde{\mu}$, such that $\tilde\mu(\R)=\mu_0(\R)$, $\overline{\mu_0}=\overline{\tilde{\mu}}$ and $C_{\tilde\mu}=\sup_{0 \leq t \leq 1} C_{\mu_t}$.
\end{proof}

Recall that $(\mu_t)_{t\in[0,1]}$ satisfies Assumption \ref{Assump:1}. Now for fixed $t \in [0,1)$, $\epsilon \in (0, 1-t]$ and $x\in(r_t,m^\epsilon(t))$ denote by $q^{\epsilon}(t,x)$ the conditional probability of the (martingale) upward jump under the decreasing supermartingale coupling, so that  $q^{\epsilon}(t,x)= \frac{x-T_d^{\epsilon}(t, x)}{T_u^{\epsilon}(t, x) - T_d^{\epsilon}(t, x)}$. Note that, for $x\leq x_1^\epsilon(t)$, $T^\epsilon_u(t,x)=\infty$ and therefore $q^\epsilon(t,x)=0$.

Furthermore, define $J^\epsilon_d(t,\cdot),J^\epsilon_u(t,\cdot)$ by
$$
J^\epsilon_d(t,x)=x-T_d^\epsilon(t,x),\quad J^\epsilon_u(t,x)=T_u^\epsilon(t,x)-x,\quad x\in(\ell_t,r_t),
$$
so that $J^\epsilon_d(t,\cdot),J^\epsilon_u(t,\cdot)$ correspond to the downward and upward distance each particle travels (between times $t$ and $t+\epsilon$) under the decreasing supermartingale coupling, conditioned it started at $x$.
(Note that $J^\epsilon_u(t,x)=\infty$ for $x\leq x^\epsilon_1(t)$, but this happens with zero probability.)

\begin{Lemma} \label{Lemma:uniform_boudedness}
	
Suppose Assumptions \ref{Assump:1} and \ref{Assump:2} hold. Then
	\begin{itemize}
		\item[\rm(i)] For every $K>0$, there is a constant $C_1$, independent of $(t,x,\epsilon)$ such that
		$$
		J_d^{\epsilon} (t,x) + q^{\epsilon}(t,x) \leq C_1 \epsilon, ~~~ \forall x \in [-K,K] \cap (l_t, r_t) \cap (x_1^{\epsilon}(t), m^{\epsilon}_t).
		$$

		\item[\rm(ii)] For every $K>0$, there is a constant $C_2$, independent of $(t,x,\epsilon)$ such that
		$$
		J_d^{\epsilon} (t,x) \leq C_2 \epsilon, ~~~\forall x \in [-K,K] \cap (l_t, x_1^{\epsilon}(t)].
		$$
	\end{itemize}
\end{Lemma}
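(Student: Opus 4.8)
The plan is to read off both estimates from the explicit one-period description of the decreasing supermartingale coupling of $(\mu_t,\mu_{t+\epsilon})$ recalled in Appendix \ref{sec:Decreasing_SMOT_one_period_primal}, together with two uniform facts. First, Assumption \ref{Assump:1}(ii) gives $L:=\sup_{E}\lvert\partial_t F\rvert<\infty$, so that $\delta^\epsilon F(t,x)=\int_t^{t+\epsilon}\partial_s F(s,x)\,ds$ satisfies $\lvert\delta^\epsilon F(t,x)\rvert\le L\epsilon$ for all admissible $(t,x,\epsilon)$. Second, by Assumptions \ref{Assump:1}--\ref{Assump:2} and the continuity of $f$, for every $K>0$ there is $c_K>0$ with $f(s,y)\ge c_K$ for all $s\in[0,1]$ and all $y$ in a fixed compact neighbourhood of $[-K,K]$ lying in the interior of $\mathrm{supp}(\mu_s)$ (cf. Remark \ref{rem:TouziAssumption3.1}). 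I will also use that $T_d^\epsilon(t,\cdot)$ is non-decreasing, that on $(\ell_t,x_1^\epsilon(t)]$ the coupling transports $\mu_t$ deterministically downwards, and that on $(x_1^\epsilon(t),m^\epsilon(t))$ it performs the binomial martingale move $T_d^\epsilon(t,x)<x<T_u^\epsilon(t,x)$ with $q^\epsilon(t,x)\bigl(T_u^\epsilon(t,x)-x\bigr)=\bigl(1-q^\epsilon(t,x)\bigr)\bigl(x-T_d^\epsilon(t,x)\bigr)$, the up-destination $T_u^\epsilon(t,x)$ lying in the right lobe $(m^\epsilon(t),r_{t+\epsilon})$.

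For $(ii)$, fix $x\in(\ell_t,x_1^\epsilon(t)]$. Since $T_d^\epsilon(t,\cdot)$ is non-decreasing, the coupling maps $\mu_t\lvert_{(\ell_t,x]}$ into $(\ell_{t+\epsilon},T_d^\epsilon(t,x)]$, hence $F(t+\epsilon,T_d^\epsilon(t,x))\ge F(t,x)$ and
$$
0\le \mu_{t+\epsilon}\big((T_d^\epsilon(t,x),x]\big)=F(t+\epsilon,x)-F(t+\epsilon,T_d^\epsilon(t,x))\le F(t+\epsilon,x)-F(t,x)=\delta^\epsilon F(t,x)\le L\epsilon .
$$
A short bootstrap gives $J_d^\epsilon(t,x)\le 1$ for $\epsilon$ small, so the density bound yields $\mu_{t+\epsilon}\big((T_d^\epsilon(t,x),x]\big)\ge c_K J_d^\epsilon(t,x)$, whence $J_d^\epsilon(t,x)\le (L/c_K)\epsilon=:C_2\epsilon$.

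For $(i)$, fix $x\in(x_1^\epsilon(t),m^\epsilon(t))$. Tracking which mass lands in $(\ell_{t+\epsilon},T_d^\epsilon(t,x)]$ (at least all of $\mu_t\lvert_{(\ell_t,x_1^\epsilon(t)]}$ together with the down-part $1-q^\epsilon$ of the binomial mass on $(x_1^\epsilon(t),x]$, the up-part $q^\epsilon$ going to the right lobe) gives $F(t,x)-F(t+\epsilon,T_d^\epsilon(t,x))\le\int_{x_1^\epsilon(t)}^x q^\epsilon(t,y)f(t,y)\,dy$, so
$$
c_K J_d^\epsilon(t,x)\le \mu_{t+\epsilon}\big((T_d^\epsilon(t,x),x]\big)=\delta^\epsilon F(t,x)+\big(F(t,x)-F(t+\epsilon,T_d^\epsilon(t,x))\big)\le L\epsilon+\int_{x_1^\epsilon(t)}^x q^\epsilon(t,y)f(t,y)\,dy .
$$
The last integral is at most the total upward mass flux out of $(x_1^\epsilon(t),m^\epsilon(t))$; as the up-destinations all lie in the right lobe, this flux equals $-\delta^\epsilon F(t,m^\epsilon(t))=\lvert\delta^\epsilon F(t,m^\epsilon(t))\rvert$ up to the order-$\epsilon$ mass the coupling moves downward across $m^\epsilon(t)$, hence is $\le C\epsilon$; this gives $J_d^\epsilon(t,x)\le (L+C)\epsilon/c_K$. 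For $q^\epsilon$, write $q^\epsilon(t,x)=J_d^\epsilon(t,x)/\big(J_d^\epsilon(t,x)+J_u^\epsilon(t,x)\big)\le J_d^\epsilon(t,x)/J_u^\epsilon(t,x)$, so it suffices to bound $J_u^\epsilon(t,x)=T_u^\epsilon(t,x)-x$ below by a uniform $c_0>0$. Since $x<m^\epsilon(t)$, this reduces to showing the up-destinations stay a fixed distance above $m^\epsilon(t)$: the order-$\epsilon$ mass carried by the up-jumps has to be spread, inside the right lobe, over a set of length of order one (the density excess $f(t+\epsilon,\cdot)-f(t,\cdot)$ there being itself of order $\epsilon$), the quantitative form being part of the characterization of $x_1^\epsilon(t),y_1^\epsilon(t)$ in Appendix \ref{sec:Decreasing_SMOT_one_period_primal}. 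Then $q^\epsilon(t,x)\le J_d^\epsilon(t,x)/c_0\le C'\epsilon$, and $(i)$ follows with $C_1$ depending only on $L,c_K,c_0$.

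The main obstacle is precisely this uniform-in-$(t,x,\epsilon)$ lower bound $J_u^\epsilon(t,x)\ge c_0>0$ on the martingale region, for which the precise description of $x_1^\epsilon(t),y_1^\epsilon(t)$ and Assumption \ref{Assump:2} (uniform continuity of $(t,\epsilon)\mapsto m^\epsilon(t)$) are needed. Should $J_u^\epsilon$ degenerate as $x\uparrow m^\epsilon(t)$, one would instead differentiate the mass-balance identities for $T_d^\epsilon$ and $T_u^\epsilon$, combine them with the martingale relation to obtain $\lvert\partial_x J_d^\epsilon(t,x)\rvert\le C\epsilon$, and conclude from $q^\epsilon(t,x)=1-\dfrac{f(t+\epsilon,T_d^\epsilon(t,x))}{f(t,x)}\,\partial_x T_d^\epsilon(t,x)=O(\epsilon)$, using $f(t+\epsilon,T_d^\epsilon(t,x))/f(t,x)=1+O(\epsilon)$. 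A secondary, routine point is the behaviour near the support boundaries $\ell_t,r_t$, where the density lower bound degenerates; there one uses the Lipschitz regularity of $t\mapsto\ell_t$ and $t\mapsto r_t$ available under Assumption \ref{Assump:1} (cf. Henry-Labord{\`e}re and Touzi \cite{HLTouzi16}). The remaining steps are bookkeeping.
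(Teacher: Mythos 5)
For part (ii), your argument is essentially the paper's. Both of you express $J_d^\epsilon(t,x)$ in the quantile region via $T_d^\epsilon(t,\cdot)=F^{-1}(t+\epsilon,F(t,\cdot))$ and bound it by $\delta^\epsilon F(t,x)$ divided by a density value, then invoke $\lvert\delta^\epsilon F\rvert\le L\epsilon$ and a density lower bound; the paper phrases it as a mean value theorem applied to $F^{-1}(t+\epsilon,\cdot)$, whereas you measure $\mu_{t+\epsilon}\bigl((T_d^\epsilon(t,x),x]\bigr)$, which amounts to the same computation. Both glide over the degeneracy of the density lower bound as $T_d^\epsilon(t,x)$ approaches $\ell_{t+\epsilon}$, so I will not count that against you specifically.

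For part (i) there is a genuine gap, one you half-acknowledge. The paper does not reprove the estimate: it notes that on $(x_1^\epsilon(t),m^\epsilon(t))$ the decreasing supermartingale coupling coincides with the right-curtain martingale coupling of $\mu_t\lvert_{[x_1^\epsilon(t),r_t)}$ and $\mu_{t+\epsilon}\lvert_{[y_1^\epsilon(t),r_{t+\epsilon})}$ (see \eqref{eq:decreasing_representation2}) and cites Henry-Labord\`ere and Touzi \cite[Lemma~6.4]{HLTouzi16} for that coupling. Your ab initio attempt hinges on a uniform lower bound $J_u^\epsilon(t,x)\ge c_0>0$ over the whole martingale region, and this is \emph{false}: by \eqref{eq:T_dT_uDiagonal} and the continuity of $T_d^\epsilon,T_u^\epsilon$ (Remark~\ref{rem:T_dT_uProperties}), both $T_d^\epsilon(t,x)$ and $T_u^\epsilon(t,x)$ tend to $m^\epsilon(t)$ as $x\uparrow m^\epsilon(t)$, so $J_u^\epsilon(t,x)\to 0$. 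In particular the up-destinations do \emph{not} stay a fixed distance above $m^\epsilon(t)$; the order-$\epsilon$ excess mass in the right lobe includes contributions landing arbitrarily close to $m^\epsilon(t)$, coming from starting points $x$ close to $m^\epsilon(t)$. Near $m^\epsilon(t)$ the ratio $q^\epsilon=J_d^\epsilon/(J_d^\epsilon+J_u^\epsilon)$ is a $0/0$ form, and showing it is $O(\epsilon)$ uniformly is exactly the nontrivial content of \cite[Lemma~6.4]{HLTouzi16}. Your fallback sketch --- the density transport identity $q^\epsilon(t,x)=1-\tfrac{f(t+\epsilon,T_d^\epsilon(t,x))}{f(t,x)}\,\partial_x T_d^\epsilon(t,x)$ plus uniform control on $\partial_x T_d^\epsilon$ from the mass/mean balance --- is the right route and is closer to what the cited lemma actually does, but as written it is only a sketch. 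Without carrying it out (or citing the lemma, as the paper does), the proof of (i) is incomplete.
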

\begin{proof}
	(i) Since $\mu_t\lvert_{[x^\epsilon_1(t),r_t)}\leq_c\mu_{t+\epsilon}\lvert_{[y_1^\epsilon(t),r_{t+\epsilon})}$ and the decreasing supermartingale coupling $\hat\P^{\mu_t,\mu_{t+\epsilon}}$ coincides with the right-curtain coupling of $\mu_t\lvert_{[x^\epsilon_1(t),r_t)}\leq_c\mu_{t+\epsilon}\lvert_{[y_1^\epsilon(t),r_{t+\epsilon})}$ (see Appendix \ref{sec:Decreasing_SMOT_one_period_primal}), the result follows immediately from Henry-Labord\`ere and Touzi \cite[Lemma 6.4]{HLTouzi16} applied to the right-curtain coupling.
	
	(ii) For $x\leq x^\epsilon_1(t)$, $T_d^\epsilon(t,x)=F^{-1}(t+\epsilon,F(t,x))$ and therefore
	$$
	\begin{aligned}
	J_d^{\epsilon} (t,x) &=  F^{-1} (t+\epsilon, F(t+\epsilon,x)) - F^{-1} (t+\epsilon, F(t,x))  \\
	&= \frac{1}{f(t+\epsilon, F^{-1}(t+\epsilon, \xi) )} \delta^{\epsilon} F(t,x),
	\end{aligned}
	$$
	for some $\xi$ between $F(t+\epsilon,x)$ and $F(t,x)$, by the mean value theorem. Now one draws the desired conclusion from the fact that $|\delta^{\epsilon} F| \leq C \epsilon$, for some constant $C>0$.
\end{proof}

We are now ready to prove Proposition \ref{Prop::Tightness}.
\begin{proof}[Proof of Proposition \ref{Prop::Tightness}]

Recall that $\P^n$ is a supermartingale measure on the canonical space $\Om$, induced by the continuous-time supermartingale $X^{*,n}$ under the probability $\P^{*,n}$. The supermartingale $X^{*,n}$ jumps only at discrete time points relative to the partition $\pi_n=(t_k^n)_{0 \leq k \leq n}$. Moreover, at time $t_{k+1}^n$, on $\{x^\epsilon_1(t^n_k)\leq X_{t_n^k}\}$, the upward jump size is $J_u^{\epsilon}(t_n^k, X_{t_n^k})$ and downward jump size is $J_u^{\epsilon}(t_n^k, X_{t_n^k})$, where $\epsilon:=t_{k+1}^n - t_k^n$. In addition, at time $t_{k+1}^n$ and on $\{x^\epsilon_1(t^n_k)\geq X_{t_n^k}\}$, the downward jump size is $J_d^{\epsilon}(t_n^k, X_{t_n^k})$ with probability 1.

For given positive constants $C,\theta$, we introduce 
$$
\begin{aligned}
\Ec_n(C,\theta):=& \inf \{ \Pi_{i=j}^{k-1} ( 1 - C(t_{i+1}^n - t_i^n) ): \mbox{ for some } s \in [0,1 ) \mbox{ and }  \\
&~~~~~~0 \leq j \leq k \leq n \mbox{ such that } s \leq t_j^n \leq t_{k+1}^n \leq s+ \theta \}.
\end{aligned}
$$
\begin{itemize}
	\item[\rm(i)]  
	First, using Jacod and Shiryaev \cite[Thoerem VI.4.5]{JacodShiryaev}, we show that $(\P^n)_{n\geq 1}$ is tight. Let $\tau:= 1 \wedge \inf\{ s: |X_s| \geq K \}$, and $\tau$ is a stopping time w.r.t. the canonical filtration generated by $(X_s)_{0 \leq s \leq 1}$. In addition,
	$$
	\begin{aligned}
	K \P^n \left[ \sup_{0 \leq t \leq 1} |X_t| \geq K \right] & \leq \E^{\P^n} \left[ |X_{\tau}| \right] \\
	&= \E^{\P^n} \left[ X_{\tau} + 2 X_{\tau}^- \right] \leq \E^{\P^n} \left[ X_0 + 2 X_{\tau}^- \right] \\
	&\leq  \E^{\P^n} \left[ X_0 + 2 X_{1}^- \right] \leq  \E^{\P^n} \left[ |X_0| \right] + 2  \E^{\P^n} \left[ |X_{1}| \right] < +\infty,
	\end{aligned}
	$$
	where on the above, the first inequality comes from Markov inequality, $\P^{*,n} \circ (X^{*,n})^{-1} =  \P^{n}\circ X^{-1}$, and that fact that $X^{*,n}_t$ is piecewise constant. The third inequality comes from optional stopping theorem and the fact that $X^-$ is a submartingale. Let $\eta > 0$ be an arbitrary small real number, then there is some $K > 0$ such that
	$$
	\P^n[ \sup_{0 \leq t \leq 1} |X_t| \geq K ] \leq \eta, ~~~\mbox{for all } n \geq 1.
	$$ 
	 We can assume w.l.o.g. that $K$ satisfies $-K<m_t < K$ for all $t \in [0,1]$; recall Assumption \ref{Assump:2}.
	 
	 The remaining proof is almost identical to the proof of \cite[Proposition 3.2]{HLTanTouzi}. However, in few places the supermartingale transitions appear (which are not present in \cite{HLTanTouzi}) and thus we sketch the arguments.
	 
	 Denote in addition that $r^{K}(t) : = r_t \wedge K$ and $\ell^{K}(t) : = \ell_t \lor (-K)$.	Let $\delta > 0$, then it follows from Lemma \ref{Lemma:uniform_boudedness} that $J_d^{\epsilon}(t_n^k, X_{t_n^k})$ is uniformly bounded by $C \epsilon$ for some constant $C$ on $D^K_{\delta}:=\{ (t,x): l^K(t) + \delta/2 \leq x \leq m_t \}$. Let $\theta > 0$ satisfy $\theta \leq \frac{\delta}{2C}$ and $|l^K(t+\theta) - l^K(t)|+|r^K(t+\theta) - r^K(t)| \leq \delta/2$.
Let $S, T$ be two stopping times w.r.t. the filtration generated by $X^{*,n}$ such that $0 \leq S \leq T \leq S+\theta \leq 1$. Since, by Lemma \ref{Lemma:uniform_boudedness}, the big jumps of $X^{*,n}$ correspond to $J_u^{\epsilon}(t_n^k, X_{t_n^k})$, and not to $J_d^{\epsilon}(t_n^k, X_{t_n^k})$,  using the same arguments as in the proof of \cite[Proposition 3.2]{HLTanTouzi}, we have that 

	$$
	\P^{*,n} [ \sup_{0 \leq t\leq 1} | X_t^{*,n}| \leq K, |X_T^{*,n} - X_S^{*,n}| \geq \delta ] \leq 1 - \Ec_n(C,\theta).
	$$
	It follows that
	$$
	\begin{aligned}
	&\underset{\theta \to 0}{\lim \sup} ~ \underset{n \to \infty}{\lim \sup}  \P^{*,n} [ | X_T^{*,n} - X_S^{*,n} | \geq \delta] \\
	&\leq \underset{\theta \to 0}{\lim \sup} ~ \underset{n \to \infty}{\lim \sup}  \P^{*,n} [ \sup_{0 \leq t \leq 1} |X_t^{*,n}| \leq K, | X_T^{*,n} - X_S^{*,n} | \geq \delta] + \P^{*,n} \left[ \sup_{0 \leq t \leq 1} |X_t^{*,n}| \geq K \right]  \\
	&\leq  \underset{\theta \to 0}{\lim \sup} ~ \underset{n \to \infty}{\lim \sup} ( 1 - \Ec_n(C, \theta) ) + \eta = \eta.
	\end{aligned}
	$$
	Since $\eta >0$ can be arbitrarily small, we conclude that
	$$
	\underset{\theta \to 0}{\lim \sup} ~ \underset{n \to \infty}{\lim \sup}  \P^{*,n} [ | X_T^{*,n} - X_S^{*,n} | \geq \delta] = 0.
	$$
	Finally, Jacod and Shiryaev \cite[Thoerem VI.4.5]{JacodShiryaev} shows that $(\P^n)_{n\geq 1}$ is tight.
	\item[\rm(ii)] Let $\P^0$ be a limit of $(\P^n)_{n\geq 1}$. Then repeating the arguments of the second part of the proof of \cite[Proposition 3.2]{HLTanTouzi} (which relies on the continuity of $F(t,x)$), we obtain that $\P^0\circ X^{-1}_t=\mu_t$.

	\item[\rm(iii)] In the last step, we show that $X$ is still a supermartingale under $\P^0$. For every $K>0$, we first define the auxiliary process $X_t^K:=(-K) \lor X_t \wedge K$. Given $s< t$ and $\varphi(s, X_\cdot)$ a bounded continuous, $\Fc_s$-measurable function, it follows from weak convergence that
	$$
	\lim_{n \to +\infty}\E^{\P^n}[\varphi(s, X_\cdot)(X_t^K-X_s^K))] = \E^{\P^0}[\varphi(s, X_\cdot)(X_t^K-X_s^K))].
	$$
	As $X$ is a $\P^n$-supermartingale, we have that
	$$
	\begin{aligned}
	\E^{\P^n}[\varphi(s, X_\cdot)(X_t^K-X_s^K))] &\leq -\E^{\P^n} [ \varphi(s, X_\cdot) ( X_t \mathbf{1}_{\{ |X_t| \geq K \}} - X_s \mathbf{1}_{\{ |X_s| \geq K \}} ) ] \\
	&\leq |\E^{\P^n} [ \varphi(s, X_\cdot) ( X_t \mathbf{1}_{\{ |X_t| \geq K \}} - X_s \mathbf{1}_{\{ |X_s| \geq K \}} ) ] | \\
	& \leq 2 |\varphi|_{\infty} \sup_{0 \leq t \leq 1} \int |x| \mathbf{1}_{\{ |x| \geq K \}} \mu_t(dx) \\
	& \to 0, \mbox{       as } K \to \infty,
	\end{aligned}
	$$
	where the last inequality follows from Lemma \ref{lemma:uniform_integral} and the above convergence is uniformly in $n$. Finally, by the dominated convergence theorem, we have that
	$$
	\begin{aligned}
	\E^{\P^0}[\varphi(s, X_\cdot)(X_t-X_s))] &= \lim_{K \to \infty} \E^{\P^0}[\varphi(s, X_\cdot)(X_t^K-X_s^K))] \\
	&=\lim_{K \to \infty}  \lim_{n \to +\infty}\E^{\P^n}[\varphi(s, X_\cdot)(X_t^K-X_s^K))] \\
	&\leq 0.
	\end{aligned}
	$$
	As $\varphi$ is arbitrary, we conclude that $X$ is a $\P^0$-supermartingale.
\end{itemize}
\end{proof}

\subsection{Characterization of the limiting process}\label{sec:SDE}

To further analyse the limiting process of the previous section, we consider the decreasing  supermartingale coupling (see Appendix \ref{sec:Decreasing_SMOT_one_period}) of two marginals $\mu_t$ and $\mu_{t+\epsilon}$. Recall that $x_1^{\epsilon}(t)$ denotes the unique phase transition point which divides the martingale region (to the right of $x_1^{\epsilon}(t)$) and the supermartingale region (to the left of $x_1^{\epsilon}(t)$).  In the supermartingale region, the mass at $x\leq x^\epsilon_1(t)$ is transported (through the quantile coupling) to the destination $T_d^{\epsilon}(t,x)=F^{-1}(t+\epsilon,F(t,x))$. In the martingale region, the two supporting maps $T_u^{\epsilon}$ and $T_d^{\epsilon}$ are characterized in Proposition \ref{Prop:one_period_transport_map}. %similarly as (3.3) and (3.4) of \cite{HLTanTouzi} that  we expect that $T_S^{\epsilon}$ converges to some function 

	Set $\epsilon_0:=\epsilon_1\wedge\epsilon_2$, where $\epsilon_1,\epsilon_2$ are as in Assumptions \ref{Assump:1} and \ref{Assump:2}, respectively.
\begin{Assumption} \label{Assump:3}
	\begin{itemize}

		\item[\rm(i)] The map $(t,\epsilon) \mapsto x_1^{\epsilon}(t)$ is uniformly continuous (with continuity modulus $\rho_1$) and also uniformly bounded on $\{ (t,\epsilon): 0 < \epsilon \leq \epsilon_0, 0\leq t \leq 1-\epsilon \}$.
		\item[\rm(ii)] For every $t \in [0,1]$, $r_t=r\in(-\infty,\infty]$.
		\item[\rm(iii)] For every $t \in [0,1]$, we have $\partial_{tx} f(t, m_t) >0$.
		%\item[\rm(iv)] { For every $t \in [0,1]$, we have $\partial_{tx} f(t, m_t) > 0$. }
	\end{itemize}
\end{Assumption}

\begin{Lemma} \label{lemma:phase_transition_curve}
	Suppose that  Assumptions \ref{Assump:1}, \ref{Assump:2}, \ref{Assump:3}(i) and (ii) hold. Then the limit $\lim_{\epsilon \to 0} x_1^{\epsilon}(t)$ exists. Moreover, if we denote the limit by $x_1(t)$, then $t\mapsto x_1(t)$ is continuous and uniquely determined by the equation
	$$
	\int_{x_1(t)}^{+\infty} (x_1(t) - \xi) \partial_t f(t, \xi) d \xi = 0,\quad t\in[0,1].
	$$
\end{Lemma}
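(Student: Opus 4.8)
The statement has two parts: (1) the limit $x_1(t):=\lim_{\epsilon\to0}x_1^\epsilon(t)$ exists, and (2) it is characterized by the integral equation. For part (1), I would use the uniform continuity and uniform boundedness of $(t,\epsilon)\mapsto x_1^\epsilon(t)$ granted by Assumption \ref{Assump:3}(i): the family $\{t\mapsto x_1^\epsilon(t)\}_{\epsilon}$ is uniformly bounded and equicontinuous with common modulus $\rho_1$, so by Arzel\`a--Ascoli every sequence $\epsilon_n\downarrow0$ admits a subsequence along which $x_1^{\epsilon_n}(\cdot)$ converges uniformly to some continuous limit. To promote this to existence of the full limit for each fixed $t$, I would show any two subsequential limits satisfy the same characterizing equation in (2) and then invoke uniqueness of that equation's solution.

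**Deriving the characterizing equation.**
The key is to pass to the limit in the defining relation for $x_1^\epsilon(t)$. Recall from Appendix \ref{sec:Decreasing_SMOT_one_period_primal} (and Remark \ref{rem:TouziAssumption3.1}) that $x_1^\epsilon(t)$ is the phase transition point, determined by the fact that $\mu_t|_{[x_1^\epsilon(t),r_t)}\leq_c\mu_{t+\epsilon}|_{[y_1^\epsilon(t),r_{t+\epsilon})}$ with these restricted measures having equal mass and equal barycenter (this is what makes the decreasing supermartingale coupling coincide with a right-curtain martingale coupling there). In potential-function terms, $x_1^\epsilon(t)$ is pinned down by an equation of the form
$$
\int_{x_1^\epsilon(t)}^{\infty}\big(x_1^\epsilon(t)-\xi\big)\,\delta^\epsilon F(t,d\xi)=0,
$$
i.e. the first moment (above $x_1^\epsilon(t)$) of the signed measure $\delta^\epsilon F(t,\cdot)=F(t+\epsilon,\cdot)-F(t,\cdot)$ balances. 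Dividing by $\epsilon$ and letting $\epsilon\to0$, using $F\in C_b^4(E)$ (Assumption \ref{Assump:1}(ii)) so that $\tfrac1\epsilon\delta^\epsilon F(t,\xi)\to\partial_t F(t,\xi)$ uniformly on compacts, and integrating by parts so the density $\partial_t f(t,\xi)=\partial_t\partial_\xi F(t,\xi)$ appears, I would obtain
$$
\int_{x_1(t)}^{+\infty}\big(x_1(t)-\xi\big)\,\partial_t f(t,\xi)\,d\xi=0.
$$
Here I need to justify interchanging the limit with the (improper) integral; uniform integrability of $(\mu_t)$ from Lemma \ref{lemma:uniform_integral}, together with the $C_b^4$ bound on $F$, controls the tails uniformly in $\epsilon$, and on compact $\xi$-ranges the convergence is uniform, so dominated convergence applies. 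One also checks $x_1^\epsilon(t)$ stays in a compact set (Assumption \ref{Assump:3}(i)) and bounded away from $r_t$ in the relevant regime (using $r_t\equiv r$ from Assumption \ref{Assump:3}(ii), which rules out the degenerate case where the martingale region collapses).

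**Uniqueness of the solution to the equation.**
It remains to show $t\mapsto x_1(t)$ is uniquely determined by the displayed equation. Define $\Phi(t,a):=\int_a^\infty(a-\xi)\,\partial_t f(t,\xi)\,d\xi$. Differentiating in $a$ gives $\partial_a\Phi(t,a)=\int_a^\infty\partial_t f(t,\xi)\,d\xi=-\partial_t F(t,a)$ (since $\int_{-\infty}^\infty\partial_t f=\partial_t\mu_t(\R)=0$, equivalently $\partial_t F(t,\xi)\to0$ as $\xi\to\infty$, $\partial_a$ of the boundary term $\xi=a$ vanishes because the integrand is zero there). Under Assumption \ref{Assump:2}, $\partial_t F(t,\cdot)$ has a unique local minimum $m_t$ and unique local maximum $\tilde m_t$; combined with $\partial_t F(t,\xi)\to0$ at both ends and sign information, $\partial_t F(t,\cdot)$ changes sign in a controlled way, so $\partial_a\Phi(t,\cdot)=-\partial_t F(t,\cdot)$ is strictly monotone on the region to the right of the relevant extremum, making $a\mapsto\Phi(t,a)$ strictly convex (or strictly monotone) there; hence the root $x_1(t)$ is unique in that region. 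Since every subsequential limit from part (1) lies in that region and solves $\Phi(t,\cdot)=0$, all subsequential limits coincide, which simultaneously proves the full limit exists and gives the characterization. Continuity of $t\mapsto x_1(t)$ follows either from the uniform convergence along the Arzel\`a--Ascoli extraction (the limit inherits modulus $\rho_1$) or from the implicit function theorem applied to $\Phi(t,x_1(t))=0$ using Assumption \ref{Assump:3}(iii), $\partial_{tx}f(t,m_t)>0$, to guarantee $\partial_a\Phi\neq0$ at the root.

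**Main obstacle.**
The delicate point is the limit interchange and the identification of the \emph{correct} root: a priori the equation $\int_{a}^\infty(a-\xi)\partial_t f(t,\xi)d\xi=0$ might have several solutions (one near each sign change of $\partial_t F(t,\cdot)$), and one must argue that $x_1^\epsilon(t)$ converges to the specific one lying to the right of $\tilde m_t$ (or wherever Assumption \ref{Assump:3}/\ref{Assump:2} localizes it) rather than to a spurious root. This is exactly where Assumptions \ref{Assump:2} and \ref{Assump:3}(i)--(ii) do the work: the uniform bound keeps the sequence in the right compact window, and the unique-extremum structure of $\partial_t F(t,\cdot)$ isolates a unique root there. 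Handling the improper tail of the integral rigorously (justifying $\lim_\epsilon$ and $\int$ commute out to $+\infty$) via the uniform integrability in Lemma \ref{lemma:uniform_integral} is the other technical step, but it is routine once set up.
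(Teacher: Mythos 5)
Your overall plan (Arzel\`a--Ascoli via Assumption~\ref{Assump:3}(i), then derive the limiting equation from the defining relations of $x_1^\epsilon(t)$, then identify the limit via uniqueness of the root) matches the paper's route, and your explicit promotion of subsequential to full convergence via uniqueness of the limiting root is actually a nice clarification that the paper leaves implicit. However, the key middle step as you wrote it contains a genuine inaccuracy. You claim that $x_1^\epsilon(t)$ is characterized by the single equation
$$
\int_{x_1^\epsilon(t)}^{\infty}\bigl(x_1^\epsilon(t)-\xi\bigr)\,\delta^\epsilon F(t,d\xi)=0.
$$
This is not true for $\epsilon>0$. The threshold $x_1^\epsilon(t)$ is determined jointly with an auxiliary point $y_1^\epsilon(t)\le x_1^\epsilon(t)$ through the \emph{pair} of conditions (mass and mean preservation between $\mu_t|_{[x_1^\epsilon(t),r_t)}$ and its shadow $\mu_{t+\epsilon}|_{[y_1^\epsilon(t),r_{t+\epsilon})}$). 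Substituting these two conditions into your left-hand side gives
$$
\int_{x_1^\epsilon(t)}^{\infty}\bigl(x_1^\epsilon(t)-\xi\bigr)\,\delta^\epsilon F(t,d\xi)= -\int_{y_1^\epsilon(t)}^{x_1^\epsilon(t)}\bigl(x_1^\epsilon(t)-\xi\bigr)f(t+\epsilon,\xi)\,d\xi,
$$
which is strictly negative whenever $y_1^\epsilon(t)<x_1^\epsilon(t)$ (the generic case). So your equation fails at finite $\epsilon$; the error is of order $(x_1^\epsilon(t)-y_1^\epsilon(t))^2$. The paper's proof is careful precisely here: it keeps both preservation conditions, applies Taylor and the mean value theorem to each, first concludes $\lim_{\epsilon\to 0} x_1^\epsilon(t)=\lim_{\epsilon\to0}y_1^\epsilon(t)$, establishes $x_1^\epsilon(t)-y_1^\epsilon(t)=O(\epsilon)$, and only then combines the two scaled relations to eliminate the $(x_1^\epsilon-y_1^\epsilon)/\epsilon$ term and obtain the stated integral equation. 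Without tracking $y_1^\epsilon(t)$, you have no control on the error introduced by your one-equation shortcut; once you do track it (so the discrepancy is $O(\epsilon^2)$), dividing by $\epsilon$ and passing to the limit recovers your conclusion. This is fixable but is a real omission in the argument as written. The uniqueness discussion and the implicit-function-theorem remark at the end are fine and go slightly beyond what the paper spells out.
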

\begin{proof}
It follows from Lemma \ref{lem:transition_dispersion} that $x_1^{\epsilon}(t)$ is unique for all $t \in [0,1)$, $\epsilon \in (0, \epsilon_0]$. From Assumption \ref{Assump:3} {\rm(i)}, using the Arzel{\`a}-Ascoli theorem, we have that $x_1^{\epsilon}(t)$ converges when ${\epsilon} \to 0$, and the limit remains a continuous function.

Using the properties of the decreasing supermartingale coupling (see Appendix \ref{sec:Decreasing_SMOT_one_period}), we have that $x_1^{\epsilon}(t)$ and $y_1^{\epsilon}(t)$ satisfy the mean and mass preservation conditions:
$$
\int_{x_1^{\epsilon}(t)}^{r_t} f(t,x) dx = \int_{y_1^{\epsilon}(t)}^{r_{t+\epsilon}} f(t+{\epsilon},x) dx, ~~~\int_{x_1^{\epsilon}(t)}^{r_t} x f(t,x) dx = \int_{y_1^{\epsilon}(t)}^{r_{t+\epsilon}} x f(t+\epsilon,x) dx.
$$
Rearranging, and using that $r_t=r_{t+\epsilon}$, gives
$$
\int_{x_1^{\epsilon}(t)}^{r_t} (f(t,x)-f(t+\epsilon,x) ) dx = \int_{y_1^{\epsilon}(t)}^{x_1^{\epsilon}(t)} f(t+\epsilon,x) dx, ~~~\int_{x_1^{\epsilon}(t)}^{+\infty} x(f(t,x)-f(t+\epsilon,x) ) dx = \int_{y_1^{\epsilon}(t)}^{x_1^{\epsilon}(t)} x f(t+\epsilon,x) dx.
$$
Using Taylor's expansion $f(t+\epsilon,x) - f(t,x)=\partial_t f(t,x) \epsilon + \frac{1}{2} \partial_{tt} f(\xi_1, x) \epsilon^2$ (with $\xi_1 \in (t, t+\epsilon)$) in the first equation above and applying the mean value theorem for definite integrals, we get that
$$
- \epsilon \int_{x_1^{\epsilon}(t)}^{+\infty} \partial_t f(t,x) dx - \frac{1}{2}  \epsilon^2 \int_{x_1^{\epsilon}(t)}^{+\infty} \partial_{tt} f(\xi_1,x) dx   = \left(x_1^{\epsilon}(t) - y_1^{\epsilon}(t)\right)  f(t, \xi_2), 
$$
where $\xi_2$ lies in $[y_1^{\epsilon}(t),x_1^{\epsilon}(t)]$. Letting $\epsilon$ tend to $0$ on both sides, the l.h.s. tends to $0$, and consequently r.h.s. also tends to $0$. It follows that $\lim_{\epsilon \to 0} x_1^{\epsilon}(t) =\lim_{\epsilon \to 0} y_1^{\epsilon}(t)$. Dividing both sides by $\epsilon$ we get
\begin{equation} \label{eq:phase_transition1}
- \int_{x_1^{\epsilon}(t)}^{+\infty} \partial_t f(t,x) dx   = \frac{ x_1^{\epsilon}(t) - y_1^{\epsilon}(t)}{\epsilon} f(t, \xi_2) + O(\epsilon), 
\end{equation}
Similarly,
\begin{equation}\label{eq:phase_transition2}
- \int_{x_1^{\epsilon}(t)}^{+\infty} x \partial_t f(t,x) dx   = \frac{ x_1^{\epsilon}(t) - y_1^{\epsilon}(t)}{\epsilon} \xi_2 f(t, \xi_2) + O(\epsilon).
\end{equation}
Note that Assumption \ref{Assump:3}(ii) ensures that $x\mapsto\partial_tf(t,x)$ is not a constant.

Combining \eqref{eq:phase_transition1} and \eqref{eq:phase_transition2}, and letting ${\epsilon}$ tend to $0$, we get that
$$
\int_{x_1(t)}^{+\infty} ( x - x_1(t) ) \partial_t f(t,x) dx = 0.
\vspace{-10mm}
$$
\end{proof} 
 \begin{Remark}
 	The Assumption \ref{Assump:3}(ii) can be relaxed. For example, if $t\mapsto r_t$ is differentiable, then the equation that characterizes $x_1(t)$, as in Lemma \ref{lemma:phase_transition_curve}, can be shown to be of the following form:
 	$$
 	\int_{x_1(t)}^{r_t} (x_1(t) - \xi) \partial_t f(t, \xi) d \xi = -f(t,r_t)(x_1(t)-r_t)\frac{d r_t}{dt},\quad t\in[0,1].
 	$$
 	Consequently, the subsequent arguments would have to be adjusted. Since our main motivation is the examples that satisfy Assumption \ref{Assump:3}(ii), we do not include the details.
 \end{Remark}
We also introduce $T_u$ through the following integral equation:
\begin{equation} \label{eq:T_S_charac}
\int_x^{T_u(t,x)} (x - \xi) \partial_t f(t, \xi) d \xi = 0,
\end{equation}

\begin{Proposition}
Suppose that  Assumptions \ref{Assump:1}, \ref{Assump:2}, \ref{Assump:3}(i) and (ii) hold. For $x \in (x_1(t), m_t)$, \eqref{eq:T_S_charac} has the unique solution $T_u(t,x)$ on $(m_t,r_t)$.
\end{Proposition}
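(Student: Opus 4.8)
Fix $t$ and $x\in(x_1(t),m_t)$, and consider
\[
\Phi(y):=\int_x^{y}(x-\xi)\,\partial_t f(t,\xi)\,d\xi,\qquad y\in[m_t,r_t).
\]
Writing $g(\xi):=\partial_t F(t,\xi)$ (so that $g'=\partial_t f(t,\cdot)$) and integrating by parts, one has $\Phi(y)=\int_x^y\bigl(g(\xi)-g(y)\bigr)\,d\xi$, and in particular $\Phi(r_t^-)=\int_x^{r_t}g(\xi)\,d\xi$ since $g(r_t)=0$ (here we use Assumption \ref{Assump:3}(ii), i.e.\ $F(t,r)\equiv1$). The proposition then follows from the intermediate value theorem once we establish: (a) $\Phi(m_t)>0$; (b) $\Phi(r_t^-)<0$; (c) $\Phi$ is strictly decreasing on $(m_t,r_t)$.

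The preliminary step is to record the shape of $g$. Letting $\epsilon\downarrow0$ in the Dispersion Assumption (Assumption \ref{Assump:1}(iii)), together with Assumptions \ref{Assump:2} and \ref{Assump:3}(iii), one obtains that $\partial_t f(t,\cdot)\ge0$ on $(\ell_t,\tilde m_t)\cup(m_t,r_t)$ and $\le0$ on $(\tilde m_t,m_t)$, while $g(\ell_t^+)\ge0$ (mass escapes below the non-increasing left boundary $\ell_t$), $g(r_t)=0$, and $\partial_{tx}f(t,m_t)>0$ forces $\partial_t f(t,\cdot)>0$ on a right-neighbourhood of $m_t$. Consequently $g(m_t)=-\int_{m_t}^{r_t}\partial_t f(t,\xi)\,d\xi<0$, $g<0$ on $(m_t,r_t)$, $g>g(m_t)$ on $(\ell_t,m_t)$, and $g$ has a unique zero $c_t\in(\tilde m_t,m_t)$ with $g\ge0$ on $(\ell_t,c_t]$ and $g<0$ on $(c_t,m_t]$. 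Given this, (a) is immediate, $\Phi(m_t)=\int_x^{m_t}(g(\xi)-g(m_t))\,d\xi>0$, and (c) is immediate from $\Phi'(y)=(x-y)\,\partial_t f(t,y)$, since $x-y<x-m_t<0$ and $\partial_t f(t,y)\ge0$ on $(m_t,r_t)$ (with strict sign a.e., by non-degeneracy).

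The substance is (b). Put $\Psi(a):=\int_a^{r_t}g(\xi)\,d\xi$, so $\Phi(r_t^-)=\Psi(x)$ and $\Psi'(a)=-g(a)$. By Lemma \ref{lemma:phase_transition_curve} and the same integration by parts, the phase transition point satisfies $\Psi(x_1(t))=0$. Using the shape of $g$: $\Psi$ is strictly increasing on $(c_t,m_t)$ and $\Psi(m_t)=\int_{m_t}^{r_t}g(\xi)\,d\xi<0$, so $\Psi<0$ on $(c_t,m_t)$; since $\Psi(x_1(t))=0$ this forces $x_1(t)<c_t<m_t$ (so the interval $(x_1(t),m_t)$ is in fact non-empty), and on $(x_1(t),c_t)$ the function $\Psi$ is non-increasing, whence $\Psi<0$ on $(x_1(t),c_t]$ as well. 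Thus $\Psi(x)<0$ for every $x\in(x_1(t),m_t)$, which is (b). Combining (a)--(c), $\Phi$ is continuous and strictly decreasing on $[m_t,r_t)$ with $\Phi(m_t)>0>\Phi(r_t^-)$, so it has a unique zero $T_u(t,x)\in(m_t,r_t)$, and this is the asserted solution of \eqref{eq:T_S_charac}.

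I expect the main obstacle to be the bookkeeping in the preliminary step: extracting the precise sign pattern and boundary behaviour of $g=\partial_t F(t,\cdot)$ from the Dispersion Assumption in the $\epsilon\downarrow0$ limit, and verifying the strict inequalities ($g(m_t)<0$, strict monotonicity of $\Psi$ on $(c_t,m_t)$, strict decrease of $\Phi$) needed for uniqueness; these rest on the non-degeneracy built into Assumptions \ref{Assump:1}(ii), \ref{Assump:2} and \ref{Assump:3}(iii). Once those are in place, (a) and (c) are routine and (b) is the short but essential use of the characterization of $x_1(t)$ from Lemma \ref{lemma:phase_transition_curve}.
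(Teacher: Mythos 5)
Your proposal is correct and follows essentially the same strategy as the paper: fix $x\in(x_1(t),m_t)$, show $G(t,x,\cdot)$ is positive at $m_t$, negative at $r_t^-$, and strictly decreasing on $(m_t,r_t)$, and conclude by the intermediate value theorem. The only structural difference is that you integrate by parts up front and carry out the sign analysis entirely in terms of $g=\partial_t F(t,\cdot)$; your $\Psi(a)=\int_a^{r_t}g$ is literally the paper's $G(t,a,+\infty)$, and your unique zero $c_t$ is the paper's $\bar x(t)$ (the unique root of $\partial_t F(t,\cdot)=0$). This rephrasing makes the appeal to Lemma~\ref{lemma:phase_transition_curve} (i.e.\ $\Psi(x_1(t))=0$) a bit more transparent, and your explicit deduction $x_1(t)<c_t<m_t$ is cleaner than the paper's implicit use of the same fact, but the content is identical.

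One small inaccuracy worth flagging: you invoke Assumption~\ref{Assump:3}(iii) ($\partial_{tx}f(t,m_t)>0$) to force $g(m_t)<0$ and the strict sign of $\partial_t f$ near $m_t$, but the Proposition only assumes Assumption~\ref{Assump:3}(i) and (ii). This extra hypothesis is in fact unnecessary: since $\partial_t f(t,\cdot)\ge0$ on $(m_t,r_t)$ by the Dispersion limit, $g$ is non-decreasing there with $g(r_t)=0$, so $g(m_t)\le0$; and if $g(m_t)=0$ then $g\equiv0$ on $[m_t,r_t]$, which would make every point of that interval a local extremum of $\partial_t F(t,\cdot)$, contradicting the uniqueness in Assumption~\ref{Assump:2}. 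The same uniqueness argument rules out flat pieces of $g$ and thus supplies the strict monotonicity of $\Phi$ on $(m_t,r_t)$ without appealing to 3(iii). The paper implicitly uses exactly this (``$\partial_t F(t,\cdot)$ is increasing on $(m_t,\infty)$''), which is the correct source of the strict inequalities under the stated hypotheses.
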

\begin{proof}
Define $G(t,x,y):=\int_x^{y} (x - \xi) \partial_t f(t, \xi) d \xi $, as $y \mapsto G(t,x,y)$ is continuous, it is enough to show that, for $x \in [x_1(t), m_t)$, $y \mapsto G(t, x, y)$ is decreasing on $y \in [m_t, \infty)$, $G(t,x,m_t)>0$ and $G(t,x, +\infty)<0$. 

We first show that for $x \in [x_1(t), m_t]$, $y \mapsto G(t, x, y)$ is decreasing on $y \in [m_t, \infty)$. Indeed, for $m_t \leq y_1 < y_2$, we have 
$$
G(t, x, y_2) - G(t, x, y_1) = \int_{y_1}^{y_2} (x - \xi) \partial_t f(t, \xi) d \xi < 0,
$$
where the last inequality follows from the fact that $\xi > y_1 \geq m_t \geq x$ and $\partial_t f(t, \xi)>0$ on $[y_1, y_2]$ (as $x \mapsto \partial_t F(t,x)$ is increasing on $(m_t, \infty)$).

Notice that the equation $\partial_t F(t,x)=0$ has a unique solution under Assumption \ref{Assump:2}, and we denote it by $\bar{x}(t)$.  Now let us show that $x \mapsto G(t,x,+\infty)$ is decreasing on $(x_1(t), \bar{x}(t) )$, and increasing on $(\bar{x}(t), m_t )$. Indeed,
$$
\partial_x G(t,x,+\infty) = \int_x^{+\infty} \partial_t f(t, \xi) d \xi = \partial_t F(t,+\infty) - \partial_t F(t,x) = - \partial_t F(t,x), 
$$
which is negative on $(x_1(t), \bar{x}(t) )$, and positive on $(\bar{x}(t), m_t )$. To show $G(t,x,+\infty) < 0$ for all $ x_1(t) < x < m_t$, it is enough to show that $G(t, x_1(t), +\infty) \leq 0$ and $G(t, m_t, +\infty) \leq 0$. Notice that from Lemma \ref{lemma:phase_transition_curve}, $x_1(t)$ satisfies that 
$$
\int_{x_1(t)}^{+\infty} (x_1(t) - \xi) \partial_t f(t, \xi) d \xi = 0,
$$
i.e., $G(t, x_1(t), +\infty)=0$. Moreover, as $G(t, m_t, m_t)=0$ and $y \mapsto G(t, m_t, y)$ is strictly decreasing on $y \in [m_t, \infty]$ which is shown above, it follows that $G(t, m_t, +\infty) < 0$.

In addition, $x \mapsto G(t,x, m_t)$ is decreasing on the interval $(x_1(t), m_t)$:
$$
\partial_x G(t,x,m_t) = \int_x^{m_t} \partial_t f(t, \xi) d \xi = \partial_t F(t,m_t) - \partial_t F(t,x) < 0.
$$
It follows that $G(t,x, m_t) > G(t, m_t, m_t) = 0$.  \end{proof}

We further define $j_u$ and $j_d$ for $t \in [0,1], x \in (x_1(t), m_t)$ by $j_u(t,x):= T_u(t,x) - x$ and $j_d(t,x):= \frac{\partial_t F(t,x) - \partial_t F(t, T_u(t,x)) }{f(t,x)}$. The following lemma analyzes the asymptotic behaviour of $T_d^{\epsilon}$, $T_u^{\epsilon}$ and $q^\epsilon(t,x)$ in the martingale region, and when $\epsilon$ is small. The proof is analogous to the proof of Henry-Labord\`ere et al. \cite[Lemma 6.6]{HLTanTouzi} when applied to the right-curtain coupling, and thus omitted.
\begin{Lemma} \label{lemm:asymptotic_expansion} Suppose that  Assumptions \ref{Assump:1}, \ref{Assump:2}, \ref{Assump:3}(i) and (ii) hold.
	
	For $0 < \delta < K < \infty$, let $E_{\delta,K}:=\{ (t,x):   t\in[0,1], (m_t-K)\lor x_1(t) < x \leq m_t-\delta \}$. Define $j_d^{\epsilon}(t,x):= \frac{\partial_t F(t,x) - \partial_t F(t, T_u^{\epsilon}(t,x)) }{f(t+\epsilon,x)}$. Then $T_u^{\epsilon}(t,x)$ and $T_d^{\epsilon}(t,x)$ admit the following expansions:
	$$
	T_u^{\epsilon}(t,x)-x:= J_u^{\epsilon} (t,x) = j_u (t,x)  + (\epsilon \lor \rho_0(\epsilon) ) e_u^{\epsilon}(t,x),
	$$
	and
	$$
	x-T_d^{\epsilon}(t,x):= J_d^{\epsilon} (t,x) =  \epsilon j_d^{\epsilon}(t,x) + \epsilon^2 e_d^{\epsilon}(t,x),
	$$
	with $e_d^{\epsilon}(t,x)$ and $e_u^{\epsilon}(t,x)$ bounded on $E_{\delta,K}$. Consequently, there exists a constant $C_{\delta,K}$ such that the probability of the upward jump $q^{\epsilon}$ admits the asymptotic expansion:
	\begin{equation} \label{eq:jump_proba_expan}
	q^{\epsilon}(t,x) = \frac{J_d^{\epsilon} (t,x)}{J_u^{\epsilon} (t,x) + J_d^{\epsilon} (t,x)} = \epsilon \frac{j_d(t,x)}{j_u(t,x)} + C_{\delta,K} \epsilon \left( \epsilon \lor \rho_0(\epsilon) \right), ~~\mbox{for } (t,x) \in E_{\delta,K}.
	\end{equation}
	\end{Lemma}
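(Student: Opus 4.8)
The plan is to establish the stated asymptotic expansions by transferring the analogous result for the right-curtain martingale coupling, namely Henry-Labord\`ere et al. \cite[Lemma 6.6]{HLTanTouzi}, via the identification recalled in Appendix \ref{sec:Decreasing_SMOT_one_period_primal}: on the martingale region $[x_1^\epsilon(t),r_t)$, the decreasing supermartingale coupling $\hat\P^{\mu_t,\mu_{t+\epsilon}}$ coincides with the right-curtain coupling of the restricted (and renormalized) marginals $\mu_t\lvert_{[x_1^\epsilon(t),r_t)}\leq_c\mu_{t+\epsilon}\lvert_{[y_1^\epsilon(t),r_{t+\epsilon})}$. Since on $E_{\delta,K}$ one has $x>x_1(t)$, and $x_1^\epsilon(t)\to x_1(t)$ by Lemma \ref{lemma:phase_transition_curve}, for $\epsilon$ small enough the point $x$ lies strictly inside the martingale region, so the maps $T_u^\epsilon,T_d^\epsilon$ there are exactly the right-curtain transport maps. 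The first step is therefore to verify that the restricted marginal family satisfies (uniformly in $(t,\epsilon)$, on the relevant compact region) the hypotheses under which \cite[Lemma 6.6]{HLTanTouzi} is proved — this uses Assumption \ref{Assump:1} (the Dispersion Assumption and $F\in C^4_b$), Assumption \ref{Assump:2} (uniqueness and continuity of the extrema $m^\epsilon(t)$, giving the modulus $\rho_0$), and Assumption \ref{Assump:3}(i)--(ii) ($r_t\equiv r$ and uniform continuity/boundedness of $x_1^\epsilon(t)$, which control the renormalization). With these in hand, \cite[Lemma 6.6]{HLTanTouzi} yields $J_u^\epsilon(t,x)=j_u(t,x)+(\epsilon\vee\rho_0(\epsilon))e_u^\epsilon(t,x)$ and $J_d^\epsilon(t,x)=\epsilon\, j_d^\epsilon(t,x)+\epsilon^2 e_d^\epsilon(t,x)$ with $e_u^\epsilon,e_d^\epsilon$ uniformly bounded on $E_{\delta,K}$, where the leading terms $j_u,j_d^\epsilon$ are read off from the first-order ODEs governing the right-curtain maps; matching those ODEs against \eqref{eq:T_S_charac} and the definition $j_d(t,x)=(\partial_tF(t,x)-\partial_tF(t,T_u(t,x)))/f(t,x)$ identifies the limits, using that $f(t+\epsilon,\cdot)\to f(t,\cdot)$ uniformly on compacts (again by $F\in C^4_b$) so that $j_d^\epsilon(t,x)\to j_d(t,x)$.

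The second step is the expansion \eqref{eq:jump_proba_expan} for $q^\epsilon(t,x)=J_d^\epsilon/(J_u^\epsilon+J_d^\epsilon)$, which follows by elementary algebra from the two expansions above: dividing numerator and denominator, $q^\epsilon=\epsilon j_d^\epsilon/(j_u+(\epsilon\vee\rho_0(\epsilon))e_u^\epsilon+\epsilon j_d^\epsilon+\epsilon^2 e_d^\epsilon)$, and one expands the reciprocal using that $j_u(t,x)=T_u(t,x)-x\geq m_t-x\geq\delta>0$ is bounded away from $0$ on $E_{\delta,K}$ (so the denominator is bounded below by $\delta/2$ for $\epsilon$ small) and that all the error terms and $j_d^\epsilon-j_d$ are $O(\epsilon\vee\rho_0(\epsilon))$ uniformly. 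Collecting the remainders into a single constant $C_{\delta,K}$ gives $q^\epsilon(t,x)=\epsilon\, j_d(t,x)/j_u(t,x)+C_{\delta,K}\,\epsilon(\epsilon\vee\rho_0(\epsilon))$ on $E_{\delta,K}$, as claimed.

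The main obstacle is the first step: one must check that the \emph{restricted} marginals $\mu_t\lvert_{[x_1^\epsilon(t),r_t)}$, after renormalization, inherit the regularity and dispersion properties required by \cite[Lemma 6.6]{HLTanTouzi} \emph{uniformly} in $(t,\epsilon)$ over the compact set cut out by $E_{\delta,K}$ and $0<\epsilon\le\epsilon_0$. The delicate points are that the lower endpoints $x_1^\epsilon(t),y_1^\epsilon(t)$ move with $\epsilon$ and that one needs the error bounds to be uniform rather than merely pointwise; this is where Assumptions \ref{Assump:2} and \ref{Assump:3}(i) (the moduli $\rho_0,\rho_1$ and the uniform boundedness of $x_1^\epsilon$) do the work, exactly as in the martingale argument. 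Because this bookkeeping is a routine, if tedious, adaptation of \cite[Lemma 6.6]{HLTanTouzi} with the roles of the original and restricted marginals interchanged, we omit the details and refer the reader to that proof.
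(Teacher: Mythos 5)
Your proposal matches the paper's approach: the paper also proves this lemma by appealing to Henry-Labord\`ere et al.~\cite[Lemma~6.6]{HLTanTouzi} applied to the right-curtain coupling, which, by Lemma~\ref{lem:transition_dispersion}, coincides with $\hat\P^{\mu_t,\mu_{t+\epsilon}}$ on the martingale region $[x_1^\epsilon(t),r_t)$. Your write-up supplies the same reduction plus the elementary algebra for the $q^\epsilon$ expansion (using $j_u\geq\delta$ on $E_{\delta,K}$) and flags the same uniformity bookkeeping that the paper leaves implicit.
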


The next lemma considers the asymptotic expansion of $J^\epsilon_d$ in the supermartingale region.

\begin{Lemma} \label{lemm:asymptotic_expansion_2} Suppose that  Assumptions \ref{Assump:1}, \ref{Assump:2}, \ref{Assump:3}(i) and (ii) hold.
	
 For $0 < K < \infty$, let $E_{K}:=\{ (t,x):   t\in[0,1], (x_1(t)-K)\lor l_t \leq x \leq x_1(t) \}$. Then, $x - T_d^{\epsilon} (t,x) = J_d^{\epsilon} (t,x) =  \epsilon j_d^{\epsilon}(t,x) + \epsilon^2 e_d^{\epsilon}(t,x)$, with $j_d^{\epsilon}(t,x) : = \frac{1}{f(t+\epsilon,x)} \partial_t F (t,x)$, and $e_d^{\epsilon}(t,x)$ bounded on $E_{K}$.
\end{Lemma}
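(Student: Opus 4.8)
The plan is to reduce the claim to a second-order Taylor expansion in $\epsilon$ of the quantile map, following the route of the proof of Lemma~\ref{Lemma:uniform_boudedness}(ii) but keeping track of the $O(\epsilon^2)$ remainder. First I would recall from Appendix~\ref{sec:Decreasing_SMOT_one_period_primal} that on the supermartingale region, i.e.\ for $x\le x_1^\epsilon(t)$, the decreasing supermartingale coupling transports the mass at $x$ along the quantile map, so that $T_d^\epsilon(t,x)=F^{-1}(t+\epsilon,F(t,x))$. Since Assumption~\ref{Assump:1} makes $F(s,\cdot)$ strictly increasing on $(\ell_s,r_s)$, and for $(t,x)\in E_K$ one has $\ell_{t+\epsilon}<\ell_t\le x\le x_1(t)<r=r_{t+\epsilon}$, it follows that $x=F^{-1}(t+\epsilon,F(t+\epsilon,x))$, whence by the mean value theorem
\[
J_d^\epsilon(t,x)=F^{-1}(t+\epsilon,F(t+\epsilon,x))-F^{-1}(t+\epsilon,F(t,x))=\frac{\delta^\epsilon F(t,x)}{f\big(t+\epsilon,\,F^{-1}(t+\epsilon,\xi)\big)}
\]
for some $\xi$ between $F(t,x)$ and $F(t+\epsilon,x)$.

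Next I would expand both factors using $F\in C_b^4(E)$. For the numerator, $\delta^\epsilon F(t,x)=\epsilon\,\partial_tF(t,x)+\tfrac12\epsilon^2\,\partial_{tt}F(\xi_1,x)$ with $\xi_1\in(t,t+\epsilon)$ and $\partial_{tt}F$ bounded. For the denominator, $F^{-1}(t+\epsilon,\xi)$ lies between $x$ and $T_d^\epsilon(t,x)$, so $|F^{-1}(t+\epsilon,\xi)-x|\le J_d^\epsilon(t,x)\le C\epsilon$ by Lemma~\ref{Lemma:uniform_boudedness}(ii), and with $\partial_xf=\partial_{xx}F$ bounded this gives $f(t+\epsilon,F^{-1}(t+\epsilon,\xi))=f(t+\epsilon,x)+O(\epsilon)$. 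Granting a uniform positive lower bound on this density, inverting and collecting terms yields
\[
J_d^\epsilon(t,x)=\frac{\epsilon\,\partial_tF(t,x)+\tfrac12\epsilon^2\,\partial_{tt}F(\xi_1,x)}{f(t+\epsilon,x)+O(\epsilon)}=\epsilon\,\frac{\partial_tF(t,x)}{f(t+\epsilon,x)}+\epsilon^2\,e_d^\epsilon(t,x),
\]
with $e_d^\epsilon$ a finite combination of $\partial_tF$, $\partial_{tt}F$, $1/f$ and $O(1)$ factors evaluated on $E_K$, hence bounded there; this is the asserted expansion with $j_d^\epsilon(t,x)=\partial_tF(t,x)/f(t+\epsilon,x)$, and it is consistent with the $T_u^\epsilon\to+\infty$, $\partial_tF(t,+\infty)=0$ limit of Lemma~\ref{lemm:asymptotic_expansion}.

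The main obstacle is the uniform positivity of the density in the denominator: one needs $c_K>0$ with $f(t+\epsilon,F^{-1}(t+\epsilon,\xi))\ge c_K$ on $E_K$ for all admissible small $\epsilon$ (say $0<\epsilon\le\epsilon_0\wedge(1-t)$). Away from $x=\ell_t$ this follows from joint continuity of $f$, compactness of the parameter set, Assumption~\ref{Assump:1}(ii), and $x\le x_1(t)<r$ (with $t\mapsto x_1(t)$ bounded by Lemma~\ref{lemma:phase_transition_curve} and $r_t\equiv r$ by Assumption~\ref{Assump:3}(ii)). Near the lower endpoint one argues separately: for $x>\ell_t$ one has $\xi\ge F(t,x)>0$, so $F^{-1}(t+\epsilon,\xi)>F^{-1}(t+\epsilon,F(t,x))>\ell_{t+\epsilon}$ stays strictly inside the support, and the Dispersion Assumption ($\ell_{t+\epsilon}<\ell_t$ together with $f(t+\epsilon,\cdot)>f(t,\cdot)$ near $\ell_{t+\epsilon}$) yields the bound, while the single point $x=\ell_t$ is handled by a direct estimate of $\ell_t-\ell_{t+\epsilon}$. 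A secondary bookkeeping point: for $(t,x)\in E_K$ with $x_1^\epsilon(t)<x\le x_1(t)$ — possible only if $x_1^\epsilon(t)<x_1(t)$ — the point sits technically in the martingale region, and there I would invoke Lemma~\ref{lemm:asymptotic_expansion} with $x-x_1^\epsilon(t)\to0$ and $\partial_tF(t,+\infty)=0$ to recover the same expansion. I expect the lower-boundary estimate to be the real source of technical fuss.
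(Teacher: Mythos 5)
Your route is essentially the paper's: both start from the quantile representation $T_d^{\epsilon}(t,x)=F^{-1}(t+\epsilon,F(t,x))$ and Taylor-expand in $\epsilon$, the only cosmetic difference being that the paper does a single second-order Taylor expansion of $y\mapsto F^{-1}(t+\epsilon,y)$ about $F(t+\epsilon,x)$ (which places $f(t+\epsilon,x)$ directly in the denominator with an explicit $\epsilon^2$ remainder involving $\partial_{tt}F$, $f'$ and $f^{-3}$ at intermediate points), while you first apply a first-order mean-value step and then separately correct the denominator from the intermediate point back to $x$ using Lemma \ref{Lemma:uniform_boudedness}(ii). The two additional concerns you raise — the uniform positive lower bound on $f$ near $\ell_t$ (the paper leans on Remark \ref{rem:TouziAssumption3.1} without re-deriving it) and the fringe $x\in(x_1^{\epsilon}(t),x_1(t)]$ where the quantile formula for $T_d^{\epsilon}$ does not literally apply (the paper elides this) — are legitimate but do not change the nature of the argument.
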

\begin{proof}
For $J_d^{\epsilon} (t,x)$, applying Taylor's expansion to the first order, we have that
$$
\begin{aligned}
J_d^{\epsilon} (t,x)& = x - T_d^{\epsilon} (t,x)  =  F^{-1} (t+\epsilon, F(t+\epsilon,x)) - F^{-1} (t+\epsilon, F(t,x)) \\
&=\frac{\partial F^{-1}}{\partial x}(t+\epsilon, F(t+\epsilon,x))  \left[ F(t+\epsilon,x) - F(t,x) \right] + \frac{1}{2}  \frac{\partial^2 F^{-1}}{\partial x^2}(t+\epsilon,c)   \left[ F(t+\epsilon,x) - F(t,x) \right] ^2 \\
&= \frac{1}{f(t+\epsilon, x)} \left[ F(t+\epsilon,x) - F(t,x) \right] + \frac12 (-1) \frac{f'(t+\epsilon,\xi)}{f^3(t+\epsilon,\xi)} \left[ F(t+\epsilon,x) - F(t,x) \right] ^2,
\end{aligned}
$$
where on the above, $c$ is between $F(t,x)$ and $F(t+\epsilon,x)$, and $\xi=F^{-1}(t+\epsilon, c)$. Applying further second-order Taylor's expansion to the first $F(t+\epsilon,x) - F(t,x)$ and first-order Taylor's expansion to the second $F(t+\epsilon,x) - F(t,x)$, we have 
$$
\begin{aligned}
J_d^{\epsilon} (t,x) &=  \frac{1}{f(t+\epsilon, x)} \left[ \partial_t F(t,x) \epsilon+ \frac12 \partial_{tt} F(\xi_1, x) \epsilon^2 \right] - \frac12  \frac{f'(t+\epsilon,\xi)}{f^3(t+\epsilon,\xi)}  \left[ \partial_t F(\xi_2,x) \epsilon \right]^2\\
&=\frac{ \partial_t F(t,x)}{f(t+\epsilon, x)} \epsilon + \left[ \frac{\partial_{tt} F(\xi_1, x)}{2 f(t+\epsilon, x) }  - \frac{f'(t+\epsilon,\xi)}{2f^3(t+\epsilon,\xi)}  \left( \partial_t F(\xi_2,x) \right)^2 \right] \epsilon^2,
\end{aligned}
$$
with $\xi_1, \xi_2$ between $t$ and $t+\epsilon$. Hence the expansion of $J_d^{\epsilon}$ is valid, with 
$$
|e^{\epsilon}_T(t,x)| \leq \sup_{t \leq s \leq t+\epsilon, F^{-1}(t+\epsilon, F(t,x)) \leq \xi \leq x } \left\lvert \frac{\partial_{tt} F(s, \xi)}{2 f(s, \xi) }  - \frac{f'(s,\xi)}{2f^3(s,\xi)}  \left( \partial_t F(s,\xi) \right)^2 \right\lvert .
$$
\end{proof}

For $t\in[0,1]$, extend the definition of  $j_d$ to $(\ell_t,x_1(t)]$, by setting $j_d(t,x) := \frac{1}{f(t,x)} \partial_t F (t,x)$. For the main result of this section we need one additional result.
\begin{Lemma} \label{Lemma:character_limit} Suppose that  Assumptions \ref{Assump:1}, \ref{Assump:2} and \ref{Assump:3} hold.
\begin{itemize}
\item[\rm(i)] $T_u$ is strictly decreasing in $x$ on $(x_1(t), m_t)$.

\item[\rm(ii)] $j_d(t,x) \mathbf{1}_{\{ x_1(t) < x < m_t\}}, j_u(t,x) \mathbf{1}_{\{x_1(t) < x < m_t\}}$, $\frac{j_d}{j_u}(t,x) \mathbf{1}_{\{x_1(t)< x <m_t\}}$ and $j_d(t,x) \mathbf{1}_{\{x \leq x_1(t)\}}$ are all locally Lipschitz in $(t,x) \in \{ 0 \leq t \leq 1, ( -K \vee l_t ) \leq x \leq (K \wedge r_t) \}$.
\end{itemize}
\end{Lemma}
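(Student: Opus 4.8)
The plan is to establish (i) first and then use it, together with the explicit integral characterization of $T_u$ in \eqref{eq:T_S_charac} and the formula for $j_d$, to obtain the local Lipschitz statements in (ii).

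For part (i), I would differentiate the defining relation $G(t,x,T_u(t,x))=0$, where $G(t,x,y)=\int_x^{y}(x-\xi)\partial_t f(t,\xi)\,d\xi$, with respect to $x$. By the implicit function theorem (justified because, by the previous Proposition, $\partial_y G(t,x,T_u(t,x))=(x-T_u(t,x))\partial_t f(t,T_u(t,x))\neq 0$, since $T_u(t,x)>m_t>x$ and $\partial_t f(t,\cdot)>0$ on $(m_t,\infty)$ by Assumption~\ref{Assump:2}), the map $x\mapsto T_u(t,x)$ is $C^1$ and
$$
\partial_x T_u(t,x)=-\frac{\partial_x G(t,x,T_u(t,x))}{\partial_y G(t,x,T_u(t,x))}=-\frac{\int_x^{T_u(t,x)}\partial_t f(t,\xi)\,d\xi}{(x-T_u(t,x))\,\partial_t f(t,T_u(t,x))}.
$$
The denominator equals $(x-T_u(t,x))\partial_t f(t,T_u(t,x))<0$. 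For the numerator, $\int_x^{T_u(t,x)}\partial_t f(t,\xi)\,d\xi=\partial_t F(t,T_u(t,x))-\partial_t F(t,x)$; since $x<m_t$ lies where $x\mapsto\partial_t F(t,x)$ is below its value at $T_u(t,x)$ — more precisely, I would argue $\partial_t F(t,T_u(t,x))\geq\partial_t F(t,m_t)\geq$ (the minimum value) and compare using that $T_u(t,x)$ is on the increasing branch while $x$ is near $m_t$ — one gets the numerator is $>0$ strictly (it cannot vanish, else $G(t,x,T_u(t,x))=0$ would force, via the sign analysis in the preceding proof, $x=x_1(t)$ or a contradiction; the boundary case $x=x_1(t)$ is excluded since the interval is open). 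Hence $\partial_x T_u(t,x)<0$ on $(x_1(t),m_t)$, giving (i). I should double-check the numerator's strict sign near $x=x_1(t)$, where $T_u(t,x)\to+\infty$ (or $\to r_t$); there the numerator tends to $-\partial_t F(t,x_1(t))>0$ by the characterization of $x_1(t)$, so strictness persists.

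For part (ii), the strategy is to write each function as a composition/quotient of $C^1$ (hence locally Lipschitz) building blocks and invoke that $F\in C_b^4(E)$ (Assumption~\ref{Assump:1}(ii)), $f>0$ on $(\ell_t,r_t)$, and Assumption~\ref{Assump:3}(i),(iii). For $j_d(t,x)\mathbf 1_{\{x\le x_1(t)\}}=\frac{\partial_t F(t,x)}{f(t,x)}\mathbf 1_{\{x\le x_1(t)\}}$: on the closed region $\{(t,x): x\le x_1(t)\}$ restricted to $[-K\vee\ell_t, K\wedge r_t]$, both $\partial_t F$ and $f$ are $C^1$ and $f$ is bounded below away from $0$ (by Remark~\ref{rem:TouziAssumption3.1}), so the quotient is locally Lipschitz; I would note the continuity modulus $\rho_1$ of $t\mapsto x_1(t)$ handles the indicator boundary. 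For the martingale-region functions, $j_u(t,x)=T_u(t,x)-x$ is locally Lipschitz once we know $T_u$ is $C^1$ jointly in $(t,x)$ — this follows from the joint implicit function theorem applied to $G(t,x,T_u(t,x))=0$, using that $\partial_t G$ exists and is continuous because $\partial_{tt}f$ exists ($F\in C^4_b$), together with the nonvanishing $\partial_y G$ already noted; then $j_d(t,x)=\frac{\partial_t F(t,x)-\partial_t F(t,T_u(t,x))}{f(t,x)}$ is a composition of $C^1$ maps divided by something bounded below, hence locally Lipschitz, and $\frac{j_d}{j_u}$ likewise since $j_u>0$ and is bounded below on any region bounded away from $x_1(t)$ — here I would use Assumption~\ref{Assump:3}(iii), $\partial_{tx}f(t,m_t)>0$, to control $j_d/j_u$ near $x=m_t$ (where both numerator and denominator vanish), expanding both to first order around $m_t$ so the ratio has a finite limit, as in the analogous argument of Henry-Labord\`ere et al. \cite[Lemma 6.6]{HLTanTouzi}. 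The indicator $\mathbf 1_{\{x_1(t)<x<m_t\}}$ is handled via the moduli $\rho_0,\rho_1$ for the two boundary curves $t\mapsto m_t$ and $t\mapsto x_1(t)$.

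The main obstacle I anticipate is the behaviour of $j_d/j_u$ at the two endpoints of $(x_1(t),m_t)$: at $x=m_t$ both $j_d$ and $j_u$ degenerate, and at $x=x_1(t)$ the jump destination $T_u$ blows up (or hits $r_t$), so the local Lipschitz bound near these curves requires a careful matched expansion rather than a naive quotient-rule estimate; this is precisely where Assumption~\ref{Assump:3}(iii) and the uniform-continuity moduli $\rho_0,\rho_1$ must be used, and it is the only place where more than routine calculus is involved. Everything else reduces to the chain rule plus the fact that $f$ is bounded away from zero on compact $x$-intervals inside the support.
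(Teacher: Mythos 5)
Your overall plan mirrors the paper's proof: implicit differentiation of the characterizing equation for~(i), and, for~(ii), delegating the martingale-region Lipschitz estimates to the analogous argument in Henry-Labord\`ere et al.\ and treating the supermartingale region directly via $j_d=\partial_t F/f$ with $f$ bounded below on compacts. Part~(ii) is fine as a matching sketch.

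Part~(i) has a genuine sign problem. You (correctly) write $\partial_x T_u=-N/D$ with $N=\int_x^{T_u}\partial_t f\,d\xi=\partial_t F(t,T_u)-\partial_t F(t,x)$ and $D=(x-T_u)\partial_t f(t,T_u)<0$, and then claim $N>0$ and conclude $\partial_x T_u<0$. That does not follow: with $N>0$ and $D<0$ you would get $-N/D>0$. In fact $N<0$ on $(x_1(t),m_t)$, which is what gives $\partial_x T_u=-N/D<0$. To see this: at $x\downarrow x_1(t)$ one has $T_u\to r_t=r$ so $\partial_t F(t,T_u)\to 0$, while the integration-by-parts form of the characterization of $x_1(t)$ gives $\int_{x_1(t)}^{\infty}\partial_t F(t,\xi)\,d\xi=0$, which forces $\partial_t F(t,x_1(t))>0$ (so $N\to -\partial_t F(t,x_1(t))<0$); your parenthetical ``$-\partial_t F(t,x_1(t))>0$'' is the wrong sign. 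Near $x\uparrow m_t$, Taylor-expanding $\int_x^{T_u}(x-\xi)\partial_t f\,d\xi=0$ around $m_t$ (using $\partial_t f(t,m_t)=0$, $\partial_{tx}f(t,m_t)>0$) gives $m_t-x\approx 2(T_u-m_t)$, hence $\partial_t F(t,x)-\partial_t F(t,T_u)\approx\frac12\partial_{tx}f(t,m_t)\big[(m_t-x)^2-(T_u-m_t)^2\big]>0$, i.e.\ again $N<0$. Your heuristic ``$\partial_t F(t,T_u)\geq\partial_t F(t,m_t)$, compare branches'' does not determine the sign (both $\partial_t F(t,x)$ and $\partial_t F(t,T_u)$ exceed the minimum; the question is which is larger, and it is $\partial_t F(t,x)$). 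For what it is worth, the paper's displayed equation~\eqref{eq:TS_derivative} itself drops the minus sign from the implicit differentiation while asserting $\partial_t F(t,T_u)>\partial_t F(t,x)$, so its stated derivation contains the same compensating slip; you have reproduced it rather than fixed it.
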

\begin{proof}
\begin{itemize}
\item[\rm(i)]  Differentiating both sides of \eqref{eq:T_S_charac} with respect to $x \in (x_1(t), m_t)$, we get that 
$$
\partial_x T_u(t,x) \left( x - T_u(t,x) \right) \partial_t f(t, T_u(t,x)) + \int_x^{T_u(t,x)} \partial_t f(t, \xi) d \xi = 0.
$$
Hence
\begin{equation} \label{eq:TS_derivative}
\partial_x T_u(t,x) = \frac{ \partial_t F(t,T_u(t,x)) - \partial_t F(t,x) }{\left( x - T_u(t,x) \right) \partial_t f(t, T_u(t,x))}<0, 
\end{equation}
where the last inequality follows from $ \partial_t F(t,T_u(t,x)) > \partial_t F(t,x)$.

\item[\rm(ii)] Using Assumption \ref{Assump:2} and repeating the arguments of the proof of \cite[Lemma 3.6]{HLTanTouzi} (applied to the right-curtain coupling case) we get that all $j_d(t,x) \mathbf{1}_{\{ x_1(t) < x < m_t\}}, j_u(t,x) \mathbf{1}_{\{x_1(t) < x < m_t\}}$, $\frac{j_d}{j_u}(t,x) \mathbf{1}_{\{x_1(t) < x <m_t\}}$ are locally Lipschitz. It is left to verify the regularity of the function $j_d(t,x) \mathbf{1}_{\{x \leq x_1(t)\}}$. But Assumption \ref{Assump:1} and the definition $j_d(t,x) = \frac{1}{f(t,x)} \partial_t F (t,x)$ immediately imply the local Lipschitz property.
\end{itemize}
\end{proof}

In the following, we provide a characterization of the limit of the sequence $(\P^n)_{n\geq 1}$. The limiting process is a pure-jump process that is similar in spirit to the local L{\'e}vy models introduced by Carr, Geman, Madan and Yor \cite{CarrGemanMadanYor}. (See also \cite{HLTanTouzi} for the related martingale case.)

\begin{Theorem} \label{Thm:main}
Suppose that Assumptions \ref{Assump:1}, \ref{Assump:2}, \ref{Assump:3} hold. Then $\P^n \to \P^0$, where $\P^0$ is the unique weak solution to the SDE:
\begin{equation} \label{eq:SDE_main}
X_t = X_0 + \int_0^t j_u(t,X_{s-}) ( d N_s - \nu_s ds )   \mathbf{1}_{\{x_1(s) < X_{s-} < m_s\}} - \int_0^t \mathbf{1}_{\{ X_{s-}\leq x_1(s)\}}  j_d(t,X_{s-}) ds,
\end{equation}
and $(N_s)_{0 \leq s \leq 1}$ is a unit size jump process with predictable compensated process $(\nu_s)_{0 \leq s \leq 1}$ given by:
$$
\nu_s:= \frac{j_d}{j_u} (s,X_{s-}) \mathbf{1}_{\{x_1(s) < X_{s-}< m_s\}};
$$
here $x_1(t)$ is given by Lemma \ref{lemma:phase_transition_curve}, $j_u(t,x):= T_u(t,x) - x$ and $j_d(t,x):= \frac{\partial_t F(t,x) - \partial_t F(t, T_u(t,x)) }{f(t,x)}$ for $x\in(x_1(t),m_t)$, $j_d(t,x) := \frac{1}{f(t,x)} \partial_t F (t,x)$ for $x\in(\ell_t,x_1(t)]$, and $T_u$ is characterized  by \eqref{eq:T_S_charac} on $x\in(x_1(t),m_t)$.
\end{Theorem}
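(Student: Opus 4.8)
The plan is to identify $\P^0$ through the martingale problem for the generator read off \eqref{eq:SDE_main}, and then close the argument with uniqueness for that martingale problem. For $\varphi\in C_c^\infty(\R)$ set
\begin{align*}
\Lc_s\varphi(x):={}&\mathbf{1}_{\{x_1(s)<x<m_s\}}\frac{j_d}{j_u}(s,x)\Bigl[\varphi\bigl(x+j_u(s,x)\bigr)-\varphi(x)-j_u(s,x)\varphi'(x)\Bigr]\\
&-\mathbf{1}_{\{x\le x_1(s)\}}j_d(s,x)\varphi'(x).
\end{align*}
By Lemma \ref{Lemma:character_limit} the coefficients $j_d$, $j_u$ and $j_d/j_u$ (on the martingale region) are locally bounded; moreover one checks from \eqref{eq:T_S_charac} that $j_u(s,m_s^-)=j_d(s,m_s^-)=0$ and that $j_d$ is continuous across $x_1(s)$, so that $\Lc_s\varphi$ is bounded and $\mu_s$-a.e.\ continuous in $x$, uniformly in $s$. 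An application of It\^o's formula to the jump SDE shows that $\P$ is a weak solution of \eqref{eq:SDE_main} if and only if $M^\varphi_t:=\varphi(X_t)-\varphi(X_0)-\int_0^t\Lc_s\varphi(X_s)\,ds$ is a $\P$-martingale for every $\varphi\in C_c^\infty(\R)$.

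By Proposition \ref{Prop::Tightness} the family $(\P^n)_{n\ge1}$ is relatively compact and every limit point lies in $\Sc_\infty(\mu)$; fix a limit point $\P^0$ and a subsequence (not relabelled) with $\P^n\to\P^0$. To show that $\P^0$ solves the martingale problem I would use that under $\P^n$ the canonical process is the piecewise-constant interpolation of the iterated $n$-period decreasing supermartingale coupling, so that for $s<t$, a bounded continuous $\Fc_s$-measurable $\Phi$ and $\varphi\in C_c^\infty$,
\begin{align*}
\E^{\P^n}\!\bigl[\Phi(X_\cdot)\bigl(\varphi(X_t)-\varphi(X_s)\bigr)\bigr]
&=\E^{\P^n}\Bigl[\Phi(X_\cdot)\!\!\!\sum_{k:\,s\le t^n_k<t}\!\!\!\E^{\P^n}\bigl[\varphi(X_{t^n_{k+1}})-\varphi(X_{t^n_k})\,\big|\,\Fc_{t^n_k}\bigr]\Bigr]\\
&\quad+o(1).
\end{align*}
For a one-step increment with $X_{t^n_k}=x$ and $\epsilon=t^n_{k+1}-t^n_k$: in the martingale region the martingale identity $q^\epsilon J^\epsilon_u=(1-q^\epsilon)J^\epsilon_d$ cancels the first-order term, leaving $q^\epsilon\bigl[\varphi(x+J^\epsilon_u)-\varphi(x)-J^\epsilon_u\varphi'(x)\bigr]+O(\epsilon^2)$, which by the expansions of Lemma \ref{lemm:asymptotic_expansion} equals $\epsilon\,\Lc_{t^n_k}\varphi(x)+o(\epsilon)$; in the supermartingale region $\{x\le x^\epsilon_1(t^n_k)\}$ a first-order Taylor expansion together with Lemma \ref{lemm:asymptotic_expansion_2} gives $-J^\epsilon_d\varphi'(x)+O(\epsilon^2)=\epsilon\,\Lc_{t^n_k}\varphi(x)+o(\epsilon)$. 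The $o(\epsilon)$ is uniform on compact $x$-sets bounded away from $m_s$, so summing over $k$ produces a Riemann sum converging to $\int_s^t\Lc_u\varphi(X_u)\,du$; passing to the limit along $\P^n\to\P^0$ (using the $\mu_s$-a.e.\ continuity of $\Lc_s\varphi$ and the uniform integrability of Lemma \ref{lemma:uniform_integral}) yields $\E^{\P^0}[\Phi(X_\cdot)(M^\varphi_t-M^\varphi_s)]=0$, i.e.\ $\P^0$ is a weak solution of \eqref{eq:SDE_main}.

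The main obstacle is controlling the state-space regions where the expansions of Lemmas \ref{lemm:asymptotic_expansion} and \ref{lemm:asymptotic_expansion_2} are not available: a $\delta$-neighbourhood of $m_s$, the ``frozen'' region $\{x\ge m_s\}$ where $\Lc_s\varphi\equiv0$, and the transition point $x_1(s)$, across which $j_u$ becomes large when $r=\infty$. Near $m_s$, since $j_u(s,m_s^-)=j_d(s,m_s^-)=0$ the contribution of $\{m_s-\delta<X<m_s\}$ is $O(\delta)$ uniformly in $n$ and vanishes as $\delta\downarrow0$. On $\{x\ge m_s\}$ the right-curtain transitions are near-identity, $J^\epsilon_u,J^\epsilon_d=O(\epsilon)$, so after the martingale cancellation the one-step increment is $O(\epsilon^2)$, consistent with $\Lc_s\varphi\equiv0$. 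Near $x_1(s)$ the large but rare upward jumps are handled exactly as in the tightness proof via Lemma \ref{Lemma:uniform_boudedness}, using that $\nu_s=(j_d/j_u)(s,\cdot)$ remains locally bounded; moreover $\mu_s(\{x_1(s),m_s\})=0$, so the possible discontinuity of $\Lc_s\varphi$ at these points is irrelevant for the limit. One must also bound $\sup_n\E^{\P^n}\!\bigl[\int_0^1\mathbf{1}_{\{|X_s-m_s|<\delta\}}\,ds\bigr]$ and the analogous quantity at $x_1(s)$, which follows from the densities $f(s,\cdot)$ being bounded on compacts together with the continuity of $s\mapsto m_s,x_1(s)$. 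These steps parallel \cite[Theorem 3.5 and Lemma 3.6]{HLTanTouzi} applied to the right-curtain coupling, the only new feature being the deterministic drift region treated by Lemma \ref{lemm:asymptotic_expansion_2}.

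It remains to establish uniqueness in law for \eqref{eq:SDE_main}. By Lemma \ref{Lemma:character_limit} the coefficients $j_u\mathbf{1}_{\{x_1<x<m\}}$, $j_d$ and the jump intensity $\nu=(j_d/j_u)\mathbf{1}_{\{x_1<x<m\}}$ are locally Lipschitz and locally bounded, and the jump part has locally finite activity ($\nu$ locally bounded, with the large jumps near $x_1(s)$ occurring with integrable intensity since $\nu\,j_u=j_d$ is bounded). A localisation at the exit time of $[-K,K]$, combined with the standard existence--uniqueness theory for SDEs driven by Poisson random measures with locally Lipschitz coefficients, gives pathwise uniqueness, hence uniqueness in law. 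Consequently every subsequential limit of $(\P^n)_{n\ge1}$ coincides with the law $\P^0$ of the unique weak solution of \eqref{eq:SDE_main}, and by relative compactness the full sequence converges, $\P^n\to\P^0$.
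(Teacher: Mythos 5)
Your argument follows essentially the same route as the paper's proof: use the tightness from Proposition~\ref{Prop::Tightness}, formulate a martingale problem for the generator associated to the SDE, decompose the one-step conditional increment of the iterated decreasing supermartingale coupling into its martingale-region and supermartingale-region parts, invoke the asymptotic expansions of Lemmas~\ref{lemm:asymptotic_expansion} and~\ref{lemm:asymptotic_expansion_2}, and close with uniqueness via local Lipschitz coefficients (Lemma~\ref{Lemma:character_limit}) and localisation, as in Henry-Labord\`ere, Tan and Touzi \cite{HLTanTouzi}. Your write-up is somewhat more explicit than the paper's on two points that the paper delegates to \cite{HLTanTouzi} — the treatment of the boundary neighbourhoods of $m_s$ and $x_1(s)$ and the separate uniqueness step — and incidentally your generator $\Lc_s\varphi$ carries the correct signs from It\^o's formula, whereas the paper's displayed $M_t(\varphi,\mathbf{x})$ has a sign slip on the two $j_d D\varphi$ drift integrals (they should appear with a $+$ sign to compensate the downward drift); this is a transcription error in the paper, not a change of method.
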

\begin{proof} 
	By Proposition \ref{Prop::Tightness}, the sequence of supermartingale measures $(\P^n)_{n \geq 1}$ induced by the decreasing supermartingale transport plan is tight. We shall prove that any limit of $(\P^n)_{n \geq 1}$ provides a weak solution to \eqref{eq:SDE_main}. Our strategy is identical to that of Henry-Labord\`ere and Touzi \cite[Theorem 3.7]{HLTouzi16}, and thus we only present the steps that are affected by the supermartingale feature of our construction.
	
	 For all $\mathbf{x} \in \Om:= \D([0,1], \R)$ and $\varphi \in C^1_b(\R)$, we define
	$$
	\begin{aligned}
	&M_t(\varphi, \mathbf{x}): = \varphi(\mathbf{x}_t) - \int_0^t j_d(s, \mathbf{x}_{s-}) D \varphi (\mathbf{x}_{s-}) \mathbf{1}_{\{ x_1(s) < \mathbf{x}_{s-} < m_s\}} ds - \int_0^t \large[ [ \varphi(\mathbf{x}_{s-} + j_u(s, \mathbf{x}_{s-}) )  \\
	& - \varphi(\mathbf{x}_{s-}) ] \frac{j_d}{j_u}(s, \mathbf{x}_{s-})\large]  \mathbf{1}_{ \{x_1(s) < \mathbf{x}_{s-} < m_s\}}  ds - \int_0^t j_d(s, \mathbf{x}_{s-}) D \varphi (\mathbf{x}_{s-}) \mathbf{1}_{ \{\mathbf{x}_{s-} \leq x_1(s)\}} ds.
	\end{aligned}
	$$
	The process $M(\varphi, X)$ is progressively measurable w.r.t. the canonical filtration $\F$. For every constant $p > 0$, we introduce an $\F$-stopping time and the corresponding stopped canonical process
	$$
	\tau_p:= \inf \{ t \geq 0: |X_t| \geq p \mbox{ or } |X_{t-}| \geq p \}, ~~~X_t^p:= X_{t \wedge \tau_p}.
	$$
	Denote further $J(\mathbf{x}):=\{t >0: \Delta \mathbf{x}(t) \neq 0 \}$, $V(\mathbf{x}):=\{ a >0: \tau_a (\mathbf{x}) < \tau_{a^+}(\mathbf{x}) \}$ and 
	$$
	V'(\mathbf{x}):= \{ a>0: \tau_a (\mathbf{x}) \in J(\mathbf{x}) \mbox{ and } |\mathbf{x}(\tau_a (\mathbf{x}))| =a \}.
	$$
	By extracting a subsequence, we can suppose w.l.o.g. that $\P^n \to \P^0$ weakly. To prove that $\P^0$ is a weak solution to SDE \eqref{eq:SDE_main}, it is enough to show that $(M_t(\varphi, X))_{t \in [0,1]}$ is a local martingale under $\P^0$ for every $\varphi \in C_b^1(\R)$. Since the functions $j_d$ and $j_u$ are locally Lipchitz, but not necessarily uniformly bounded, as in the proof of Henry-Labord\`ere and Touzi \cite[Theorem 3.7]{HLTouzi16}, we need to adapt the localization technique in Jacod and Shiryaev \cite{JacodShiryaev}.
	
	First, since $\P^n$ is induced by the Markov chain $(X^n, \P^{*,n})$ for all $n \geq 1$, we have 
	$$
	\E^{\P^n}_{t_k^n} [ \varphi(X_{t^n_{k+1}})-\varphi(X_{t^n_k}) ] = \alpha_u + \alpha_d +  \alpha_{\tilde d},
	$$
	where
	\begin{align*}
\alpha_u &:= \E^{\P^n}_{t_k^n} [ \{ \varphi(X_{t^n_k} + J_u^{\epsilon^n_k} (t^n_k, X_{t^n_k}) ) -  \varphi(X_{t^n_{k}}) \} \frac{J_{d}^{\epsilon^n_k}}{J_d^{\epsilon^n_k}+J_u^{\epsilon^n_k}} \mathbf{1}_{\{x_1^{\epsilon_k^n}(t^n_k) < X_{t_k^n} < m^{\epsilon_k^n}(t^n_k)\}} ],\\
\alpha_d &:= \E^{\P^n}_{t_k^n} [ \{ \varphi(X_{t^n_k} - J_d^{\epsilon^n_k} (t^n_k, X_{t^n_k}) ) -  \varphi(X_{t^n_{k}}) \} \frac{J_{u}^{\epsilon^n_k}}{J_d^{\epsilon^n_k}+J_u^{\epsilon^n_k}} \mathbf{1}_{\{x_1^{\epsilon_k^n}(t^n_k) < X_{t_k^n} < m^{\epsilon_k^n}(t^n_k)\}} ],\\
\alpha_{\tilde d}&:=  \E^{\P^n}_{t_k^n} [ \{ \varphi(X_{t^n_k} -J_d^{\epsilon^n_k} (t^n_k, X_{t^n_k}) ) -  \varphi(X_{t^n_{k}}) \} \mathbf{1}_{\{X_{t_k^n} \leq x_1^{\epsilon_k^n}(t^n_k)\}} ],
	\end{align*}
	with $\epsilon_k^n:= t^n_{k+1} - t^n_k$ (also recall that, under the present assumptions (Dispersion Assumption in particular), $x^{\epsilon^n_k}_1(t^n_k)$ is the unique regime-switching boundary).
	
	By the continuity of $(\epsilon, t) \mapsto ( m^{\epsilon}(t), x_1^{\epsilon}(t))$ and noticing that the density $f(t,x) \in C^3_b(E)$, we have that for $\epsilon^n_k$ small and for all $s \in [t_k^n, t_{k+1}^n]$, 
	\begin{equation*} \label{eq:indicator_estimate}
	\begin{aligned}
	& |\E^{\P^n}_{t_k^n} [ \mathbf{1}_{\{x_1^{\epsilon_k^n}(t^n_k) < X_{t_k^n} < m^{\epsilon_k^n}(t^n_k)\}}  - \mathbf{1}_{\{ x_1(s) < X_{s} < m_s\}} ] | \\
	\leq & |\int_{x_1^{\epsilon_k^n}(t^n_k)}^{ x_1(s)} f(t,x) dx | + |\int_{m^{\epsilon_k^n}(t^n_k)}^{ m_s} f(t,x) dx | = O( \rho_0(\epsilon_k^n)\lor \rho_1(\epsilon_k^n)).
	\end{aligned}
	\end{equation*}
	
Then using Lemma \ref{lemm:asymptotic_expansion} and the arguments of \cite[Theorem 3.7]{HLTanTouzi} we have that
			\begin{align*}
		\alpha_u &= \E^{\P^n}_{t_k^n} \left[  \int_{t_k^n}^{t_{k+1}^n}  [ \varphi(X_s + j_u(s, X_s) ) - \varphi(X_s) ] \frac{j_d}{j_u}(s, X_s) \mathbf{1}_{ \{x_1(s) < X_s < m_s)\}}  ds \right],\\
		\alpha_d &= -\E^{\P^n}_{t_k^n} [  \int_{t_k^n}^{t_{k+1}^n} j_d(s, X_{s}) D \varphi (X_{s}) \mathbf{1}_{ \{x_1(s) < X_{s}< m_s\}} ds ] + O(\epsilon_k^n(\epsilon_k^n \lor \rho_0(\epsilon_k^n)\lor \rho_1(\epsilon_k^n))).
		\end{align*}
		Similarly, using Lemma \ref{lemm:asymptotic_expansion_2} we have that
		$$
		\alpha_{\tilde d} = -\E^{\P^n}_{t_k^n} \left[  \int_{t_k^n}^{t_{k+1}^n} j_d(s, X_{s}) D \varphi (X_{s}) \mathbf{1}_{  \{X_{s} \leq x_1(s)\}} ds \right] + O(\epsilon_k^n(\epsilon_k^n \lor \rho_0(\epsilon_k^n)\lor \rho_1(\epsilon_k^n))).
		$$

	Now let $0\leq s < t \leq 1$, $p \in \N$, $\phi_s(X_\cdot)$ be a $\Fc_s$-measurable bounded random variable on $\Om$ such that $\phi: \Om \to \R$ is continuous under the Skorokhod topology. Then 
	$$
	\E^{\P^n} [ \phi_s(X_\cdot) (M_{t \wedge \tau_p}(\varphi, X) - M_{s \wedge \tau_p}(\varphi, X) ) ] \leq C_p ( |\pi_n| \lor \rho_0(|\pi_n|) \lor \rho_1(|\pi_n|) ).
	$$
	for some constant $C_p > 0$. The remaining arguments are those of \cite[Theorem 3.7]{HLTanTouzi}, and thus omitted. 
\end{proof}
%\begin{Lemma} \label{Lemma:deterministic_traject}
%The map $t_0 \mapsto x^{*,t_0}(s)$ is monotone and continuous, and the map  $s \mapsto x^{*,t_0}(s)$ is of class $C^1$. 
%\end{Lemma}

\subsection{Optimality of the local L{\'e}vy process}\label{sec:optimalityDual}
In this section, we establish the optimality of the previously obtained limiting process to the continuous-time primal problem $\mathbf{P}_{\infty}$.

We will need the optimal dual strategy (in discrete time one-period case) which is given in Lemma \ref{Lemma:OptimalDual}; in the following, we denote $h^{\epsilon}:=\hat h^{\mu_t, \mu_{t+\epsilon}}, \varphi^{\epsilon}:=\hat \varphi^{\mu_t, \mu_{t+\epsilon}}, \psi^{\epsilon}:=\hat \psi^{\mu_t, \mu_{t+\epsilon}}$.

Now let us introduce the optimal dual strategies for the continuous-time transport problem. First, we define the dynamic strategy $h^*$ by
$$
h^*(t,x) =0, ~~\mbox{when } x \leq x_1(t),
$$
$$
\partial_x h^*(t,x) = \frac{c_x(x,T_u(t,x)) - c_x(x, x)}{j_u(t,x)},\quad  x_1(t) < x < m_t,
$$
$$\lim_{x\downarrow x_1(t)}h^*(t,x)=0,
$$
$$
h^*(t,x) = h^{*}(t,T_u^{-1}(t,x)) - c_y( T_u^{-1}(t,x), x ), ~~\mbox{when } x \geq m_t.
$$

We remark that from the above definition, for all $t\in [0,1]$, $x \mapsto h^*(t,x)$ is continuous both at $x_1(t)$ and $m_t$.  In addition, $\psi^*$ is defined, up to a constant, by
$$
\partial_x \psi^*(t,x):= - h^*(t,x),\quad (t,x)\in E=\{ (t,x): t \in [0,1], x \in (l_t, r_t) \}.
$$
Now using the same arguments as \cite[Corollary 3.10]{HLTanTouzi} together with the fact that $h^*(t,x)=0$ for $t\in[0,1]$ and $x\leq x_1(t)$, we have that $\psi^* \in C^{1,1}(E)$.

For the (static part of the) dual limiting strategy, let $\gamma^*(dt) := \delta_{\{ 0 \}} (dt) +  \delta_{\{ 1 \}} (dt) + Leb(dt)$ be a finite measure on $[0,1]$, where $Leb$ denotes the Lebesgue measure on $[0,1]$.

 One further defines $\lambda_0^*$ and $\bar{\lambda}_0^*$ by $\lambda_0^*(0,x):= \psi^*(0,x)$, $\lambda_0^*(1,x):= \psi^*(1,x)$, $\bar{\lambda}_0^*(0,x):= | \psi^*(0,x) |$, and $\bar{\lambda}_0^*(1,x):= \sup_{t \in [0,1]} | \psi^*(t,x) |$; and for all $(t,x) \in (0,1) \times \R$,
\begin{align*}
\lambda_0^*(t,x):= \partial_t \psi^*(t,x) &+ \mathbf{1}_{\{x_1(t) < x \leq m_t\}} \left( \partial_x \psi^* j_d + \nu [ \psi^* - \psi^*(\cdot, T_u) + c(\cdot, T_u) ] \right)\\&+ \mathbf{1}_{\{x \leq x_1(t)\}}  \partial_x \psi^* j_d,
\end{align*}
\begin{align*}
\bar{\lambda}_0^*:= | \partial_t \psi^*(t,x) &+ \mathbf{1}_{\{x_1(t) < x < m_t\}} \left( \partial_x \psi^* j_d + \nu [ \psi^* - \psi^*(\cdot, T_u)] \right)| + \mathbf{1}_{\{x_1(t) < x < m_t\}} \nu [|c(\cdot, T_u) |] \\&+ \mathbf{1}_{x \leq x_1(t)}  |\partial_x \psi^* j_d|.
\end{align*}
Finally we define $\lambda^*(x,dt):=\lambda_0^*(t,x) \gamma^*(dt)$ and $\bar{\lambda}^*(x,dt):=\bar{\lambda}_0^*(t,x) \gamma^*(dt)$.

The next result shows that the limiting process of the previous sections solves the continuous-time supermartingale optimal transport problem. Let $H^*$ be the $\F$-predictable process on $\Om$ defined by
$$
H_t^*:=h^*(t,X_{t-}), \quad t \in [0,1].
$$

\begin{Theorem} \label{Thm:optimality}
Suppose that Assumptions \ref{Assump:CostFunction}, \ref{Assump:1}, \ref{Assump:2}, \ref{Assump:3} hold, and that 
\begin{equation} \label{eq:optimality_integrability}
\mu(\bar{\lambda}^*)=\int_0^1 \int_{\R} \bar{\lambda}^*_0 (t,x) \mu_t(dx) \gamma^*(dt) < \infty.
\end{equation} 
Then the supermartingale transport problem \eqref{eq:Primal_Full} is solved by the local L{\'e}vy process \eqref{eq:SDE_main}. Moreover, we have that $(H^*, \lambda^*) \in \Dc_{\infty} (\mu)$ and the following duality is valid:
$$
\E^{\P^0} [ C(X_\cdot) ] = \mathbf{P}_{\infty}(\mu) =  \mathbf{D}_{\infty}(\mu) = \mu(\lambda^*),
$$
and the optimal value is given by 
$$
\mu(\lambda^*) = \int_0^1 \int_{x_1(t)}^{m_t} \frac{j_d(t,x)}{j_u(t,x)} c(x, x+j_u(t,x)) f(t,x) dx dt.
$$
\end{Theorem}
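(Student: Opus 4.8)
The plan is to establish the chain of (in)equalities
$$
\E^{\P^0}[C(X_\cdot)] \;\le\; \mathbf{P}_{\infty}(\mu) \;\le\; \mathbf{D}_{\infty}(\mu) \;\le\; \mu(\lambda^*) \;\le\; \E^{\P^0}[C(X_\cdot)],
$$
so that every term coincides, and then to identify the common value with the stated integral. The first inequality is definitional (since $\P^0 \in \Sc_\infty(\mu)$ by Proposition \ref{Prop::Tightness}), and the second is the weak duality recalled after \eqref{eq:Dual_Full}. The bulk of the work is (a) showing $(H^*,\lambda^*)\in\Dc_{\infty}(\mu)$, which will give $\mathbf{D}_{\infty}(\mu)\le\mu(\lambda^*)$ once we check $\lambda^*\in\Lambda(\mu)$ via the integrability hypothesis \eqref{eq:optimality_integrability} and $H^*\in\Hc$; and (b) showing the final inequality $\mu(\lambda^*)\le\E^{\P^0}[C(X_\cdot)]$, which I would obtain by proving that the superhedging inequality defining $\Dc_\infty$ is in fact an \emph{equality} along $\P^0$-a.e.\ path, i.e.\ $\delta^X(\lambda^*)+(H^*\cdot X)_1 = C(X_\cdot)$ $\P^0$-a.s., and then taking $\P^0$-expectations (using that $H^*\cdot X$ is a true $\P^0$-martingale, or at least has zero expectation, since $H^*\ge 0$ and $X$ is a $\P^0$-supermartingale forces $\E^{\P^0}[(H^*\cdot X)_1]\le 0$, while the reverse will follow from the pathwise identity combined with $\mathbf P_\infty\le\mathbf D_\infty$).

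For step (a), the pathwise superhedging inequality $\delta^{\mathbf{x}}(\lambda^*)+(H^*\cdot\mathbf{x})_1 \ge C(\mathbf{x})$ for every $\mathbf{x}\in\Om$ is the technical heart. I would mimic the one-period construction of Lemma \ref{Lemma:OptimalDual} (the triple $\hat h^{\mu_t,\mu_{t+\epsilon}},\hat\varphi,\hat\psi$) and pass to the limit: the key algebraic fact is that, by the definitions of $h^*,\psi^*$, the ``infinitesimal'' version of the one-period dual inequality holds, namely that for the generator of $C$ along a path one has, on $\{x_1(t)<x<m_t\}$, $\partial_t\psi^*(t,x) + j_d\,\partial_x\psi^* + \nu\bigl[\psi^*-\psi^*(\cdot,T_u)+c(\cdot,T_u)\bigr] = \lambda_0^*(t,x)$ (which is exactly how $\lambda_0^*$ was defined), while on $\{x\le x_1(t)\}$ the deterministic drift part matches, and the jump term $c(x,x+j_u)$ is controlled by $\psi^*(x)-\psi^*(x+j_u)+h^*(x)\,j_u$ using $\partial_x\psi^*=-h^*$ and the defining ODE $\partial_x h^* = (c_x(x,T_u)-c_x(x,x))/j_u$. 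Concretely I would apply It\^o's formula (in the form of Lemma \ref{Lemma:Quadratic_Variation}, since $\psi^*\in C^{1,1}(E)$ and $C$ is built from $c$ satisfying Assumption \ref{Assump:CostFunction}) to $\psi^*(t,X_t)$ along an arbitrary c\`adl\`ag path with finite quadratic variation, collect the drift, the compensated-jump, and the pure-jump contributions, and read off that the accumulated cost is dominated by $\delta^{\mathbf x}(\lambda^*) + (h^*(t,\mathbf x_{t-})\cdot\mathbf x)_1$ plus a boundary term that $\gamma^*$'s atoms at $0$ and $1$ absorb (this is why $\lambda_0^*(0,\cdot)=\psi^*(0,\cdot)$ and $\lambda_0^*(1,\cdot)=\psi^*(1,\cdot)$). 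Here $c(x,x)=c_y(x,x)=0$ and $c_{xyy}<0,c_{xy}>0$ enter to guarantee the correct sign of the remainder, exactly as in Henry-Labord\`ere et al.\ \cite{HLTanTouzi}. That the inequality becomes an equality precisely along paths of $\P^0$ — which jump only according to $T_u$ in the martingale region and drift only according to $j_d$ in the supermartingale region — is the reinforcement needed for the last step, and it follows because all the dominations used above are tight on exactly those transitions.

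The remaining pieces are bookkeeping: $H^*\in\Hc$ because $h^*\ge 0$ (it is nonnegative by construction, being $0$ below $x_1(t)$ and nondecreasing-then-matched above, using $c_{xy}>0$ for $\partial_x h^*\ge 0$ on $(x_1(t),m_t)$), locally bounded, $\F$-predictable, and $H^*\cdot X$ is a $\P$-supermartingale for every $\P\in\Sc_\infty$ by the no-short-selling lemma; $\lambda^*\in\Lambda(\mu)$ by \eqref{eq:optimality_integrability} and the representation $\lambda^*(x,dt)=\lambda_0^*(t,x)\gamma^*(dt)$ with $\gamma^*=\delta_0+\delta_1+\Leb$; and the explicit formula for $\mu(\lambda^*)$ comes from integrating the definition of $\lambda_0^*$ against $\mu_t(dx)\gamma^*(dt)$: the $\partial_t\psi^*$ term integrates to a telescoping boundary contribution that cancels against the atoms at $0,1$ (using $\mu_t(\partial_t\psi^*(t,\cdot)) = \frac{d}{dt}\mu_t(\psi^*(t,\cdot))$ up to the transport terms), the $\partial_x\psi^*j_d$ terms cancel by the marginal flow equation (this is essentially the Fokker--Planck identity satisfied by $(\mu_t)$ under the SDE \eqref{eq:SDE_main}, equivalently the defining equations for $x_1(t)$ and $T_u$ in Lemma \ref{lemma:phase_transition_curve} and \eqref{eq:T_S_charac}), and only the genuinely nonlinear jump-cost term $\nu\,c(\cdot,T_u)=\frac{j_d}{j_u}(t,x)\,c(x,x+j_u(t,x))$ survives, leaving $\int_0^1\!\int_{x_1(t)}^{m_t}\frac{j_d(t,x)}{j_u(t,x)}c(x,x+j_u(t,x))f(t,x)\,dx\,dt$. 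The main obstacle I anticipate is verifying the pathwise superhedging inequality with the correct error sign for \emph{all} c\`adl\`ag paths (not just $\P^0$-typical ones), since this requires carefully controlling the $C^{1,1}$ (not $C^2$) regularity of $\psi^*$ at the free boundaries $x_1(t)$ and $m_t$ and handling the continuous quadratic-variation term $\frac12\int c_{yy}(\mathbf x_t,\mathbf x_t)d[\mathbf x]^c_t$ together with the jump sum simultaneously — this is where the bulk of the estimates live, and where one most closely imitates \cite[Section 3]{HLTanTouzi} adapted to the decreasing (right-curtain $\oplus$ quantile) structure.
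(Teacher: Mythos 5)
Your high-level skeleton --- the chain of (in)equalities closed by identifying all terms --- is exactly the paper's, but you deviate from the paper's route in two places, and in one of them your plan has a genuine gap.

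First, for the pathwise superhedging inequality (step (a)), the paper does \emph{not} attempt a direct It\^o argument on $\psi^*$ along arbitrary c\`adl\`ag paths. It starts from the one-period discrete superhedging inequality \eqref{eq:Discrete_Superhedge} (which holds exactly, by Lemma \ref{Lemma:OptimalDual}, for every finite partition and every path) and passes to the limit using Lemmas \ref{Lemma:Quadratic_Variation}, \ref{Lemma:Convergence_Static}, \ref{Lemma:Convergence_Rynamic}. This sidesteps precisely the $C^{1,1}$-regularity obstacle at the free boundaries $x_1(t)$ and $m_t$ that you flag as your main anticipated difficulty: one never needs to apply It\^o's formula to $\psi^*$ pathwise with the correct error sign for \emph{all} paths, because the inequality is inherited term-by-term from the discrete superhedges, whose validity is a consequence of the one-period Brenier construction. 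Your proposed direct It\^o argument is not obviously doomed, but it requires second-order control of $\psi^*$ across the free boundary and a separate treatment of the continuous quadratic-variation term that the paper simply never confronts.

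Second, and this is the real gap: for closing the chain ($\mu(\lambda^*)\le\E^{\P^0}[C(X_\cdot)]$), you propose to establish a \emph{pathwise} equality $\delta^X(\lambda^*)+(H^*\cdot X)_1 = C(X_\cdot)$ $\P^0$-a.s. and then argue about $\E^{\P^0}[(H^*\cdot X)_1]$. The paper never proves a pathwise equality. It introduces the auxiliary functional $\Psi^*$ (the limit of the accumulated static legs $\sum_k(\varphi^\epsilon+\psi^\epsilon)$) --- which is \emph{not} equal to $\delta^X(\lambda^*)$ pathwise --- and Lemma \ref{Lemma:optimality} then computes the three expectations $\E^{\P^0}[C(X_\cdot)]$, $\E^{\P^0}[\Psi^*(X_\cdot)]$ and $\mu(\lambda^*)$ \emph{separately} and shows all three equal the explicit integral $\int_0^1\int_{x_1(t)}^{m_t}\frac{j_d}{j_u}c(x,x+j_u)f(t,x)\,dx\,dt$. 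That computation uses It\^o's formula applied to $\psi^*(t,X_t)$ under $\P^0$ (where the dynamics are known from Theorem \ref{Thm:main}), the local-martingale property of the resulting compensated expression, the identity $[X]^c\equiv 0$ $\P^0$-a.s.\ (so the continuous part of $C$ vanishes), and the integrability condition \eqref{eq:optimality_integrability} to pass from local martingale to expectation zero. Your step ``$\E^{\P^0}[(H^*\cdot X)_1]=0$ or at least the reverse inequality follows from the pathwise identity combined with weak duality'' is circular: you would be deducing the conclusion you are trying to establish. The quantity $\E^{\P^0}[(H^*\cdot X)_1]=0$ is not a separate hypothesis but a by-product of the explicit Lemma \ref{Lemma:optimality} calculation, which is where the real content of the final step lives and which your sketch bypasses.

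Finally, a smaller but nontrivial bookkeeping point you gloss over: showing $(H^*,\lambda^*)\in\Dc_\infty(\mu)$ requires reconciling the superhedging inequality proved in terms of $\Psi^*$ with the definition of $\Dc_\infty$ stated in terms of $\delta^X(\lambda^*)$. These are different pathwise functionals of the path, and the relationship between them must be spelled out (their $\P^0$-expectations agree, and this is part of Lemma \ref{Lemma:optimality}, but the pathwise domination needed for $\Dc_\infty$ membership is a separate assertion).

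In short: your chain of inequalities is right, and your intuition about where the technical work lives (Assumption \ref{Assump:CostFunction}, free-boundary regularity, jump-cost structure) is sound. But you should replace the direct It\^o route by the discrete-to-continuous limit of the one-period duals (as the paper does), and you should replace the proposed pathwise equality by the explicit expectation computation of Lemma \ref{Lemma:optimality}.
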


In order to prove Theorem \ref{Thm:optimality}, we consider the following partition of $[0,1]$: $\pi_n=(t_k^n)_{0 \leq k \leq n}$, where $t_k^n: =k \epsilon$ with $\epsilon=\frac{1}{n}$. In the following, we shall simplify the notation $t_k^n$ to $t_k$.  Remind that under every $\P^n$, we have the following super-replication inequality:
\begin{equation} \label{eq:Discrete_Superhedge}
\sum_{k=0}^{n-1} \left( \varphi^{\epsilon}(t_k, X_{t_k}) + \psi^{\epsilon}(t_k, X_{t_{k+1}}) \right) + \sum_{k=0}^{n-1} h^{\epsilon} (t_k, X_{t_k}) (X_{t_{k+1}}-X_{t_k}) \geq \sum_{k=0}^{n-1} c(X_{t_k}, X_{t_{k+1}}).
\end{equation}
We further define $\Psi^*: \Om \to \R$ by
$$
\begin{aligned}
& \Psi^*(\mathbf x): = \psi^*(1,\mathbf{x}_1) - \psi^*(0,\mathbf{x}_0) - \int_0^1 \large( \partial_t \psi^*(t, \mathbf{x}_t) + j_d(t, \mathbf{x}_t) \mathbf{1}_{\{\mathbf{x}_t <m_t\}}   
\partial_x \psi^*(t, \mathbf{x}_t)  \large) dt  \\&+ \int_0^1 \frac{j_d(t,\mathbf{x}_t)}{j_u(t,\mathbf{x}_t)} \mathbf{1}_{\{x_1(t) < \mathbf{x}_t <m_t\}} \left( \psi^*(t, \mathbf{x}_t) -  \psi^*(t, \mathbf{x}_t + j_u(t,\mathbf{x}_t)) + c(\mathbf{x}_t, \mathbf{x}_t + j_u(t,\mathbf{x}_t)) \right) dt.
\end{aligned}
$$

To prove Theorem \ref{Thm:optimality}, we need several auxiliary results. The proof of the following lemma relies on Lemmas \ref{lemm:asymptotic_expansion} and \ref{lemm:asymptotic_expansion_2}, but otherwise is the same as the proof of \cite[Lemma 6.7]{HLTanTouzi}, and thus we omit the details.
\begin{Lemma} \label{lemm:local_uniform_converge}
Under Assumptions \ref{Assump:CostFunction}, \ref{Assump:1}, \ref{Assump:2}, we have that 
\begin{align*}
T_u^{\epsilon} \mathbf{1}_{\{x_1^{\epsilon}(t) < x < m^{\epsilon}_t\}} \to T_u \mathbf{1}_{\{x_1(t) < x < m_t\}},\ h^{\epsilon} \to h^*,\ \partial_t \psi^{\epsilon} \to \partial_t \psi^*,\ \psi^{\epsilon} \to \psi^*
\end{align*}
locally uniformly on $E=\{ (t,x): t \in [0,1], x \in (l_t, r_t) \}$.
\end{Lemma}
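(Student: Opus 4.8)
The plan is to follow the scheme of Henry-Labord\`ere et al.\ \cite[Lemma 6.7]{HLTanTouzi} almost verbatim, with two adjustments forced by the supermartingale structure. First, the one-period data $(T_u^\epsilon,h^\epsilon,\psi^\epsilon)$ are those of the decreasing supermartingale coupling (Lemma~\ref{Lemma:OptimalDual}) rather than of a curtain coupling; second, there is an additional ``quantile'' region $\{x\le x_1^\epsilon(t)\}$, empty in the martingale setting, on which $h^\epsilon\equiv0\equiv h^*$, so that all the delicate convergence work takes place on the martingale region $\{x_1^\epsilon(t)<x<m^\epsilon(t)\}$ and its closure, and the expansions of Lemmas~\ref{lemm:asymptotic_expansion} and \ref{lemm:asymptotic_expansion_2} supply exactly the quantitative input that replaces the curtain estimates of \cite{HLTanTouzi}.

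I would first establish $T_u^\epsilon\mathbf 1_{\{x_1^\epsilon(t)<x<m^\epsilon(t)\}}\to T_u\mathbf 1_{\{x_1(t)<x<m_t\}}$. On a compact slice $E_{\delta,K}$ of the martingale region, Lemma~\ref{lemm:asymptotic_expansion} gives $T_u^\epsilon(t,x)-x=j_u(t,x)+(\epsilon\vee\rho_0(\epsilon))e_u^\epsilon(t,x)$ with $e_u^\epsilon$ uniformly bounded, hence $T_u^\epsilon\to T_u$ uniformly on $E_{\delta,K}$. To pass to the product with the indicators on an arbitrary compact subset of $E$, I would use that, by Assumptions~\ref{Assump:2} and \ref{Assump:3}(i) together with Lemma~\ref{lemma:phase_transition_curve}, the curves $(\epsilon,t)\mapsto(x_1^\epsilon(t),m^\epsilon(t))$ are uniformly continuous and converge uniformly to $(x_1(t),m_t)$; since $f$ is bounded (Assumption~\ref{Assump:1}), the symmetric differences of the two index sets have Lebesgue measure $O(\rho_0(\epsilon)\vee\rho_1(\epsilon))$, and a covering argument upgrades the convergence on the $E_{\delta,K}$'s to locally uniform convergence of the products. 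Here it is essential that the claim is only \emph{locally} uniform on $E$, since $j_u(t,\cdot)$ may blow up as $x\downarrow x_1(t)$ when $r=\infty$.

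Next I would deduce $h^\epsilon\to h^*$. Both vanish on $\{x\le x_1^\epsilon(t)\}$. On the martingale region, $h^\epsilon$ and $h^*$ satisfy the same ODE, $\partial_x h=\bigl(c_x(x,T_u(t,x))-c_x(x,x)\bigr)/j_u(t,x)$ with $T_u^\epsilon$ (resp.\ $J_u^\epsilon$) in place of $T_u$ (resp.\ $j_u$) and the left-endpoint value $0$; since $c\in C^3$ (Assumption~\ref{Assump:CostFunction}) and $J_u^\epsilon$ is bounded below on compact subsets of $\{x_1(t)<x<m_t\}$ by Lemma~\ref{lemm:asymptotic_expansion}, the integrands converge locally uniformly, whence $h^\epsilon\to h^*$ there. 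For $x\ge m^\epsilon(t)$ one propagates this through the relation $h^\epsilon(t,x)=h^\epsilon\bigl(t,(T_u^\epsilon)^{-1}(t,x)\bigr)-c_y\bigl((T_u^\epsilon)^{-1}(t,x),x\bigr)$, using $(T_u^\epsilon)^{-1}\to T_u^{-1}$ (a consequence of $T_u^\epsilon\to T_u$ and the strict monotonicity of $T_u$ from Lemma~\ref{Lemma:character_limit}(i)) and the continuity of $c_y$. Then $\partial_x\psi^\epsilon=-h^\epsilon\to-h^*=\partial_x\psi^*$ locally uniformly, and integrating in $x$ with a consistent normalization of the additive constant yields $\psi^\epsilon\to\psi^*$ locally uniformly.

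I expect the genuine obstacle to be the convergence of the $t$-derivatives, $\partial_t\psi^\epsilon\to\partial_t\psi^*$, which is not visible from the expansions. I would treat it as in \cite{HLTanTouzi}: differentiate the one-period characterization of $h^\epsilon$ (equivalently $\psi^\epsilon$) in $t$, and use $F\in C_b^4(E)$ (Assumption~\ref{Assump:1}) together with the regularity of the boundary curves (Assumptions~\ref{Assump:2}, \ref{Assump:3}) to obtain $\epsilon$-uniform $C^1$-in-$t$ bounds for $\psi^\epsilon$ on compact subsets of $E$; Arzel\`a--Ascoli then promotes the pointwise convergence already obtained to locally uniform convergence of $\partial_t\psi^\epsilon$, and the limit is identified with $\partial_t\psi^*$ using \eqref{eq:T_S_charac} and the explicit formula for $h^*$. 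Care is again required near $x=x_1(t)$, where some auxiliary quantities may be unbounded; but they enter the relevant expressions only weighted by factors that vanish at the boundary, and the convergence claimed is only locally uniform, so the argument closes.
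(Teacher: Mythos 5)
Your proposal is essentially the paper's own route: the paper omits the argument and instead points to \cite[Lemma~6.7]{HLTanTouzi} together with Lemmas~\ref{lemm:asymptotic_expansion} and \ref{lemm:asymptotic_expansion_2}, which is precisely what you have unpacked — expansion-based convergence of $T_u^\epsilon$, propagation to $h^\epsilon$ via the first-order ODE and the inversion relation for $x\ge m^\epsilon(t)$, integration in $x$ for $\psi^\epsilon$, and an Arzel\`a--Ascoli step for $\partial_t\psi^\epsilon$, with the quantile region $\{x\le x_1^\epsilon(t)\}$ handled trivially because $h^\epsilon\equiv0\equiv h^*$ there.

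Two small corrections. First, the one-period ODE that $h^\epsilon$ satisfies on the martingale region is
\[
\partial_x h^\epsilon(t,x)=\frac{c_x\bigl(x,T_u^\epsilon(t,x)\bigr)-c_x\bigl(x,T_d^\epsilon(t,x)\bigr)}{T_u^\epsilon(t,x)-T_d^\epsilon(t,x)},
\]
not the one you wrote with $J_u^\epsilon$ alone in the denominator and $c_x(x,x)$ in the numerator; the conclusion is unchanged because $J_d^\epsilon=O(\epsilon)$ and $T_d^\epsilon\to x$ locally uniformly on the martingale region by Lemma~\ref{lemm:asymptotic_expansion}, so $T_u^\epsilon-T_d^\epsilon\to j_u$ and $c_x(\cdot,T_d^\epsilon)\to c_x(\cdot,\cdot)$ locally uniformly, but the ODE should be stated in its actual form. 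Second, you invoke Assumption~\ref{Assump:3} — via Lemma~\ref{lemma:phase_transition_curve} (to define $x_1(t)$ and obtain $x_1^\epsilon\to x_1$) and Lemma~\ref{Lemma:character_limit}(i) (strict monotonicity of $T_u$, needed to pass $(T_u^\epsilon)^{-1}\to T_u^{-1}$) — and you are right to do so; note that the hypotheses listed in the lemma's statement omit Assumption~\ref{Assump:3}, which appears to be an oversight in the paper rather than an indication that it is dispensable.
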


\begin{Lemma} \label{Lemma:Convergence_Static}
Suppose Assumptions \ref{Assump:CostFunction}, \ref{Assump:1}, \ref{Assump:2} hold. Then for every c{\`a}dl{\`a}g path $\mathbf{x} \in \D([0,1], \R)$ taking value in $(l_1,r_1)$, we have
$$
\lim_{n \to \infty} \sum_{k=0}^{n-1} \left( \varphi^{\epsilon} (t_k, \mathbf{x}_{t_k})+ \psi^{\epsilon} (t_k, \mathbf{x}_{t_{k+1}}) \right) \to \Psi^*(\mathbf{x}), ~~\mbox{as } \epsilon \to 0.
$$
\end{Lemma}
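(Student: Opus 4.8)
The plan is to adapt the proof of \cite[Lemma 6.7]{HLTanTouzi} (which itself follows \cite{HLTouzi16}); the only genuinely new ingredient is the strict supermartingale region $\{x\le x_1(t)\}$, where the one-period coupling degenerates to the deterministic map $T_d^\epsilon(t,\cdot)=F^{-1}(t+\epsilon,F(t,\cdot))$.

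First I would reorganise the sum by pairing, for each $k$, the term $\varphi^\epsilon(t_k,\mathbf{x}_{t_k})$ with $\psi^\epsilon(t_{k-1},\mathbf{x}_{t_k})$; reindexing the $\psi$-sum and isolating the two endpoints gives
\[
\sum_{k=0}^{n-1}\bigl(\varphi^\epsilon(t_k,\mathbf{x}_{t_k})+\psi^\epsilon(t_k,\mathbf{x}_{t_{k+1}})\bigr)=\varphi^\epsilon(0,\mathbf{x}_0)+\psi^\epsilon(t_{n-1},\mathbf{x}_1)+\sum_{k=1}^{n-1}(\varphi^\epsilon+\psi^\epsilon)(t_k,\mathbf{x}_{t_k})+\sum_{k=1}^{n-1}\bigl(\psi^\epsilon(t_{k-1},\mathbf{x}_{t_k})-\psi^\epsilon(t_k,\mathbf{x}_{t_k})\bigr).
\]
Since $\mathbf{x}$ is c\`adl\`ag with values in $(l_1,r_1)$, every evaluation above lies in a fixed compact subset of $E$, so Lemma \ref{lemm:local_uniform_converge} applies there. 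Using the expansion of Step 2 below, $\varphi^\epsilon+\psi^\epsilon\to0$ uniformly, hence $\varphi^\epsilon(0,\mathbf{x}_0)=-\psi^\epsilon(0,\mathbf{x}_0)+o(1)\to-\psi^*(0,\mathbf{x}_0)$, while $\psi^\epsilon(t_{n-1},\mathbf{x}_1)\to\psi^*(1,\mathbf{x}_1)$ because $t_{n-1}=1-\epsilon\to1$ and $\psi^\epsilon\to\psi^*$ locally uniformly with continuous limit. For the last sum I would write $\psi^\epsilon(t_{k-1},y)-\psi^\epsilon(t_k,y)=-\int_{t_{k-1}}^{t_k}\partial_t\psi^\epsilon(s,y)\,ds=-\epsilon\,\partial_t\psi^*(t_{k-1},y)+o(\epsilon)$ uniformly, using $\partial_t\psi^\epsilon\to\partial_t\psi^*$ from Lemma \ref{lemm:local_uniform_converge}; it is then a Riemann sum along $(\pi_n)$ converging to $-\int_0^1\partial_t\psi^*(t,\mathbf{x}_t)\,dt$.

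The heart of the proof is an asymptotic expansion, locally uniform on compact subsets of $E$, of $(\varphi^\epsilon+\psi^\epsilon)(t,x)$. On the martingale region $x_1^\epsilon(t)<x<m^\epsilon(t)$ I would use the one-period complementary-slackness relations from Lemma \ref{Lemma:OptimalDual}: along the two supporting maps $T_d^\epsilon(t,\cdot)$ and $T_u^\epsilon(t,\cdot)$ the dual constraint $c(x,y)\le\varphi^\epsilon(t,x)+\psi^\epsilon(t,y)+h^\epsilon(t,x)(y-x)$ is an equality, which yields two expressions for $\varphi^\epsilon(t,x)$; combining them with weights $1-q^\epsilon(t,x)$ and $q^\epsilon(t,x)$ the $h^\epsilon$-terms cancel by the martingale identity $q^\epsilon J_u^\epsilon=(1-q^\epsilon)J_d^\epsilon$, leaving $(\varphi^\epsilon+\psi^\epsilon)(t,x)$ expressed purely through $c$, $\psi^\epsilon$, $T_d^\epsilon$, $T_u^\epsilon$, $q^\epsilon$. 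Taylor-expanding (using $c(x,x)=c_y(x,x)=0$, so $c(x,T_d^\epsilon)=O(\epsilon^2)$, and $\psi^\epsilon(t,x)-\psi^\epsilon(t,T_d^\epsilon)=J_d^\epsilon\,\partial_x\psi^\epsilon(t,x)+O(\epsilon^2)$) and inserting the expansions $J_d^\epsilon=\epsilon j_d^\epsilon+O(\epsilon^2)$, $J_u^\epsilon=j_u+o(1)$, $q^\epsilon=\epsilon\,j_d/j_u+o(\epsilon)$ of Lemma \ref{lemm:asymptotic_expansion} together with $\psi^\epsilon\to\psi^*$, $h^\epsilon\to h^*$, $T_u^\epsilon\mathbf{1}_{\{x_1^\epsilon<x<m^\epsilon\}}\to T_u\mathbf{1}_{\{x_1<x<m\}}$ (hence $\partial_x\psi^\epsilon\to\partial_x\psi^*=-h^*$), I expect $\epsilon^{-1}(\varphi^\epsilon+\psi^\epsilon)(t,x)$ to converge, after simplification with the explicit definition of $h^*$, to the integrand of the two $dt$-integrals in $\Psi^*$ restricted to $\{x_1(t)<x<m_t\}$. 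On the strict supermartingale region $x\le x_1^\epsilon(t)$ there is no upward jump and $h^\epsilon(t,x)=0$, so $(\varphi^\epsilon+\psi^\epsilon)(t,x)=c(x,T_d^\epsilon)+\psi^\epsilon(t,x)-\psi^\epsilon(t,T_d^\epsilon)$, which by Lemma \ref{lemm:asymptotic_expansion_2}, $c(x,x)=c_y(x,x)=0$ and $\partial_x\psi^*\equiv0$ there, is $o(\epsilon)$; similarly $(\varphi^\epsilon+\psi^\epsilon)(t,x)=o(\epsilon)$ for $x\ge m_t$. This justifies $\varphi^\epsilon+\psi^\epsilon\to0$ used above and shows that $\sum_{k=1}^{n-1}(\varphi^\epsilon+\psi^\epsilon)(t_k,\mathbf{x}_{t_k})$ is a Riemann sum; since $\mathbf{x}$ has at most countably many jumps and the integrand is bounded on the relevant compact, dominated convergence identifies its limit with the corresponding $dt$-integrals in $\Psi^*$. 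Adding the limits from Step 1 then produces exactly $\Psi^*(\mathbf{x})$.

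I expect the main obstacle to be Step 2: obtaining the expansion of $\varphi^\epsilon+\psi^\epsilon$ \emph{uniformly} on compacts up to the moving boundaries $x_1^\epsilon(t)$ and $m^\epsilon(t)$, where the one-period jump sizes degenerate, and simultaneously replacing $\mathbf{1}_{\{x_1^\epsilon(t)<x<m^\epsilon(t)\}}$ by $\mathbf{1}_{\{x_1(t)<x<m_t\}}$ — this is where Assumptions \ref{Assump:2} and \ref{Assump:3}(i) enter, via the uniform continuity of $(\epsilon,t)\mapsto(x_1^\epsilon(t),m^\epsilon(t))$, which bounds the $\mu_t$-measure of the discrepancy set by $O(\rho_0(\epsilon)\vee\rho_1(\epsilon))$. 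A related delicate point is that all per-step errors in Steps 1--2 must be $o(\epsilon)$ \emph{uniformly}, so that they survive summation over $n\sim\epsilon^{-1}$ indices; this is exactly why the convergences in Lemma \ref{lemm:local_uniform_converge} and the second-order remainders in Lemmas \ref{lemm:asymptotic_expansion}--\ref{lemm:asymptotic_expansion_2} are stated with explicit moduli. The steps that are verbatim identical to the martingale case treated in \cite{HLTanTouzi} would be omitted, as elsewhere in the paper.
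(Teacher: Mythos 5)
Your proposal is correct and follows essentially the same route as the paper: the same reindexing of the sum into an endpoint term, a $\sum(\varphi^\epsilon+\psi^\epsilon)$ Riemann sum, and a $\sum(\psi^\epsilon(t_{k-1},\cdot)-\psi^\epsilon(t_k,\cdot))$ Riemann sum, each handled via the locally uniform convergences of Lemma \ref{lemm:local_uniform_converge} and the asymptotic expansions of Lemmas \ref{lemm:asymptotic_expansion}--\ref{lemm:asymptotic_expansion_2}. The paper writes the supermartingale-region limit as $-\int_0^1 j_d\,\mathbf{1}_{\{\mathbf{x}_t\le x_1(t)\}}\,\partial_x\psi^*\,dt$ rather than observing directly that it vanishes, but since $\partial_x\psi^*\equiv0$ there (as $h^*=0$) the two formulations coincide; your $o(\epsilon)$-per-term argument on that region is a clean equivalent of the paper's.
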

\begin{proof} The arguments are almost identical to those of \cite[Lemma 6.9]{HLTanTouzi}, and thus we only briefly sketch them. First note that 
$$
\begin{aligned}
\sum_{k=0}^{n-1} \left( \varphi^{\epsilon} (t_k, \mathbf{x}_{t_k})+ \psi^{\epsilon} (t_k, \mathbf{x}_{t_{k+1}}) \right) &= \sum_{k=0}^{n-1} \left( \varphi^{\epsilon} (t_k, \mathbf{x}_{t_k})+ \psi^{\epsilon} (t_k, \mathbf{x}_{t_{k}}) \right) + \psi^{\epsilon} (t_{n-1}, \mathbf{x}_{1}) \\
&+\sum_{k=1}^{n-1} \left( \psi^{\epsilon} (t_{k-1}, \mathbf{x}_{t_{k}}) - \psi^{\epsilon} (t_k, \mathbf{x}_{t_k})\right) - \psi^{\epsilon} (0, \mathbf{x}_{0}).
\end{aligned}
$$
Then by Lemma \ref{lemm:local_uniform_converge} and the arguments of \cite[Lemma 6.9]{HLTanTouzi}, as $\epsilon\to0$, we have that $\psi^{\epsilon} (t_{n-1}, \mathbf{x}_{1}) \to \psi^{*} (1, \mathbf{x}_{1})$,
$$
\sum_{k=1}^{n-1} \left( \psi^{\epsilon} (t_{k-1}, \mathbf{x}_{t_{k}}) - \psi^{\epsilon} (t_k, \mathbf{x}_{t_k})\right) \to - \int_0^1 \partial_t \psi^*(t, \mathbf{x}_t) dt,
$$
and $\sum_{k=0}^{n-1} \left( \varphi^{\epsilon} (t_k, \mathbf{x}_{t_k})+ \psi^{\epsilon} (t_k, \mathbf{x}_{t_{k}}) \right)\mathbf{1}_{\{x_1^\epsilon(t_k)<\mathbf{x}_{t_k}<m^\epsilon(t_k)\}}$ converges to
$$
\begin{aligned}
& \int_0^1 -\partial_x \psi^*(t, \mathbf{x}_{t}) j_d (t, \mathbf{x}_{t}) dt \\
& + \int_0^1 \frac{j_d (t, \mathbf{x}_{t})}{j_u (t, \mathbf{x}_{t})} \mathbf{1}_{\{x_1(t) < \mathbf{x}_{t} <m_t\}} \left( \psi^*(t, \mathbf{x}_{t}) - \psi^*(t, \mathbf{x}_{t}+j_u(t, \mathbf{x}_{t})) + c(\mathbf{x}_{t},\mathbf{x}_{t}+j_u(t, \mathbf{x}_{t})) \right) dt.
\end{aligned}
$$
Finally, using similar arguments, and noting that $c(x,x)=0$ for all $x \in \R$, we have that 
$$
\sum_{k=0}^{n-1} \left( \varphi^{\epsilon} (t_k, \mathbf{x}_{t_k})+ \psi^{\epsilon} (t_k, \mathbf{x}_{t_{k}}) \right)\mathbf{1}_{\{\mathbf{x}_{t_k}\leq x_1^\epsilon(t_k)\}}\to- \int_0^1 j_d(t, \mathbf{x}_t) \mathbf{1}_{ \{\mathbf{x}_t \leq x_1(t)\}}\partial_x \psi^*(t, \mathbf{x}_t)  dt,
$$
which concludes the proof.
\end{proof}

\ \\

\begin{Lemma} \label{Lemma:optimality}
Suppose that Assumptions \ref{Assump:CostFunction}, \ref{Assump:1}, \ref{Assump:2}, \ref{Assump:3} hold and $\mu(\bar{\lambda}^*) < \infty$. Then for the probability measure $\P^0$ given in Theorem \ref{Thm:main}, we have that 
$$
\E^{\P^0} [ C(X_\cdot) ] = \E^{\P^0} [ \Psi^*(X_\cdot) ] = \mu(\lambda^*) =  \int_0^1 \int_{x_1(t)}^{m_t} \frac{j_d(t,x)}{j_u(t,x)} c(x, x+j_u(t,x)) f(t,x) dx dt.
$$
\end{Lemma}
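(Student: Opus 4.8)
The plan is to establish the three claimed equalities in turn, working from $\E^{\P^0}[C(X_\cdot)]$ rightward. First I would evaluate $\E^{\P^0}[C(X_\cdot)]$ directly from the SDE characterization \eqref{eq:SDE_main} of $\P^0$ obtained in Theorem \ref{Thm:main}. Since $X$ under $\P^0$ is a pure-jump process, the continuous part $[X]^c$ vanishes, so $C(X_\cdot)=\sum_{0\leq t\leq 1}c(X_{t-},X_t)$. Moreover jumps occur only in the martingale region $\{x_1(t)<X_{t-}<m_t\}$ (on $\{X_{t-}\leq x_1(t)\}$ the motion is the deterministic drift $-j_d\,dt$, with no jump) and at such a jump the process moves from $X_{t-}$ to $X_{t-}+j_u(t,X_{t-})$, contributing $c(X_{t-},X_{t-}+j_u(t,X_{t-}))$, with the jump rate being $\nu_t=\frac{j_d}{j_u}(t,X_{t-})\mathbf{1}_{\{x_1(t)<X_{t-}<m_t\}}$. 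Taking expectations, using the compensator and the fact that $X_t\sim^{\P^0}\mu_t$ (so the law at time $t$ has density $f(t,\cdot)$), gives
$$
\E^{\P^0}[C(X_\cdot)]=\int_0^1\int_{x_1(t)}^{m_t}\frac{j_d(t,x)}{j_u(t,x)}\,c\big(x,x+j_u(t,x)\big)f(t,x)\,dx\,dt,
$$
which is the last of the three quantities. This step should be routine once the bookkeeping of the predictable compensator is set up carefully; the only subtlety is integrability, which is supplied by Assumption \ref{Assump:CostFunction} (growth of $c$) together with Lemma \ref{lemm:asymptotic_expansion} and $\mu(\bar\lambda^*)<\infty$.

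Next I would show $\E^{\P^0}[C(X_\cdot)]=\E^{\P^0}[\Psi^*(X_\cdot)]$. Here the idea is to apply It\^o's formula for c\`adl\`ag semimartingales to $\psi^*(t,X_t)$ under $\P^0$. Because $\psi^*\in C^{1,1}(E)$ (established just before the statement using $h^*(t,\cdot)=0$ on $(\ell_t,x_1(t)]$) and the drift/jump coefficients of \eqref{eq:SDE_main} are locally Lipschitz by Lemma \ref{Lemma:character_limit}(ii), the decomposition
$$
\psi^*(1,X_1)-\psi^*(0,X_0)=\int_0^1\!\partial_t\psi^*(t,X_t)\,dt+\int_0^1\!\partial_x\psi^*(t,X_{t-})\,dX_t^{\mathrm{drift}}+\sum_{0\leq t\leq 1}\big(\psi^*(t,X_t)-\psi^*(t,X_{t-})\big)
$$
holds. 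Inserting the drift $-j_d(t,X_{t-})\mathbf 1_{\{X_{t-}<m_t\}}$ and rewriting the jump sum via its compensator $\nu_t\,dt$, one recognizes exactly the terms defining $\Psi^*(\mathbf x)$ — the martingale part (the compensated jump integral against $\partial_x\psi^*$, or rather the martingale generated by $\psi^*(t,X_t+j_u)-\psi^*(t,X_t)$) has zero $\P^0$-expectation, and one is left with $\E^{\P^0}[\Psi^*(X_\cdot)]=\E^{\P^0}[\psi^*(1,X_1)-\psi^*(0,X_0)-(\text{drift and compensated-jump terms})]$. A short computation matching the definition of $\Psi^*$ against the definition of $h^*$ (via $\partial_x\psi^*=-h^*$ and the ODE $\partial_xh^*=\frac{c_x(x,T_u)-c_x(x,x)}{j_u}$, together with $c(x,x)=c_y(x,x)=0$) shows that $\E^{\P^0}[\Psi^*(X_\cdot)]$ equals the jump-expectation computed in the first step, i.e. equals $\E^{\P^0}[C(X_\cdot)]$; alternatively one verifies $\Psi^*=C$ pathwise on the support of $\P^0$.

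Finally, the equality $\E^{\P^0}[\Psi^*(X_\cdot)]=\mu(\lambda^*)$ is essentially a Fubini computation: from the definition of $\Psi^*$, taking $\P^0$-expectation and using $X_t\sim^{\P^0}\mu_t$ turns each term into an integral against $\mu_t\,dt$ (plus boundary terms at $t=0,1$ coming from $\psi^*(0,\cdot)$ and $\psi^*(1,\cdot)$); comparing with the definition of $\lambda_0^*$ and $\gamma^*=\delta_0+\delta_1+\mathrm{Leb}$ shows the result is precisely $\int_0^1\int_\R\lambda_0^*(t,x)\,\mu_t(dx)\,\gamma^*(dt)=\mu(\lambda^*)$. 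The interchange of expectation and integration is justified by $\mu(\bar\lambda^*)<\infty$, which dominates every term. I expect the main obstacle to be the rigorous justification of the It\^o expansion and the compensator manipulations in the second step: one must check that the local martingale parts are true martingales (or localize and pass to the limit) given that $j_u,j_d$ are only \emph{locally} Lipschitz and not uniformly bounded, which requires the same localization sequence $\tau_p$ used in the proof of Theorem \ref{Thm:main} together with the uniform integrability of $(\mu_t)$ from Lemma \ref{lemma:uniform_integral} and the integrability hypothesis \eqref{eq:optimality_integrability}.
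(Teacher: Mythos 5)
Your proposal follows essentially the same route as the paper: It\^o's formula applied to $\psi^*(t,X_t)$ to extract a local martingale, the predictable compensator of the jump measure together with the marginal laws $\mu_t$ to turn pathwise quantities into the deterministic double integral, and the integrability hypothesis $\mu(\bar\lambda^*)<\infty$ (plus localization via $\tau_p$) to pass from local martingales to genuine expectations. The paper simply starts from the It\^o expansion of $\psi^*(t,X_t)$ and deduces $\E^{\P^0}[\Psi^*(X_\cdot)]$ first, then handles $\E^{\P^0}[C(X_\cdot)]$ via $[X]^c\equiv 0$ and an auxiliary local martingale $Y_t$, whereas you evaluate $\E^{\P^0}[C(X_\cdot)]$ first; this is a reordering, not a different argument. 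One small imprecision worth flagging: the suggested alternative that ``$\Psi^*=C$ pathwise on the support of $\P^0$'' is not quite what holds. Pathwise one has $\Psi^*(X)+\int_0^1 h^*(t,X_{t-})\,dX_t = C(X)$, $\P^0$-a.s.\ (this is the tightness of the superhedge on the support of the optimizer), and it is the vanishing $\P^0$-expectation of the stochastic integral term that makes the two expectations agree; dropping that term changes the statement.
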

\begin{proof}
The arguments are almost identical to those of \cite[Lemma 6.10]{HLTanTouzi} (adapted to the right-curtain coupling situation). We only need to adapt various definitions to include the seupermartingale regions governed by $t\mapsto x_1(t)$. 

By It\^o's formula, the following process is a local martingale:
$$
\begin{aligned}
& \psi^*(t, X_t) - \psi^*(0,X_0) - \int_0^1 \large( \partial_t \psi^*(t, X_t) + \left(j_d(t, X_t) \mathbf{1}_{\{x_1(t) < X_t < m_t\}} +j_d(t, X_t) \mathbf{1}_{ \{X_t \leq x_1(t)\}} \right) \\
& \partial_x \psi^*(t, X_t)  \large) dt+ \int_0^1 \frac{j_d(t,x_t)}{j_u(t,x_t)} \mathbf{1}_{\{x_1(t) < X_t < m_t\}} \left( \psi^*(t, X_t) -  \psi^*(t, X_t + j_u(t, X_t))  \right) dt.
\end{aligned}
$$
Moreover, using the fact that $\mu(\bar{\lambda}^*) < \infty$ together with the dominated convergence theorem, we have that 
$$
 \E^{\P^0} [ \Psi^*(X_\cdot) ] =  \int_0^1 \int_{x_1(t)}^{m_t} \frac{j_d(t,x)}{j_u(t,x)} c(x, x+j_u(t,x)) f(t,x) dx dt.
$$
To compute $\E^{\P^0} [ C(X_\cdot) ]$, noticing that $[X]_t^c=0$, $\P^0$-a.s., and the process
$$
Y_t: = \sum_{s \leq t} | c(X_{s-}, X_s) | - \int_0^t | c(X_{s-}, X_{s-} + j_u(t,X_{s-})| \frac{j_d(t,X_{s-})}{j_u(t,X_{s-})} \mathbf{1}_{\{x_1(t) < X_t < m_t\}} dt
$$ 
is a local martingale. The remainder of the proof follows the arguments of \cite[Lemma 6.10]{HLTouzi16}.
\end{proof}

In the following lemma, we establish the continuous limit of the dynamic part. The proof relies on Lemma \ref{lemm:local_uniform_converge}, but otherwise is identical to the proof of \cite[Lemma 6.11]{HLTanTouzi}, and thus omitted.
\begin{Lemma} \label{Lemma:Convergence_Rynamic}
Suppose Assumptions \ref{Assump:1}, \ref{Assump:2}, \ref{Assump:3} hold. Then the following convergence in probability (as $\epsilon\to0$) holds under every supermartingale measure $\P \in \Sc_{\infty}$:
$$
\sum_{k=1}^{n-1} h^{\epsilon} (t_k, X) (X_{t_{k+1}} - X_{t_k}) \to \int_0^1 h^*(t, X_{t-}) d X_t.
$$
\end{Lemma}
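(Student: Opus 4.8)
The plan is to recognise the sum on the left as a discrete stochastic integral against $X$ and to pass to the limit by a localisation argument built on the local uniform convergence $h^\epsilon\to h^*$ of Lemma~\ref{lemm:local_uniform_converge}. Fix $\P\in\Sc_\infty$ and let $X=X_0+M-A$ be the Doob--Meyer decomposition, with $M$ a local $\P$-martingale and $A$ a predictable non-decreasing process, $A_0=0$. Set $H^{\epsilon,n}_t:=\sum_{k=0}^{n-1}h^\epsilon(t^n_k,X_{t^n_k})\mathbf{1}_{(t^n_k,t^n_{k+1}]}(t)$ and $H^*_t:=h^*(t,X_{t-})$; since $t\mapsto X_{t-}$ is left-continuous adapted and $h^*$ is continuous, $H^*$ is predictable and locally bounded, so that $\int_0^1 h^*(t,X_{t-})\,dX_t=(H^*\cdot X)_1$ is well defined, and the left-hand side of the statement is exactly $(H^{\epsilon,n}\cdot X)_1$. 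It therefore suffices to show $(H^{\epsilon,n}\cdot X)_1\to(H^*\cdot X)_1$ in $\P$-probability as $n\to\infty$.

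First I would localise: for $p\in\N$ put $\tau_p:=\inf\{t\ge0:|X_t|\vee|X_{t-}|\ge p\}\wedge1$. On $[0,\tau_p]$ the path of $X$ stays in the compact $[-p,p]$, and by Lemma~\ref{lemm:local_uniform_converge} the functions $h^\epsilon$ converge uniformly on this compact to the continuous function $h^*$; hence $H^{\epsilon,n}$ and $H^*$ are dominated there by a constant $c_p$, uniformly in $n$. Moreover, for $\P$-a.e.\ $\omega$ and Lebesgue-a.e.\ $t$ one has $t^n_k\uparrow t$ and, by right-continuity of $X$, $X_{t^n_k}(\omega)\to X_{t-}(\omega)$; together with the local uniform convergence $h^\epsilon\to h^*$ and the continuity of $x\mapsto h^*(t,x)$ across $x_1(t)$ and $m_t$ (which absorbs the delicate convergence of the indicator regions $\{x_1^\epsilon(t)<x<m^\epsilon(t)\}$ hidden inside $h^\epsilon$, handled exactly as in \cite{HLTanTouzi}), this gives $H^{\epsilon,n}_t(\omega)\to H^*_t(\omega)$.

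Next I would pass to the limit on the localised interval. Since on $[0,\tau_p]$ the predictable integrands are uniformly bounded by $c_p$ and converge pointwise to $H^*$, the dominated convergence theorem for stochastic integrals (Jacod--Shiryaev \cite[Ch.~I.4]{JacodShiryaev}), applied separately to the local-martingale part $M$ and to the finite-variation part $A$, yields $(H^{\epsilon,n}\cdot X^{\tau_p})_1\to(H^*\cdot X^{\tau_p})_1$ in $\P$-probability for each fixed $p$; heuristically this is the It\^o isometry $\E^{\P}\big[\big((H^{\epsilon,n}-H^*)\cdot M^{\tau_p}\big)_1^2\big]=\E^{\P}\big[\int_0^{\tau_p}(H^{\epsilon,n}_t-H^*_t)^2\,d[M]_t\big]\to0$ for the martingale term and $\big|\int_0^{\tau_p}(H^{\epsilon,n}_t-H^*_t)\,dA_t\big|\le\int_0^{\tau_p}|H^{\epsilon,n}_t-H^*_t|\,dA_t\to0$ pathwise for the drift term. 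Finally, since every $\P\in\Sc_\infty$ has integrable marginals, the supermartingale maximal inequality (as in the proof of Proposition~\ref{Prop::Tightness}) gives $\P[\tau_p<1]\le\P\big[\sup_{0\le t\le1}|X_t|\ge p\big]\le p^{-1}\big(\E^{\P}[|X_0|]+2\,\E^{\P}[|X_1|]\big)\to0$ as $p\to\infty$, independently of $n$; combining this with the above and the fact that $(H\cdot X^{\tau_p})_1=(H\cdot X)_1$ on $\{\tau_p=1\}$ gives, via the usual $\eta/2+\eta/2$ argument, the asserted convergence in probability.

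The main obstacle is the absence of any uniform integrability or boundedness of the integrands $h^\epsilon$, which forces the localisation to be carried out uniformly in the partition $\pi_n$, together with the need to control the predictable finite-variation part $A$ of a generic supermartingale measure --- a feature not present in the martingale reference \cite[Lemma~6.11]{HLTanTouzi}. Once the local uniform convergence of Lemma~\ref{lemm:local_uniform_converge} is in hand, the rest is a routine dominated-convergence passage to the limit.
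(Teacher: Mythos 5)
Your proof is correct and follows essentially the route the paper is deferring to (the proof of \cite[Lemma 6.11]{HLTanTouzi} adapted by Lemma~\ref{lemm:local_uniform_converge}): identify the discrete sum with a stochastic integral of a step-function integrand, use the local uniform convergence $h^\epsilon\to h^*$ together with right-continuity of $X$ to get pointwise convergence of integrands, stop at $\tau_p$ to get uniform domination by a locally bounded predictable process, invoke the dominated convergence theorem for stochastic integrals from Jacod--Shiryaev, and remove the localisation via the supermartingale maximal inequality. Your remark that the predictable finite-variation part $A$ of the Doob--Meyer decomposition requires a separate (pathwise, Stieltjes) dominated-convergence step is the one genuinely supermartingale-specific point and is handled correctly.
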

Finally, we provide the proof of Theorem \ref{Thm:optimality}.
\begin{proof}[Proof of  Theorem \ref{Thm:optimality}]

Using the discrete superhedging duality \eqref{eq:Discrete_Superhedge}, together with the convergence results Lemma \ref{Lemma:Quadratic_Variation}, \ref{Lemma:Convergence_Static}, \ref{Lemma:Convergence_Rynamic}, we have that under every $\P \in \Sc_{\infty}$, $(\Psi^*,h^*)$ superhedges the continuous-time cost $C(X_\cdot)$:
$$
\Psi^*(X_\cdot) + \int_0^1 h^*(t, X_{t-}) d X_t \geq \int_0^1 \frac{1}{2} c_{yy} (X_t, X_t) d [X]_t^c + \sum_{0 \leq t \leq 1} c(X_{t-}, X_t), \quad \P\mbox{-a.s.}
$$
By the weak duality, 
$$
\E^{\P^0} [ C(X_\cdot) ] \leq \mathbf{P}_{\infty} (\mu) \leq \mathbf{D}_{\infty} (\mu) \leq \mu(\lambda^*).
$$
Then Lemma \ref{Lemma:optimality} concludes the proof. 
\end{proof}

\section{Examples} \label{sec:examples}

In this section, we will consider three examples of supermartingales matching a given PCOCD. In contrary to the martingale case of \cite{JuilletPCOC,HLTanTouzi}(that covers left-curtain and right-curtain couplings), in the supermartingale context, the increasing and decreasing couplings provide significantly different dynamics in continuous-time.

\subsection{Uniform distribution with bounded support} \label{sec:uniform}
In this section the family of marginal distributions $(\mu_t)_{t\in[0,1]}$ satisfies
\begin{equation}\label{eq:uniformMarginals}
\mu_t=\frac{1}{e^t+e^{2t}} \lambda_{[-e^{2t}, e^t ]}, \quad t \in [0,1].
\end{equation}
(This example was also considered in the martingale case by Juillet \cite{JuilletPCOC}.) Our goal is to explicitly determine the quantities that define the SDE given in Theorem \ref{Thm:main}.

Note that $\ell_t=-e^{2t}$ and $r_t=e^t$ for all $t\in[0,1]$. Furthermore, $(\mu_t)_{t\in[0,1]}$ satisfies Assumption \ref{Assump:1} with $m_\epsilon(t)=\ell_t$ and $m^\epsilon(t)=r_t$, for each $t \in [0,1)$ and $\epsilon \in (0,1-t]$. Recall that $m_\epsilon,m^\epsilon$ maximizes and minimizes, respectively, the function $x\mapsto F(t+\epsilon,x)-F(t,x)$. Observe that $m_\epsilon,m^\epsilon$ are independent of $\epsilon>0$. By direct computation we have that, for all $t\in(0,1)$, $\partial_t F(t,\cdot)$ is minimized and maximized at $m_t=e^t$ and $\tilde{m}_t=-e^{2t}$. Then $m^0(t):=m_t=\lim_{\epsilon\downarrow 0}m^\epsilon(t)=e^t$ for all $t\in[0,1]$. It is now easy to see that $(\mu_t)_{t\in[0,1]}$ satisfies Assumption \ref{Assump:2}.
However, since $t\mapsto r_t$ is not constant and $m_t=r_t$, Assumption \ref{Assump:3} does not hold. For this reason, most of the results of Section \ref{sec:SDE} (that lead to Theorem \ref{Thm:main}) cannot be applied, and thus we will prove the relevant statements by direct calculations.

Now fix $t \in [0,1)$ and $\epsilon \in (0,1-t]$, and consider the decreasing supermartingale coupling $\hat\P^{\mu_t,\mu_{t+\epsilon}}$ of $\mu_t$ and $\mu_{t+\epsilon}$ (see Appendix \ref{sec:Decreasing_SMOT_one_period_primal}). Since, Assumption \ref{Assump:1} is satisfied, the phase transition point, that separates the martingale and supermartingale regions, is unique; see Lemma \ref{lem:transition_dispersion}. The transition threshold is denoted by
$$
x_1^\epsilon(t):=x_1^{\mu_t,\mu_{t+\epsilon}},
$$
see \eqref{eq:x_1Appendix}.
\begin{Lemma}\label{Lemma:trans_uniform_decreasing}
	If $(\mu_t)_{t\in[0,1]}$ is given by \eqref{eq:uniformMarginals}, then the phase transition $x_1^{\epsilon}(t)$ is given by
	\begin{equation} \label{eq:transition_decreasing_uniform}
	x_1^{\epsilon} (t)= \frac{e^t ( e^{2(t+\epsilon)}-e^{t+\epsilon}+e^{t} ) + e^{2t} ( e^{t} - 2 e^{t+\epsilon})}{e^{t+\epsilon} + e^{2(t+\epsilon)} - e^t - e^{2t}}.\iffalse, ~~ x_2^{\epsilon} (t) = x_1^{\epsilon} (t) - e^{t+\epsilon} + e^t.\fi
	\end{equation}
	Furthermore, the limit $x_1(t):=\lim_{\epsilon\downarrow 0}x_1^\epsilon(t)$ exists and is given by

	\begin{equation} \label{eq:transition_decreasing_uniform_cont}
	x_1(t)=  \frac{-e^{t} }{1 + 2 e^{t} } \in (-e^{2t}, e^t),\quad t\in[0,1].
	\end{equation}
\end{Lemma}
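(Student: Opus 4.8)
The plan is to start from the characterisation of the threshold $x_1^\epsilon(t)=x_1^{\mu_t,\mu_{t+\epsilon}}$ (together with its companion level $y_1^\epsilon(t)=y_1^{\mu_t,\mu_{t+\epsilon}}$, cf.\ \eqref{eq:x_1Appendix}) that comes from the structure of the decreasing supermartingale coupling recalled in Appendix~\ref{sec:Decreasing_SMOT_one_period_primal}: to the right of $x_1^\epsilon(t)$ the coupling $\hat\P^{\mu_t,\mu_{t+\epsilon}}$ agrees with the right-curtain martingale coupling of $\mu_t|_{[x_1^\epsilon(t),r_t)}\leq_c\mu_{t+\epsilon}|_{[y_1^\epsilon(t),r_{t+\epsilon})}$, so these two (sub)measures must have the same mass and the same barycentre. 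This yields the two scalar identities $\int_{x_1^\epsilon(t)}^{r_t}f(t,x)\,dx=\int_{y_1^\epsilon(t)}^{r_{t+\epsilon}}f(t+\epsilon,x)\,dx$ and $\int_{x_1^\epsilon(t)}^{r_t}xf(t,x)\,dx=\int_{y_1^\epsilon(t)}^{r_{t+\epsilon}}xf(t+\epsilon,x)\,dx$. By Lemma~\ref{lem:transition_dispersion} (applicable since Assumption~\ref{Assump:1}(iii) holds here with $m_\epsilon(t)=\ell_t$, $m^\epsilon(t)=r_t$) this system has a unique solution, which is $x_1^\epsilon(t)$.

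Next I would simply evaluate these integrals. Writing $a=e^t$, $b=e^{2t}$, $a'=e^{t+\epsilon}$, $b'=e^{2(t+\epsilon)}$, so that $\mu_t$ is uniform on $[-b,a]$ with density $(a+b)^{-1}$ and similarly for $\mu_{t+\epsilon}$, the mass identity reads $\frac{a-x_1^\epsilon}{a+b}=\frac{a'-y_1^\epsilon}{a'+b'}$ and the mean identity reads $\frac{(a-x_1^\epsilon)(a+x_1^\epsilon)}{a+b}=\frac{(a'-y_1^\epsilon)(a'+y_1^\epsilon)}{a'+b'}$. Dividing the second by the first gives the linear relation $a+x_1^\epsilon=a'+y_1^\epsilon$, i.e.\ $y_1^\epsilon(t)=x_1^\epsilon(t)-(e^{t+\epsilon}-e^t)$; substituting this back into the mass identity leaves a single linear equation for $x_1^\epsilon(t)$, and solving it (collecting the $x_1^\epsilon$-terms) produces exactly \eqref{eq:transition_decreasing_uniform} — one checks that the numerator and denominator displayed there are, respectively, $-(aa'+2a'b-a^2-ab-ab')$ and $-(a+b-a'-b')$, so that the stated quotient coincides with the one obtained.

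For the continuous-time limit, as $\epsilon\downarrow0$ both $a+b-a'-b'$ and $aa'+2a'b-a^2-ab-ab'$ vanish, so I would Taylor-expand using $a'=ae^\epsilon=a+a\epsilon+O(\epsilon^2)$ and $b'=be^{2\epsilon}=b+2b\epsilon+O(\epsilon^2)$: this gives $a+b-a'-b'=-(a+2b)\epsilon+O(\epsilon^2)$ and $aa'+2a'b-a^2-ab-ab'=a^2\epsilon+O(\epsilon^2)$, whence $x_1^\epsilon(t)\to -a^2/(a+2b)$; using $b=a^2$ this simplifies to $-a/(1+2a)=-e^t/(1+2e^t)$, which is \eqref{eq:transition_decreasing_uniform_cont}. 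Finally, $-e^t/(1+2e^t)$ is negative, hence strictly below $r_t=e^t$, while the elementary inequality $e^t<e^{2t}+2e^{3t}$ (valid for $t\geq0$) rearranges to $-e^t/(1+2e^t)>-e^{2t}=\ell_t$, giving $x_1(t)\in(-e^{2t},e^t)$.

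The argument is entirely computational; the only point requiring care is extracting the correct pair of defining equations for $x_1^\epsilon(t)$ from the geometry of the decreasing supermartingale coupling (the rest being linear algebra and a one-term Taylor expansion). Note also that Assumption~\ref{Assump:3} fails in this example because $m_t=r_t$ and $t\mapsto r_t$ is non-constant, which is precisely why the mass/mean identities must be invoked directly from the one-period construction rather than through Lemma~\ref{lemma:phase_transition_curve}.
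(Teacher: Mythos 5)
Your argument is correct and follows essentially the same route as the paper's proof: you invoke the mass and mean preservation identities coming from the right-curtain/martingale structure of the decreasing supermartingale coupling on $[x_1^\epsilon(t),r_t)$, derive the linear relation $y_1^\epsilon(t)=x_1^\epsilon(t)-(e^{t+\epsilon}-e^t)$, back-substitute, and solve. The only cosmetic difference is that you compute the $\epsilon\downarrow0$ limit via a first-order Taylor expansion of numerator and denominator rather than via L'H\^opital's rule as the paper does; the two are interchangeable for this $0/0$ quotient.
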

\begin{proof}
To determine $x_1^{\epsilon}(t)\in(\ell_t,r_t)$ we use the fact that under the decreasing supermartingale coupling $\hat\P^{\mu_t,\mu_{t+\epsilon}}$, $\mu\lvert_{[x_1^\epsilon(t),r_t)}$ is mapped to $\nu\lvert_{[y_1^\epsilon(t),r_{t+\epsilon})}$ in a martingale way, where $y_1^\epsilon(t)=y_1^{\mu_t,\mu_{t+\epsilon}}$ (see \eqref{eq:y_1Appendix}). In particular, the pair $(x_1^\epsilon(t),y_1^\epsilon(t))$ is uniquely determined by the mass and mean preservation condition
\begin{equation}%\label{eq:mean-massExamples}
\int^{r_t}_{x_1^\epsilon(t)}z^i\mu_t(dz)=\int^{r_{t+\epsilon}}_{y_1^\epsilon(t)}z^i\mu_{t+\epsilon}(dz),\quad i=0,1.
\end{equation}
Direct computation leads to
$$
\frac{e^t-x_1^{\epsilon}(t)}{e^t+e^{2t}} = \frac{e^{t+\epsilon}-y_1^{\epsilon}(t)}{e^{t+\epsilon}+e^{2(t+\epsilon)}},\quad
\frac{e^{2t}-(x_1^{\epsilon}(t))^2}{e^t+e^{2t}} = \frac{e^{2(t+\epsilon)}-(y_1^{\epsilon}(t))^2}{e^{t+\epsilon}+e^{2(t+\epsilon)}}.
$$
Solving the above equations gives
$$
x_1^{\epsilon} (t)= \frac{e^t ( e^{2(t+\epsilon)}-e^{t+\epsilon}+e^{t} ) + e^{2t} ( e^{t} - 2 e^{t+\epsilon})}{e^{t+\epsilon} + e^{2(t+\epsilon)} - e^t - e^{2t}},\quad y_1^{\epsilon} (t) = x_1^{\epsilon} (t) - e^{t+\epsilon} + e^t.
$$
By letting $\epsilon\downarrow 0$, direct calculation (using the L'H\^opital's rule) gives that $x_1(t) =  \frac{-e^{t} }{1 + 2 e^{t} }$ (note that $\lim_{\epsilon\downarrow 0 }y^\epsilon_1(t)=x_1(t)$), which concludes the proof.
\end{proof}
\iffalse
In addition, one can justify that the above phase transition is unique. In other words, all the mass left, i.e. $\mu_t|_{[-e^{2t}, x_1^{\epsilon}(t)]}$, is transported to $\mu_{t+\epsilon}|_{[-e^{2(t+\epsilon)}, x_2^{\epsilon}(t)]}$ by quantile coupling, and there is no second phase transition from quantile coupling to right-curtain martingale coupling. Indeed, if there exists a second phase transition, it should take place at $\bar{x}^h(t)$ such that $F(t,\bar{x}^h(t))=F(t+\epsilon,\bar{x}^h(t))$, and in addition $\bar{x}^h(t)<x_1^{\epsilon}(t)$. For the details of the arguments, see Proposition \ref{Prop:one_phase_transition}. Now using the definition of $F$, $\bar{x}^h(t)$ should satisfy the equation
$$
\frac{\bar{x}^h(t)+e^{2t}}{e^t+e^{2t}} = \frac{\bar{x}^h(t)+e^{2(t+\epsilon)}}{e^{t+\epsilon}+e^{2(t+\epsilon)}},
$$
which gives the solution
$$
\bar{x}^h(t)=\frac{e^{3t+2h}-e^{3t+\epsilon} }{e^{t+\epsilon} + e^{2(t+\epsilon)} - e^t - e^{2t}},
$$
Using \eqref{eq:transition_decreasing_uniform}, it is easy to verify that $\bar{x}^h(t)>x_1^{\epsilon}(t)$, which implies the uniqueness of the phase transition.
\fi

Even though not all the Assumptions \ref{Assump:1}, \ref{Assump:2}, \ref{Assump:3} are satisfied in the case of \eqref{eq:uniformMarginals}, the following result shows that the statement of Theorem \ref{Thm:main} remains true. 

\begin{Proposition}%\label{Prop:sde_Sniform_decreasing}
 If $(\mu_t)_{t\in[0,1]}$ is given by \eqref{eq:uniformMarginals}, then the statement of Theorem \ref{Thm:main} holds, and the corresponding SDE is explicitly given by
	$$
	X_t = X_0 + \int_0^t\mathbf{1}_{\{X_{s-} > x_1(s)\} } j_u(s,X_{s-}) ( d N_s - \nu_s ds )    -\int_0^t \mathbf{1}_{ \{X_{s-}\leq x_1(s)\}} \tilde{\nu}_s ds,
	$$
	where $(N_s)_{0 \leq s \leq 1}$ is a unit size jump process with predictable compensated process $(\nu_s)_{0 \leq s \leq 1}$ given by  $\nu_s:= \frac{j_d}{j_u} ( s, X_{s-} )$, with $j_d(s,x)= \frac{1}{2} \frac{e^{s}-x}{1+e^{s}}(1+2e^{s})$, $j_u(s,x) = e^s -x$, and $\tilde{\nu}_s:= \frac{e^{2s}-x(1+2e^s)}{1+e^s}$.
	
	In addition, if Assumption \ref{Assump:CostFunction} and the integrability condition \eqref{eq:optimality_integrability} are satisfied, then the above process is the optimal solution for the 
	primal problem $\eqref{eq:Primal_Full}$ and the conclusion of Theorem \ref{Thm:optimality} holds.
\end{Proposition}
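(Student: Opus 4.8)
The plan is to re-run the proofs of Theorems~\ref{Thm:main} and~\ref{Thm:optimality}, replacing every step that invokes Assumption~\ref{Assump:3} by an explicit computation based on~\eqref{eq:uniformMarginals}. Since Assumptions~\ref{Assump:1} and~\ref{Assump:2} do hold in this example, Proposition~\ref{Prop::Tightness} and Lemma~\ref{Lemma:uniform_boudedness} apply verbatim, so $(\P^n)_{n\geq1}$ is tight and every limit point belongs to $\Sc_\infty(\mu)$; only the identification of the limit must be redone. The affine cumulative distribution function $F(t,x)=(x+e^{2t})/(e^t+e^{2t})$ makes the one-period decreasing supermartingale coupling $\hat\P^{\mu_t,\mu_{t+\epsilon}}$ fully explicit: in the supermartingale region $x\le x_1^\epsilon(t)$ one has $T_d^\epsilon(t,x)=F^{-1}(t+\epsilon,F(t,x))$, and a first-order Taylor expansion in $\epsilon$ gives $J_d^\epsilon(t,x)=x-T_d^\epsilon(t,x)=\epsilon\,\tilde\nu(t,x)+O(\epsilon^2)$ with $\tilde\nu(t,x)=\frac{e^{2t}-x(1+2e^t)}{1+e^t}=\frac{1}{f(t,x)}\partial_t F(t,x)$, uniformly on compacts (the analogue of Lemma~\ref{lemm:asymptotic_expansion_2}); in the martingale region $[x_1^\epsilon(t),r_t)$ the coupling coincides, by Appendix~\ref{sec:Decreasing_SMOT_one_period_primal}, with the right-curtain coupling of the two truncated uniform measures $\mu_t\lvert_{[x_1^\epsilon(t),e^t]}$ and $\mu_{t+\epsilon}\lvert_{[y_1^\epsilon(t),e^{t+\epsilon}]}$, whose supporting maps are affine and hence computable in closed form. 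From these one reads off $T_u^\epsilon(t,x)=e^t+O(\epsilon)$ uniformly on compact subsets of $\{x_1(t)<x<e^t\}$, hence $J_u^\epsilon(t,x)=(e^t-x)+O(\epsilon)$ and $j_u(t,x)=e^t-x$; and, after a further expansion of $T_d^\epsilon$ and of $q^\epsilon=J_d^\epsilon/(J_d^\epsilon+J_u^\epsilon)$, $J_d^\epsilon(t,x)=\epsilon\,j_d(t,x)+O(\epsilon^2)$ and $q^\epsilon(t,x)=\epsilon\frac{j_d}{j_u}(t,x)+O(\epsilon^2)$ with $j_d(t,x)=\tfrac12\frac{e^t-x}{1+e^t}(1+2e^t)$ (the analogue of Lemma~\ref{lemm:asymptotic_expansion}), using also that, by Lemma~\ref{Lemma:trans_uniform_decreasing}, the maps $(t,\epsilon)\mapsto x_1^\epsilon(t),y_1^\epsilon(t)$ are smooth and converge to $x_1(t)$.

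With these expansions in hand the martingale-problem argument in the proof of Theorem~\ref{Thm:main} goes through unchanged. The only cosmetic difference is that, since $m_t=r_t=e^t$ while $X_{t-}<r_t$ for all $t$, $\P^0$-a.s.\ (the process is pure-jump with a downward drift between jumps, and every upward jump lands at $r_t$), the indicator $\mathbf{1}_{\{x_1(s)<X_{s-}<m_s\}}$ equals $\mathbf{1}_{\{X_{s-}>x_1(s)\}}$, and $j_d(s,x)=\tilde\nu(s,x)$ for $x\le x_1(s)$, so the displayed SDE coincides with the one of Theorem~\ref{Thm:main}. This shows that every limit point of $(\P^n)$ is a weak solution of that SDE. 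For uniqueness in law observe that $j_u(t,x)=e^t-x$, $j_d$, $\nu$ and $\tilde\nu$ are rational functions of $(x,e^t)$, hence locally Lipschitz, that $j_d$ extends continuously across $x=x_1(t)$ (both one-sided formulae give the value $e^t$ there), and that all jumps are bounded because $\mathrm{supp}(\mu_t)\subseteq[-e^2,e]$ for $t\in[0,1]$; the uniqueness argument of Henry-Labord\`ere and Touzi~\cite[Theorem 3.7]{HLTouzi16} then applies and yields $\P^n\to\P^0$.

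For the optimality part we follow Section~\ref{sec:optimalityDual}. The dual objects $h^*$, $\psi^*$, $\gamma^*$, $\lambda_0^*$, $\bar\lambda_0^*$ are still well defined from the explicit $T_u(t,x)=e^t$, $j_u$ and $j_d$: $h^*(t,\cdot)\equiv0$ on $(\ell_t,x_1(t)]$, $\partial_x h^*(t,x)=\frac{c_x(x,e^t)-c_x(x,x)}{e^t-x}$ on $(x_1(t),e^t)$ with $h^*(t,x_1(t)^+)=0$, and $\psi^*\in C^{1,1}(E)$ by the argument of~\cite[Corollary 3.10]{HLTanTouzi}; the value of $h^*$ at the moving boundary $x=r_t$ plays no role since $[X]^c\equiv0$ and $X$ is pure-jump, so $X_{t-}<r_t$ at every jump time and $h^*$ is evaluated only in the open martingale region. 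Lemmas~\ref{lemm:local_uniform_converge}, \ref{Lemma:Convergence_Static}, \ref{Lemma:optimality} and~\ref{Lemma:Convergence_Rynamic} use only Assumptions~\ref{Assump:CostFunction}, \ref{Assump:1}, \ref{Assump:2} together with the asymptotic expansions above (which here replace Lemmas~\ref{lemm:asymptotic_expansion}, \ref{lemm:asymptotic_expansion_2}), so they remain valid. Passing to the limit in the discrete superhedging inequality~\eqref{eq:Discrete_Superhedge} (Lemma~\ref{Lemma:OptimalDual}, available since Assumption~\ref{Assump:1} holds) gives that $(\Psi^*,h^*)$ superhedges $C(X)$ under every $\P\in\Sc_\infty$; under~\eqref{eq:optimality_integrability} this yields $(H^*,\lambda^*)\in\Dc_\infty(\mu)$, whence $\E^{\P^0}[C(X)]\le\mathbf{P}_\infty(\mu)\le\mathbf{D}_\infty(\mu)\le\mu(\lambda^*)$ by weak duality, while Lemma~\ref{Lemma:optimality} gives $\E^{\P^0}[C(X)]=\mu(\lambda^*)=\int_0^1\int_{x_1(t)}^{e^t}\frac{j_d}{j_u}(t,x)\,c(x,x+j_u(t,x))\,f(t,x)\,dx\,dt$, so all the inequalities are equalities, which is the asserted conclusion of Theorem~\ref{Thm:optimality}.

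The main obstacle is the failure of Assumption~\ref{Assump:3}: here the right boundary $r_t=e^t$ both moves in time and coincides with $m_t$, so the phase-transition equation of Lemma~\ref{lemma:phase_transition_curve} and the integral characterization~\eqref{eq:T_S_charac} of $T_u$ degenerate and must be replaced by their moving-boundary versions (this is exactly why $x_1(t)=-e^t/(1+2e^t)$ in Lemma~\ref{Lemma:trans_uniform_decreasing} rather than the root of $\int_{x_1}^{+\infty}(x_1-\xi)\partial_t f(t,\xi)\,d\xi=0$, and it forces the limiting up-map to collapse onto the boundary, $T_u(t,x)\equiv r_t$, and produces the extra factor $\tfrac12$ in $j_d$ on the martingale region). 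Consequently Lemmas~\ref{lemm:asymptotic_expansion}--\ref{lemm:asymptotic_expansion_2} cannot be quoted and the closed-form right-curtain coupling of the two truncated uniforms must be produced by hand; establishing the $O(\epsilon)$ and $O(\epsilon^2)$ remainders, uniformly on compacts in the martingale region, is the technical heart of the proof. Once those expansions are available, every remaining step is a transcription of the corresponding step in the proofs of Theorems~\ref{Thm:main} and~\ref{Thm:optimality}.
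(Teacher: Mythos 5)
Your argument follows the paper's own proof essentially step by step: you replace every invocation of Assumption~\ref{Assump:3} (which fails because $m_t=r_t=e^t$ moves) by a direct computation of $(T_u^\epsilon,T_d^\epsilon)$, $x_1^\epsilon$, $j_u$, $j_d$ from the affine c.d.f., verify the expansions that stand in for Lemmas~\ref{lemm:asymptotic_expansion} and~\ref{lemm:asymptotic_expansion_2}, and then re-run the martingale-problem identification and the dual-superhedging machinery unchanged. The additional observations you supply (continuity of $j_d$ across $x=x_1(t)$ where both branches equal $e^t$, boundedness of the jumps from the compact support, and the remark that the general phase-transition equation must be replaced by its moving-boundary version) are correct and make the argument slightly more self-contained, but they are refinements of the paper's proof rather than a different route.
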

\begin{proof}
Since, for $t\in[0,1]$, we have that $m_t=r_t$, it follows that $\mathbf{1}_{\{x_1(t)<X_{t-}<m_t\}}=\mathbf{1}_{\{X_{t-}>x_1(t)\}}$ a.s. We now determine $(j_d,j_u)$.

For $t \in [0,1)$ and $\epsilon \in (0, 1-t]$, we consider the one-period decreasing supermartingale coupling $\hat\P^{\mu_t,\mu_{t+\epsilon}}$, which is supported on the graphs of two functions $T_u^{\epsilon}(t,\cdot)=T_u^{\mu_t,\mu_{t+\epsilon}}(\cdot)$, $T_d^{\epsilon}(t,\cdot)=T_d^{\mu_t,\mu_{t+\epsilon}}(\cdot)$; see Appendix \ref{sec:Decreasing_SMOT_one_period_primal}. On $(x_1^\epsilon(t),m^\epsilon(t)=r_t)$, $(T_d^\epsilon(t,\cdot),T_u^\epsilon(t,\cdot)$ can be determined from the mass and mean preservation condition
\begin{equation*}
\int^{T^\epsilon_u(t,x)}_xz^i\mu_t(dz)=\int^{T^\epsilon_u(t,x)}_{T^\epsilon_d(t,x)}z^i\mu_{t+\epsilon}(dz),\quad i=0,1.
\end{equation*}
Note that, since $T_u^\epsilon(t,x)\geq m^\epsilon(t)=r_t$ for $x\leq r_t$, we have that $\int^{T^\epsilon_u(t,x)}_xz^i\mu_t(dz)=\int^{r_t}_xz^i\mu_t(dz)$ for $i=0,1$. Then direct calculation gives that
$$
\frac{e^t-x}{e^t+e^{2t}} =  \frac{T_u^{\epsilon}(t,x)-T_d^{\epsilon}(t,x)}{e^{t+\epsilon}+e^{2(t+\epsilon)}},\quad T_u^{\epsilon}(t,x)  = x +e^t - T_d^{\epsilon}(t,x),\quad x\in(x^\epsilon_1(t),e^t).
$$
It follows that
$$
x-T^\epsilon_d(t,x)=\frac{1}{2}\frac{e^t-x}{e^t+e^{2t}}\left[e^{t+\epsilon}-e^t+e^{2(t+\epsilon)}-e^{2t}\right],\quad x\in(x^\epsilon_1(t),e^t).
$$
This further leads to 
$$
j_d(t,x) := \lim_{\epsilon \downarrow 0}j_d^{\epsilon}(t,x) := \lim_{\epsilon \downarrow 0} \frac{x-T_d^{\epsilon}(t,x)}{\epsilon} = \frac{1}{2} \frac{e^{t}-x}{1+e^{t}}(1+2e^{t}),\quad x\in(x_1(t),e^t),
$$
$$
j_u(t,x) := \lim_{\epsilon \downarrow 0} ( T_u^{\epsilon}(t,x) -x ) = e^t -x,\quad x\in(x_1(t),e^t).
$$
Then also
$$
\frac{j_d}{j_u}(t,x)=\frac{1}{2}\frac{1+2e^t}{1+e^t},\quad x\in(x_1(t),e^t).
$$

If we define $T_u$ by $T_u(t,x):=\lim_{\epsilon\downarrow 0}T^\epsilon_u(t,x)$, then $T_u(t,x)=e^t$ for $x\in(x_1(t),e^t)$, which is clearly non-decreasing and continuous.

The definition of $j_d$ on $[0,1)\times(\ell_t,x_1(t)]$ remains the same:
$$
j_d(t,x) := \lim_{\epsilon \downarrow 0}j_d^{\epsilon}(t,x) = \lim_{\epsilon \downarrow 0} \frac{x-T_d^{\epsilon}(t,x)}{\epsilon},\quad x\in (-e^{2t},x_1(t)].
$$
The explicit form is recovered by using the fact that $T_d^{\epsilon}(t,x)=F^{-1}(t+\epsilon,F(t,x))$ for $x\leq x_1^\epsilon(t)$. Direct computation gives that
$$
j_d(t,x)=\frac{e^{2t}-x(1+2e^t)}{1+e^t},\quad x\in (-e^{2t},x_1(t)].
$$

It follows that the statements of Lemmas \ref{lemm:asymptotic_expansion}, \ref{lemm:asymptotic_expansion_2}, \ref{Lemma:character_limit}(ii) (and consequently Theorem \ref{Thm:main}) hold, which proves the first part of the present proposition.

The second claim is established by following the arguments of the proof of Theorem \ref{Thm:optimality}. In particular we need continuity of $T_u$, Lemmas \ref{Lemma:character_limit}(ii) and \ref{lemm:local_uniform_converge}. However, these results (by direct computation) also hold in the present setting, 
\end{proof}

\paragraph{Increasing supermartingale coupling}
Let $(\mu_t)_{t\in[0,1]}$ be as in \eqref{eq:uniformMarginals}. Fix $t \in [0,1)$ and $\epsilon \in (0,1-t]$, and denote by $\bar\P^{\mu_t,\mu_{t+\epsilon}}$ the increasing supermartingale coupling of $\mu_t$ and $\mu_{t+\epsilon}$ (see Nutz and Stebegg \cite{NutzStebegg.18} and Bayraktar et al. \cite{BayDengNorgilas}).

The main difference between $\bar\P^{\mu_t,\mu_{t+\epsilon}}$ and the decreasing supermartingale coupling $\hat\P^{\mu_t,\mu_{t+\epsilon}}$ is that $\bar\P^{\mu_t,\mu_{t+\epsilon}}$ is constructed by working from left to right and mapping $\mu_t\lvert_{(-\infty,x]}$ to the shadow measure $S^{\mu_{t+\epsilon}}(\mu_t\lvert_{(-\infty,x]})$, for each $x\in\R$. The construction of $\hat\P^{\mu_t,\mu_{t+\epsilon}}$ is similar, but works from right to left and thus considers the measures $(\mu_t\lvert_{[x,\infty)})_{x\in\R}$ and their corresponding `shadows' in $\mu_{t+\epsilon}$.

The main feature of $\bar\P^{\mu_t,\mu_{t+\epsilon}}$ is that there exists the unique point $x_1^\epsilon(t)\in(\ell_t,r_t)$ such that $\bar\P^{\mu_t,\mu_{t+\epsilon}}$ is a martingale on $(-\infty,x^\epsilon_1(t)]\times \R$ and strict supermartingale elsewhere. In particular,
\begin{itemize}
	\item on $(-\infty,x^\epsilon_1(t)]\times \R$, $\bar\P^{\mu_t,\mu_{t+\epsilon}}$ coincides with the left-curtain martingale coupling of $\mu_t\lvert_{(-\infty,x^\epsilon_1(t)]}$ and $S^{\mu_{t+\epsilon}}(\mu_t\lvert_{(-\infty,x^\epsilon_1(t)]})$;
	\item the support of $(\mu_t-\mu_t\lvert_{(-\infty,x^\epsilon_1(t)]})$ is strictly to the right of the support of  $(\mu_{t+\epsilon} - S^{\mu_{t+\epsilon}}(\mu_t\lvert_{(-\infty,x^\epsilon_1(t)]}))$;
	\item on $(x^\epsilon_1(t),\infty)\times \R$, $\bar\P^{\mu_t,\mu_{t+\epsilon}}$ coincides with the \textit{antitone} coupling of $(\mu_t-\mu_t\lvert_{(-\infty,x^\epsilon_1(t)]})$ and $(\mu_{t+\epsilon}-S^{\mu_{t+\epsilon}}(\mu_t\lvert_{(-\infty,x^\epsilon_1(t)]}))$;
\end{itemize}
see Bayraktar et al. \cite{BayDengNorgilas} for details. (In fact, these properties hold for any measures $\mu\leq_{cd}\nu$, and not necessarily $\mu_t\leq_{cd}\mu_{t+\epsilon}$ as in \eqref{eq:uniformMarginals}.) The left-curtain martingale coupling was introduced by Beiglb\"ock and Juillet \cite{BeiglbockJuillet:16} (it is the symmetric counterpart of the right-curtain martingale coupling; see Appendix \ref{sec:Brenier_MOT}). The aforementioned antitone coupling of two measures $\eta,\chi$ (with $\eta(\R)=\chi(\R)$ and atom-less $\eta$) is denoted by $\pi^{a,\eta,\chi}$ and given by
$$
\pi^{a,\eta,\chi}(dx,dy)=\eta(dx)\delta_{F^{-1}_\chi(\chi(\R)-F_\eta(x))}(dy).
$$
Note that $\pi^{a,\eta,\chi}$ is supported on a graph of a decreasing funtion $x\mapsto F^{-1}_\chi(\chi(\R)-F_\eta(x))$. 
\begin{Lemma}\label{Lemma:trans_uniform_increasing}
	If $(\mu_t)_{t\in[0,1]}$ is defined by \eqref{eq:uniformMarginals}, then the phase transition $x_1^{\epsilon}(t)$ is given by
	\begin{equation} \label{eq:transition_increasing_uniform}
x_1^{\epsilon} (t)= \frac{e^{3t} (1-e^{2\epsilon})+e^t(2e^\epsilon+e^t(1+e^\epsilon))}{1+e^\epsilon+e^t(1+e^{2\epsilon})},\quad t\in [0,1),\ \epsilon\in (0,1-t].
	\end{equation}
	In particular, under $\bar\P^{\mu_t,\mu_{t+\epsilon}}$, $\mu_t\lvert_{(\ell_t,x_1^\epsilon(t)]}$ is mapped to $\mu_{t+\epsilon}\lvert_{[y_1^\epsilon(t),r_t)}$ in a martingale way, where
	$$
	y_1^{\epsilon} (t) = x_1^{\epsilon} (t) - e^{t+\epsilon} -e^{2t},
	$$
	while $\mu_t\lvert_{(x^\epsilon_1(t),\infty)})$ is mapped to $\mu_{t+\epsilon}\lvert_{(-\infty,y^\epsilon_1(t))}$ in a strict supermartingale way.
	
	Furthermore, the limit $x_1(t):=\lim_{\epsilon\downarrow 0}x_1^\epsilon(t)$ exists and is given by
	
	\begin{equation} \label{eq:transition_increasing_uniform_cont}
	x_1(t)=  e^t=r_t,\quad t\in[0,1].
	\end{equation}
\end{Lemma}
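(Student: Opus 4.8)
The plan is to identify explicitly the shadow measure that governs the martingale part of $\bar\P^{\mu_t,\mu_{t+\epsilon}}$ and then read off $x_1^\epsilon(t)$, $y_1^\epsilon(t)$ from its geometry. I would start by recalling, from the structural description preceding the statement, that $\bar\P^{\mu_t,\mu_{t+\epsilon}}$ coincides, on $(-\infty,x_1^\epsilon(t)]\times\R$, with the left-curtain martingale coupling of $\mu_t\lvert_{(-\infty,x_1^\epsilon(t)]}$ and its shadow $\nu^\epsilon:=S^{\mu_{t+\epsilon}}(\mu_t\lvert_{(-\infty,x_1^\epsilon(t)]})$, that the complement $\mu_{t+\epsilon}-\nu^\epsilon$ is supported strictly to the left of $x_1^\epsilon(t)$, and that on $(x_1^\epsilon(t),\infty)\times\R$ it is the antitone coupling of $\mu_t\lvert_{(x_1^\epsilon(t),\infty)}$ and $\mu_{t+\epsilon}-\nu^\epsilon$. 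Thus it is enough to pin down $\nu^\epsilon$, and hence $x_1^\epsilon(t)$.

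Next, for $x\in(\ell_t,r_t]$ I would introduce the mass $m(x):=\mu_t((\ell_t,x])=\frac{x+e^{2t}}{e^t+e^{2t}}$ and barycenter $b(x):=\frac{x-e^{2t}}{2}$ of $\mu_t\lvert_{(-\infty,x]}$. Since $\mu_{t+\epsilon}$ has constant density on its support, the shadow $S^{\mu_{t+\epsilon}}(\mu_t\lvert_{(-\infty,x]})$ depends on $\mu_t\lvert_{(-\infty,x]}$ only through the pair $(m(x),b(x))$, and by the bathtub/rearrangement characterisation of the $\leq_c$-minimal sub-measure of a uniform law with prescribed mass and barycenter, it equals $\mu_{t+\epsilon}\lvert_{[b(x)-L(x)/2,\,b(x)+L(x)/2]}$ with $L(x):=m(x)(e^{t+\epsilon}+e^{2(t+\epsilon)})$, valid as long as this interval is contained in $(\ell_{t+\epsilon},r_{t+\epsilon})$. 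I would then check the two easy facts that these intervals are nested in $x$ (so that the choice is consistent with the left-curtain structure and the associated coupling is a genuine martingale) and that the right endpoint $b(x)+L(x)/2=\frac{x-e^{2t}}{2}+\frac{(x+e^{2t})(e^{t+\epsilon}+e^{2(t+\epsilon)})}{2(e^t+e^{2t})}$ is strictly larger than $x$ for every $x\in(\ell_t,r_t)$, since $e^{t+\epsilon}+e^{2(t+\epsilon)}>e^t+e^{2t}$.

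These facts identify $x_1^\epsilon(t)$ as the unique $x$ at which the right endpoint $b(x)+L(x)/2$ reaches $r_{t+\epsilon}=e^{t+\epsilon}$: for larger $x$ the centred interval would overflow $\mathrm{supp}(\mu_{t+\epsilon})$, so the shadow can no longer be barycenter-preserving and the strict-supermartingale regime sets in; equivalently, because the right endpoint always exceeds $x$, the complement $\mu_{t+\epsilon}-\nu^\epsilon$ is supported strictly left of $x_1^\epsilon(t)$ exactly when that endpoint equals $e^{t+\epsilon}$. The equation $b(x)+L(x)/2=e^{t+\epsilon}$ is linear in $x$, and solving it gives the stated expression for $x_1^\epsilon(t)$; then $y_1^\epsilon(t)$ is just the left endpoint of the same interval, $y_1^\epsilon(t)=b(x_1^\epsilon(t))-L(x_1^\epsilon(t))/2=2b(x_1^\epsilon(t))-e^{t+\epsilon}=x_1^\epsilon(t)-e^{2t}-e^{t+\epsilon}$. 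Hence $\nu^\epsilon=\mu_{t+\epsilon}\lvert_{[y_1^\epsilon(t),r_{t+\epsilon})}$ and $\mu_{t+\epsilon}-\nu^\epsilon=\mu_{t+\epsilon}\lvert_{(\ell_{t+\epsilon},y_1^\epsilon(t))}$; a mass count ($m(x_1^\epsilon(t))$ on one side, $1-m(x_1^\epsilon(t))$ on the other) yields the remaining assertions about how $\mu_t\lvert_{(\ell_t,x_1^\epsilon(t)]}$ and $\mu_t\lvert_{(x_1^\epsilon(t),\infty)}$ are transported. Finally, setting $\epsilon=0$ in the formula gives numerator $2e^t(1+e^t)$ and denominator $2(1+e^t)$, so $x_1(t)=e^t=r_t$.

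I expect the main obstacle to be rigorously justifying the characterisation of $x_1^\epsilon(t)$ — that the transition point produced by the Nutz--Stebegg / Bayraktar et al. construction is precisely the point at which the centred shadow interval touches the upper boundary of $\mathrm{supp}(\mu_{t+\epsilon})$. This is handled either by quoting the explicit description of $x_1^\epsilon$ from those references, or by verifying directly that the coupling assembled from $\mu_{t+\epsilon}\lvert_{[y_1^\epsilon(t),r_{t+\epsilon})}$ (left-curtain part) and $\mu_{t+\epsilon}\lvert_{(\ell_{t+\epsilon},y_1^\epsilon(t))}$ (antitone part) has the support-monotonicity (equivalently $\leq_c$-minimality) that defines the increasing supermartingale coupling; the bathtub characterisation of shadows in a uniform law is a secondary technical point that can likewise be cited or checked by a one-line rearrangement argument.
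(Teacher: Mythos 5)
Your proof is correct and follows essentially the same approach as the paper's. Both characterize $x_1^\epsilon(t)$ by the condition that the shadow of $\mu_t\lvert_{(\ell_t,x]}$ in $\mu_{t+\epsilon}$ first touches $r_{t+\epsilon}$ and then solve the resulting mass-and-barycenter system for $(x_1^\epsilon(t),y_1^\epsilon(t))$; your write-up simply makes explicit the bathtub description of the shadow of a uniform target and the nested-interval monotonicity that the paper invokes implicitly under ``the properties of the shadow measures.''
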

\begin{proof}
	To determine $x_1^{\epsilon}(t)\in(\ell_t,r_t)$ we use the fact that $x_1^{\epsilon}(t)$ is the largest $x\in(\ell_t,r_t)$ for which $\mu_t\lvert_{(\ell_t,x]}\leq_{c} S^{\mu_{t+\epsilon}}(\mu_t\lvert_{(\ell_t,x]})$ and that $r_{S^{\mu_{t+\epsilon}}(\mu_t\lvert_{(\ell_t,x_1^\epsilon(t)]})}=\sup\{k\in supp(S^{\mu_{t+\epsilon}}(\mu_t\lvert_{(\ell_t,x_1^\epsilon(t)]}))\}=r_t$. Using that $(\mu_t)_{t\in[0,1]}$ are given by \eqref{eq:uniformMarginals} and te properties of the shadow measures, it follows that $\mu_t\lvert_{(\ell_t,x_1^\epsilon(t)]}\leq _c S^{\mu_{t+\epsilon}}(\mu_t\lvert_{(\ell_t,x_1^\epsilon(t)]})=\mu_{t+\epsilon}\lvert_{[y_1^\epsilon(t),r_{t+\epsilon})}$ for some (unique) $y_1^\epsilon(t)\in(\ell_{t+\epsilon},\ell_t)$. In particular, the pair $(x_1^\epsilon(t),y_1^\epsilon(t))$ is uniquely characterized by the mass and mean preservation condition
\begin{equation}\label{eq:mean-massExamplesIncreasing}
\int^{x_1^\epsilon(t)}_{\ell_t}z^i\mu_t(dz)=\int^{r_{t+\epsilon}}_{y_1^\epsilon(t)}z^i\mu_{t+\epsilon}(dz),\quad i=0,1.
\end{equation}
	Direct computation leads to
	$$
	\frac{x_1^{\epsilon}(t)+e^{2t}}{e^t+e^{2t}} = \frac{e^{t+\epsilon}-y_1^{\epsilon}(t)}{e^{t+\epsilon}+e^{2(t+\epsilon)}},\quad
	\frac{(x_1^{\epsilon}(t))^2-e^{4t}}{e^t+e^{2t}} = \frac{e^{2(t+\epsilon)}-(y_1^{\epsilon}(t))^2}{e^{t+\epsilon}+e^{2(t+\epsilon)}}.
	$$
	Solving the above equations gives
	$$
	x_1^{\epsilon} (t)= \frac{e^{3t} (1-e^{2\epsilon})+e^t(2e^\epsilon+e^t(1+e^\epsilon))}{1+e^\epsilon+e^t(1+e^{2\epsilon})},\quad y_1^{\epsilon} (t) = x_1^{\epsilon} (t) - e^{t+\epsilon} -e^{2t}.
	$$

	Letting $\epsilon\downarrow 0$ gives that $x_1(t) =  e^t$ and $y^\epsilon_1(t)=-e^{2t}$, which concludes the proof.
\end{proof}
It is well-known (see Henry-Labord\`re and Touzi \cite{HLTouzi16}) that the left-curtain martingale coupling (of two measures in convex order) is supported on the graphs of two functions that can be computed explicitly. The following gives an explicit representation of $\bar\P^{\mu_t,\mu_{t+\epsilon}}$ in terms of the supporting functions:

\begin{Lemma}\label{lem:Decreasing}
Let $(\mu_t)_{t\in[0,1]}$ be given by \eqref{eq:uniformMarginals}. Fix $t\in[0,1)$ and $\epsilon(0,1-t]$. Let $x_1^\epsilon(t)$ be as in Lemma \ref{Lemma:trans_uniform_increasing}.

Then $\bar\P^{\mu_t,\mu_{t+\epsilon}}$ is given by
\begin{equation*}\label{eq:increasing_representation}
\begin{aligned}
\bar\P^{\mu_t,\mu_{t+\epsilon}}(dx,dy)&=\mu(dx)\mathbf{1}_{[x_1^{\epsilon}(t), e^t)}(x)\delta_{\bar T_d^\epsilon(t,x)}(dy)\\
&+\mu(dx)\mathbf{1}_{(-e^{2t},x_1^{\epsilon}(t))}\left\{\frac{\bar T^\epsilon_u(t,x)-x}{\bar T^\epsilon_u(t,x)-\bar T_d^\epsilon(t,x)}\delta_{\bar T_d^\epsilon(t,x)}(dy)+\frac{x-\bar T_d^\epsilon(t,x)}{\bar  T^\epsilon_u(t,x)-\bar T_d^\epsilon(t,x)}\delta_{\bar T_u(x)}(dy)\right\},
\end{aligned}\end{equation*}
where
\begin{equation*}
\begin{aligned}
T^\epsilon_u(t,x)=\frac{1}{2}\left\{\frac{e^\epsilon(1+e^{t+\epsilon})(x+{e^{2t}})}{1+e^t}+x-e^{2t}\right\},\quad &x\in(-e^{2t},x^\epsilon_1(t)),\\
T^\epsilon_d(t,x)=x-e^{2t}-T^\epsilon_u(t,x),\quad &x\in(-e^{2t},x^\epsilon_1(t)),\\ T^\epsilon_d(t,x)=\left(e^{t+\epsilon}+e^{2(t+\epsilon)}\right)\left(1-\frac{x+e^{2t}}{e^t+e^{2t}}\right)-e^{2t}\quad &x\in [x^\epsilon_1(t),e^t).
\end{aligned}
\end{equation*}
\end{Lemma}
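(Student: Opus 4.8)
The plan is to read off $\bar\P^{\mu_t,\mu_{t+\epsilon}}$ directly from its structural description recalled above (Bayraktar et al. \cite{BayDengNorgilas}): there is a unique $x_1^\epsilon(t)\in(\ell_t,r_t)$ such that on $(-\infty,x_1^\epsilon(t)]\times\R$ the coupling agrees with the left-curtain martingale coupling of $\mu_t\lvert_{(\ell_t,x_1^\epsilon(t)]}$ and its shadow $S^{\mu_{t+\epsilon}}(\mu_t\lvert_{(\ell_t,x_1^\epsilon(t)]})$, while on $(x_1^\epsilon(t),r_t)\times\R$ it agrees with the antitone coupling $\pi^{a,\eta,\chi}$ of $\eta=\mu_t\lvert_{(x_1^\epsilon(t),r_t)}$ and $\chi=\mu_{t+\epsilon}-S^{\mu_{t+\epsilon}}(\mu_t\lvert_{(\ell_t,x_1^\epsilon(t)]})$. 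By Lemma \ref{Lemma:trans_uniform_increasing} the value of $x_1^\epsilon(t)$ is already known, and that lemma identifies the shadow with $\mu_{t+\epsilon}\lvert_{[y_1^\epsilon(t),r_{t+\epsilon})}$, so that $\chi=\mu_{t+\epsilon}\lvert_{(\ell_{t+\epsilon},y_1^\epsilon(t))}$ and $y_1^\epsilon(t)=x_1^\epsilon(t)-e^{t+\epsilon}-e^{2t}$. Thus everything reduces to computing the two supporting maps on each of the two regions for these particular uniform marginals.

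On the (strict) supermartingale region $x\in[x_1^\epsilon(t),e^t)$, I would apply the explicit form of the antitone coupling recalled above, $\pi^{a,\eta,\chi}(dx,dy)=\eta(dx)\delta_{F^{-1}_\chi(\chi(\R)-F_\eta(x))}(dy)$, with $\eta,\chi$ as above. Since both $\eta$ and $\chi$ are uniform, $F_\eta$ and $F^{-1}_\chi$ are affine; plugging in the densities $\tfrac1{e^t+e^{2t}}$ and $\tfrac1{e^{t+\epsilon}+e^{2(t+\epsilon)}}$ and the endpoints $-e^{2(t+\epsilon)},y_1^\epsilon(t)$, and using the mass-preservation identity $F(t+\epsilon,y_1^\epsilon(t))=1-F(t,x_1^\epsilon(t))$ (which reduces $F^{-1}_\chi(\chi(\R)-F_\eta(x))$ to $F^{-1}(t+\epsilon,1-F(t,x))$, i.e.\ the reversed quantile map of the full marginals), yields the claimed closed form for $\bar T^\epsilon_d(t,\cdot)$, and the coupling is deterministic there.

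On the martingale region $x\in(-e^{2t},x_1^\epsilon(t))$, I would invoke the Henry-Labord\`ere--Touzi description of the left-curtain coupling of absolutely continuous marginals \cite{HLTouzi16}: the shadow $S^{\mu_{t+\epsilon}}(\mu_t\lvert_{(-e^{2t},x]})$ of the growing initial segment is a subinterval $[\bar T^\epsilon_d(t,x),\bar T^\epsilon_u(t,x)]$ of $\mathrm{supp}(\mu_{t+\epsilon})$, and the left-curtain supporting maps coincide with its lower and upper endpoints. These endpoints are pinned down by two scalar relations, mass preservation $\mu_{t+\epsilon}\big([\bar T^\epsilon_d(t,x),\bar T^\epsilon_u(t,x)]\big)=\mu_t\big((-e^{2t},x]\big)$ and barycentre preservation, which for a uniform chunk reads $\tfrac12\big(\bar T^\epsilon_d(t,x)+\bar T^\epsilon_u(t,x)\big)=\tfrac12(x-e^{2t})$; solving this $2\times2$ linear system gives the displayed formula for $\bar T^\epsilon_u(t,x)$ together with $\bar T^\epsilon_d(t,x)=(x-e^{2t})-\bar T^\epsilon_u(t,x)$. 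The two weights $\tfrac{\bar T^\epsilon_u(t,x)-x}{\bar T^\epsilon_u(t,x)-\bar T^\epsilon_d(t,x)}$ and $\tfrac{x-\bar T^\epsilon_d(t,x)}{\bar T^\epsilon_u(t,x)-\bar T^\epsilon_d(t,x)}$ in the decomposition are then forced, being the unique probabilities making the two-point law on $\{\bar T^\epsilon_d(t,x),\bar T^\epsilon_u(t,x)\}$ have barycentre $x$.

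It then remains to run the elementary consistency checks: that $\bar T^\epsilon_u(t,\cdot)$ is increasing and $\bar T^\epsilon_d(t,\cdot)$ is decreasing with $\bar T^\epsilon_d(t,-e^{2t})=\bar T^\epsilon_u(t,-e^{2t})=-e^{2t}$, so that $\bar T^\epsilon_d(t,x)\le x\le\bar T^\epsilon_u(t,x)$ and the weights lie in $[0,1]$; that the two regional formulas glue continuously at $x_1^\epsilon(t)$ (where the left-curtain shadow interval has reached the right endpoint $r_{t+\epsilon}$); and that the measure so defined has second marginal $\mu_{t+\epsilon}$ and is a supermartingale (a martingale of barycentre $x$ for $x<x_1^\epsilon(t)$, and with conditional mean $\bar T^\epsilon_d(t,x)<x$ for $x\ge x_1^\epsilon(t)$), all of which follow from the mass/mean identities already recorded plus direct computation with the exponentials. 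The only genuinely delicate point is the input from \cite{HLTouzi16} used in the martingale region: one must confirm that, for these uniform (hence atomless, gap-free) marginals, $S^{\mu_{t+\epsilon}}(\mu_t\lvert_{(-e^{2t},x]})$ really is a single interval carrying constant $\mu_{t+\epsilon}$-density, and that the left-curtain supporting maps are its endpoints; once that identification is secured, the relation $\bar T^\epsilon_d+\bar T^\epsilon_u=x-e^{2t}$ and hence all the closed forms are immediate. I expect this identification, rather than any of the subsequent algebra, to be the main obstacle.
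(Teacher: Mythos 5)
Your argument follows exactly the paper's own route: split into the martingale region $(-e^{2t},x_1^\epsilon(t))$, where you match $\mu_t\lvert_{(-e^{2t},x]}$ with its shadow via mass and barycentre preservation to get the two affine supporting maps, and the supermartingale region $[x_1^\epsilon(t),e^t)$, where the antitone coupling collapses (via the identity $F(t+\epsilon,y_1^\epsilon(t))=1-F(t,x_1^\epsilon(t))$ from Lemma \ref{Lemma:trans_uniform_increasing}) to the reversed quantile map $F^{-1}(t+\epsilon,1-F(t,\cdot))$. The point you flag as the main obstacle --- that the shadow of a left segment of a uniform measure inside a larger uniform measure is itself an interval, so that the left-curtain supporting maps are its endpoints --- is not in fact delicate here: the pair $(\mu_t,\mu_{t+\epsilon})$ given by \eqref{eq:uniformMarginals} satisfies the Dispersion Assumption (Assumption \ref{ass:Dispersion_c_appendix}), under which this interval description of the shadow is exactly the content of the Henry-Labord\`ere--Touzi construction cited in Appendix \ref{sec:Brenier_MOT}, and the paper simply invokes it. With that noted, your proof is complete and matches the one in the text.
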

\begin{proof}
	Since, on $(\ell_t,x^\epsilon_1(t))\times(y^\epsilon_1(t),r_{t+\epsilon})$, $\bar\P^{\mu_t,\mu_{t+\epsilon}}$ coincides with the left-curtain martingale coupling of $\mu_t\lvert_{(\ell_t,x^\epsilon_1(t))}$ and $\mu_{t+\epsilon}\lvert_{(y^\epsilon_1(t),r_{t+\epsilon})}$, the supporting functions $(T_d^\epsilon(t,\cdot),T_u^\epsilon(t,\cdot))$
can be determined from the mass and mean preservation condition
	\begin{equation*}
	\int^{x}_{-e^{2t}}z^i\mu_t(dz)=\int^{T^\epsilon_u(t,x)}_{T^\epsilon_d(t,x)}z^i\mu_{t+\epsilon}(dz),\quad i=0,1.
	\end{equation*}
The direct computation gives that
$$
\frac{x+e^{2t}}{e^t+e^{2t}}=\frac{T_u^\epsilon(t,x)-T_d^\epsilon(t,x)}{e^{t+\epsilon}+e^{2(t+\epsilon)}},\quad T_d^\epsilon(t,x)=x-e^{2t}-T_u^\epsilon(t,x),\quad x\in(-e^{2t},x_1^\epsilon(t)).
$$
Solving for $T_u^\epsilon(t,x)$ (which is easy) gives the required representations of the supporting functions on $(-e^{2t},x^\epsilon_1(t))$.

To obtain $T_d^\epsilon(t,\cdot)$ on $[x^\epsilon_1(t),r_t)$, we use that $\bar\P^{\mu_t,\mu_{t+\epsilon}}$ coincides with the antitone coupling $\pi^{a,\mu_t\lvert_{[x_1^\epsilon(t),r_t)},\mu_{t+\epsilon}\lvert_{(\ell_{t+\epsilon},y_1^\epsilon(t)]}}$ of $\mu_t\lvert_{[x_1^\epsilon(t),r_t)}$ and $\mu_{t+\epsilon}\lvert_{(\ell_{t+\epsilon},y_1^\epsilon(t),r_{t+\epsilon}]}$. It follows that
$$
T^\epsilon_d(t,x)=F^{-1}(t+\epsilon,1-F(t,x)),\quad x\in[x^\epsilon_1(t),r_t),
$$
with $F(t,x)=(x+e^{2t})/(e^t+e^{2t})$, which proves the claim.
\end{proof}

Now define $J^\epsilon_d(t,x)=x-T^\epsilon_d(t,x)$ for $x\in(\ell_t,r_t)$ and $J^\epsilon_u(t,x)=T^\epsilon_u(t,x)-x$ for $x\in(\ell_t,x_1^\epsilon(t))$. Then direct computation gives that
\begin{equation}\label{eq:increasing_j_d}
j_d(t,x):=\lim_{\epsilon\downarrow 0}J^\epsilon_d(t,x)=e^{2t}+\lim_{\epsilon\downarrow 0}T^\epsilon_u(t,x)=e^{2t}+x,\quad x\in(\ell_t=-e^{2t},x_1(t)=r_t=e^t)
\end{equation}
and
\begin{equation}\label{eq:increasing_j_u}
j_u(t,x):=\lim_{\epsilon\downarrow 0}\frac{J^\epsilon_u(t,x)}{\epsilon}=\frac{1+2e^{2t}}{2}\frac{x+e^{2t}}{1+e^t},\quad x\in(-e^{2t},e^t).
\end{equation}

\begin{Proposition}\label{Prop:sde_Sniform_decreasing} Let $(\mu_t)_{t\in[0,1]}$ be as in \eqref{eq:uniformMarginals}. For each $n\geq 1$, let $\P^n:= \P^{*,n} \circ (X^{*,n})^{-1}$, where $\P^{*,n}$ is the $n$-period supermartingale coupling (w.r.t. partition $\pi_n$), obtained through the Markovian iteration of one-period increasing supermartingale couplings $\bar\P^{\mu_{t^n_k},\mu_{t^n_{k+1}}}$. (Here $X^{*,n}$ is the piece-wise constant canonical process; see Section \ref{sec:main_results}.)

Then the sequence $(\P^n)_{n \geq 1}$ is tight w.r.t. the Skorokhod topology on $\Om$.

Moreover, any accumulation point of $(\P^n)_{n\geq 1}$, denoted by  $\P^0$, is the law of the following SDE:
	$$
	X_t = X_0 - \int_0^t j_d(t,X_{s-}) ( d N_s - \nu_s ds ) - \sum_{s \leq t} \mathbf{1}_{\{X_{s-}=e^s\}}(e^{s}+e^{2s}), 
	$$
where $(N_s)_{0 \leq s \leq 1}$ is a unit size jump process, with predictable compensator $ \nu_s:= \frac{j_u}{j_d} ( s, X_{s-} )$, with $j_d,j_u$ as in \eqref{eq:increasing_j_d}, \eqref{eq:increasing_j_u}.

Finally, if Assumption \ref{Assump:CostFunction} holds with $c_{xyy}>0$ and $c_{xy}<0$, then $\P^0$ solves $\mathbf{P}_\infty(\mu)$ and the strong duality $\mathbf{P}_\infty(\mu)=\mathbf{D}_\infty(\mu)$ holds.
\end{Proposition}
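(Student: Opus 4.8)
The plan is to re-run the three-step programme of Sections \ref{sec:main_results} and \ref{sec:optimalityDual} — tightness, identification of the limit as a weak SDE solution, optimality via discrete-to-continuous superhedging — but with the roles of the upward and downward motions interchanged relative to Theorem \ref{Thm:main}: for the iterated \emph{increasing} coupling the macroscopic $O(1)$ moves are now the downward jumps $J^\epsilon_d$ (which in this example always land on the lower boundary $-e^{2t}$, cf.\ \eqref{eq:increasing_j_d}), the upward motion $J^\epsilon_u=O(\epsilon)$ supplies an infinitesimal drift $j_u$, and there is a genuinely new ingredient — the shrinking antitone slab $(x^\epsilon_1(t),r_t)$ of Lemma \ref{Lemma:trans_uniform_increasing} collapses, as $\epsilon\downarrow0$, onto the upper boundary point $r_t=e^t$, producing the forced escape jump $e^s\mapsto-e^{2s}$. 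All coefficients $j_d(t,x)=x+e^{2t}$, $j_u$, $\nu=j_u/j_d$ are explicit, smooth and bounded on $[0,1]\times[-e^2,e]$, and $\mathrm{supp}(\mu_t)\subset[-e^2,e]$, which trivialises several estimates. For tightness I would follow the proof of Proposition \ref{Prop::Tightness} almost verbatim: compact containment is immediate since $\P^n[\sup_t|X_t|\ge K]=0$ for $K\ge e^2$; the only moves of $X^{*,n}$ that are not $O(|\pi_n|)$ are the left-curtain downward jumps $J^\epsilon_d$ (one-step probability $q^\epsilon=J^\epsilon_u/(J^\epsilon_u+J^\epsilon_d)=O(\epsilon)$, i.e.\ intensity bounded by $\sup j_u/j_d<\infty$) and the antitone escape jumps (one-step probability $\mu_{t_k}((x^\epsilon_1(t_k),r_{t_k}))=O(\epsilon)$ because $r_t-x^\epsilon_1(t)=O(\epsilon)$ by Lemma \ref{Lemma:trans_uniform_increasing}); hence the quantity $\Ec_n(C,\theta)$ of that proof again controls the probability of a big jump in a window of length $\theta$, the accumulated drift over such a window is $O(\theta)$, and Jacod--Shiryaev \cite[Theorem VI.4.5]{JacodShiryaev} gives tightness on $(\Om,\mathrm{J1})$; the argument of Proposition \ref{Prop::Tightness}(ii)--(iii) then shows every limit point $\P^0$ lies in $\Sc_\infty(\mu)$.

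To identify the limit I would mimic the proof of Theorem \ref{Thm:main} (equivalently \cite[Theorem 3.7]{HLTouzi16}): fix $\varphi\in C^1_b(\R)$ and decompose $\E^{\P^n}_{t_k}[\varphi(X_{t_{k+1}})-\varphi(X_{t_k})]$ according to whether $X_{t_k}$ lies in the left-curtain martingale region $(\ell_{t_k},x^\epsilon_1(t_k))$ or in the antitone region $(x^\epsilon_1(t_k),r_{t_k})$. On the first region, the explicit expansions $J^\epsilon_d=j_d+O(\epsilon)$, $J^\epsilon_u=\epsilon j_u+o(\epsilon)$, $q^\epsilon=\epsilon\, j_u/j_d+O(\epsilon^2)$ — obtained by direct computation from Lemma \ref{lem:Decreasing}, just as the analogues of Lemmas \ref{lemm:asymptotic_expansion}--\ref{lemm:asymptotic_expansion_2} were obtained for the decreasing-coupling proposition — give $\E^{\P^n}_{t_k}[\varphi(X_{t_{k+1}})-\varphi(X_{t_k})]=\epsilon\,\mathcal L\varphi(X_{t_k})+o(\epsilon)$ with $\mathcal L\varphi(x)=j_u(t,x)\varphi'(x)+\tfrac{j_u}{j_d}(t,x)\big(\varphi(x-j_d(t,x))-\varphi(x)\big)$; on the second region the contribution, summed over $k$, converges to $\sum_{s\le t}\mathbf{1}_{\{X_{s-}=e^s\}}\big(\varphi(-e^{2s})-\varphi(e^s)\big)$. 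The standard localization (stopping times $\tau_p$, local-Lipschitz and locally-bounded coefficients) then shows that the associated process $M_t(\varphi,X)$ is a $\P^0$-local martingale for every $\varphi\in C^1_b$, so $\P^0$ is a weak solution of the stated SDE. Uniqueness of the weak solution is classical here: below the smooth curve $\{x=e^t\}$ the dynamics is driven by the globally Lipschitz, bounded coefficients $j_d,j_u,\nu$, and hitting the curve triggers a deterministic jump to the interior point $-e^{2t}$, so the solution is a well-posed piecewise-deterministic Markov process; hence $\P^n\to\P^0$.

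For optimality, note that under the stated hypothesis $c(x,x)=c_y(x,x)=0$, $c_{xy}<0$, $c_{xyy}>0$, the cost $c$ is (strictly) supermartingale Spence--Mirrlees, so by Nutz--Stebegg \cite{NutzStebegg.18} each one-period increasing coupling $\bar\P^{\mu_{t_k},\mu_{t_{k+1}}}$ is an optimizer, $\P^n$ solves the $n$-period problem, and there is an explicit one-period dual $(\varphi^\epsilon,\psi^\epsilon,h^\epsilon)$ — the increasing-coupling counterpart of Lemma \ref{Lemma:OptimalDual}, assembled from the left-curtain dual on $(\ell_t,x^\epsilon_1(t))$ and the quantile/antitone dual on $(x^\epsilon_1(t),r_t)$, whose explicit form in this uniform setting again follows by the same direct computation used above for $(j_d,j_u)$. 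I would then define the continuous-time dual $(\Psi^*,h^*)$ exactly as in Section \ref{sec:optimalityDual} but with the roles of the upward and downward supporting maps interchanged (so $h^*$ is supported on the left-curtain region, with $\partial_x h^*$ fixed by the appropriate first-order condition there and $h^*$ continued past the boundary by matching $c$ along the antitone graph), verify the analogues of Lemmas \ref{lemm:local_uniform_converge}, \ref{Lemma:Convergence_Static}, \ref{Lemma:Convergence_Rynamic} (all elementary in this example), and pass to the limit in the discrete superhedging inequality \eqref{eq:Discrete_Superhedge} to obtain $\Psi^*(X_\cdot)+\int_0^1 h^*(t,X_{t-})\,dX_t\ge C(X_\cdot)$, $\P$-a.s.\ for every $\P\in\Sc_\infty$; with the weak duality $\mathbf P_\infty(\mu)\le\mathbf D_\infty(\mu)$ this yields $\mathbf D_\infty(\mu)\le\mu(\lambda^*)$. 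An It\^o expansion of $\psi^*(t,X_t)$ under $\P^0$ (legitimate once the analogue of \eqref{eq:optimality_integrability} is checked, which is automatic since $\psi^*$ and all coefficients are bounded on the compact support), together with the fact that $(H^*\cdot X)$ is a zero-mean $\P^0$-(super)martingale on the relevant region, closes the chain $\E^{\P^0}[C(X_\cdot)]=\mathbf P_\infty(\mu)=\mathbf D_\infty(\mu)=\mu(\lambda^*)$, i.e.\ the conclusion of Theorem \ref{Thm:optimality}.

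The hard part is the boundary-escape mechanism at $\{x=e^t\}$: unlike the left/right-curtain martingale limits of \cite{HLTanTouzi}, the increasing supermartingale limit has $x_1(t)=m_t=r_t$, so the "interior" objects of Section \ref{sec:SDE} ($T_u$, $j_u/j_d$ on $(x_1(t),m_t)$) degenerate and must be re-derived. One must (i) justify that the antitone slab $(x^\epsilon_1(t),r_t)$, of width $O(\epsilon)$, converges in the limit precisely to the forced jump term in the SDE — this is exactly where the rate $r_t-x^\epsilon_1(t)=O(\epsilon)$ from Lemma \ref{Lemma:trans_uniform_increasing} and the boundedness of the density near $r_t$ enter; (ii) establish well-posedness of the resulting PDMP with boundary jump; and (iii) carry the dual variables $\psi^\epsilon,h^\epsilon$ through this degenerate boundary limit, including the escape jumps' contribution to the optimal value. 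Everything else is a routine, if lengthy, transcription of the decreasing-coupling arguments, aided by the fact that in this example all quantities are explicit and the state space is compact.
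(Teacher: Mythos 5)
Your proposal follows essentially the same three-step route as the paper's proof: tightness adapted from Proposition~\ref{Prop::Tightness} (with compact containment trivial since $\mathrm{supp}(\mu_t)\subseteq[-e^2,e]$), identification of the limit via the generator decomposition $\alpha_u,\alpha_d,\alpha_{\tilde d}$ of Theorem~\ref{Thm:main} with the roles of $j_u$ and $j_d$ interchanged and the shrinking antitone slab $(x_1^\epsilon(t),r_t)$ producing the boundary-escape summation term, and optimality via the increasing-coupling (left-curtain) dual variables passed through the discrete superhedging inequality as in Theorem~\ref{Thm:optimality}. You are slightly more explicit than the paper on two points the paper glosses over — that Proposition~\ref{Prop::Tightness} must be re-derived for the increasing coupling (the paper cites it directly even though it was stated for the decreasing coupling) and that the limiting piecewise-deterministic process with a forced boundary jump needs a well-posedness argument — but these are refinements of, not departures from, the paper's argument.
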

\begin{proof}
	The tightness of $(\P^n)_{n\geq 1}$ is guaranteed by Proposition \ref{Prop::Tightness}.
	
	The proof of the SDE characterization of $\P^0$ and its optimality use similar arguments as in Theorems \ref{Thm:main} and \ref{Thm:optimality}. Some notable difference arise due to the fact that the construction is based on the one-period increasing supermartingale coupling (and not on he decreasing one). For example, the drift part of $X$ is determined by $j_u$ (and thus by the upward transitions of the increasing supermrtingale coupling), while the jumps of $X$ are governed by $j_d$ (and thus arise from the downward transitions of the increasing coupling).  In particular, the terms $\alpha_u,\alpha_d$ (which are defined as in the proof of Theorem \ref{Thm:main}) will determine the integral part of the SDE. This does not introduce new arguments and thus we omit the details.
		
	 In order to explain the summation term that appears in the SDE, we turn to the term $\alpha_{\tilde d}$ (see the proof of Theorem \ref{Thm:main}):
	\begin{align*}\alpha_{\tilde d}=  \E^{\P^n}_{t_k^n} [ \{ \varphi(X_{t^n_k} -J_d^{\epsilon^n_k} (t^n_k, X_{t^n_k}) ) -  \varphi(X_{t^n_{k}}) \} \mathbf{1}_{\{ x_1^{\epsilon_k^n}(t^n_k)\leq X_{t_k^n}<e^{t_k^n}\}} ],
	\end{align*}
	where $\epsilon^{n}_k=t^n_{k+1}-t^n_k$. Note that in the present setting
$x_1^{\epsilon}(t)\to e^t$, and thus in the limit, the supermartingale region $[x^\epsilon_1(t),r_t)$ becomes a single point. Using this together with the fact that $J_d^{\epsilon}(t,x)\to e^{2t}+x $, we obtain that
	$$
	\alpha_{\tilde d} = \E^{\P^n}_{t_k^n} \left[  \sum_{ t_k^n\leq s\leq t_{k+1}^n} \left[ \varphi(-e^{2s})-\varphi(e^s) \right]\mathbf{1}_{  \{X_{s} = e^s\}} \right] + O(\epsilon_k^n(\epsilon_k^n \lor  \rho_1(\epsilon_k^n))),
	$$
	which, in the limit, determines the summation term $\sum_{s \leq t} \mathbf{1}_{\{X_{s-}=e^s\}}(e^{s}+e^{2s})$ of the SDE.
	
For the optimality, the arguments are similar to those of Theorem \ref{Thm:optimality}. The candidate optimal dual strategies should be redefined to reflect the fact that we are working with the increasing supermartingale coupling (or the left-curtain martingale coupling in the martingale region). In the limit, we have that $X$ is a martingale on $(\ell_t,r_t)$, $t\in[0,1]$, and has a strict supermartingale transition (a jump down to the lower boundary $\ell_t$) only if it escapes to the upper boundary $r_t$ at time $t$. Hence the dual strategies (see Section \ref{sec:optimalityDual}) in fact can be defined as in Henry-Labord\`ere et al. \cite[Eq. (3.9), (3.10), (3.11)]{HLTanTouzi}:
$$
\partial_x h^*(t,x) = \frac{c_x(x, x)-c_x(x,T_d(t,x))  }{j_d(t,x)},\quad  x\in(\ell_t,r_t),
$$
$$\lim_{x\uparrow x_1(t)=e^t}h^*(t,x)=0=h^*(t,e^t).
$$
We remark that from the above definition, for all $t\in [0,1]$, $x \mapsto h^*(t,x)$ is continuous.  In addition, $\psi^*$ is defined, up to a constant, by
$$
\partial_x \psi^*(t,x):= - h^*(t,x),\quad (t,x)\in E=\{ (t,x): t \in [0,1], x \in (l_t, r_t) \}.
$$
The limiting arguments that prove optimality of $(h^*,\psi^*)$ are as in the decreasing case, and thus we omit the details.
\end{proof}
`														
\subsection{Bachelier dynamics with negative drift} \label{sec:bachelier}

In this section we consider the marginals $(\mu_t)_{t\in(0,1]}$ such that each $\mu_t$ has the density
\begin{equation}\label{eq:normal_density}
f(t,x)= \frac{1}{\sqrt{2 \pi t}} e^{-\frac{(x+t)^2}{2t}}, \quad t \in (0,1].
\end{equation}
Note that Assumption \ref{Assump:1} is almost satisfied; indeed $\mu_0$ is left undefined. We could overcome this by introducing $\delta>0$ and then working with $t\in[\delta,1]$. Alternatively, we could specify $\tilde\mu_t$ to have density $f(t+\delta,\cdot)$ for each $t\in[0,1]$. For the convenience of notation we choose to work on $(0,1]$.

Fix $t\in(0,1)$ and $\epsilon\in(0,1-t]$. Since $m_\epsilon(t)$ and $m^\epsilon(t)$, as in Assumption \ref{Assump:1}, correspond to two unique crossing points of the densities $f(t,\cdot),f(t+\epsilon,\cdot)$, by direct computation one easily obtains that
$$
m^2 = \frac{t (t+\epsilon)}{\epsilon} \ln \frac{t+\epsilon}{t} + (t+\epsilon)t,\quad m\in\{m_\epsilon(t),m^\epsilon(t)\}.
$$
On the other hand, straightforward calculations show that $m_t$ (resp. $\tilde{m}_t$), the minimizer (resp. maximizer) of $\partial_t F(t,\cdot)$, is given by $m_t=-\tilde{m}_t=\sqrt{t(t+1)}$. Note that  $m_t =  \lim_{\epsilon \downarrow 0} m^{\epsilon}(t), \tilde{m}_t= \lim_{\epsilon \downarrow 0} m_{\epsilon}(t)$, for each $t\in(0,1)$. It follows that Assumption \ref{Assump:2} holds. 

Our next goal is to show that $(\mu_t)_{t\in(0,1]}$ also satisfies Assumption \ref{Assump:3}. Since, $r_t=\infty$ for all $t\in(0,1]$, Assumption \ref{Assump:3}(ii) is immediate. For Assumption \ref{Assump:3} (iii), simple calculations lead to
$$
\partial_{tx}f (t,x) = t^{-\frac{5}{2}} \Phi'\left(\frac{x+t}{\sqrt{t}}\right) \left[ x+t+\frac{1}{2} (t-x) \left( \frac{(x+t)^2}{t} -1 \right) \right],
$$
where $\Phi(x):= \int_{-\infty}^{x} \frac{1}{\sqrt{2\pi}} e^{-\frac{t^2}{2}} dt$ is the c.d.f. of the standard normal random variable. Now using that $m_t=\sqrt{t(t+1)}$, we consequently have that $\partial_{tx} f(t,m_t) =t^{-2} \sqrt{t+1} \Phi'( \sqrt{t} +\sqrt{t+1} ) > 0$.

It is left to verify Assumption \ref{Assump:3}(i). Let $x_1^\epsilon(t)$ be the unique phase transition point of the decreasing supermartingale coupling of $\mu_t$ and $\mu_{t+\epsilon}$; see Section \ref{sec:Decreasing_SMOT_one_period_primal}.
\begin{Lemma}\label{lem:x_1^epsilonNormal}
	Let $(\mu_t)_{t\in(0,1]}$ be specified by \eqref{eq:normal_density}. Then, $x_1^\epsilon(t)$ is uniquely determined by the equation
	$$
	1- \Phi\left(\frac{x_1^{\epsilon}(t)+t}{\sqrt{t}}\right) = \frac{1}{\epsilon} ( \sqrt{t+\epsilon} -  \sqrt{t}) \Phi'\left(\frac{x_1^{\epsilon}(t)+t}{\sqrt{t}}\right),\quad t\in(0,1),\ \epsilon\in(0,t-1].
	$$
	Furthermore, the limit $x_1(t):=\lim_{\epsilon\downarrow 0}x^\epsilon_1(t)$ exists, and is uniquely determined by the equation
	\begin{equation} \label{eq:Bownian_boundary}
	2 \sqrt{t} \left( 1 - \Phi \left(\frac{x_1(t)+t}{\sqrt{t}}\right)\right) = \Phi' \left(\frac{x_1(t)+t}{\sqrt{t}}\right).
	\end{equation}
\end{Lemma}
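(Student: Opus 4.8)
The plan is to exploit the translation structure of the Gaussian densities and the characterization of $x_1^\epsilon(t)$ via the mass- and mean-preservation identities recalled in the proof of Lemma \ref{lemma:phase_transition_curve}. First I would recall that, under the decreasing supermartingale coupling of $\mu_t$ and $\mu_{t+\epsilon}$, the pair $(x_1^\epsilon(t), y_1^\epsilon(t))$ is the unique solution of
\begin{equation*}
\int_{x_1^\epsilon(t)}^{\infty} z^i f(t,z)\, dz = \int_{y_1^\epsilon(t)}^{\infty} z^i f(t+\epsilon,z)\, dz,\qquad i=0,1,
\end{equation*}
with $x_1^\epsilon(t)<y_1^\epsilon(t)$ impossible and in fact $y_1^\epsilon(t)<x_1^\epsilon(t)$ since the right boundary is $+\infty$ and the mass escapes downward. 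Uniqueness of $x_1^\epsilon(t)$ is already granted by Lemma \ref{lem:transition_dispersion} (the Dispersion Assumption holds, as verified just above the statement). For $f(t,x)=\frac{1}{\sqrt{2\pi t}}e^{-(x+t)^2/2t}$ one has $F(t,x)=\Phi\!\left(\frac{x+t}{\sqrt t}\right)$ and the tail integrals are explicit: $\int_x^\infty f(t,z)\,dz = 1-\Phi\!\left(\frac{x+t}{\sqrt t}\right)$ and $\int_x^\infty z f(t,z)\,dz = -t\left(1-\Phi\!\left(\frac{x+t}{\sqrt t}\right)\right) + \sqrt t\,\Phi'\!\left(\frac{x+t}{\sqrt t}\right)$. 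Substituting these into the two equations and eliminating the common tail mass $1-\Phi\!\left(\frac{x_1^\epsilon(t)+t}{\sqrt t}\right) = 1-\Phi\!\left(\frac{y_1^\epsilon(t)+t}{\sqrt{t+\epsilon}}\right)$, the mean equation collapses to
\begin{equation*}
\sqrt t\,\Phi'\!\left(\frac{x_1^\epsilon(t)+t}{\sqrt t}\right) - \sqrt{t+\epsilon}\,\Phi'\!\left(\frac{y_1^\epsilon(t)+t}{\sqrt{t+\epsilon}}\right) = (t+\epsilon - t)\left(1-\Phi\!\left(\tfrac{x_1^\epsilon(t)+t}{\sqrt t}\right)\right) = \epsilon\left(1-\Phi\!\left(\tfrac{x_1^\epsilon(t)+t}{\sqrt t}\right)\right).
\end{equation*}
Since the two standardized arguments coincide (both equal $\Phi^{-1}$ of the same number), $\Phi'$ takes the same value at both, and the left side is $(\sqrt t - \sqrt{t+\epsilon})\Phi'\!\left(\frac{x_1^\epsilon(t)+t}{\sqrt t}\right)$; rearranging gives exactly the claimed equation
\begin{equation*}
1-\Phi\!\left(\tfrac{x_1^\epsilon(t)+t}{\sqrt t}\right) = \tfrac1\epsilon(\sqrt{t+\epsilon}-\sqrt t)\,\Phi'\!\left(\tfrac{x_1^\epsilon(t)+t}{\sqrt t}\right).
\end{equation*}

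For the limiting boundary equation \eqref{eq:Bownian_boundary}, I would pass to the limit $\epsilon\downarrow 0$ using $\frac{\sqrt{t+\epsilon}-\sqrt t}{\epsilon}\to\frac{1}{2\sqrt t}$, together with the continuity and boundedness of $(t,\epsilon)\mapsto x_1^\epsilon(t)$ (Assumption \ref{Assump:3}(i), whose verification is exactly the point of this lemma — so strictly I should first establish that bound here, or note it follows from the implicit equation since $\Phi$ and $\Phi'$ are smooth with bounded derivatives and the equation has a unique root depending continuously on $(t,\epsilon)$). Then any subsequential limit $x_1(t)$ satisfies $1-\Phi\!\left(\frac{x_1(t)+t}{\sqrt t}\right) = \frac{1}{2\sqrt t}\Phi'\!\left(\frac{x_1(t)+t}{\sqrt t}\right)$, i.e.\ $2\sqrt t\left(1-\Phi\!\left(\frac{x_1(t)+t}{\sqrt t}\right)\right) = \Phi'\!\left(\frac{x_1(t)+t}{\sqrt t}\right)$, which is \eqref{eq:Bownian_boundary}. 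Uniqueness of the limit (hence existence of the genuine limit, not just subsequential) follows from uniqueness of the solution of \eqref{eq:Bownian_boundary}: writing $u=\frac{x+t}{\sqrt t}$, one studies $\Xi(u):=2\sqrt t(1-\Phi(u))-\Phi'(u)$; since $\Xi'(u)=\Phi'(u)(u-2\sqrt t)$, $\Xi$ is decreasing on $(-\infty,2\sqrt t)$ and increasing afterwards, with $\Xi(-\infty)=2\sqrt t>0$, and one checks (via the Mills-ratio bound $1-\Phi(u)<\Phi'(u)/u$ for $u>0$, already used in the paper's commented-out proof of Proposition on the Bachelier boundary) that $\Xi$ is eventually negative, so $\Xi$ has exactly one zero, in $(-\infty,2\sqrt t)$. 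Translating back gives a unique $x_1(t)$, and $x_1(t)\le \sqrt{t(t+1)}=m_t$ as needed for consistency with the martingale region.

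The only genuinely non-routine point is making the limit argument rigorous without circularity: the statement is meant to help verify Assumption \ref{Assump:3}(i), so I cannot simply invoke that assumption to extract a convergent subsequence. Instead I would argue directly from the implicit equation: the map $(t,\epsilon)\mapsto \frac{1}{\epsilon}(\sqrt{t+\epsilon}-\sqrt t)$ extends continuously to $\epsilon=0$ with value $\frac{1}{2\sqrt t}$, the function $v\mapsto (1-\Phi(v))-\beta\Phi'(v)$ has, for each $\beta>0$, a unique zero $v^*(\beta)$ depending continuously (indeed smoothly, by the implicit function theorem, since the $v$-derivative $-\Phi'(v)-\beta\Phi''(v)=-\Phi'(v)(1-\beta v)$ is nonzero at the root once one checks $1-\beta v^*(\beta)\neq 0$ there), so $x_1^\epsilon(t)=\sqrt t\, v^*\!\big(\tfrac{1}{\epsilon}(\sqrt{t+\epsilon}-\sqrt t)\big)-t$ extends continuously to $\epsilon=0$, giving both the existence of $x_1(t)=\lim_{\epsilon\downarrow0}x_1^\epsilon(t)$ and its characterization \eqref{eq:Bownian_boundary}. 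Boundedness and joint uniform continuity on the compact parameter set then follow automatically, completing also the verification of Assumption \ref{Assump:3}(i).
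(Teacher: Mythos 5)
Your proposal is correct and follows essentially the same route as the paper: both reduce the mass and mean preservation identities, after the standardization $\hat x_1^\epsilon=(x_1^\epsilon+t)/\sqrt t$, $\hat y_1^\epsilon=(y_1^\epsilon+t+\epsilon)/\sqrt{t+\epsilon}$, to $\hat x_1^\epsilon=\hat y_1^\epsilon$ and then the scalar equation $\epsilon(1-\Phi(\hat x_1^\epsilon))=(\sqrt{t+\epsilon}-\sqrt t)\Phi'(\hat x_1^\epsilon)$ (your displayed intermediate line has a small typo, $y_1^\epsilon+t$ instead of $y_1^\epsilon+t+\epsilon$ in the numerator, but the argument is consistent with the correct expression). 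The paper passes to the limit by introducing a function $F(t,\epsilon,x)$ whose $\epsilon=0$ level set is characterized by Lemma \ref{lemm:phase_trans_eq_bachelier} and then establishes continuity via the implicit function theorem in the subsequent lemma; your reparametrization through $v^*(\beta)$ with $\beta=\tfrac1\epsilon(\sqrt{t+\epsilon}-\sqrt t)$ is the same argument packaged more explicitly, and your check that $1-\beta v^*(\beta)\neq 0$ via the Mills-ratio bound is exactly the nondegeneracy the paper uses as well; it also makes the non-circularity of the convergence argument (you cannot invoke Assumption \ref{Assump:3}(i), which this lemma is meant to help verify) more transparent than the paper's own phrasing.
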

The proof of Lemma \ref{lem:x_1^epsilonNormal} requires the following auxiliary result.
\begin{Lemma} \label{lemm:phase_trans_eq_bachelier}
	For each $t \in (0,1]$, the equation $2 \sqrt{t} ( 1 - \Phi (x)) = \Phi' (x)$
	admits a unique (real-valued) solution  $x^*_t\in(-\infty,2\sqrt{t})$.	
\end{Lemma}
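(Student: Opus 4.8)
The plan is to analyze the function $G(x) := 2\sqrt{t}\,(1 - \Phi(x)) - \Phi'(x)$ on $\mathbb{R}$ and show it has exactly one zero, which moreover lies in $(-\infty, 2\sqrt{t})$. First I would compute the derivative: since $\Phi''(x) = -x\Phi'(x)$, we get $G'(x) = -2\sqrt{t}\,\Phi'(x) - \Phi''(x) = \Phi'(x)\,(x - 2\sqrt{t})$. Hence $G$ is strictly decreasing on $(-\infty, 2\sqrt{t})$ and strictly increasing on $(2\sqrt{t}, \infty)$, so $x = 2\sqrt{t}$ is the unique global minimizer of $G$.

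Next I would examine the boundary behaviour. As $x \to -\infty$, $\Phi'(x) \to 0$ and $1 - \Phi(x) \to 1$, so $G(x) \to 2\sqrt{t} > 0$. As $x \to +\infty$, both $1 - \Phi(x) \to 0$ and $\Phi'(x) \to 0$, but one checks (using the standard tail bound $1 - \Phi(x) \le \Phi'(x)/x$ for $x > 0$, which is already invoked elsewhere in the paper) that $G(x) < 0$ for all sufficiently large $x$; in fact $G(2\sqrt{t}) \le (2\sqrt{t}\cdot\frac{1}{2\sqrt{t}} - 1)\Phi'(2\sqrt{t}) = 0$ is not quite strict, so more carefully: for $x > 2\sqrt{t}$ the bound gives $2\sqrt{t}(1-\Phi(x)) < \frac{2\sqrt{t}}{x}\Phi'(x) < \Phi'(x)$, hence $G(x) < 0$ on $(2\sqrt{t}, \infty)$, and in particular $G(2\sqrt t)\le 0$ by continuity. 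Combined with $G(2\sqrt{t})$ being the minimum, this shows $G < 0$ on all of $[2\sqrt{t}, \infty)$ except possibly at the single point $2\sqrt t$ where $G$ could vanish.

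It remains to locate the zero in $(-\infty, 2\sqrt{t})$. On this interval $G$ is continuous and strictly decreasing, with $G(-\infty) = 2\sqrt{t} > 0$ and $G(2\sqrt{t}) \le 0$. By the intermediate value theorem and strict monotonicity, $G$ has exactly one zero $x_t^* \in (-\infty, 2\sqrt{t}]$; and since $G < 0$ strictly on $(2\sqrt{t},\infty)$ while $G(2\sqrt t)\le 0$, if $G(2\sqrt t)=0$ the zero is $x_t^*=2\sqrt t$, otherwise $x_t^*<2\sqrt t$. Either way, $x_t^*$ is the unique real solution and $x_t^* \le 2\sqrt{t}$; to get the strict inequality $x_t^* \in (-\infty, 2\sqrt{t})$ one notes $G(2\sqrt t)<0$ strictly, which follows from the strict tail inequality applied at $x=2\sqrt t>0$: $2\sqrt t(1-\Phi(2\sqrt t))<\Phi'(2\sqrt t)$.

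The only mild obstacle is being careful with the tail bound at the single point $x = 2\sqrt{t}$ so as to conclude the strict inequality $x_t^* < 2\sqrt{t}$; once one observes that the inequality $1-\Phi(x) < \Phi'(x)/x$ is strict for every $x > 0$, everything closes up. The rest is a routine sign analysis.
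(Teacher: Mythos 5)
Your proof is correct and follows essentially the same approach as the paper: compute $G'(x)=\Phi'(x)(x-2\sqrt{t})$, identify $2\sqrt{t}$ as the unique minimizer, check the limit $2\sqrt t>0$ at $-\infty$, and invoke the intermediate value theorem together with strict monotonicity. The only (cosmetic) difference is that to see $G(2\sqrt t)<0$ you invoke the Gaussian tail bound $1-\Phi(x)<\Phi'(x)/x$, whereas the paper instead observes $\lim_{x\to\infty}G(x)=0$ and uses that $G$ is strictly increasing on $[2\sqrt t,\infty)$; both routes close the same gap.
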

\begin{proof}
	The proof is an application of the intermediate value theorem. Let us consider the function $F(x):=2 \sqrt{t} ( 1 - \Phi (x)) - \Phi' (x)$. First, as $F'(x)=\Phi'(x) (x-2\sqrt{t})$, it is clear that $x \mapsto F(x)$ is continuous on $\R$, decreasing on $(-\infty, 2\sqrt{t}]$, increasing on $[2\sqrt{t},\infty)$. Furthermore, $\lim_{x \to -\infty} F(x)=2\sqrt{t} > 0$ and $\lim_{x \to \infty} F(x)= 0$. It follows that the equation $F(x)=0$ admits a unique solution in the interval $(-\infty,2\sqrt{t})$.	
\end{proof}
\begin{proof}[Proof of Lemma \ref{lem:x_1^epsilonNormal}]
	To determine $x_1^{\epsilon}(t)\in(\ell_t,r_t)$ we use the fact that under the decreasing supermartingale coupling $\hat\P^{\mu_t,\mu_{t+\epsilon}}$, $\mu\lvert_{[x_1^\epsilon(t),r_t)}$ is mapped to $\nu\lvert_{[y_1^\epsilon(t),r_{t+\epsilon})}$ in a martingale way, where $y_1^\epsilon(t)=y_1^{\mu_t,\mu_{t+\epsilon}}$ (see \eqref{eq:y_1Appendix}). In particular, the pair $(x_1^\epsilon(t),y_1^\epsilon(t))$ is uniquely determined by the mass and mean preservation condition
	\begin{equation}\label{eq:mean-massExamples}
	\int^{r_t=\infty}_{x_1^\epsilon(t)}z^i\mu_t(dz)=\int^{r_{t+\epsilon}=\infty}_{y_1^\epsilon(t)}z^i\mu_{t+\epsilon}(dz),\quad i=0,1.
	\end{equation}
	Note that due to the Dispersion Assumption (see Assumption \ref{Assump:1}), we must have that $y_1^\epsilon(t)\leq x_1^\epsilon(t)<m^\epsilon(t)$.
	
	Now apply the change of variables: $\hat{x}_1^{\epsilon}(t) = \frac{x_1^{\epsilon}(t)+t}{\sqrt{t}}$, $\hat{y}_1^{\epsilon}(t) = \frac{y_1^{\epsilon}(t)+t+\epsilon}{\sqrt{t+\epsilon}}$. Then \eqref{eq:mean-massExamples} with $i=0$ reads 
	\begin{equation} \label{eq:bechelier_transition_1}
	\Phi(\hat{x}_1^{\epsilon}(t)) = \Phi(\hat y_1^{\epsilon}(t)).
	\end{equation}
	For \eqref{eq:mean-massExamples} with $i=1$, we first re-write it as 
	$$
	\int_{x_1^{\epsilon}(t)}^{\infty} (x +t) f(t,x) dx + \epsilon \int_{x_1^{\epsilon}(t)}^{\infty} f(t,x) dx = \int_{y_1^{\epsilon}(t)}^{\infty} (x+t+\epsilon) f(t+\epsilon,x) dx,
	$$
	by adding $(t+\epsilon)$ times the equation \eqref{eq:mean-massExamples} with $i=0$ on both sides. Then direct calculation leads to
	\begin{equation} \label{eq:bechelier_transition_2}
	\sqrt{t} \Phi'(\hat{x}_1^{\epsilon}(t)) + \epsilon (1- \Phi(\hat{x}_1^{\epsilon}(t))) = \sqrt{t+\epsilon} \Phi'(\hat{y}_1^{\epsilon}(t)).
	\end{equation}
	
	From \eqref{eq:bechelier_transition_1}, we get $\hat{x}_1^{\epsilon}(t) = \hat{y}_1^{\epsilon}(t)$, or equivalently $\frac{x_1^{\epsilon}(t)+t}{\sqrt{t}} =  \frac{y_1^{\epsilon}(t)+t+\epsilon}{\sqrt{t+\epsilon}}$.
	By plugging this relation $\hat{x}_1^{\epsilon}(t) = \hat{x}_2^{\epsilon}(t)$ into \eqref{eq:bechelier_transition_2}, we obtain that 
	\begin{equation} \label{eq:Bownian_boundary_discrete}
	1- \Phi(\hat{x}_1^{\epsilon}(t)) = \frac{1}{\epsilon} ( \sqrt{t+\epsilon} -  \sqrt{t}) \Phi'(\hat{x}_1^{\epsilon}(t)),
	\end{equation}
	which is precisely the required algebraic equation that characterizes $x^\epsilon_1(t)$.
	
	We now deal with the limit $x_1(t):=\lim_{\epsilon\downarrow 0}x^\epsilon_1(t)$.
	
Recall that $\epsilon_0:=\epsilon_1\wedge\epsilon_2$, where $\epsilon_1,\epsilon_2$ are as in Assumptions \ref{Assump:1} and \ref{Assump:2}, respectively.
		
	 For each (small) $\delta>0$, define $F=F^\delta: \R^3 \to \R$ on  $\{ (t,\epsilon): 0 \leq \epsilon \leq \epsilon_0,\ \delta\leq t \leq 1-\epsilon \}\times\R$ by 
	\begin{equation}\label{eq:F_epsilon}
	F(t, \epsilon,x):= (1- \Phi(x)) - \frac{\sqrt{t+\epsilon} -\sqrt{t}}{\epsilon} \frac{1}{\sqrt{2\pi}} e^{-\frac{x^2}{2}},\quad (t,\epsilon,x)\in[\delta,1-\epsilon]\times(0, \epsilon_0]\times\R
	\end{equation}
	and 
	\begin{equation}\label{eq:F_0}
	F(t, 0, x):=  (1- \Phi(x)) - \frac{1}{2 \sqrt{t}} \frac{1}{\sqrt{2\pi}} e^{-\frac{x^2}{2}},\quad (t,x)\in[\delta,1]\times\R.
	\end{equation}
Note that, by \eqref{eq:Bownian_boundary_discrete}, $F(t,\epsilon,\hat x^\epsilon_1(t))=0$ for all $\epsilon \in(0,\epsilon_0]$ and $t\in[\delta,1-\epsilon]$.

Using Lemma \ref{lemm:phase_trans_eq_bachelier}, we define $\hat x_1^0(t)$, for all $t\in(0,1]$, to be the unique solution to $F(t,0,x)=0$. Then $x^0_1(t)$ is defined as $x_1^0(t):=\sqrt{t}\hat x_1^0(t)-t$, for each $t\in(0,1]$. Note that $x_1^0(\cdot)$ is continuous if and only if $\hat x_1^0(\cdot)$ is. Now since $\hat x^0_1(t)$ uniquely satisfies $F(t,0,x)=0$, we immediately have that $\lim_{\epsilon\downarrow0}\hat x_1^\epsilon(t)$ exists, for all $t\in(0,1]$, and uniquely satisfies \eqref{eq:Bownian_boundary}.

\end{proof}

\begin{Lemma} 
	For $\epsilon\in[0,\epsilon_0]$ and $t\in(0,1]$, let $x^\epsilon_1(t)$ be given by Lemma \ref{lem:x_1^epsilonNormal} with $x^0_1(t):=x_1(t)=\lim_{\epsilon\downarrow0}x_1^\epsilon(t)$.
	
	Then, for each (small) $\delta>0$, the map $(t,\epsilon)\mapsto x^\epsilon_1(t)$ is continuous on $\{ (t,\epsilon): 0 \leq \epsilon \leq \epsilon_0,\ \delta\leq t \leq 1-\epsilon \}\times\R$.	\end{Lemma}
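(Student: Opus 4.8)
The plan is to exhibit $x_1^\epsilon(t)$ as the unique zero of a jointly continuous function and then run the standard compactness-plus-uniqueness argument. First I would put the coefficient appearing in Lemma~\ref{lem:x_1^epsilonNormal} into a manifestly continuous form: since $\sqrt{t+\epsilon}-\sqrt t=\epsilon/(\sqrt{t+\epsilon}+\sqrt t)$, setting $c(t,\epsilon):=1/(\sqrt{t+\epsilon}+\sqrt t)$ we have $c(t,\epsilon)=(\sqrt{t+\epsilon}-\sqrt t)/\epsilon$ for $\epsilon>0$ and $c(t,0)=1/(2\sqrt t)$, so $(t,\epsilon)\mapsto c(t,\epsilon)$ is jointly continuous on $\{\delta\le t\le 1,\ 0\le\epsilon\le\epsilon_0\}$ and, because $t\ge\delta>0$ and $\epsilon$ is bounded, $c$ is bounded above and below there by strictly positive constants. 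Hence $F(t,\epsilon,x)=(1-\Phi(x))-c(t,\epsilon)\Phi'(x)$, as in \eqref{eq:F_epsilon}--\eqref{eq:F_0}, is jointly continuous in $(t,\epsilon,x)$. Moreover, for each fixed $(t,\epsilon)$ the equation $F(t,\epsilon,\cdot)=0$ has exactly one real root $\hat x_1^\epsilon(t)$: this is verbatim the argument of Lemma~\ref{lemm:phase_trans_eq_bachelier} with $2\sqrt t$ replaced by $1/c(t,\epsilon)$, using $\partial_x F(t,\epsilon,x)=\Phi'(x)\bigl(c(t,\epsilon)x-1\bigr)$ to see that $F(t,\epsilon,\cdot)$ is strictly decreasing on $(-\infty,1/c(t,\epsilon))$ and strictly increasing afterwards, together with the limits $F(t,\epsilon,-\infty)=1>0$ and $F(t,\epsilon,+\infty)=0^-$.

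Next I would prove sequential continuity. Let $(t_n,\epsilon_n)\to(t_0,\epsilon_0)$ inside the domain. The key preliminary step is that $(\hat x_1^{\epsilon_n}(t_n))_{n}$ lies in a fixed compact interval. For the upper bound, $\hat x_1^{\epsilon_n}(t_n)<1/c(t_n,\epsilon_n)=\sqrt{t_n+\epsilon_n}+\sqrt{t_n}$, which is bounded since $t_n\le1$ and $\epsilon_n\le\epsilon_0$. For the lower bound, let $C:=\sup_n c(t_n,\epsilon_n)<\infty$; since $1-\Phi(x)\to1$ and $\Phi'(x)\to0$ as $x\to-\infty$, one can choose $M>0$ (depending only on $C$) so that $F(t_n,\epsilon_n,x)\ge\tfrac12>0$ for all $n$ and all $x\le -M$, which forces $\hat x_1^{\epsilon_n}(t_n)\ge -M$ for every $n$.

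Then I would run the usual subsequence argument. Given any subsequence, extract a further subsequence along which $\hat x_1^{\epsilon_{n_k}}(t_{n_k})\to x^\star$ for some $x^\star\in\R$; passing to the limit in $0=F\bigl(t_{n_k},\epsilon_{n_k},\hat x_1^{\epsilon_{n_k}}(t_{n_k})\bigr)$ and using the joint continuity of $F$ yields $F(t_0,\epsilon_0,x^\star)=0$, hence $x^\star=\hat x_1^{\epsilon_0}(t_0)$ by uniqueness of the root. Since every subsequence has a further subsequence converging to the same limit, $\hat x_1^{\epsilon_n}(t_n)\to\hat x_1^{\epsilon_0}(t_0)$, i.e. $(t,\epsilon)\mapsto\hat x_1^\epsilon(t)$ is continuous. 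Finally, $x_1^\epsilon(t)=\sqrt t\,\hat x_1^\epsilon(t)-t$ is continuous as a sum and product of continuous functions (recalling that $x_1^0(t)=x_1(t)$ by construction in Lemma~\ref{lem:x_1^epsilonNormal}), which is the desired claim.

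The only genuinely delicate points are the two \emph{uniform} estimates in the boundedness step, and these are precisely where the hypothesis $t\ge\delta$ enters, to keep $c(t,\epsilon)$ bounded away from both $0$ and $\infty$; the remainder is routine bookkeeping. One could alternatively invoke the implicit function theorem at each $(t_0,\epsilon_0)$, since $\partial_x F(t_0,\epsilon_0,\hat x_1^{\epsilon_0}(t_0))=\Phi'(\hat x_1^{\epsilon_0}(t_0))\bigl(c(t_0,\epsilon_0)\hat x_1^{\epsilon_0}(t_0)-1\bigr)\neq0$ by the strict inequality $\hat x_1^{\epsilon_0}(t_0)<1/c(t_0,\epsilon_0)$; but the compactness route is cleaner because it handles the boundary $\epsilon=0$ and the global uniqueness on the same footing.
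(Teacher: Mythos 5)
Your proof is correct, and it follows a genuinely different route from the paper's. The paper invokes the implicit function theorem: it verifies that $(t,\epsilon,x)\mapsto F(t,\epsilon,x)$ is $C^1$ up to and including $\epsilon=0$ (via a separate limiting computation for $\partial_\epsilon F(t,0,x)$), checks that $\partial_x F\neq 0$ along the zero set, and concludes that $\hat x_1^\epsilon(t)$ is continuously differentiable, hence continuous. You instead use a compactness-plus-uniqueness argument that needs only \emph{continuity} of $F$: the key observation, which the paper does not make explicit, is that the coefficient $(\sqrt{t+\epsilon}-\sqrt t)/\epsilon$ equals $1/(\sqrt{t+\epsilon}+\sqrt t)$, so $F$ is manifestly jointly continuous (indeed smooth) on $\{\delta\le t,\ 0\le\epsilon\le\epsilon_0\}\times\R$ with no special treatment of $\epsilon=0$. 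Combined with the uniform a priori bounds on the roots (the upper one from the location $1/c(t,\epsilon)$ of the minimum of $F$, the lower one from $1-\Phi\to1$, $\Phi'\to0$ together with $c$ being bounded), the subsequence argument then delivers continuity directly. What you give up is the extra information that the paper's proof yields for free, namely $C^1$ regularity of $(t,\epsilon)\mapsto\hat x_1^\epsilon(t)$; what you gain is a lighter verification (no partial-derivative computations) and an argument that treats the whole parameter region, including $\epsilon=0$, uniformly. Both proofs are correct; the Lemma only asserts continuity, so your version is sufficient, and your algebraic simplification of the coefficient would also streamline the paper's IFT route.
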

\begin{proof}	
	Let $F$ be defined as in \eqref{eq:F_epsilon} and \eqref{eq:F_0}. To prove the continuity of $(t,\epsilon)\mapsto x^\epsilon_1(t):=\sqrt{t}\hat x^\epsilon_1(t)-t$, we will use the implicit function theorem and show that $(t,\epsilon)\mapsto \hat x^\epsilon_1(t)$ is continuous, where $F(t,\epsilon,\hat x^\epsilon_1(t))=0$.
	
	We claim that $(t,\epsilon, x) \mapsto F(t,\epsilon,x)$ is continuously differentiable. 
		
		First, for $0<\epsilon\leq\epsilon_0$, we have that 
		$$
		\frac{\partial F}{\partial \epsilon} (t, \epsilon,x) = - \frac{1}{\sqrt{2\pi}} e^{-\frac{x^2}{2}} \frac{\frac{1}{2}(t+\epsilon)^{-\frac{1}{2}} \epsilon - ( \sqrt{t+\epsilon} - \sqrt{t} ) }{\epsilon^2},
		$$
		and it follows that $\lim_{\epsilon \to 0} \frac{\partial F}{\partial \epsilon} (t, \epsilon,x)=-\Phi'(x)\frac14 t^{-\frac32}$. On the other hand, 
		$$
		\frac{\partial F}{\partial \epsilon} (t, 0,x) = \lim_{\epsilon \to 0} \frac{F(t,\epsilon,x)-F(t, 0,x)}{\epsilon} = \lim_{\epsilon \to 0} - \frac{1}{\sqrt{2\pi}} e^{-\frac{x^2}{2}} \frac{\frac{1}{2}t^{-\frac{1}{2}} \epsilon - ( \sqrt{t+\epsilon} - \sqrt{t} ) }{\epsilon^2}=-\Phi'(x)\frac14 t^{-\frac32},
		$$
		and hence $\frac{\partial F}{\partial \epsilon}$ is continuous. The continuity of $\frac{\partial F}{\partial t}$ and $\frac{\partial F}{\partial x}$ follow similarly by the direct computations.
		
		Now we check that $\frac{\partial F}{\partial x}(t,\epsilon,x)|_{F(t, \epsilon, x)=0}\neq0$. For $\epsilon \in[0,\epsilon_0]$, direct calculations show that
		$$
		\frac{\partial F}{\partial x}(t,\epsilon,x)|_{F(t, \epsilon, x)=0} =  \Phi'(x) \left[ x  \frac{1-\Phi(x)}{ \Phi'(x) } -1  \right] <0,
		$$
		where for the inequality we use the fact that $\frac{1-\Phi(x)}{\Phi'(x)} < \frac{1}{x}$ when $x > 0$ (for $x\leq 0$ the inequality is trivially satisfied).
		
		By the implicit function theorem, we get that $(t, \epsilon) \mapsto \hat x_1^{\epsilon}(t)$ is continuously differentiable on a compact set $\{ (t,\epsilon): 0 \leq \epsilon \leq \epsilon_0,\ \delta\leq t \leq 1-\epsilon \}$, and consequently uniformly bounded and continuous. The regularity of $(t, \epsilon) \mapsto x_1^{\epsilon}(t)$ follows from the regularity of $(t,x) \mapsto \hat x_1^{\epsilon}(t)$.	
	\end{proof}
Since all the assumption required for Theorems \ref{Thm:main} and \ref{Thm:optimality} hold we have the following.
\begin{Proposition}\label{Prop:sde_bachelier}
	Let $(\mu_t)_{t\in(0,1]}$ be specified by \eqref{eq:normal_density} and consider $t\mapsto x_1(t)$ as in Lemma \ref{lem:x_1^epsilonNormal}. Then the SDE as in Theorem \ref{Thm:main} is explicitly given by
	$$
	X_t = X_0 + \int_0^t j_u(s,X_{s-}) ( d N_s - \nu_s ds )   \mathbf{1}_{\{x_1(s)< X_{s-}<m_s\}} - \int_0^t \mathbf{1}_{ \{X_{s-}\leq x_1(s)\}}  j_d(s,X_{s-}) ds,
	$$
	where $m_s= \sqrt{s(s+1)}$, $\nu_s(s,X_{s-}):= \frac{j_d}{j_u} ( s, X_{s-} )$ with $j_u(t,x) = T_u(t,x) -x$, and $j_d(t,x)=  \frac{1}{2\sqrt{t}} ( \tilde{T}_u(t,x)-2 \sqrt{t} ) e^{\frac{\hat{x}^2}{2}- \frac{\tilde{T}_u(t,x)^2}{2}} - \frac{\hat{x}}{2\sqrt{t}} + 1$, for $x\in(x_1(s),m_s)$, and $j_d(s,X_{s-}):=\frac{s-x}{2s}$ for $x\in(\ell_s,x_1(s)]$; here $\hat{x}:=\frac{x+t}{\sqrt{t}}$ and $ \tilde{T}_u(t,x):=\sqrt{t} T_u(t,x) -t$ is uniquely determined by the equation
	\begin{align*}
	( \tilde{T}_u(t,x) - \hat{x}) ( \tilde{T}_u(t,x)-2\sqrt{t} ) \Phi'(\tilde{T}_u(t,x))  + 2\sqrt{t}  \left(\Phi(\tilde{T}_u(t,x))-\Phi(\hat{x})\right) =\Phi'(\hat x) -\Phi'(\tilde{T}_u(t,x)).
	\end{align*}
	
	In addition, if Assumption \ref{Assump:CostFunction}(ii) and the integrability condition \eqref{eq:optimality_integrability} are satisfied, then the law $\P^0$ of the above process is the optimal solution for the 
	primal problem $\eqref{eq:Primal_Full}$, and the strong duality
	$$
	\E^{\P^0} [ C(X_\cdot) ] = \mathbf{P}_{\infty}(\mu) =  \mathbf{D}_{\infty}(\mu)
	$$
	holds.
\end{Proposition}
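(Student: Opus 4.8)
The strategy is to read the conclusion off Theorems~\ref{Thm:main} and~\ref{Thm:optimality} and then to make the (otherwise implicit) coefficients $T_u$, $j_u$, $j_d$, $m_t$ and $x_1(t)$ explicit by exploiting the Gaussian form of $(\mu_t)_{t\in(0,1]}$. First I would record that all hypotheses are in force: the computations preceding this proposition show that $(\mu_t)_{t\in(0,1]}$ --- viewed on $[\delta,1]$ for arbitrarily small $\delta>0$, equivalently after a harmless shift of the time origin --- satisfies Assumptions~\ref{Assump:1},~\ref{Assump:2} and~\ref{Assump:3}; in particular $r_t=\infty$, $m_t=\sqrt{t(t+1)}$, $\partial_{tx}f(t,m_t)>0$, and $(t,\epsilon)\mapsto x_1^\epsilon(t)$ is continuous and bounded by the implicit function theorem applied to~\eqref{eq:F_epsilon}--\eqref{eq:F_0}, while Assumption~\ref{Assump:CostFunction} and the integrability~\eqref{eq:optimality_integrability} are part of the hypotheses of the proposition. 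Hence Theorem~\ref{Thm:main} gives $\P^n\to\P^0$ with $\P^0$ the unique weak solution of~\eqref{eq:SDE_main}, and Theorem~\ref{Thm:optimality} gives the optimality of $\P^0$ for $\mathbf P_\infty(\mu)$ together with the strong duality $\mathbf P_\infty(\mu)=\mathbf D_\infty(\mu)$. What remains is purely to rewrite the coefficients.

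For the explicit computation I would set $\hat x:=\frac{x+t}{\sqrt t}$, so that $F(t,x)=\Phi(\hat x)$, $f(t,x)=\frac1{\sqrt t}\Phi'(\hat x)$ and, differentiating in $t$, $\partial_t F(t,x)=\frac{2\sqrt t-\hat x}{2t}\Phi'(\hat x)$; the minimizer of $\partial_t F(t,\cdot)$ is $m_t=\sqrt{t(t+1)}$ as already found, which fixes the drift/jump boundary in~\eqref{eq:SDE_main}. To obtain $T_u$ I would rewrite~\eqref{eq:T_S_charac}, using $\partial_t f=\partial_x(\partial_t F)$ and one integration by parts, as
$$
(x-T_u(t,x))\,\partial_t F(t,T_u(t,x))+\int_x^{T_u(t,x)}\partial_t F(t,\xi)\,d\xi=0,
$$
then substitute $\xi=\sqrt t\,\hat\xi-t$ and use the elementary identities $\int\Phi'=\Phi$ and $\int u\,\Phi'(u)\,du=-\Phi'(u)$ to evaluate $\int_x^{T_u}\partial_t F(t,\xi)\,d\xi=\frac1{2\sqrt t}\big[\,2\sqrt t\,\Phi(\hat\xi)+\Phi'(\hat\xi)\,\big]_{\hat x}^{\tilde T_u}$, where $\tilde T_u$ denotes the standardisation of $T_u$, i.e. $T_u(t,x)=\sqrt t\,\tilde T_u(t,x)-t$. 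Collecting terms and multiplying by $2\sqrt t$ produces exactly the stated algebraic equation for $\tilde T_u$, whose unique solvability on the relevant range is inherited from the existence/uniqueness result for $T_u$ established right after Lemma~\ref{lemma:phase_transition_curve}. Then $j_u=T_u-x$ is immediate; for $x\in(\ell_t,x_1(t)]$ one gets $j_d(t,x)=\frac{\partial_t F(t,x)}{f(t,x)}=\frac{2\sqrt t-\hat x}{2\sqrt t}=\frac{t-x}{2t}$; and for $x\in(x_1(t),m_t)$ one divides $\partial_t F(t,x)-\partial_t F(t,T_u(t,x))$ by $f(t,x)$ and uses $\Phi'(\tilde T_u)/\Phi'(\hat x)=e^{\hat x^2/2-\tilde T_u^2/2}$ to reach the stated expression for $j_d$. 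The boundary $x_1(t)$ is the one delivered by Lemma~\ref{lem:x_1^epsilonNormal}.

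I do not expect a genuine obstacle here: once Theorems~\ref{Thm:main} and~\ref{Thm:optimality} are applied, the remaining work is bookkeeping in Gaussian calculus. The two points asking for a little care are (i) that the algebraic equation for $\tilde T_u$ has a unique solution in the correct interval --- which is not argued ad hoc but follows from the general statement for $T_u$ after Lemma~\ref{lemma:phase_transition_curve} --- and (ii) keeping the change of variables $x\leftrightarrow\hat x$ consistent so that $j_d$, $\nu_s$ and $m_s$ emerge in the normalized form stated; since $\mu_0$ is left unspecified, all of this is read on $[\delta,1]$ with $\delta\downarrow0$, exactly as in the verification of the assumptions above.
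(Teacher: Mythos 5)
Your proposal is correct and follows essentially the same route as the paper's own proof: verify the standing assumptions (done already in the preceding Section~4.2), invoke Theorems~\ref{Thm:main} and~\ref{Thm:optimality}, then carry out the Gaussian bookkeeping to make $T_u,j_u,j_d,m_t,x_1$ explicit. The only difference is that you make the intermediate step visible (one integration by parts to turn $\int(x-\xi)\partial_t f\,d\xi=0$ into $(x-T_u)\partial_t F(t,T_u)+\int_x^{T_u}\partial_t F\,d\xi=0$, followed by the change of variables $\xi=\sqrt t\,\hat\xi-t$ and the antiderivatives of $\Phi'$ and $u\Phi'$), where the paper simply asserts "by a change of variables"; your expanded derivation checks out and reproduces the stated equation for $\tilde T_u$ and the formula for $j_d$ exactly.
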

\begin{proof}
	As it has been verified that Assumptions  \ref{Assump:1},  \ref{Assump:2},  \ref{Assump:3} are valid (up to $t=0$), using Theorems \ref{Thm:main} and \ref{Thm:optimality} we obtain the validity of the SDE and the optimality of $\P^0$. It is left to derive the explicit expressions of the terms $T_u, j_u, j_d$ that define the SDE.
	
	In the supermartingale region, i.e., for $x\in(\ell_t,x_1(t)]$, we have that 
	$$
	j_d(t,x) = \frac{1}{f(t,x)} \partial_t F(t,x) = \frac{\sqrt{t}}{\Phi'(\frac{x+t}{\sqrt{t}})} \partial_t \Phi(\frac{x+t}{\sqrt{t}}) = \frac{t-x}{2t}.
	$$
	
	In the martingale region, i.e., for $x\in(x_1(t),m_t)$, using \eqref{eq:T_S_charac} we have that $T_u(t,x)$ is uniquely determined by
	$$
	\int_x^{T_u(t,x)} (x - \xi) \frac{\partial}{\partial t} \left( \frac{1}{\sqrt{2 \pi t}} e^{-\frac{(\xi+t)^2}{2t}} \right) d \xi = 0.
	$$
	By a change of variables $\tilde{T}_u(t,x):=\sqrt{t} T_u(t,x) -t$, we obtain that $\tilde{T}_u(t,x)$ satisfies
	$$
	( \tilde{T}_u(t,x) - \hat{x}) ( \tilde{T}_u(t,x)-2\sqrt{t} ) \Phi'(\tilde{T}_u(t,x))  + 2\sqrt{t}  \left(\Phi(\tilde{T}_u(t,x))-\Phi(\hat{x})\right) =\Phi'(\hat x) -\Phi'(\tilde{T}_u(t,x)).
	$$
	Finally, for $x\in(x_1(t),m_t)$,
	$$
	j_d(t,x)= \frac{\partial_t F(t,x) - \partial_t F(t, T_u(t,x)) }{f(t,x)} = \frac{1}{2\sqrt{t}} ( \tilde{T}_u(t,x)-2 \sqrt{t} ) e^{\frac{\hat{x}^2}{2}- \frac{\tilde{T}_u(t,x)^2}{2}} - \frac{\hat{x}}{2\sqrt{t}} + 1,
	$$	
	as required.	
\end{proof}

\begin{Remark} \label{Rmk:Increasing_Bachelier} Let $(\mu_t)_{t\in(0,1]}$ be specified by \eqref{eq:normal_density} and for $t\in (0,1)$ and $\epsilon \in (0, 1-t]$, consider the increasing supermartingale coupling of $\mu_t$ and $\mu_{t+\epsilon}$. Then there exists the unique threshold $x_1^\epsilon(t)$, such that $(\ell_t,x_1^\epsilon(t)]$ is a martingale region, while $(x^\epsilon_1(t),r_t)$ corresponds to the supermartingale region.
	
By changing variables $\hat{x}_1^{\epsilon}(t) = \frac{x_1^{\epsilon}(t)+t}{\sqrt{t}}$ and using similar arguments as in Lemma \ref{lem:x_1^epsilonNormal}, we have that $x^\epsilon_1(t)$ uniquely solves
$$
\epsilon \Phi(\hat{x}_1^{\epsilon}(t)) = \sqrt{t} \Phi'(\hat{x}_1^{\epsilon}(t)) + \sqrt{t+\epsilon} \Phi'(-\hat{x}_1^{\epsilon}(t)).
$$
Then letting $\epsilon\downarrow 0$ we have that $\hat{x}_1^{\epsilon}(t) \to +\infty$ (provided the limit exists). It follows that ${x}_1^{\epsilon}(t) \to +\infty$, and therefore one could expect that the supermartingale transitions of the limiting SDE correspond to jumps from $\infty$ to $-\infty$.

This is in-line with our observations in Proposition \ref{Prop:sde_Sniform_decreasing} obtained in the uniform case. In particular, when the increasing supermartingale coupling is used in construction, and in the case of unbounded support of marginals, the convergence results may fail in general. We will consider these issues in our future research.
\end{Remark}

\subsection{Geometric Brownian motion with decreasing average}
In this section we consider the marginals $(\mu_t)_{t\in(0,1]}$ such that each $\mu_t$ has the density
\begin{equation}\label{eq:log_normal_density}
f(t,x)=\frac{1}{x\sqrt{2 \pi t}} e^{-\frac{(\ln x + t)^2}{2t}}, \quad t \in (0,1].
\end{equation}
Similarly to the previous Bachelier case (see Section \ref{sec:bachelier}), we can introduce $\delta>0$ and then work with $t\in[\delta,1]$ (or $t\in[\delta,1+\delta]$). For the convenience of notation we shall work on $(0,1]$. 

Assumption \ref{Assump:1} is clearly satisfied. Fix $t\in(0,1)$ and $\epsilon\in(0,1-t]$. For $m_\epsilon(t)$ and $m^\epsilon(t)$ (as in Assumption \ref{Assump:1}), direct computation leads to 
$$
( \ln m)^2 = \frac{t (t+\epsilon)}{\epsilon} \ln \frac{t+\epsilon}{t} + (t+\epsilon)t,\quad m\in\{m_\epsilon(t),m^\epsilon(t)\}.
$$
On the other hand, straightforward calculations show that $m_t$ (resp. $\tilde{m}_t$), the minimizer (resp. maximizer) of $\partial_t F(t,\cdot)$, is given by $\ln m_t=-\ln \tilde{m}_t=\sqrt{ t(t+1)}$.  It follows that Assumption \ref{Assump:2} is also valid. 

We now show that $(\mu_t)_{t\in(0,1]}$ also satisfies Assumption \ref{Assump:3}. First, Assumption \ref{Assump:3}(ii) is immediate, as $r_t=\infty$ for all $t\in(0,1]$. For Assumption \ref{Assump:3} (iii), simple calculations lead to
$$
\partial_{tx} f (t,m_t) = t^{-\frac{3}{2}} \sqrt{t+1} \Phi'( \sqrt{t} +\sqrt{t+1} ) e^{-2 \sqrt{t(t+1)}} >0.
$$

It is left to verify Assumption \ref{Assump:3}(i). Let $x_1^\epsilon(t)$ be the unique phase transition point of the decreasing supermartingale coupling of $\mu_t$ and $\mu_{t+\epsilon}$; see Section \ref{sec:Decreasing_SMOT_one_period_primal}.

\begin{Lemma}\label{lem:x_1^epsilonLogNormal}
	Let $(\mu_t)_{t\in(0,1]}$ be specified by \eqref{eq:log_normal_density}. Then, $x_1^\epsilon(t)$ is uniquely determined by the equation
	$$
		1-\Phi( \frac{\ln x_1^{\epsilon}(t)}{\sqrt{t}} ) = e^{-\frac{1}{2}\epsilon} [1-\Phi( \frac{\ln x_1^{\epsilon}(t)+t}{\sqrt{t}} - \sqrt{t+\epsilon})],\quad t\in(0,1),\ \epsilon\in(0,t-1].
	$$
	Furthermore, the limit $x_1(t):=\lim_{\epsilon\downarrow 0}x^\epsilon_1(t)$ exists, and is uniquely determined by the equation
\begin{equation} \label{eq:GBm_boundary}
\frac{1}{\sqrt{t}} \Phi'(\frac{\ln x_1(t) }{\sqrt{t}}) = 1- \Phi(\frac{\ln x_1(t) }{\sqrt{t}}).
\end{equation}
\end{Lemma}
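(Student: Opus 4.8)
The plan is to follow closely the argument used for the Bachelier case in Lemma~\ref{lem:x_1^epsilonNormal}, exploiting the fact that a log-normal variable is turned into a normal one after taking logarithms. Since $(\mu_t)_{t\in(0,1]}$ satisfies the Dispersion Assumption (Assumption~\ref{Assump:1}), Lemma~\ref{lem:transition_dispersion} already provides the existence and uniqueness of the phase transition point $x_1^\epsilon(t)\in(\ell_t,r_t)=(0,\infty)$ of the decreasing supermartingale coupling $\hat\P^{\mu_t,\mu_{t+\epsilon}}$, and on $[x_1^\epsilon(t),\infty)$ this coupling maps $\mu_t\lvert_{[x_1^\epsilon(t),\infty)}$ to $\mu_{t+\epsilon}\lvert_{[y_1^\epsilon(t),\infty)}$ in a martingale manner; hence the pair $(x_1^\epsilon(t),y_1^\epsilon(t))$ is determined by the mass and mean preservation conditions $\int_{x_1^\epsilon(t)}^\infty z^i\mu_t(dz)=\int_{y_1^\epsilon(t)}^\infty z^i\mu_{t+\epsilon}(dz)$ for $i=0,1$. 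This is the starting point of the computation.

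First I would pass to the variables $\hat x_1^\epsilon(t):=\frac{\ln x_1^\epsilon(t)+t}{\sqrt t}$ and $\hat y_1^\epsilon(t):=\frac{\ln y_1^\epsilon(t)+t+\epsilon}{\sqrt{t+\epsilon}}$, so that the tails of $\mu_t$ and $\mu_{t+\epsilon}$ become standard normal tails. The $i=0$ equation then reads $1-\Phi(\hat x_1^\epsilon(t))=1-\Phi(\hat y_1^\epsilon(t))$, i.e. $\hat x_1^\epsilon(t)=\hat y_1^\epsilon(t)$, equivalently $\frac{\ln y_1^\epsilon(t)}{\sqrt{t+\epsilon}}=\frac{\ln x_1^\epsilon(t)+t}{\sqrt t}-\sqrt{t+\epsilon}$. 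For $i=1$ I would use the log-normal tail-expectation identity $\int_a^\infty z\,\mu_t(dz)=e^{-t/2}\bigl(1-\Phi(\tfrac{\ln a}{\sqrt t})\bigr)$ together with its level-$(t+\epsilon)$ analogue; substituting the $i=0$ relation into the resulting identity and dividing by $e^{-t/2}$ collapses the system to $1-\Phi\bigl(\tfrac{\ln x_1^\epsilon(t)}{\sqrt t}\bigr)=e^{-\epsilon/2}\bigl(1-\Phi\bigl(\tfrac{\ln x_1^\epsilon(t)+t}{\sqrt t}-\sqrt{t+\epsilon}\bigr)\bigr)$, which is the claimed algebraic characterization. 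Its uniqueness can be read off from the uniqueness of $x_1^\epsilon(t)$ in Lemma~\ref{lem:transition_dispersion} (the equation is, given the relation $\hat y=\hat x$, equivalent to the two preservation conditions), or verified directly by a monotonicity argument in the spirit of Lemma~\ref{lemm:phase_trans_eq_bachelier}.

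For the limit, note that passing $\epsilon\downarrow 0$ naively in the displayed equation is degenerate (both sides agree to zeroth order), so the content is an order-$\epsilon$ expansion. Writing $\hat x:=\frac{\ln x+t}{\sqrt t}$ and rearranging, the equation becomes $\Phi(\hat x-\sqrt{t+\epsilon})-\Phi(\hat x-\sqrt t)=(e^{-\epsilon/2}-1)\bigl(1-\Phi(\hat x-\sqrt{t+\epsilon})\bigr)$; Taylor-expanding with $\sqrt{t+\epsilon}-\sqrt t=\frac{\epsilon}{2\sqrt t}+O(\epsilon^2)$ and $e^{-\epsilon/2}-1=-\frac\epsilon2+O(\epsilon^2)$, dividing by $\epsilon$, and letting $\epsilon\to 0$ yields $\frac1{\sqrt t}\Phi'\bigl(\tfrac{\ln x_1(t)}{\sqrt t}\bigr)=1-\Phi\bigl(\tfrac{\ln x_1(t)}{\sqrt t}\bigr)$, i.e. \eqref{eq:GBm_boundary}. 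To make this rigorous and to obtain existence of $x_1(t)=\lim_{\epsilon\downarrow0}x_1^\epsilon(t)$, I would (as in the Bachelier case) define $F(t,\epsilon,\cdot)$ to be the suitably normalized difference of the two sides of the discrete equation, with $F(t,0,\cdot)$ its limit, check that $F$ is $C^1$ up to $\epsilon=0$, verify $\partial_x F\neq0$ at any root using the elementary bound $1-\Phi(u)<\Phi'(u)/u$ for $u>0$, and combine the implicit function theorem with an intermediate-value argument (analogous to Lemma~\ref{lemm:phase_trans_eq_bachelier}) establishing that the limiting equation $\Phi'(u)=\sqrt t\,(1-\Phi(u))$ has a unique zero; the auxiliary function has its maximizer at $u=\sqrt t$, and the bound $1-\Phi(u)<\Phi'(u)/u$ rules out a second root on $(\sqrt t,\infty)$. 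The main obstacle is precisely this last point: extracting the nontrivial equation \eqref{eq:GBm_boundary} from a cancelling $O(\epsilon)$ term and then proving existence and uniqueness of the limit through a regularity analysis of $F$ near $\epsilon=0$, rather than by direct substitution.
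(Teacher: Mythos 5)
Your proposal is correct and follows essentially the same route as the paper's own proof: change of variables $\hat x_1^\epsilon(t)=\frac{\ln x_1^\epsilon(t)+t}{\sqrt t}$, $\hat y_1^\epsilon(t)=\frac{\ln y_1^\epsilon(t)+t+\epsilon}{\sqrt{t+\epsilon}}$ collapses the mass condition to $\hat x_1^\epsilon=\hat y_1^\epsilon$, the Gaussian tail identity for the mean condition yields the $e^{-\epsilon/2}$ factor, and the limiting equation is extracted via the normalized function $G(t,\epsilon,x)$ together with the intermediate-value/IFT machinery of Lemma~\ref{lemm:phase_trans_eq_GBm}. (Minor slip: the auxiliary function $u\mapsto\sqrt t(1-\Phi(u))-\Phi'(u)$ attains a minimum, not a maximum, at $u=\sqrt t$, but your monotonicity argument is otherwise what the paper uses.)
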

The proof of Lemma \ref{lem:x_1^epsilonLogNormal} requires the following auxiliary result.
\begin{Lemma} \label{lemm:phase_trans_eq_GBm}
	For each $t \in (0,1]$, the equation $\sqrt{t} ( 1 - \Phi (x)) = \Phi' (x)$
	admits the unique (real-valued) solution  $x^*_t\in(-\infty,\sqrt{t})$.	
\end{Lemma}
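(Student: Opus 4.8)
The plan is to mimic the proof of Lemma~\ref{lemm:phase_trans_eq_bachelier} and reduce the claim to an application of the intermediate value theorem combined with a monotonicity analysis. Define $F(x):=\sqrt{t}\,(1-\Phi(x))-\Phi'(x)$ on $\R$; a real number $x$ solves $\sqrt{t}\,(1-\Phi(x))=\Phi'(x)$ if and only if $F(x)=0$, so it suffices to show $F$ has exactly one zero and that it lies in $(-\infty,\sqrt{t})$.

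First I would differentiate. Using $\Phi''(x)=-x\,\Phi'(x)$ one obtains
$$
F'(x)=-\sqrt{t}\,\Phi'(x)-\Phi''(x)=\Phi'(x)\bigl(x-\sqrt{t}\bigr),
$$
so that $F$ is continuous on $\R$, strictly decreasing on $(-\infty,\sqrt{t}]$ and strictly increasing on $[\sqrt{t},\infty)$. Next I would record the boundary behaviour $\lim_{x\to-\infty}F(x)=\sqrt{t}>0$ and $\lim_{x\to+\infty}F(x)=0$. Since $F$ is strictly increasing on $[\sqrt{t},\infty)$ and tends to $0$ at $+\infty$, it follows that $F(x)<0$ for all $x\ge\sqrt{t}$; in particular $F(\sqrt{t})<0$. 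On $(-\infty,\sqrt{t}]$ the function $F$ then decreases strictly from the positive limit $\sqrt{t}$ to the negative value $F(\sqrt{t})$, so the intermediate value theorem yields exactly one zero $x^*_t$ of $F$, and it lies in $(-\infty,\sqrt{t})$. Since $F<0$ on $[\sqrt{t},\infty)$, there are no further zeros, which gives uniqueness on all of $\R$.

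There is no genuine obstacle here: the argument is a routine one-variable calculus computation, essentially the Bachelier-case proof with $2\sqrt{t}$ replaced by $\sqrt{t}$. The only point deserving a word of care is that the root must be placed in $(-\infty,\sqrt{t})$ rather than merely in $\R$; this is read off from the sign of $F$ at the stationary point $\sqrt{t}$, which is in turn forced by the vanishing of $F$ at $+\infty$ together with strict monotonicity on $[\sqrt{t},\infty)$. (If a more quantitative localisation were ever needed, the standard Mills-ratio bound $1-\Phi(x)<\Phi'(x)/x$ for $x>0$ could be invoked, but it is not required for this statement.)
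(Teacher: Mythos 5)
Your proof is correct and follows exactly the route the paper intends: the paper explicitly defers to the Bachelier-case Lemma (the one with $2\sqrt{t}$), and you have reproduced that intermediate-value-theorem argument with $\sqrt{t}$ in place of $2\sqrt{t}$, while also making explicit the step (that $F(\sqrt t)<0$, forced by strict increase to a zero limit at $+\infty$) that the paper's Bachelier proof leaves implicit.
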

\begin{proof}
	The proof is similar to Lemma \ref{lemm:phase_trans_eq_bachelier} and is an application of the intermediate value theorem. Hence we omit the details here.	
\end{proof}
\begin{proof}[Proof of Lemma \ref{lem:x_1^epsilonLogNormal}]
	To determine $x_1^{\epsilon}(t)\in(\ell_t,r_t)$ we use the fact that under the decreasing supermartingale coupling $\hat\P^{\mu_t,\mu_{t+\epsilon}}$, $\mu\lvert_{[x_1^\epsilon(t),r_t)}$ is mapped to $\nu\lvert_{[y_1^\epsilon(t),r_{t+\epsilon})}$ in a martingale way, where $y_1^\epsilon(t)=y_1^{\mu_t,\mu_{t+\epsilon}}$ (see \eqref{eq:y_1Appendix}). In particular, the pair $(x_1^\epsilon(t),y_1^\epsilon(t))$ is uniquely determined by the mass and mean preservation condition
	\begin{equation}\label{eq:mean-massExamples2}
	\int^{r_t=\infty}_{x_1^\epsilon(t)}z^i\mu_t(dz)=\int^{r_{t+\epsilon}=\infty}_{y_1^\epsilon(t)}z^i\mu_{t+\epsilon}(dz),\quad i=0,1.
	\end{equation}
	Note that due to the Dispersion Assumption (see Assumption \ref{Assump:1}), we must have that $y_1^\epsilon(t)\leq x_1^\epsilon(t)<m^\epsilon(t)$.
	
	Now apply the change of variables: $\hat{x}_1^{\epsilon}(t) =  \frac{\ln x_1^{\epsilon}(t)+t}{\sqrt{t}}$, $\hat{y}_1^{\epsilon}(t) =  \frac{\ln y_1^{\epsilon}(t)+t+\epsilon}{\sqrt{t+\epsilon}}$. Then \eqref{eq:mean-massExamples2} with $i=0$ reads 
	\begin{equation} \label{eq:GBm_transition_1}
	\Phi(\hat{x}_1^{\epsilon}(t)) = \Phi(\hat y_1^{\epsilon}(t)).
	\end{equation}
	For \eqref{eq:mean-massExamples2} with $i=1$, it is equivalent to
	$$
	\int_{\hat{x}_1^{\epsilon}(t)}^{+\infty} \frac{1}{\sqrt{2\pi}} e^{-\frac{1}{2}(y-\sqrt{t})^2} dy = e^{-\frac{1}{2}\epsilon} \int_{\hat{y}_1^{\epsilon}(t)}^{+\infty} \frac{1}{\sqrt{2\pi}} e^{-\frac{1}{2}(y-\sqrt{t+\epsilon})^2} dy,
	$$
	which leads to 
	\begin{equation} \label{eq:GBm_boundary_discrete}
	1-\Phi(\hat{x}_1^{\epsilon}(t) - \sqrt{t}) = e^{-\frac{1}{2}\epsilon} [1-\Phi(\hat{y}_1^{\epsilon}(t) - \sqrt{t+\epsilon})].
	\end{equation}

	From \eqref{eq:bechelier_transition_1}, we get $\hat{x}_1^{\epsilon}(t) = \hat{y}_1^{\epsilon}(t)$, or equivalently $ \frac{\ln x_1^{\epsilon}(t)+t}{\sqrt{t}}=  \frac{\ln y_1^{\epsilon}(t)+t+\epsilon}{\sqrt{t+\epsilon}}$.
	By plugging  $\hat{x}_1^{\epsilon}(t) = \hat{y}_1^{\epsilon}(t)$ into \eqref{eq:GBm_boundary_discrete}, we obtain that 
	\begin{equation} \label{eq:GBownian_boundary_discrete}
	1-\Phi( \frac{\ln x_1^{\epsilon}(t)}{\sqrt{t}} ) = e^{-\frac{1}{2}\epsilon} [1-\Phi( \frac{\ln x_1^{\epsilon}(t)+t}{\sqrt{t}} - \sqrt{t+\epsilon})],
	\end{equation}
	which characterizes $x^\epsilon_1(t)$.
	
	We now deal with the limit $x_1(t):=\lim_{\epsilon\downarrow 0}x^\epsilon_1(t)$.
	
Recall that $\epsilon_0:=\epsilon_1\wedge\epsilon_2$, where $\epsilon_1,\epsilon_2$ are as in Assumptions \ref{Assump:1} and \ref{Assump:2}, respectively.
		
	 For each (small) $\delta>0$, define $G=G^\delta: \R^3 \to \R$ on  $\{ (t,\epsilon): 0 \leq \epsilon \leq \epsilon_0,\ \delta\leq t \leq 1-\epsilon \}\times\R$ by 
	\begin{equation}\label{eq:G_epsilon}
	G(t, \epsilon,x):= \frac{1}{\epsilon} \left\{ (1- \Phi(x)) - e^{-\frac{1}{2} \epsilon} [1-\Phi( x + \sqrt{t} - \sqrt{t+\epsilon})] \right\},\quad (t,\epsilon,x)\in[\delta,1-\epsilon]\times(0, \epsilon_0]\times\R
	\end{equation}
	and 
	\begin{equation}\label{eq:G_0}
	G(t, 0, x):=  (1- \Phi(x)) - \frac{1}{ \sqrt{t}} \frac{1}{\sqrt{2\pi}} e^{-\frac{x^2}{2}}.
	\end{equation}

Note that, by \eqref{eq:GBownian_boundary_discrete}, $G(t,\epsilon,\hat x^\epsilon_1(t))=0$ for all $\epsilon \in(0,\epsilon_0]$ and $t\in[\delta,1-\epsilon]$.

Using Lemma \ref{lemm:phase_trans_eq_GBm}, we define $\hat x_1^0(t)$, for all $t\in(0,1]$, to be the unique solution to $F(t,0,x)=0$. Then $x^0_1(t)$ is defined as $x_1^0(t):=\exp \left(\sqrt{t}\hat x_1^0(t)-t\right)$, for each $t\in(0,1]$. Note that $x_1^0(\cdot)$ is continuous if and only if $\hat x_1^0(\cdot)$ is. Now since $\hat x^0_1(t)$ uniquely satisfies $F(t,0,x)=0$, we immediately have that $\lim_{\epsilon\downarrow0}\hat x_1^\epsilon(t)$ exists, for all $t\in(0,1]$, and uniquely satisfies \eqref{eq:Bownian_boundary}.

\end{proof}

\begin{Lemma}
	For $\epsilon\in[0,\epsilon_0]$ and $t\in(0,1]$, let $x^\epsilon_1(t)$ be given by Lemma \ref{lem:x_1^epsilonLogNormal} with $x^0_1(t):=x_1(t)=\lim_{\epsilon\downarrow0}x_1^\epsilon(t)$.
	
	Then, for each (small) $\delta>0$, the map $(t,\epsilon)\mapsto x^\epsilon_1(t)$ is continuous on $\{ (t,\epsilon): 0 \leq \epsilon \leq \epsilon_0,\ \delta\leq t \leq 1-\epsilon \}\times\R$.	\end{Lemma}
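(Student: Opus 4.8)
The plan is to mimic, essentially verbatim, the proof of the analogous continuity statement in the Bachelier case (Section~\ref{sec:bachelier}), replacing the function $F$ of \eqref{eq:F_epsilon}--\eqref{eq:F_0} by the function $G$ of \eqref{eq:G_epsilon}--\eqref{eq:G_0}. Since $x_1^\epsilon(t)=\exp\bigl(\sqrt t\,\hat x_1^\epsilon(t)-t\bigr)$ and $x\mapsto\exp(\sqrt t\,x-t)$ is smooth in $(t,x)$, it suffices to show that $(t,\epsilon)\mapsto\hat x_1^\epsilon(t)$ is continuous on $\{(t,\epsilon):0\le\epsilon\le\epsilon_0,\ \delta\le t\le1-\epsilon\}$, where $\hat x_1^\epsilon(t)$ is the (unique) zero of $x\mapsto G(t,\epsilon,x)$: for $\epsilon>0$ this is \eqref{eq:GBownian_boundary_discrete}, and for $\epsilon=0$ it is the root characterized by \eqref{eq:GBm_boundary}, unique by Lemma~\ref{lemm:phase_trans_eq_GBm}. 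I would obtain the continuity from the implicit function theorem, for which two ingredients are needed: the $C^1$-regularity of $(t,\epsilon,x)\mapsto G(t,\epsilon,x)$ (including across $\epsilon=0$), and the non-degeneracy $\partial_xG\ne0$ on the zero set.

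First I would establish the $C^1$-regularity of $G$. On $\{\epsilon>0\}$ this is immediate, since $G$ is assembled from $\Phi$, $\Phi'$ and elementary functions of $(t,\epsilon)$ that stay away from their singularities on the compact domain $\{0<\epsilon\le\epsilon_0,\ \delta\le t\le1-\epsilon\}$. The delicate part is the limit $\epsilon\downarrow0$: using $e^{-\epsilon/2}=1-\tfrac\epsilon2+O(\epsilon^2)$ and $\sqrt{t+\epsilon}-\sqrt t=\tfrac{\epsilon}{2\sqrt t}+O(\epsilon^2)$ in \eqref{eq:G_epsilon} one checks that $G(t,\epsilon,x)\to G(t,0,x)$ and, more to the point, that each of $\partial_tG$, $\partial_\epsilon G$, $\partial_xG$ extends continuously to $\epsilon=0$ with the value obtained by formally differentiating \eqref{eq:G_0} --- exactly the computation carried out for $\partial F/\partial\epsilon$ in the Bachelier case. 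I expect this Taylor-expansion bookkeeping to be the only genuinely technical step; it is routine but slightly lengthy.

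Next I would verify $\partial_xG(t,\epsilon,x)\ne0$ on $\{G(t,\epsilon,\cdot)=0\}$. For $\epsilon=0$, on the zero set one has $1-\Phi(x)=\tfrac1{\sqrt t}\Phi'(x)$, and since $\Phi''=-x\Phi'$ this gives $\partial_xG(t,0,x)\big|_{\{G=0\}}=\Phi'(x)\bigl[x\,\tfrac{1-\Phi(x)}{\Phi'(x)}-1\bigr]<0$, the strict inequality coming from the Mills-ratio bound $\tfrac{1-\Phi(x)}{\Phi'(x)}<\tfrac1x$ for $x>0$ (and being trivial for $x\le0$). For $\epsilon>0$ the same sign is obtained from the identity on the zero set $e^{-\epsilon/2}=\tfrac{1-\Phi(x)}{1-\Phi\bigl(x-(\sqrt{t+\epsilon}-\sqrt t)\bigr)}$, which reduces $\partial_xG\big|_{\{G=0\}}$ to a positive multiple of $m(x)-m\bigl(x-(\sqrt{t+\epsilon}-\sqrt t)\bigr)$ with $m(z):=\tfrac{1-\Phi(z)}{\Phi'(z)}$; since $m$ is strictly decreasing (equivalently, $m'(z)=zm(z)-1<0$, which is again the above Mills-ratio inequality for $z>0$ and trivial for $z\le0$), this is strictly negative. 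With the $C^1$-regularity and this sign condition in hand, the implicit function theorem yields that $(t,\epsilon)\mapsto\hat x_1^\epsilon(t)$ is continuously differentiable --- hence continuous and uniformly bounded --- on the compact set in question, and the regularity of $(t,\epsilon)\mapsto x_1^\epsilon(t)$ follows by composing with the smooth map $x\mapsto\exp(\sqrt t\,x-t)$. The main obstacle is thus purely computational: the careful passage to the $\epsilon\downarrow0$ limit of the partial derivatives of $G$; everything else is an almost verbatim repetition of the Bachelier argument.
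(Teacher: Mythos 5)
Your proposal is correct and mirrors the paper's own argument: both establish $C^1$-regularity of $G$ across $\epsilon=0$ by Taylor-expanding $e^{-\epsilon/2}$ and $\sqrt{t+\epsilon}-\sqrt t$, verify $\partial_x G\neq0$ on the zero set via the Mills-ratio inequality, and conclude with the implicit function theorem on the compact domain. The only cosmetic difference is that you phrase the sign condition through the strict decrease of $m(z)=(1-\Phi(z))/\Phi'(z)$, whereas the paper uses the equivalent strict increase of $\Phi'(z)/(1-\Phi(z))$.
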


\begin{proof}

Let $G$ be defined as in \eqref{eq:G_epsilon} and \eqref{eq:G_0}. To prove the continuity of $(t,\epsilon)\mapsto x^\epsilon_1(t):= \exp \left( \sqrt{t}\hat x^\epsilon_1(t)-t \right)$, we will use the implicit function theorem and show that $(t,\epsilon)\mapsto \hat x^\epsilon_1(t)$ is continuous, where $G(t,\epsilon,\hat x^\epsilon_1(t))=0$.
	
	We claim that $(t,\epsilon, x) \mapsto G(t,\epsilon,x)$ is continuously differentiable.  Indeed,
	$$
	\begin{aligned}
	\lim_{\epsilon \to 0} \frac{\partial G}{\partial \epsilon}(t, \epsilon, x) &= -\frac18 (1-\Phi(x)) + \frac18 \Phi'(x) t^{-\frac32} (2t+1) + \frac18 \Phi''(x) t^{-1} \\
	&=\lim_{\epsilon \to 0} \frac{G(t,\epsilon,x)-G(t, 0,x)}{\epsilon},   
	\end{aligned}
	$$	
and hence $\frac{\partial G}{\partial \epsilon}$ is continuous. The continuity of $\frac{\partial G}{\partial t}$ and $\frac{\partial G}{\partial x}$ follow similarly by the direct computations.
		
		Now we check that $\frac{\partial G}{\partial x}(t,\epsilon,x)|_{G(t, \epsilon, x)=0}\neq0$. For $\epsilon \in[0,\epsilon_0]$, direct calculations show that
		$$
	\frac{\partial G}{\partial x}(t,\epsilon,x)|_{G(t, \epsilon, x)=0}  = -\Phi'(x) + e^{-\frac12 \epsilon} \Phi'(x+\sqrt{t} - \sqrt{t+\epsilon}) = -\frac{\Phi'(x)}{1- \Phi(x)}  + \frac{\Phi'(x+\sqrt{t} - \sqrt{t+\epsilon})}{1-\Phi(x+\sqrt{t} - \sqrt{t+\epsilon})},
	$$
	and the desired conclusion follows.
	
	We now study the monotonicity of the map $x \mapsto \frac{\Phi'(x)}{1- \Phi(x)}$:
	$$
	\left( \frac{\Phi'(x)}{1- \Phi(x)} \right)' = \frac{\Phi''(x) (1-\Phi(x)) +( \Phi'(x))^2 }{ (1-\Phi(x))^2 } = \Phi'(x)  \frac{ \Phi'(x) -x(1-\Phi(x)) }{ (1-\Phi(x))^2 } >0,
	$$ 
	where in the last step, in the case $x > 0$, we use the inequality $\frac{1-\Phi(x)}{\Phi'(x)} < \frac{1}{x}$ (the case $x\geq 0$ is trivially satisfied). Hence $x \mapsto \frac{\Phi'(x)}{1- \Phi(x)} $ is strictly increasing. It follows that $ \frac{\partial G}{\partial x}|_{G(t,\epsilon,x)=0} <0$ for $\epsilon\in(0,\epsilon_0]$. Similarly, for $\epsilon=0$, we have that
	$$
	\frac{\partial G}{\partial x}(t,0,x)|_{G(t, 0, x)=0}  = -\Phi'(x) + \frac{1}{\sqrt{t}} \Phi'(x) x = -\Phi'(x) + x (1-\Phi(x)) <0.
	$$ 
Finally, from the implicit function theorem, we have that $(t, \epsilon) \mapsto \ln x_1^{\epsilon}(t)$ is continuously differentiable on a compact set $\{ (t,\epsilon): 0 \leq \epsilon \leq \epsilon_0,\ \delta\leq t \leq 1-\epsilon \}$, and consequently uniformly bounded and continuous. 
\end{proof}

Finally we have the following characterization and the optimality of the limiting process.
\begin{Proposition}\label{Prop:sde_GBm}
	Let $(\mu_t)_{t\in(0,1]}$ be specified by \eqref{eq:log_normal_density} and consider $t\mapsto x_1(t)$ as in Lemma \ref{lem:x_1^epsilonLogNormal}. Then the SDE as in Theorem \ref{Thm:main} is explicitly given by
	$$
	X_t = X_0 + \int_0^t j_u(s,X_{s-}) ( d N_s - \nu_s ds )   \mathbf{1}_{\{x_1(s)< X_{s-}<m_s\}} - \int_0^t \mathbf{1}_{ \{X_{s-}\leq x_1(s)\}}  j_d(s,X_{s-}) ds,
	$$
	where $m_s=e^{\sqrt{s(s+1)}}$, $\nu_s(s,X_{s-}):= \frac{j_d}{j_u} ( s, X_{s-} )$ with $j_u(t,x) = T_u(t,x) -x$, and 
	$$
j_d(t,x)= \frac{x}{2} \left[ e^{\frac{(\ln x+t)^2}{2t}-\frac{(\ln (T_S(t,x))+t)^2}{2t}}(\frac{\ln (T_u(t,x))}{\sqrt{t}} -\sqrt{t} ) - (\frac{\ln x}{\sqrt{t}} -\sqrt{t} ) \right],
$$
	for $x\in(x_1(s),m_s)$, and $j_d(s,X_{s-}):=x \frac{s-\ln x}{2s}$ for $x\in(\ell_s,x_1(s)]$; here $T_u(t,x)$ is uniquely determined by the following equation:
$$
(1-\frac{x}{T_u(t,x)}) e^{-\frac{(\ln (T_u(t,x)))^2}{2t}} \ln (T_u(t,x)) = t (1-\frac{x}{T_u(t,x)}) e^{-\frac{(\ln x)^2}{2t}} - t^{\frac{3}{2}}  \left[ \Phi(\frac{\ln (T_u(t,x))}{\sqrt{t}}) - \Phi(\frac{\ln x}{\sqrt{t}}) \right].
$$
	In addition, if Assumption \ref{Assump:CostFunction}(ii) and the integrability condition \eqref{eq:optimality_integrability} are satisfied, then the law $\P^0$ of the above process is the optimal solution for the 
	primal problem $\eqref{eq:Primal_Full}$, and the strong duality
	$$
	\E^{\P^0} [ C(X_\cdot) ] = \mathbf{P}_{\infty}(\mu) =  \mathbf{D}_{\infty}(\mu)
	$$
	holds.
\end{Proposition}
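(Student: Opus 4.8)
The plan is to mimic, essentially verbatim, the proof of Proposition \ref{Prop:sde_bachelier} in the Bachelier case. The three lemmas preceding the statement already verify that the log-normal family $(\mu_t)_{t\in(0,1]}$ of \eqref{eq:log_normal_density} satisfies Assumptions \ref{Assump:1}, \ref{Assump:2} and \ref{Assump:3} (modulo the usual shift to $t\in[\delta,1]$, $\delta>0$), that $m_s=e^{\sqrt{s(s+1)}}$ is the minimiser of $\partial_t F(s,\cdot)$, and that $t\mapsto x_1(t)$ is the continuous function uniquely determined by \eqref{eq:GBm_boundary}. Consequently Theorems \ref{Thm:main} and \ref{Thm:optimality} apply directly, and the only remaining work is to render the coefficients $T_u$, $j_u$, $j_d$ appearing in the SDE \eqref{eq:SDE_main} explicit for this particular density.

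First I would handle the supermartingale region $x\in(\ell_t,x_1(t)]$. By Theorem \ref{Thm:main}, $j_d(t,x)=\frac{1}{f(t,x)}\partial_t F(t,x)$ with $F(t,x)=\Phi\!\big(\tfrac{\ln x+t}{\sqrt t}\big)$. Differentiating $F$ in $t$ gives $\partial_t F(t,x)=\Phi'\!\big(\tfrac{\ln x+t}{\sqrt t}\big)\tfrac{t-\ln x}{2t^{3/2}}$, and since $f(t,x)=\tfrac{1}{x\sqrt t}\Phi'\!\big(\tfrac{\ln x+t}{\sqrt t}\big)$, the ratio collapses to $j_d(t,x)=x\tfrac{t-\ln x}{2t}$, which is the claimed formula; the extra factor $x$ relative to the Bachelier case is precisely the Jacobian of the $\log$-transform. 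Next I would treat the martingale region $x\in(x_1(t),m_t)$: the map $T_u(t,\cdot)$ is characterised by \eqref{eq:T_S_charac}, $\int_x^{T_u(t,x)}(x-\xi)\partial_t f(t,\xi)\,d\xi=0$. Splitting this into the integrals of $x\,\partial_t f$ and $\xi\,\partial_t f$, using $\int\partial_t f\,d\xi=\partial_t F$, integrating by parts in the second term, and passing to the variable $\hat\xi=\tfrac{\ln\xi+t}{\sqrt t}$ reduces the identity to the stated algebraic equation for $T_u(t,x)$. Then $j_u(t,x)=T_u(t,x)-x$ is immediate, and $j_d(t,x)=\frac{\partial_t F(t,x)-\partial_t F(t,T_u(t,x))}{f(t,x)}$ is obtained by substituting the closed form of $\partial_t F$ evaluated at $x$ and at $T_u(t,x)$, which yields the displayed expression containing the factor $e^{(\ln x+t)^2/(2t)-(\ln T_u+t)^2/(2t)}$.

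For the optimality and duality claims I would simply invoke Theorem \ref{Thm:optimality}: since Assumptions \ref{Assump:1}, \ref{Assump:2}, \ref{Assump:3} have been verified, once Assumption \ref{Assump:CostFunction}(ii) and the integrability condition \eqref{eq:optimality_integrability} are assumed, the local L\'evy process constructed in the first part solves $\mathbf P_\infty(\mu)$ and the strong duality $\E^{\P^0}[C(X_\cdot)]=\mathbf P_\infty(\mu)=\mathbf D_\infty(\mu)$ follows, with the optimal value given by the integral formula of Theorem \ref{Thm:optimality}. The main obstacle is purely computational: carrying out the integration by parts and the $\log$-change of variables in \eqref{eq:T_S_charac} for the log-normal density without error, and simplifying $\partial_t f$ and $\partial_t F$ at the two arguments $x$ and $T_u(t,x)$ so that the resulting $j_d$ matches the stated form; no new conceptual input is needed beyond the general theorems already established.
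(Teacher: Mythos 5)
Your proposal is correct and follows essentially the same route as the paper: verify Assumptions~\ref{Assump:1}--\ref{Assump:3} via the preceding lemmas, invoke Theorems~\ref{Thm:main} and~\ref{Thm:optimality}, then compute $T_u$, $j_u$, $j_d$ explicitly for the log-normal density — the paper itself only writes ``the calculations are similar to those of Proposition~\ref{Prop:sde_bachelier}, and we omit the details.'' One small remark on the arithmetic you sketch for the martingale-region $j_d$: carrying the Bachelier derivation over with the extra Jacobian factor $x$ yields a prefactor $\tfrac{x}{2\sqrt t}$ in front of the bracket, not the $\tfrac{x}{2}$ printed in the proposition, so when you ``substitute the closed form of $\partial_t F$'' you will actually find a discrepancy with the displayed formula (which appears to be a typographical slip in the paper rather than a flaw in your argument).
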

\begin{proof} As it has been verified that Assumptions  \ref{Assump:1},  \ref{Assump:2},  \ref{Assump:3} are valid (up to $t=0$), using Theorems \ref{Thm:main} and \ref{Thm:optimality} we obtain the validity of the SDE and the optimality of $\P^0$. It is left to derive the explicit expressions of the terms $T_u, j_u, j_d$ that define the SDE. 

The calculations are similar to those of Proposition \ref{Prop:sde_bachelier}, and we omit the details.
\end{proof}

\appendix
\section{Brenier's Theorem} \label{sec:Brenier}
\subsection{Brenier's Theorem in Optimal Transport (OT)} \label{sec:Brenier_OT}
Let $(X,Y)$ be a random vector in $\R^2$ and let $\Pc_{2}$ be the set of all (Borel) probability measures on $\R^2$. We denote respectively $\mu$ and $\nu$ the (integrable) marginal distributions of $X$ and $Y$.

Let $c: \R^2 \to \R$ be a (measurable) cost, or payoff, function. The primal Optimal Transport problem corresponds to
$$
P_{OT}(\mu, \nu):=\sup_{\P \in \Pc_2(\mu, \nu)} \E^{\P} [ c(X,Y) ], ~~~~\mbox{ where } \Pc_2(\mu, \nu):= \{ \P \in \Pc_{2}: X \sim_{\P} \mu, Y \sim_{\P} \nu \}.
$$
The above optimisation problem has a dual problem, which is defined by
$$
D_{OT}(\mu,\nu):=\inf_{(\varphi, \psi) \in \Dc_{s}} \{ \mu(\varphi) + \nu(\psi) \}, \mbox{ where } \Dc_{s}:= \{ (\varphi, \psi): \mu(\varphi\vee0) + \nu(\psi\vee0)<\infty, \varphi \oplus \psi \geq c \}.
$$
In the above, $\varphi \oplus \psi (x,y) : = \varphi(x) + \psi (y)$ and $\chi(f): = \int f d \chi,$ for a (Borel) measure $\chi$ on $\R$ and a (Borel) measurable $f:\R\to\R$. Under mild conditions, it can be proved that the duality $P_{OT}(\mu, \nu)=D_{OT}(\mu, \nu)$ holds. (The notation $\Dc_{s}$ is motivated by the fact that, in financial terms, $(\varphi,\psi)\in\Dc_s$ corresponds to a \textit{static} hedging strategy.)

The Brenier's theorem gives the explicit expressions of the optimizers of both the primal and dual problems. In the case $\mu$ is atomless, a candidate primal optimizer is given by the Fr\'echet-Hoeffding (or quantile) coupling $\P_*$ defined by
\begin{equation}\label{eq:quantile}
\P_*(dx, dy):= \mu(dx) \delta_{T_*(x)} (dy),
\end{equation} push-forward map from $\mu$ to $\nu$ is where $T_*: = F_{\nu}^{-1} \circ F_{\mu}$. Here $F_{\nu}^{-1}$ is the right-continuous inverse of (the c.d.f. of $\nu$) $F_{\nu}$: $F_{\nu}^{-1}(t):= \inf \{ y: F_{\nu} > t \}$.

We further introduce candidate dual optimizers $\varphi_*$ and $\psi_*$ (up to a constant) by setting
\begin{equation}\label{eq:OTdualOptimizers}
\varphi_*(x):= c(x, T_*(x)) - \psi_* \circ T_*(x),\quad \psi'_*(y):=c_y(T_*^{-1}(y), y),\quad \forall x, y \in \R.
\end{equation}
\begin{Theorem}[Brenier \cite{brenier1991polar}, Rachev and R\"uschendorf \cite{rachev1998mass}]\label{thm:classicalB}
Suppose $\mu$ has no atoms. Let $c$ be an upper semi-continuous function with (at most) linear growth. Assume that the partial derivative $c_{xy}$ exists and satisfies the Spence-Mirrlees condition $c_{xy}>0$. 

Let $\P_*$ be as in \eqref{eq:quantile}, $(\varphi_*,\psi_*)$ as in \eqref{eq:OTdualOptimizers}, and assume that $(\mu( \varphi_*\vee0)+\nu(\psi_*\vee0))<\infty$.

Then $\P_*\in\Pc_2(\mu,\nu)$, $(\varphi_*, \psi_*) \in \Dc_{s}$ and 
$$
\int c(x, T_*(x)) \mu(dx)=P_{OT}(\mu, \nu) = D_{OT} (\mu, \nu)=\mu(\varphi_*)+\nu(\psi_*).
$$
\end{Theorem}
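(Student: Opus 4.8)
The plan is to verify the three claims in turn: membership $\P_*\in\Pc_2(\mu,\nu)$, admissibility $(\varphi_*,\psi_*)\in\Dc_s$, and the chain of equalities. The first is immediate: by construction $\P_*(dx,dy)=\mu(dx)\delta_{T_*(x)}(dy)$ has first marginal $\mu$, and since $\mu$ has no atoms, $T_*=F_\nu^{-1}\circ F_\mu$ pushes $\mu$ forward to $\nu$ (the quantile transform of a uniform is $\nu$, and $F_\mu(X)\sim U(0,1)$ when $\mu$ is atomless), so the second marginal is $\nu$. Hence $\E^{\P_*}[c(X,Y)]=\int c(x,T_*(x))\mu(dx)$, which is exactly the left-hand expression.

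For admissibility of the dual pair, the integrability $\mu(\varphi_*\vee 0)+\nu(\psi_*\vee 0)<\infty$ is assumed. The real content is the pointwise inequality $\varphi_*(x)+\psi_*(y)\geq c(x,y)$ for all $x,y$. First I would record that $T_*$ is non-decreasing (it is a composition of non-decreasing maps) — this is where the Spence-Mirrlees condition $c_{xy}>0$ will be used. Fix $x,y$ and set $\bar y=T_*(x)$. Using the definition $\varphi_*(x)=c(x,\bar y)-\psi_*(\bar y)$, the desired inequality becomes
$$
c(x,\bar y)-c(x,y)\geq \psi_*(\bar y)-\psi_*(y)=\int_y^{\bar y}\psi_*'(z)\,dz=\int_y^{\bar y}c_y\big(T_*^{-1}(z),z\big)\,dz.
$$
On the other hand $c(x,\bar y)-c(x,y)=\int_y^{\bar y}c_y(x,z)\,dz$, so it suffices to show $c_y(x,z)\geq c_y(T_*^{-1}(z),z)$ when $z$ lies between $y$ and $\bar y=T_*(x)$, with the inequality reversed when $\bar y<y$ so that the two integrals (each over an interval traversed in the same orientation) compare correctly. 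Since $c_{xy}>0$, the map $x\mapsto c_y(x,z)$ is strictly increasing; and for $z$ between $y$ and $T_*(x)$ one has, by monotonicity of $T_*$ and hence of its generalized inverse, that $T_*^{-1}(z)\leq x$ exactly when $z\leq T_*(x)$. A short case analysis on the position of $z$ relative to $\bar y$ then yields the sign needed in each case, giving $\varphi_*\oplus\psi_*\geq c$ everywhere. (One must be slightly careful defining $T_*^{-1}$ on the range of $T_*$ and handling flat pieces of $F_\nu$, but $\nu$-a.e. this is harmless, and for the pointwise hedging inequality one can take any measurable selection.)

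Finally, the equalities. Weak duality $P_{OT}(\mu,\nu)\leq D_{OT}(\mu,\nu)$ is standard: for any $\P\in\Pc_2(\mu,\nu)$ and any $(\varphi,\psi)\in\Dc_s$, integrating $c\leq\varphi\oplus\psi$ against $\P$ gives $\E^\P[c]\leq\mu(\varphi)+\nu(\psi)$. Therefore
$$
\int c(x,T_*(x))\,\mu(dx)=\E^{\P_*}[c(X,Y)]\leq P_{OT}(\mu,\nu)\leq D_{OT}(\mu,\nu)\leq \mu(\varphi_*)+\nu(\psi_*).
$$
To close the loop it remains to check $\mu(\varphi_*)+\nu(\psi_*)=\int c(x,T_*(x))\,\mu(dx)$. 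Integrating the definition $\varphi_*(x)=c(x,T_*(x))-\psi_*(T_*(x))$ against $\mu$ gives $\mu(\varphi_*)=\int c(x,T_*(x))\,\mu(dx)-\int \psi_*(T_*(x))\,\mu(dx)$, and since $(T_*)_\#\mu=\nu$ the last integral equals $\nu(\psi_*)$; rearranging yields the claim. Thus all inequalities are equalities, proving $P_{OT}=D_{OT}$ and that $\P_*$ and $(\varphi_*,\psi_*)$ are optimizers. The main obstacle is the pointwise superhedging inequality $\varphi_*\oplus\psi_*\geq c$: it is the only step that genuinely uses $c_{xy}>0$ together with the monotonicity of the quantile map, and it requires the careful case analysis sketched above (and some care with the linear-growth/integrability hypotheses to justify the fundamental-theorem-of-calculus manipulations and Fubini).
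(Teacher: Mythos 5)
The paper states this result without proof, citing Brenier and Rachev--R\"uschendorf; there is no in-paper argument to compare against. Your proof is the standard one for this classical theorem and is correct: the marginal verification, the reduction of the superhedging inequality $\varphi_*\oplus\psi_*\geq c$ to comparing $\int_y^{T_*(x)} c_y(x,z)\,dz$ with $\int_y^{T_*(x)} c_y(T_*^{-1}(z),z)\,dz$, the sign analysis using $c_{xy}>0$ together with monotonicity of $T_*$ (and of any measurable selection of its generalized inverse), the standard weak-duality sandwich, and the closing computation $\mu(\varphi_*)+\nu(\psi_*)=\int c(x,T_*(x))\,\mu(dx)$ via the push-forward identity $(T_*)_\#\mu=\nu$ are all exactly the canonical route. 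Your caveats about flat pieces of $F_\nu$, the choice of generalized inverse, and the integrability needed to justify the FTC/Fubini steps are the right technical points to flag; for the pointwise hedging inequality any measurable selection of $T_*^{-1}$ works because the integrand's sign only requires $T_*^{-1}(z)\leq x$ for $z<T_*(x)$ and $T_*^{-1}(z)\geq x$ for $z>T_*(x)$, both of which follow from monotonicity regardless of the selection, and the point $z=T_*(x)$ is Lebesgue-null.
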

\begin{Remark}\label{rem:antitone_dual}
	In the case $c_{xy}<0$, then $P_{OT}=D_{OT}$ and the optimal coupling is the antitone coupling which is supported on $x\mapsto T^*(x):=F^{-1}_\nu\circ (1-F_\mu(x))$. The dual optimizers are still defined through \eqref{eq:OTdualOptimizers}, but with $T^*$ in place of $T_*$.
	\end{Remark}
\subsection{Brenier's Theorem in Martingale Optimal Transport (MOT)}\label{sec:Brenier_MOT}

Two (integrable) probability measures $\mu, \nu$ on $\R$ are increasing in convex order, denoted by $\mu \leq_c \nu$, if for all convex functions $f: \R \to \R$, we have that $\mu(f) \leq \nu(f)$. By the Strassen's classical result \cite{strassen1965existence}, $\mu \leq_c \nu$ is equivalent to non-emptiness of the set $\Mc_2(\mu, \nu):= \{ \P \in \Pc_2(\mu, \nu): \E^\P[ Y|X ]=X \}$. 

In this section we suppose that $\mu$ and $\nu$ have finite first moments, $\mu \leq_c \nu$, and the following Dispersion Assumption holds:
\begin{Assumption}[Dispersion] \label{ass:Dispersion_c_appendix}
	$\mu,\nu$ admit densities $f_\mu,f_\nu$ that satisfy
\begin{enumerate} 
	\item $f_\mu>0$ on $(\ell_\mu,r_\mu)$, $f_\nu>0$ on $(\ell_\nu,r_\nu)$, and $\ell_\nu\leq\ell_\mu<r_\mu\leq r_\nu$.
	\item There exists $\ell_\mu<\underline m:=\underline{m}^{\mu,\nu}<\overline m:=\overline{m}^{\mu,\nu}<r_\mu$ such that
	$$
	f_\mu>f_\nu,\textrm{ on }(\underline{m},\overline{m}),\quad f_\nu>f_\mu\textrm{ on }(\ell_\nu,\underline{m})\cup(\overline{m},r_\nu),\quad f_\mu=f_\nu,\textrm{ on }\{\underline{m},\overline{m}\}.
	$$
\end{enumerate}
\end{Assumption}
\begin{Remark}\label{rem:dispersion_c_appendix}
	Note that if $\overline\mu=\overline\nu$ and Assumption \ref{ass:Dispersion_c_appendix} holds, then $\mu\leq_c\nu$. Furthermore, the difference of distribution functions $y\mapsto F_\nu(y)-F_\mu(y)$ attains the (unique) maximum and minimum at $\underline{m}^{\mu,\nu}$ and $\overline{m}^{\mu,\nu}$, respectively.
	\end{Remark}
	
Given a (measurable) payoff function $c: \R^2 \to \R$, the primal problem of MOT is defined by
$$
P_{MOT}(\mu, \nu):=\sup_{\P \in \Mc_2(\mu, \nu)} \E^{\P} [ c(X,Y) ].
$$
The MOT problem also has a dual problem:
$$
D_{MOT}(\mu, \nu):= \inf_{(\varphi, \psi, h) \in \Dc_{ss}} \{ \mu(\varphi) + \nu(\psi) \},
$$
where the set of dual variables is defined as:
$$
\Dc_{ss}:= \{ (\varphi, \psi, h): \mu(\varphi\vee0)+\nu(\psi\vee0)<\infty, h:\R\to\R,\varphi \oplus \psi + h^{\otimes} \geq c \}
$$
with $\varphi \oplus \psi(x,y):=\varphi(x) + \psi(y)$, and $h^{\otimes}(x,y):=h(x)(y-x)$. (The notation $\Dc_{ss}$ is motivated by the fact that, in financial terms, $(\varphi,\psi,h)\in\Dc_{ss}$ corresponds to a \textit{semi-static} hedging strategy, where the static part is $(\varphi,\psi)$ and the dynamic part corresponds to $h$.)

We now introduce candidate optimal quantities. We start with a pair of functions $(T_d^{\mu,\nu},T_u^{\mu,\nu})$ that will support the optimal coupling in the case $\mu\leq_c\nu$ satisfy Assumption \ref{ass:Dispersion_c_appendix}.
 
Set
\begin{equation}\label{eq:g}
g^{\mu,\nu}(x,y):= F_{\nu}^{-1} (F_{\mu}(x) + F_\nu(y)-F_\mu(y)),\quad x\in(\ell_\mu,r_\mu),y\in(\ell_\nu,r_\nu). 
\end{equation}

Define
\begin{equation}\label{eq:T_dT_uDiagonal}
T^{\mu,\nu}_d(x)=T^{\mu,\nu}_u(x)=x,\quad x\in[\overline{m}^{\mu,\nu},r_\mu).
\end{equation}
For $x\in(\ell_\mu,\overline{m}^{\mu,\nu})$, let $T_d^{\mu,\nu}(x)$ be the (unique) scalar such that
\begin{equation}\label{eq:offT_d}
\int_{-\infty}^x [ F_{\nu}^{-1} ( F_{\mu}(\xi)) -\xi ] f_{\mu}(\xi) d \xi + \int_{-\infty}^{T_d(x)} \mathbf{1}_{(-\infty, \overline{m}^{\mu,\nu}]}(\xi) [g^{\mu,\nu}(x, \xi) - \xi] (f_\nu(\xi)-f_\mu(\xi)) d\xi =0
\end{equation}
and set 
\begin{equation}\label{eq:offT_u}
T_u^{\mu,\nu}(x)=g^{\mu,\nu}(x,T_d^{\mu,\nu}(x)),\quad x\in(\ell_\mu,\overline{m}^{\mu,\nu}).
\end{equation}
\begin{Remark}\label{rem:T_dT_uProperties}
	The existence and uniqueness of \eqref{eq:offT_d} is given by Henry-Labord\`ere and Touzi \cite{HLTouzi16}. Furthermore, $T_d(x)\leq x\leq T_u(x)$ for all $x\in(\ell_\mu,r_\mu)$, and, under Assumption \ref{ass:Dispersion_c_appendix}, $T_d,T_u$ are both continuous, $T_d$ (resp. $T_u$) is strictly increasing (resp. decreasing) on $(\ell_\mu,\overline{m}^{\mu,\nu}]$. See Henry-Labord\`ere and Touzi \cite[Section 4]{HLTouzi16} (note, however, that \cite{HLTouzi16} constructed the left-curtain martingale coupling, while in the present paper we are working with the right-curtain coupling.)
\end{Remark}
	
Using the pair $(T^{\mu,\nu}_d,T^{\mu,\nu}_u)$ we now define a candidate martingale coupling through its disintegration w.r.t. the first marginal $\mu$. Define a measure $\P^{\mu,\nu}$ on $\R^2$ by
\begin{equation}\label{eq:right_curtain}
\begin{aligned}
\P^{\mu,\nu}(dx,dy)&=\mu(dx)\mathbf{1}_{[\overline{m}^{\mu,\nu},r_\mu)}(x) \delta_{x} (dy) \\   &+\mu(dx)\mathbf{1}_{(\ell_\mu,\overline{m}^{\mu,\nu})}(x) \left[ q^{\mu,\nu}(x) \delta_{T^{\mu,\nu}_u(x)} (dy) +  (1-q^{\mu,\nu}(x)) \delta_{T^{\mu,\nu}_d(x)} (dy)  \right],
\end{aligned}
\end{equation}
where
\begin{equation}\label{eq:probUP}
q^{\mu,\nu}(x):=\frac{x-T^{\mu,\nu}_d(x)}{T^{\mu,\nu}_u(x)-T^{\mu,\nu}_d(x)},\quad x\in(\ell_\mu,\overline{m}^{\mu,\nu}),
\end{equation}
corresponds to the (conditional) martingale probability of jumping to $T^{\mu,\nu}_u(x)$ from $x$.

We now introduce a candidate triple $(\varphi_*,\psi_*,h_*)=(\varphi^{\mu,\nu}_*,\psi^{\mu,\nu}_*,h^{\mu,\nu}_*)$ for the dual problem. To ease the notation we will write $(T_d,T_u)=(T_d^{\mu,\nu},T_u^{\mu,\nu})$.

Define $h_*$, up to a constant, by
\begin{equation}\label{eq:h_*}
\begin{aligned}
h_*&= h_* \circ T_d^{-1} + c_y(\cdot, \cdot) - c_y(T_d^{-1}, \cdot)\quad \mbox{on } [\overline{m}^{\mu,\nu},r_\mu),\\
h'_*&=\frac{c_x(\cdot, T_u) -c_x(\cdot, T_d) }{T_u-T_d}\quad \mbox{on } (\ell_\mu,\overline{m}^{\mu,\nu}).
\end{aligned}
\end{equation}
(Note that the constant can be chosen such that $h_*$ is continuous.) The (continuous) function $\psi_*$ is then given (also up to a constant) by
\begin{equation}\label{eq:psi_*}
\begin{aligned}
\psi'_* &= c_y(T_d^{-1}, \cdot) - h_* \circ T_d^{-1}\quad \mbox{on } [\overline{m}^{\mu,\nu},r_\mu),\\
\psi'_*&=c_y(T_u^{-1}, \cdot) - h_* \circ T_u^{-1}\quad \mbox{on } (\ell_\mu,\overline{m}^{\mu,\nu}).
\end{aligned}
\end{equation}
Finally the function $\varphi_*$ is defined by
\begin{equation}\label{eq:phi_*}
\begin{aligned}
\varphi_*(x)=\E^{\P^{\mu,\nu}}[c(X,Y)-\psi_*(Y)\lvert X=x],\quad x\in(\ell_\mu,r_\mu).
\end{aligned}
\end{equation}

\begin{Theorem}[Henry-Labord\`ere and Touzi {\cite[Theorems 4.5, 5.1]{HLTouzi16}}]\label{thm:martingaleB}
Suppose $\mu\leq_c\nu$ satisfies Assumption \ref{ass:Dispersion_c_appendix}. Assume further that $\int \varphi_*\vee0 d\mu +\int \psi_*\vee0 d\nu < \infty$, and that the partial derivative $c_{xyy}$ exists and $c_{xyy}<0$ on $\R ^2$. Let $\P_*=\P^{\mu,\nu}$ be as in \eqref{eq:right_curtain} and $(\varphi_*,\psi_*,h_*)$ as in \eqref{eq:phi_*}, \eqref{eq:psi_*}, \eqref{eq:h_*}.

Then $\P_*\in\Mc_2(\mu,\nu)$, $(\varphi_*,\psi_*,h_*)\in\Dc_{ss}$ and the strong duality holds:
$$
\E^{\P_*} [c(X,Y)] = P_{MOT}(\mu, \nu) = D_{MOT}(\mu, \nu)= \mu(\varphi_*) + \nu(\psi_*).
$$
\end{Theorem}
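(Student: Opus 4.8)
The statement is the right-curtain mirror of Henry-Labord\`ere and Touzi \cite[Theorems 4.5, 5.1]{HLTouzi16} (their construction is for the left-curtain coupling), so the plan is to follow their argument with the upward and downward transitions interchanged. The proof will split into three parts: (a) primal feasibility, $\P_*\in\Mc_2(\mu,\nu)$; (b) dual feasibility, $(\varphi_*,\psi_*,h_*)\in\Dc_{ss}$; and (c) matching of the two values. Part (c) is immediate once (a) and (b) are in hand, so the substance lies in (a) and, above all, in (b).

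For (a), I would first note that the first marginal of $\P^{\mu,\nu}$ in \eqref{eq:right_curtain} is $\mu$ by construction. On $[\overline{m}^{\mu,\nu},r_\mu)$ the coupling is carried by the diagonal, hence is a martingale there trivially; on $(\ell_\mu,\overline{m}^{\mu,\nu})$ it sends the mass at $x$ to $T^{\mu,\nu}_u(x)$ and $T^{\mu,\nu}_d(x)$ with weights $q^{\mu,\nu}(x)$ and $1-q^{\mu,\nu}(x)$, and the choice \eqref{eq:probUP} of $q^{\mu,\nu}$ is exactly what makes the conditional barycentre equal $x$, so $\E^{\P_*}[Y|X]=X$. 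That the second marginal is $\nu$ is a change-of-variables computation from \eqref{eq:g}, \eqref{eq:offT_d}, \eqref{eq:offT_u}: using that, on $(\ell_\mu,\overline{m}^{\mu,\nu}]$, $T^{\mu,\nu}_d$ is continuous and strictly increasing while $T^{\mu,\nu}_u$ is continuous and strictly decreasing (Remark \ref{rem:T_dT_uProperties}), one verifies that the downward images of $\mu\lvert_{(\ell_\mu,\overline{m}^{\mu,\nu})}$ (with weights $1-q^{\mu,\nu}$) recover $f_\nu$ on $(\ell_\nu,\overline{m}^{\mu,\nu})$, while the diagonal part $\mu\lvert_{[\overline{m}^{\mu,\nu},r_\mu)}$ together with the upward images recovers $f_\nu$ on $[\overline{m}^{\mu,\nu},r_\nu)$; the Dispersion Assumption \ref{ass:Dispersion_c_appendix} is precisely what guarantees that these images are correctly ordered and exhaust $\mathrm{supp}(\nu)$. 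This gives $\P_*\in\Mc_2(\mu,\nu)$ and, in particular, that $Y$ has law $\nu$ under $\P_*$.

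Part (b) is the heart of the argument. Writing
\begin{equation*}
\Phi(x,y):=\varphi_*(x)+\psi_*(y)+h_*(x)(y-x)-c(x,y),
\end{equation*}
the task is to show $\Phi\geq0$ on $\R^2$ (the integrability needed for membership in $\Dc_{ss}$ is among the hypotheses). The functions $\psi_*,h_*$ in \eqref{eq:psi_*}, \eqref{eq:h_*} are built precisely so that the first-order relations $\psi_*'\circ T^{\mu,\nu}_u+h_*=c_y(\cdot,T^{\mu,\nu}_u)$ and $\psi_*'\circ T^{\mu,\nu}_d+h_*=c_y(\cdot,T^{\mu,\nu}_d)$ hold on $(\ell_\mu,\overline{m}^{\mu,\nu})$ (with their diagonal analogue on $[\overline{m}^{\mu,\nu},r_\mu)$), and $\varphi_*$ in \eqref{eq:phi_*} is the $\P_*$-conditional mean of $c-\psi_*$; as a consequence, for each fixed $x$ the function $y\mapsto\Phi(x,y)$ vanishes together with its $y$-derivative at the support point(s) of $\P_*(\cdot|X=x)$ --- at $\{T^{\mu,\nu}_d(x),T^{\mu,\nu}_u(x)\}$ when $x\in(\ell_\mu,\overline{m}^{\mu,\nu})$ and at $\{x\}$ when $x\in[\overline{m}^{\mu,\nu},r_\mu)$. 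To propagate these double zeros to global nonnegativity, I would partition the $y$-line according to the range of $T^{\mu,\nu}_d$, the range of $T^{\mu,\nu}_u$, and the diagonal piece, and on each piece use $\partial_{yy}\Phi(x,y)=\psi_*''(y)-c_{yy}(x,y)$: since $\partial_x\partial_{yy}\Phi(x,y)=-c_{xyy}(x,y)>0$, the map $x\mapsto\partial_{yy}\Phi(x,y)$ is strictly increasing, so comparing $x$ with a source point $x'$ at which $y$ is a support point (where $\psi_*''(y)-c_{yy}(x',y)$ is known from the ODE defining $\psi_*$) pins down the sign of $\partial_{yy}\Phi(x,\cdot)$ on that piece; the resulting piecewise convexity/concavity pattern, anchored at the double zeros, then forces $\Phi(x,\cdot)\geq0$. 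This is the only place where the martingale Spence--Mirrlees condition $c_{xyy}<0$ is used in an essential way. The delicate part --- and what I expect to be the main obstacle --- is the bookkeeping of the piecewise definition of $\psi_*$ on the images of $T^{\mu,\nu}_d$ and $T^{\mu,\nu}_u$, the verification that $h_*$ and $\psi_*$ can be pasted (continuously, up to constants) across $\overline{m}^{\mu,\nu}$, and the sign analysis on the overlaps.

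Finally, for (c): by \eqref{eq:phi_*}, $\varphi_*(x)=\E^{\P_*}[c(X,Y)-\psi_*(Y)|X=x]$, so, using that $Y$ has law $\nu$ under $\P_*$ (part (a)) and the integrability hypotheses, $\mu(\varphi_*)+\nu(\psi_*)=\E^{\P_*}[c(X,Y)]$. Combining $\P_*\in\Mc_2(\mu,\nu)$, $(\varphi_*,\psi_*,h_*)\in\Dc_{ss}$, and weak duality $P_{MOT}(\mu,\nu)\leq D_{MOT}(\mu,\nu)\leq\mu(\varphi_*)+\nu(\psi_*)$ (the last inequality because $h_*^{\otimes}$ integrates to zero against any element of $\Mc_2(\mu,\nu)$), one obtains
\begin{equation*}
\E^{\P_*}[c(X,Y)]\leq P_{MOT}(\mu,\nu)\leq D_{MOT}(\mu,\nu)\leq\mu(\varphi_*)+\nu(\psi_*)=\E^{\P_*}[c(X,Y)],
\end{equation*}
forcing equality throughout, which is exactly the asserted strong duality together with the optimality of $\P_*$ and of $(\varphi_*,\psi_*,h_*)$.
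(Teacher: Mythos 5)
The paper does not prove Theorem \ref{thm:martingaleB}: it is recalled verbatim from Henry-Labord\`ere and Touzi \cite{HLTouzi16}, with the left-curtain coupling replaced by its right-curtain reflection, and is used in the paper as a black box (note there is no \texttt{proof} environment following the theorem). Your sketch is a faithful outline of the argument in that reference --- martingale feasibility from the barycentre choice of $q^{\mu,\nu}$, dual feasibility via the double-zero-at-support-points plus monotone-second-derivative device driven by $c_{xyy}<0$, and optimality from the weak-duality squeeze --- so there is nothing to compare against inside this paper, and what you wrote is the expected route.
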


\section{One-period Supermartingale Optimal Transport} \label{sec:Decreasing_SMOT_one_period}

\subsection{Solution to the primal SMOT problem and related properties}\label{sec:Decreasing_SMOT_one_period_primal}

Let $\mu,\nu\in\Pc$ be in convex-decreasing order, i.e., $\mu\leq_{cd}\nu$ (for two measures $\eta,\chi$ on $\R$ we also write $\eta\leq\chi$ if $\eta(A)\leq\chi(A)$ for all Borel $A\subseteq \R$). Let $\Sc_2(\mu, \nu):= \{ \P \in \Pc_2(\mu, \nu): \E^\P[ Y|X ]\leq X \}$ be the set of supermartingale couplings with given marginals $\mu$ and $\nu$. The following theorem defines the decreasing supermartingale coupling $\hat\P=\hat\P^{\mu,\nu}\in \Sc_2(\mu,\nu)$.
\begin{Theorem}[Nutz and Stebegg \cite{NutzStebegg.18}]\label{thm:decreasing_characterization}
	Suppose $\mu\leq_{cd}\nu$ and let $\hat\P\in \Sc_2(\mu,\nu)$ be the decreasing supermartingale coupling. It satisfies any, and then all, of the following properties
\begin{enumerate}
	\item $\hat \P$ solves \eqref{eq:SMOT_onePrimal} whenever $c$ satisfies $c_{xyy}<0$ and $c_{xy}>0$;
	\item For each $x\in\R$, $\hat\P\lvert_{[x,\infty)\times\R}$ is the smallest element (w.r.t. $\leq_{cd}$) of $\{\theta:\mu\lvert_{[x,\infty)}\leq_{cd}\theta\leq\nu\}$;
		\item There exists $\Gamma\subseteq\R^2$ and $M\subset \R$ such that $\hat \P_*$ is second-order right-monotone and first-order left-monotone w.r.t. $(\Gamma,M)$ in the sense of Definition \ref{defn:support_mon}.
\end{enumerate}
\end{Theorem}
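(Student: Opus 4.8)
The three characterizations are due to Nutz and Stebegg \cite{NutzStebegg.18}; here I sketch the route one follows to establish them. The plan is to first produce a coupling satisfying the order-theoretic property (2), and then close the loop of implications (2) $\Rightarrow$ (3) $\Rightarrow$ (1) $\Rightarrow$ (2), so that each of the three properties pins down one and the same element of $\Sc_2(\mu,\nu)$.

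\textbf{Construction via the supermartingale shadow, and property (2).} For $\mu\leq_{cd}\nu$ and any sub-measure $\alpha\leq\mu$, one first shows that $\{\theta:\alpha\leq_{cd}\theta\leq\nu\}$ is non-empty --- it contains the second marginal of any supermartingale coupling of $\mu$ and $\nu$ restricted over the support of $\alpha$, which exists by Strassen \cite{strassen1965existence} --- and that it is stable under the binary infimum induced by $\leq_{cd}$, hence admits a $\leq_{cd}$-least element $S^\nu(\alpha)$, the \emph{shadow} of $\alpha$ in $\nu$. The key structural fact to verify is the associativity relation $S^\nu(\alpha+\beta)=S^\nu(\alpha)+S^{\nu-S^\nu(\alpha)}(\beta)$ whenever $\alpha+\beta\leq\mu$. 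One then builds $\hat\P$ by a right-to-left sweep: the conditional laws are forced so that the $y$-marginal of $\hat\P\lvert_{[x,\infty)\times\R}$ equals $S^\nu(\mu\lvert_{[x,\infty)})$ for every $x$, and associativity guarantees that these conditional laws are consistent and define a bona fide $\hat\P\in\Sc_2(\mu,\nu)$; property (2) then holds by construction.

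\textbf{(2) $\Rightarrow$ (3).} Minimality of each $\hat\P\lvert_{[x,\infty)\times\R}$ in the $\leq_{cd}$-cone forces the support to be monotone in the sense of Definition \ref{defn:support_mon}. The argument is by contradiction through a mass-swap: if the support of $\hat\P$ contained a configuration violating second-order right-monotonicity (resp. first-order left-monotonicity), rearranging an infinitesimal amount of mass between two source points would, for a suitable threshold $x$, produce a competitor $\theta$ with $\mu\lvert_{[x,\infty)}\leq_{cd}\theta\leq\nu$ and $\theta$ strictly $\leq_{cd}$-below the $y$-marginal of $\hat\P\lvert_{[x,\infty)\times\R}$, contradicting (2). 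Turning this into a statement valid at $\hat\P$-a.e. point requires disintegrating $\hat\P$ and a measurable selection of the ``bad'' configurations, exactly as in the monotonicity principle for MOT of Beiglb\"ock and Juillet \cite{BeiglbockJuillet:16}, carried out in the supermartingale cone.

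\textbf{(3) $\Rightarrow$ (1) $\Rightarrow$ (2), and the main obstacle.} From a monotone support $(\Gamma,M)$ and a cost with $c_{xyy}<0$, $c_{xy}>0$, one constructs a dual triple $(\varphi,\psi,h)\in\Dc_2(\mu,\nu)$ with $c(x,y)\leq\varphi(x)+\psi(y)+h(x)(y-x)$ and equality on $\Gamma$: here $h$ is obtained by integrating the Spence--Mirrlees slope $(c_x(x,T_u(x))-c_x(x,T_d(x)))/(T_u(x)-T_d(x))$ over the martingale region and matching boundary derivatives across the deterministic region governed by $M$, in the spirit of Appendices \ref{sec:Brenier_MOT} and \ref{sec:SMOTdual}, after which $\psi$ and $\varphi$ are fixed by $\psi'=-h$ (up to the analogous corrections) and by equalizing on $\Gamma$. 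Weak duality $\mathbf{P}_2\leq\mathbf{D}_2$ together with this explicit pair gives $\mathbf{P}_2=\mathbf{D}_2=\E^{\hat\P}[c(X_0,X_1)]$; the strict sign conditions make the inequality strict off $\Gamma$, so the maximizer is unique and must be the coupling characterized by (2), closing the loop. The technical heart of the argument is the shadow calculus (existence and associativity of $S^\nu$) together with the monotonicity principle --- the passage from ``no finite improving modification'' to a geometric statement about the whole support; the duality step is involved but follows the MOT template of Appendix \ref{sec:Brenier_MOT} closely.
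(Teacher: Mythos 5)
The theorem you are asked to prove is stated in the paper as a \emph{citation} to Nutz and Stebegg \cite{NutzStebegg.18}; the paper supplies no proof of its own (it merely records the result in Appendix B as background, with the bracketed attribution in the theorem header). There is therefore no internal proof against which to compare your sketch, and any comparison must be against the expected contents of the cited reference.

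Taken on its own terms, your sketch correctly identifies the main ingredients one would expect: (a) a shadow-measure construction with associativity for the right-to-left sweep, giving the order-theoretic characterization (2); (b) a mass-swap / monotonicity-principle argument for (2) $\Rightarrow$ (3); and (c) a duality argument with an explicit candidate $(\varphi,\psi,h)$ for (3) $\Rightarrow$ (1) $\Rightarrow$ (2). This is the same architecture as the Beiglb\"ock--Juillet analysis of the left-/right-curtain martingale couplings transported to the supermartingale cone, and it does match the structure Nutz and Stebegg use. Two points in your sketch are, however, more delicate than you present them. First, the existence of a $\leq_{cd}$-least element of $\{\theta:\alpha\leq_{cd}\theta\leq\nu\}$ does not follow from a naive ``stable under binary infimum'' argument; even in the convex order the existence of the shadow requires a genuine variational/compactness argument (one works at the level of potential functions), and the convex-decreasing order adds the extra asymmetry that mass may be shifted left, so the minimality is with respect to a preorder that does not obviously yield a lattice. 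Second, the non-emptiness claim ``it contains the second marginal of any supermartingale coupling of $\mu$ and $\nu$ restricted over the support of $\alpha$'' is not literally correct when $\alpha$ is a general sub-probability measure $\alpha\leq\mu$ rather than a restriction $\mu\lvert_A$; the correct statement requires disintegrating a supermartingale kernel and reweighting. Neither of these is a fatal gap for a sketch, but a complete proof must handle them explicitly, as Nutz and Stebegg do. For the implication (3) $\Rightarrow$ (1), you should also be careful that the dual triple needs the constraint $h\geq 0$ (no short-selling), which distinguishes this setting from the martingale one and is precisely what the regime-switching boundary $x_1$ encodes; this is worth flagging in the sketch since it is where the supermartingale nature genuinely enters the duality.
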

\begin{Definition}\label{defn:support_mon}
	Let $(\Gamma, M)\in B(\R^2)\times B(\R)$. We say
	\begin{enumerate}
		\item $\Gamma$ is second-order right-monotone if for all $(x,y_1),(x,y_2),(x',y')\in\Gamma$ with $x'<x$ we have that $y'\notin(y_1,y_2)$;
		\item $(\Gamma,M)$ is first-order left-monotone if for all $(x_1,y_1),(x_2,y_2)\in\Gamma$ with $x_1<x_2$ and $x_2\notin M$ we have that $y_1\leq y_2$.
	\end{enumerate}
\end{Definition}
The second property of $\hat\P$ in Theorem \ref{thm:decreasing_characterization} implies that, for each $x\in\R$, $\hat \P\lvert_{[x,\infty)\times\R}$ is the shadow of $\mu\lvert_{[x,\infty)}$ in $\nu$: given $\mu_0\leq\mu\leq_{cd}\nu$, the \textit{shadow measure}, denoted by $S^\nu(\mu_0)$, is the smallest element (w.r.t. $\leq_{cd}$) of $\{\theta:\mu_0\leq_{cd}\theta\leq\nu\}$.

Furthermore, the set $M$, that appears in the third property of $\hat \P$ in Theorem \ref{thm:decreasing_characterization}, corresponds to the martingale points of $\hat \P$ in the sense that $\hat\P\lvert_{M\times\R}$ is a martingale. The explicit constructions of $\hat\P$ and $M$ were provided by Bayraktar et. al. \cite[Theorem 6.1, Proposition 5.1]{BayDengNorgilas2}. In particular, there exists $(x_n)_{n\geq1}$ in $\R$ such that $M=\bigcup_{n\geq1}[x_{n+1},x_n)$, so that $\hat\P$ alternates between being a martingale and supermartingale at most countably many times. For each $n\geq1$, the transitions of $\hat\P$ on $[x_{n+1},x_n)$ are of the right-curtain type (see \eqref{eq:right_curtain}), while on $\R\setminus M$, $\hat\P$ mimics the transitions of the quantile coupling (see \eqref{eq:quantile}).

For a measure $\eta$ on $\R$ let $C_\eta:\R\to\R$ be given by $C_\eta(k):=\int_\R(x-k)^+\eta(dx)$, $k\in\R$. Define
$c^{\mu,\nu}:\R\to\R$ by
$$
c^{\mu,\nu}(x)=\sup_{k\in\R}\{C_{\mu\lvert_{[x,\infty)}}(k)-C_\nu(k)\},\quad x\in\R.
$$
Then, provided $\mu$ is atom-less, $c$ is non-negative non-decreasing, continuous and 
$$
c(x)=\overline{\mu\lvert_{[x,\infty)}}-\overline{S^\nu(\mu\lvert_{[x,\infty)})}.
$$
See Bayraktar et. al. \cite[Lemmas 3.1, 5.1]{BayDengNorgilas2}. Then the first point (starting from the right, i.e., $r_\mu$, and moving to the left towards $\ell_\mu$) where $\hat\P$ transitions from being a martingale to supermartingale is given by
\begin{equation}\label{eq:x_1Appendix}
x_1^{\mu,\nu}=\inf\{x\in\R:c^{\mu,\nu}(x)=0\}\quad\textrm{with}\quad\inf\emptyset=r_\mu.
\end{equation}
Furthermore, define
\begin{equation}\label{eq:y_1Appendix}
y_1^{\mu,\nu}=\ell_{S^\nu(\mu\lvert_{[x_1^{\mu,\nu},\infty)})}.
\end{equation}
Note that, since $\mu\lvert_{[x_1^{\mu,\nu},\infty)}\leq_{cd}S^\nu(\mu\lvert_{[x_1^{\mu,\nu},\infty)})$, we have that $y_1^{\mu,\nu}\leq x_1^{\mu,\nu}$.

\begin{Lemma}\label{lem:transition_dispersion}
Suppose that $\mu\leq_{cd}\nu$ satisfy Assumption \ref{ass:Dispersion_c_appendix}. Let $\hat\P$ be the decreasing supermartingale coupling of Theorem \ref{thm:decreasing_characterization}.
 
 Then the ($\mu$-a.s. unique) set of martingale points of $\hat\P$ is given by $M=[x_1^{\mu,\nu},r_\mu)$. Furthermore, the second marginal of $\hat\P\lvert_{M\times\R}$ is $\nu\lvert_{[y_1^{\mu,\nu},r_\nu)}$.
\end{Lemma}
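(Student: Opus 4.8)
The plan is to reduce the lemma to a single structural fact about shadow measures under the Dispersion Assumption \ref{ass:Dispersion_c_appendix}, and then read off both conclusions. By Theorem \ref{thm:decreasing_characterization}(2), for every $x$ the second marginal of $\hat\P\lvert_{[x,\infty)\times\R}$ is the shadow $S^\nu(\mu\lvert_{[x,\infty)})$, and by the explicit constructions of $\hat\P$ and of $M=\bigcup_n[x_{n+1},x_n)$ in Bayraktar et al. \cite{BayDengNorgilas2} the martingale set $M$ coincides, up to a $\mu$-null set, with $\{x\in(\ell_\mu,r_\mu):c^{\mu,\nu}(x)=0\}$, where $c^{\mu,\nu}(x)=\overline{\mu\lvert_{[x,\infty)}}-\overline{S^\nu(\mu\lvert_{[x,\infty)})}$ is nonnegative and continuous (as $\mu$ is atom-less) and $x_1^{\mu,\nu}=\inf\{c^{\mu,\nu}=0\}$. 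Since $\{c^{\mu,\nu}=0\}$ is closed, $c^{\mu,\nu}(x_1^{\mu,\nu})=0$ and $c^{\mu,\nu}>0$ on $(\ell_\mu,x_1^{\mu,\nu})$; and since $f_\mu>0$ on $(\ell_\mu,r_\mu)$, an identification of $M$ that is valid $\mu$-a.e. identifies it exactly. Hence it remains to prove: (A) $c^{\mu,\nu}\equiv0$ on $[x_1^{\mu,\nu},r_\mu)$ --- i.e. no supermartingale sub-interval reappears above $x_1^{\mu,\nu}$ --- and (B) $S^\nu(\mu\lvert_{[x_1^{\mu,\nu},\infty)})=\nu\lvert_{[y_1^{\mu,\nu},r_\nu)}$.

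The core claim is that, under Assumption \ref{ass:Dispersion_c_appendix}, for every $x\in[x_1^{\mu,\nu},r_\mu)$ the shadow $S^\nu(\mu\lvert_{[x,\infty)})$ equals the upper tail $\nu\lvert_{[y(x),r_\nu)}$, where $y(x)$ is fixed by the mass constraint $\nu([y(x),r_\nu))=\mu([x,\infty))$. I would establish this in three steps. (i) The mass-matched upper tail $\nu\lvert_{[y(x),r_\nu)}$ dominates $\mu\lvert_{[x,\infty)}$ in convex-decreasing order --- and in particular has barycenter at most $\overline{\mu\lvert_{[x,\infty)}}$ --- precisely when $x\ge x_1^{\mu,\nu}$: this is a comparison of call functions which, via call--put parity, is governed entirely by the sign pattern of $y\mapsto F_\nu(y)-F_\mu(y)$ recorded in Remark \ref{rem:dispersion_c_appendix} (increasing on $(\ell_\nu,\underline{m}^{\mu,\nu})$, decreasing on $(\underline{m}^{\mu,\nu},\overline{m}^{\mu,\nu})$, increasing on $(\overline{m}^{\mu,\nu},r_\nu)$), and thus reduces to a bounded case analysis in the position of the strike; the threshold it produces is the infimum of the zero set of $c^{\mu,\nu}$, namely $x_1^{\mu,\nu}$. (ii) Admissibility of $\nu\lvert_{[y(x),r_\nu)}$ together with minimality of the shadow gives $S^\nu(\mu\lvert_{[x,\infty)})\leq_{cd}\nu\lvert_{[y(x),r_\nu)}$, whence $0\le c^{\mu,\nu}(x)=\overline{\mu\lvert_{[x,\infty)}}-\overline{S^\nu(\mu\lvert_{[x,\infty)})}\le\overline{\mu\lvert_{[x,\infty)}}-\overline{\nu\lvert_{[y(x),r_\nu)}}\le0$, so $c^{\mu,\nu}(x)=0$ and the two barycenters coincide. (iii) With equal means this strengthens to $S^\nu(\mu\lvert_{[x,\infty)})\leq_c\nu\lvert_{[y(x),r_\nu)}$, so the shadow is supported in $[y(x),r_\nu]$, and being a sub-measure of $\nu$ of the same total mass as $\nu\lvert_{[y(x),r_\nu)}$ it must equal $\nu\lvert_{[y(x),r_\nu)}$. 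Running (i)--(iii) over $x\in[x_1^{\mu,\nu},r_\mu)$ gives (A), and at $x=x_1^{\mu,\nu}$ it gives (B) since $y_1^{\mu,\nu}=\ell_{S^\nu(\mu\lvert_{[x_1^{\mu,\nu},\infty)})}=y(x_1^{\mu,\nu})$ by \eqref{eq:y_1Appendix}.

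With $M=[x_1^{\mu,\nu},r_\mu)$ established, the second-marginal claim is immediate: $\mu\lvert_M=\mu\lvert_{[x_1^{\mu,\nu},\infty)}$, so the second marginal of $\hat\P\lvert_{M\times\R}=\hat\P\lvert_{[x_1^{\mu,\nu},\infty)\times\R}$ is $S^\nu(\mu\lvert_{[x_1^{\mu,\nu},\infty)})=\nu\lvert_{[y_1^{\mu,\nu},r_\nu)}$. I expect step (i) to be the main obstacle: it is the only point where Assumption \ref{ass:Dispersion_c_appendix} is used in an essential way (everywhere else it is bookkeeping), and pinning down the exact range of $x$ for which the mass-matched upper tail of $\nu$ dominates $\mu\lvert_{[x,\infty)}$ in convex-decreasing order requires a careful, if elementary, analysis of the unimodal-then-increasing shape of $F_\nu-F_\mu$. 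A less self-contained route would instead identify $\hat\P\lvert_{[x_1^{\mu,\nu},\infty)\times\R}$, via the second-order right-monotonicity of Definition \ref{defn:support_mon}, with the right-curtain martingale coupling of Appendix \ref{sec:Brenier_MOT} applied to $\mu\lvert_{[x_1^{\mu,\nu},\infty)}\leq_c\nu\lvert_{[y_1^{\mu,\nu},r_\nu)}$, and then invoke the upper-tail structure of the shadows built into that construction.
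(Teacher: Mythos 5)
Your opening reduction is where the argument breaks down: you assert, as a consequence of the constructions in \cite{BayDengNorgilas2}, that $M$ coincides up to a $\mu$-null set with $\{x:c^{\mu,\nu}(x)=0\}$, but this identity is not a general fact --- it is, in substance, the first claim of the lemma. For a general pair $\mu\leq_{cd}\nu$ the martingale set $M$ can have several components, say $[x_3,x_2)\cup[x_1,r_\mu)$ with a strict-supermartingale stretch $[x_2,x_1)$ in between; for any $x\in[x_3,x_2)$ one then has $c^{\mu,\nu}(x)>0$ because $\hat\P\lvert_{[x,\infty)\times\R}$ incorporates that stretch and so has a strict mean drop, whence $\{c^{\mu,\nu}=0\}\subsetneq M$. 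Thus the identification $M=\{c^{\mu,\nu}=0\}$ holds exactly when $M$ is a single terminal interval, i.e.\ exactly under the conclusion of the lemma; writing it down as a given is circular. Even granting your (A) and (B), they only deliver $[x_1^{\mu,\nu},r_\mu)\subseteq M$ and the second-marginal identity; the reverse inclusion $M\subseteq[x_1^{\mu,\nu},r_\mu)$ (no re-emergence of a martingale region below $x_1^{\mu,\nu}$) is never addressed. This is precisely the step the paper proves separately: on $(\ell_\mu,x_1^{\mu,\nu})$ the coupling acts as the quantile map $x\mapsto F_\nu^{-1}(F_\mu(x))$, and the single crossing $\bar x\in(\underline m^{\mu,\nu},\overline m^{\mu,\nu})$ of $F_\nu-F_\mu$ guaranteed by Assumption~\ref{ass:Dispersion_c_appendix} forces $F_\nu^{-1}(F_\mu(x))<x$ there, so every transition below $x_1^{\mu,\nu}$ is strictly supermartingale.

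Two further issues. In step (ii) the chain $0\le c^{\mu,\nu}(x)\le\overline{\mu\lvert_{[x,\infty)}}-\overline{\nu\lvert_{[y(x),r_\nu)}}\le0$ does not close as written: the domination $\mu\lvert_{[x,\infty)}\leq_{cd}\nu\lvert_{[y(x),r_\nu)}$ claimed in (i) gives $\overline{\mu\lvert_{[x,\infty)}}\ge\overline{\nu\lvert_{[y(x),r_\nu)}}$ (the smaller element in $\leq_{cd}$ has the larger mean), which is the reverse of your final inequality, so you would need to prove equality of means --- i.e.\ convex order, not merely convex-decreasing order --- separately before the shadow identification in (iii) can be run. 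Step (i) itself is only sketched and, as phrased (``precisely when $x\ge x_1^{\mu,\nu}$''), is again circular, since $x_1^{\mu,\nu}$ is defined through $c^{\mu,\nu}$ rather than through the upper-tail domination; one would have to show the two thresholds agree. For comparison, the paper establishes the second-marginal claim by a different route: it first shows $x_1^{\mu,\nu}<\overline m^{\mu,\nu}$ (because $\mu\lvert_{[\overline m^{\mu,\nu},r_\mu)}\le\nu$ is its own shadow), then applies the associativity of the shadow to write $S^\nu(\mu\lvert_{[x_1^{\mu,\nu},r_\mu)})=\mu\lvert_{[\overline m^{\mu,\nu},r_\mu)}+S^{\nu-\mu\lvert_{[\overline m^{\mu,\nu},r_\mu)}}(\mu\lvert_{[x_1^{\mu,\nu},\overline m^{\mu,\nu})})$ and argues that a gap in the support of this shadow would contradict the minimality of $x_1^{\mu,\nu}$; your ``less self-contained'' alternative through the right-curtain coupling is indeed the closer cousin of that step, but it still leaves the uniqueness of the regime-switching point untreated.
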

\iffalse
\begin{Remark}
	One immediate consequence of Lemma \ref{lem:transition_dispersion} is that the decreasing supermartingale coupling $\P_*\in\Sc_2(\mu,\nu)$ coincides with the right-curtain coupling (see Beiglb\"ock and Juillet \cite{BeiglbockJuillet:16}) of $\mu\lvert_{[x_1^{\mu,\nu},\infty)}\leq_{c}\nu\lvert_{[y_1(\mu,\nu),\infty)}$ on $[x_1^{\mu,\nu},\infty)$, and it is equivalent to the quantile coupling of $\mu\lvert_{(-\infty,x_1^{\mu,\nu})}\leq_{cd}\nu\lvert_{(-\infty,y_1(\mu,\nu))}$ on $(-\infty,x_1^{\mu,\nu})$.
\end{Remark}\fi
\begin{proof}[Proof of Lemma \ref{lem:transition_dispersion}]
	We assume that $\overline\mu>\overline\nu$, since otherwise $\mu\leq_c\nu$, and there is nothing to prove. Indeed, if $\overline\mu=\overline\nu$, then $x_1^{\mu,\nu}=\ell_\mu$ and $\hat\P=\P^{\mu,\nu}$ is the right-curtain martingale coupling; see \eqref{eq:right_curtain}.
	
	Suppose that $x_1^{\mu,\nu}\in(\ell_\mu,r_\mu]$. Let $\overline m:=\overline{m}^{\mu,\nu}$. Note that $\mu\lvert_{[\overline m, r_\mu)}\leq \nu$ and thus $\mu\lvert_{[\overline m, r_\mu)}=S^\nu(\mu\lvert_{[\overline m, r_\mu)})$. It follows that $x_1^{\mu,\nu}\leq \overline m$. In fact, since $\mu$ and $\nu-\mu\lvert_{[\overline m, r_\mu)}$ both have strictly positive densities in the neighbourhood of $\overline m$ and $f_\mu>f_\nu$ to the left of $\overline m$, we must have that $x_1^{\mu,\nu}<\overline m$.
	
	We now argue that $S^\nu(\mu\lvert_{[x_1^{\mu,\nu}, r_\mu)})=\nu\lvert_{[y_1(\mu,\nu),r_\nu)}$. First, using the associativity of the shadow measure (see Bayraktar et al. \cite{BayDengNorgilas}), we have that
	$$
	S^\nu(\mu\lvert_{[x_1^{\mu,\nu}, r_\mu)})=\mu\lvert_{[\overline m, r_\mu)}+S^{\nu-\mu\lvert_{[\overline m, r_\mu)}}(\mu\lvert_{[x_1^{\mu,\nu}, \overline m)}).
	$$
Thanks to the properties of the shadow measure ($S^{\nu-\mu\lvert_{[\overline m, r_\mu)}}(\mu\lvert_{[x_1^{\mu,\nu}, \overline m)})$ has the smallest variance among all possible target laws of $\mu\lvert_{[x_1^{\mu,\nu}, \overline m)}$ within $\nu-\mu\lvert_{[\overline m, r_\mu)}$), we have that $$
S^{\nu-\mu\lvert_{[\overline m, r_\mu)}}(\mu\lvert_{[x_1^{\mu,\nu}, \overline m)})=\nu\lvert_{[y_1(\mu,\nu),\overline m)}+(\nu-\mu\lvert_{[\overline m, r_\mu)})\lvert_{(\overline m,r_0)}
$$ for some $r_0\in(\overline m,r_\nu]$. But if $r_0<r_\nu$, then $\nu-S^\nu(\mu\lvert_{[x_1^{\mu,\nu}, r_\mu)})$ charges $[r_0,r_\nu)$. Then, since $x_1^{\mu,\nu}<\overline m<r_0<r_\nu$, we can find a small enough $x<x_1^{\mu,\nu}$, such that $\mu\lvert_{[x,x_1^{\mu,\nu})}\leq_c S^{\nu-	S^\nu(\mu\lvert_{[x_1^{\mu,\nu}, r_\mu)})}(\mu\lvert_{[x,x_1^{\mu,\nu})})$, contradicting the minimality of $x_1^{\mu,\nu}$. It follows that $r_0=r_\nu$.

It is left to show that $x_1^{\mu,\nu}$ is the ($\mu$-a.s.) unique regime-switching point. For this we use the following observations. The transitions of the decreasing supermartingale coupling $\hat\P$ is either those of the right-curtain (see \eqref{eq:right_curtain}) or the quantile coupling (see \eqref{eq:quantile}); see Nutz and Stebegg \cite{NutzStebegg.18} or Bayraktar et al. \cite{BayDengNorgilas2}. In particular, by the definition of $x_1^{\mu,\nu}$, we must have that, (locally) to the left of $x_1^{\mu,\nu}$, $\hat\P$ corresponds to the quantile coupling of $\mu\lvert_{(-\infty,x_1^{\mu,\nu})}$ and $\nu\lvert_{(-\infty,y_1^{\mu,\nu})}$, denoted by $\pi^q$ (which is just a restriction of the quantile coupling of $\mu$ and $\nu$ to $(-\infty,x_1^{\mu,\nu})\times(-\infty,y_1^{\mu,\nu})$).

Under $\pi^q$, each $x\in(\ell_\mu,x_1^{\mu,\nu})$ is mapped to $F^{-1}_\nu(F_\mu(x))$. Note that $x\mapsto F^{-1}_\nu(F_\mu(x))$ is non-decreasing, continuous, and by construction (and the supermartingale requirement) we have that $F^{-1}_\nu(F_\mu(x_1^{\mu,\nu}))=y_1^{\mu,\nu}\leq x_1^{\mu,\nu}$. Let $\bar x$ be defined by $F_\mu(\bar x)=F_\nu(\bar x)$. Due to Assumption \ref{ass:Dispersion_c_appendix}, $\bar x$ is unique and satisfies $\bar x\in(\underline m, \overline m)$. In particular, $F_\nu>F_\mu$ on $(-\infty,\bar x)$, while $F_\nu<F_\mu$ on $(\bar x,\infty)$.

If $x_1^{\mu,\nu}\leq\bar x$, then $F^{-1}_\nu(F_\mu(x))<x$ for all $x<x_1^{\mu,\nu}$, and the quantile coupling provides strict supermartingale transitions on $(-\infty,x_1^{\mu,\nu})$. Then using characterization of $\hat P_*$ in terms of the shadow measure, it follows that $\hat\P=\pi^q$ on $(-\infty,x_1^{\mu,\nu})$, which proves our claim. On the other hand, $x_1^{\mu,\nu}>\bar x$ cannot happen, since then $y_1^{\mu,\nu}=F^{-1}_\nu(F_\mu(x_1^{\mu,\nu}))> x_1^{\mu,\nu}$, a contradiction. 
\end{proof}

Finally, provided that $\mu$ is atomless and using the construction of Bayraktar et al. \cite{BayDengNorgilas2}, we have that the decreasing supermartingale coupling is supported on the graph of two functions $\hat T_d=\hat T_d^{\mu,\nu},\hat T_u=\hat T_u^{\mu,\nu}:(\ell_\mu,r_\mu)\to(\ell_\nu,r_\nu)$. If, in addition, the Dispersion Assumption holds, then using Lemma \ref{lem:transition_dispersion} and utilising the construction and properties of the right-curtain martingale coupling $P^{\mu,\nu}$ (as in \eqref{eq:right_curtain}) we have that $(\hat T_d,\hat T_u)$ satisfies some useful properties.

First,
\begin{equation}\label{eq:decreasing_Diagonal}
\hat T_d(x)\leq x\leq \hat T_u(x)\textrm{ for all }x\in(\ell_\mu,r_\mu), \quad\textrm{and}\quad \hat T_d=\hat T_u\textrm{ on }[\bar m,r_\mu).
\end{equation}
Furthermore,
\begin{equation}\label{eq:hatT_d}
\begin{aligned}
&\hat T_d\textrm{ is continuous and strictly increasing on }(\ell_\mu,r_\mu),\\ &\hat T_d(x) = F^{-1}_\nu\circ F_\mu \textrm{ on } (\ell_\mu,x_1^{\mu,\nu})
\end{aligned}
\end{equation}
and
\begin{equation}\label{eq:hatT_u}
\begin{aligned}
&\hat T_u\textrm{ is continuous on }(x_1^{\mu,\nu},r_\mu), \textrm{ strictly decreasing on } (x_1^{\mu,\nu},\overline m),\\
&\hat T_u\equiv \infty \textrm{ on } (\ell_\mu,x_1^{\mu,\nu}].
\end{aligned}
\end{equation}

In particular, in terms of $(\hat T_d,\hat T_u)$, the decreasing supermartingale coupling $\P^{\mu,\nu}$ is given by
\begin{equation}\label{eq:decreasing_representation}
\begin{aligned}
\hat \P(dx,dy)&=\mu(dx)\mathbf{1}_{(\ell_\mu,x_1^{\mu,\nu}]\cup[\overline m, r_\mu)}(x)\delta_{\hat T_d(x)}(dy)\\
&+\mu(dx)\mathbf{1}_{(x_1^{\mu,\nu},\overline m)}\left\{\frac{\hat T_u(x)-x}{\hat T_u(x)-\hat T_d(x)}\delta_{\hat T_d(x)}(dy)+\frac{x-\hat T_d(x)}{\hat T_u(x)-\hat T_d(x)}\delta_{\hat T_u(x)}(dy)\right\}.
\end{aligned}\end{equation}
Equivalently, we can express $\hat \P$ in terms of the right-curtain coupling $\P^{\mu\lvert_{[x_1^{\mu,\nu},r_\mu)},\nu\lvert_{[y_1^{\mu,\nu}r_\nu)}}$ of $\mu\lvert_{[x_1^{\mu,\nu},r_\mu)}$ and $\nu\lvert_{[y_1^{\mu,\nu}r_\nu)}$ (see \eqref{eq:right_curtain}):
\begin{equation}\label{eq:decreasing_representation2}
\begin{aligned}
\hat \P(dx,dy)&=\mu(dx)\mathbf{1}_{(\ell_\mu,x_1^{\mu,\nu}]}(x)\delta_{F^{-1}_\nu(F_\mu(x))}(dy)+\mathbf{1}_{(x^{\mu,\nu}_1,r_\mu)}\P^{\mu\lvert_{[x_1^{\mu,\nu},r_\mu)},\nu\lvert_{[y_1^{\mu,\nu}r_\nu)}}(dx,dy).
\end{aligned}\end{equation}

While the pair $(\hat T_d, \hat T_u)$ can be explicitly constructed using potential functions of the underlying measures (see Bayraktar et al. \cite{BayDengNorgilas2}), the following representation is more useful in the present context.

Recall the definition of $g=g_{\mu,\nu}$ (see \eqref{eq:g}).

\begin{Proposition} \label{Prop:one_period_transport_map} For each $x\in[x_1^{\mu,\nu},\overline m^{\mu,\nu})$, $\hat T_u(x)$ is uniquely characterised by the integral equation:
$$
\int_{x}^{\infty} [F^{-1}_\nu (F_\mu( \xi)) -\xi ] f_\mu(\xi) d \xi + \int_{\hat T_u(x)}^{\infty} \mathbf{1}_{(x_1^{\mu,\nu}, \infty)}(\xi)  [g (x, \xi) - \xi] (f_\nu(\xi)-f_\mu(\xi)) d \xi=0,
$$
and $\hat T_d(x)=g(x, \hat T_u(x))$.
\end{Proposition}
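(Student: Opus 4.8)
The plan is to obtain the characterization of $\hat T_u(x)$ on $[x_1^{\mu,\nu},\overline m^{\mu,\nu})$ by specializing the known characterization of the right-curtain martingale coupling. By Lemma~\ref{lem:transition_dispersion}, on $(x_1^{\mu,\nu},r_\mu)$ the decreasing supermartingale coupling $\hat\P$ coincides with the right-curtain martingale coupling $\P^{\mu\lvert_{[x_1^{\mu,\nu},r_\mu)},\nu\lvert_{[y_1^{\mu,\nu},r_\nu)}}$ of the two measures $\mu\lvert_{[x_1^{\mu,\nu},r_\mu)}\leq_c\nu\lvert_{[y_1^{\mu,\nu},r_\nu)}$ (see \eqref{eq:decreasing_representation2}), and the support maps $\hat T_d,\hat T_u$ restricted to this region are precisely the maps $T_d^{\mu',\nu'},T_u^{\mu',\nu'}$ from Appendix~\ref{sec:Brenier_MOT} applied to $\mu'=\mu\lvert_{[x_1^{\mu,\nu},r_\mu)}$, $\nu'=\nu\lvert_{[y_1^{\mu,\nu},r_\nu)}$. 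Thus the first step is to invoke Remark~\ref{rem:T_dT_uProperties} (and the underlying Henry-Labord\`ere--Touzi characterization \cite{HLTouzi16}), which gives that $T_u^{\mu',\nu'}(x)=g^{\mu',\nu'}(x,T_d^{\mu',\nu'}(x))$ with $T_d^{\mu',\nu'}(x)$ uniquely determined by \eqref{eq:offT_d}, i.e.
$$
\int_{-\infty}^x [ F_{\nu'}^{-1} ( F_{\mu'}(\xi)) -\xi ] f_{\mu'}(\xi) d \xi + \int_{-\infty}^{T_d^{\mu',\nu'}(x)} \mathbf{1}_{(-\infty, \overline{m}^{\mu',\nu'}]}(\xi) [g^{\mu',\nu'}(x, \xi) - \xi] (f_{\nu'}(\xi)-f_{\mu'}(\xi)) d\xi =0.
$$

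The second step is the bookkeeping that translates the restricted objects $F_{\mu'},F_{\nu'},g^{\mu',\nu'},\overline m^{\mu',\nu'}$ back into the unrestricted ones $F_\mu,F_\nu,g$. On $[x_1^{\mu,\nu},r_\mu)$ the densities agree, $f_{\mu'}=f_\mu$ and $f_{\nu'}=f_\nu\mathbf{1}_{[y_1^{\mu,\nu},r_\nu)}$; moreover, since $\mu\lvert_{[x_1^{\mu,\nu},r_\mu)}$ and $\nu\lvert_{[y_1^{\mu,\nu},r_\nu)}$ have equal mass, $F_{\mu'}(\xi)=F_\mu(\xi)-F_\mu(x_1^{\mu,\nu})$ and $F_{\nu'}(\xi)=F_\nu(\xi)-F_\nu(y_1^{\mu,\nu})=F_\nu(\xi)-F_\mu(x_1^{\mu,\nu})$ (the last equality because $\mu\lvert_{[x_1^{\mu,\nu},r_\mu)}$ is mapped onto $\nu\lvert_{[y_1^{\mu,\nu},r_\nu)}$). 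Hence $F_{\nu'}^{-1}\circ F_{\mu'}=F_\nu^{-1}\circ F_\mu$ on $[x_1^{\mu,\nu},r_\mu)$, and similarly $g^{\mu',\nu'}(x,y)=F_{\nu'}^{-1}(F_{\mu'}(x)+F_{\nu'}(y)-F_{\mu'}(y))=F_\nu^{-1}(F_\mu(x)+F_\nu(y)-F_\mu(y))=g(x,y)$; also $\overline m^{\mu',\nu'}=\overline m^{\mu,\nu}$, since the density crossing at $\overline m^{\mu,\nu}$ is unaffected by the restriction. Substituting these identities into the displayed equation, and noting that the lower limits $-\infty$ can be replaced by $x_1^{\mu,\nu}$ because $f_{\mu'}$ and $f_{\nu'}-f_{\mu'}$ vanish below $x_1^{\mu,\nu}$ (for the $T_d$ integral one uses $T_d^{\mu',\nu'}(x)\geq y_1^{\mu,\nu}\geq \ell_{\nu'}$ and the support of $f_{\nu'}-f_{\mu'}$), one recovers exactly the stated integral equation; the indicator $\mathbf 1_{(x_1^{\mu,\nu},\infty)}$ replaces $\mathbf 1_{(-\infty,\overline m^{\mu',\nu'}]}$ after observing that outside $[x_1^{\mu,\nu},\overline m^{\mu,\nu}]$ the integrand $[g(x,\xi)-\xi](f_\nu-f_\mu)(\xi)$ either vanishes or the integration range $[\hat T_u(x),\infty)$ with $\hat T_u(x)\geq\overline m^{\mu,\nu}$ excludes it, so both formulations agree. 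Finally $\hat T_d(x)=g^{\mu',\nu'}(x,T_u^{\mu',\nu'}(x))\ $ rearranges to $\hat T_d(x)=g(x,\hat T_u(x))$ via monotonicity of $y\mapsto g(x,y)$; equivalently $g(x,\hat T_d(x))=\hat T_u(x)$, which is how the claim is stated.

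I would close by noting uniqueness: since \eqref{eq:offT_d} has a unique solution (Remark~\ref{rem:T_dT_uProperties}) and the transformation above is a bijection between the two equations, the integral equation in the Proposition has the unique solution $\hat T_u(x)$ on $(\overline m^{\mu,\nu},r_\mu)$ for each $x\in[x_1^{\mu,\nu},\overline m^{\mu,\nu})$, with $\hat T_d(x)=g(x,\hat T_u(x))\in(\ell_\nu,x]$.

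The main obstacle I anticipate is not conceptual but notational: carefully verifying that the change of the limits of integration from $(-\infty,\cdot)$ to $(x_1^{\mu,\nu},\cdot)$ and the swap of the indicator $\mathbf 1_{(-\infty,\overline m^{\mu',\nu'}]}\leftrightarrow\mathbf 1_{(x_1^{\mu,\nu},\infty)}$ are legitimate, which requires pinning down the exact support of $f_{\nu'}-f_{\mu'}$ (namely contained in $[y_1^{\mu,\nu},r_\nu)$, with a sign change at $\overline m^{\mu,\nu}$) and checking that $\hat T_u(x)$ indeed lands in $[\overline m^{\mu,\nu},r_\mu)$ for $x$ in the relevant range, so that the tail integral $\int_{\hat T_u(x)}^\infty$ only sees the region where the two indicator conventions coincide. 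This is routine given Assumption~\ref{ass:Dispersion_c_appendix} and the properties of the right-curtain coupling recalled in Remark~\ref{rem:T_dT_uProperties}, but it is the step where care is needed.
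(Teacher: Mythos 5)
Your reduction to the right-curtain coupling of $\mu'=\mu\lvert_{[x_1^{\mu,\nu},r_\mu)}$ and $\nu'=\nu\lvert_{[y_1^{\mu,\nu},r_\nu)}$, via Lemma~\ref{lem:transition_dispersion} and \eqref{eq:offT_d}--\eqref{eq:offT_u}, is exactly the paper's one-line proof; the trouble lies in the bookkeeping you add to make the translation explicit.

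First, the claim that ``$f_{\mu'}$ and $f_{\nu'}-f_{\mu'}$ vanish below $x_1^{\mu,\nu}$'' is wrong: on $[y_1^{\mu,\nu},x_1^{\mu,\nu})$ we have $f_{\mu'}=0$ but $f_{\nu'}=f_\nu>0$, so there $f_{\nu'}-f_{\mu'}=f_\nu\neq f_\nu-f_\mu$, and the lower limit in the second integral of \eqref{eq:offT_d} cannot simply be replaced by $x_1^{\mu,\nu}$ with the unrestricted densities. Second, and more seriously, the two integral equations do not differ by a cosmetic change of limits and indicators: in \eqref{eq:offT_d} the unknown $T_d^{\mu',\nu'}(x)$ is the upper limit of an integral over $(-\infty,T_d^{\mu',\nu'}(x))$, which lies entirely to the left of $\overline m^{\mu,\nu}$ (so $\mathbf 1_{(-\infty,\overline m^{\mu',\nu'}]}\equiv 1$ there), whereas in the Proposition the unknown $\hat T_u(x)$ is the lower limit of an integral over $(\hat T_u(x),\infty)$, which lies entirely to the right of $\overline m^{\mu,\nu}$ (so $\mathbf 1_{(-\infty,\overline m^{\mu',\nu'}]}\equiv 0$ but $\mathbf 1_{(x_1^{\mu,\nu},\infty)}\equiv 1$). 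The ranges are disjoint and the two indicators select complementary regions; your sentence ``both formulations agree'' is not an observation but precisely the identity that needs to be proved. (For the first term the passage $\int_{-\infty}^x=-\int_x^\infty$ does hold, because $\int_{-\infty}^\infty[F_{\nu'}^{-1}\circ F_{\mu'}-\mathrm{id}]\,d\mu'=\overline{\nu'}-\overline{\mu'}=0$ since $\mu'\leq_c\nu'$, but you never write the analogous identity for the second term.)

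Third, \eqref{eq:offT_u} states $T_u^{\mu',\nu'}(x)=g^{\mu',\nu'}(x,T_d^{\mu',\nu'}(x))$, yet you write ``$\hat T_d(x)=g^{\mu',\nu'}(x,T_u^{\mu',\nu'}(x))$'' as though it were the same fact. After unwinding $g$, the first reads $\nu'([T_d,T_u])=\mu'([T_d,x])$ and the second reads $\nu'([T_d,T_u])=\mu'([x,T_u])$; they coincide iff $F_{\mu'}(T_d(x))+F_{\mu'}(T_u(x))=2F_{\mu'}(x)$, a genuine structural property of the right-curtain coupling, and ``monotonicity of $y\mapsto g(x,y)$'' does not produce it. These are exactly the steps that separate the Proposition's stated formula from a literal application of \eqref{eq:offT_d}--\eqref{eq:offT_u}, and they need an argument, not an assertion.
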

\begin{proof}
	Since $\mu\lvert_{[x_1^{\mu,\nu},r_\mu)}\leq_c\nu\lvert_{[y_1(\mu,\nu),r_\nu)}$ and the decreasing supermartingale coupling $\hat \P^{\mu,\nu}$ of $\mu\leq_{cd}\nu$ coincides with the right-curtain coupling of $\mu\lvert_{[x_1^{\mu,\nu},r_\mu)}\leq_c\nu\lvert_{[y_1(\mu,\nu),r_\nu)}$ on $[x_1^{\mu,\nu},r_\mu)$, the result follows immediately from the construction of the latter, which is given in Section \ref{sec:Brenier_MOT} (see \eqref{eq:offT_d} and \eqref{eq:offT_u}).
\end{proof}
\subsection{Optimal dual strategy for decreasing SMOT}\label{sec:SMOTdual}

We introduce now the optimal dual strategies to the dual problem \eqref{eq:Dual_Full} in the case the decreasing supermartingale coupling $\hat \P=\hat\P^{\mu,\nu}$ (see Theorem \ref{thm:decreasing_characterization}) is optimal. Recall that the pair $(\hat T_d,\hat T_u)$ represents the supporting functions of $\hat \P$, while $x_1=x_1^{\mu,\nu}$ is the unique threshold that separates martingale and supermartingale regions.

In the following we suppose that $\mu\leq_{cd}\nu$ satisfy Assumption \ref{ass:Dispersion_c_appendix}.

We introduce the following triple $(\hat{\varphi}, \hat{\psi}, \hat{h})=(\hat{\varphi}(\mu,\nu), \hat{\psi}(\mu,\nu), \hat{h}(\mu,\nu))$, as the candidate optimal dual strategies for $\mathbf{D}_2(\mu, \nu)$ (with $\mathbf{D}_2$ defined in \eqref{eq:SMOT_oneDual}):
 \begin{equation}\begin{aligned}
&\hat h'(x):= \frac{c_x(x, \hat T_u (x))- c_x(x, \hat T_d(x))}{\hat T_u(x)-\hat T_d(x) }, \quad x_1< x < \overline m;  \\
&\lim_{x\downarrow x_1}\hat h(x)=0;\\
& \hat h(x):= \hat h((\hat T_u)^{-1}(x)) - c_y( (\hat T_u^{-1}(x), x ),~~x \geq \overline m; \\
& \hat h(x): = 0,~~x \leq x_1.
\end{aligned}
 \end{equation}
 Note that, when $x=x_1$, $\hat T_u(x)=\infty$ and therefore $\hat{h}'(x)=0$. Furthermore, $\hat h$ is continuous at $x\in\{x_1,\overline m\}$ (note that $c_y(x,x)=0$ and $\lim_{x \to \overline m} (\hat T_u)^{-1}(x) = \overline m$).
 
  Denoting {$L^{-1}(x) := (\hat T_u)^{-1}(x) \mathbf{1}_{\{x \geq \overline m\}}+(\hat T_d)^{-1}(x) \mathbf{1}_{\{x<\overline m\}}$,} we introduce $\hat \psi$, up to a constant, by
$$
\hat \psi' (x): = c_y(L^{-1}(x), x) - \hat h( L^{-1}(x)).
$$
Note that $\hat \psi$ can be chosen to be continuous, which then also makes 
$$
c(\cdot, \hat T_u(\cdot) ) - \hat \psi (\hat T_u(\cdot)) - c(\cdot, \hat T_d(\cdot) ) + \hat \psi (\hat T_d(\cdot))- (\hat T_u(\cdot) - \hat T_d(\cdot) ) \hat h(\cdot)
$$
continuous.

Finally, we define
$$
\hat \varphi(x):=\E^{\hat\P}[c(X,Y)-\psi(Y)\lvert X=x],\quad x\in\R.
$$
Note that, for $x_1 \leq x \leq \overline m$
$$
\hat \varphi(x) =\frac{x-\hat T_d(x)}{\hat T_u (x)-\hat T_d  (x)}\left( c(x,\hat T_u(x)) - \hat \psi ( \hat T_u(x)) \right) + \frac{\hat T_u (x) - x}{\hat T_u (x)-\hat T_d(x)}\left( c(x,\hat T_d(x)) - \hat \psi (\hat T_d(x)) \right),
$$ 
$$
\hat \varphi(x) := c(x,x) - \hat \psi( x) =  -\hat  \psi (x),\quad x > \overline m
$$
and
$$
\hat \varphi(x) := c(x,\hat T_d(x)) - \hat \psi ( \hat T_d(x)),\quad x < x_1.
$$

From our definitions, we immediately have that $\E^{\hat\P}[c(X,Y)]=\mu(\hat \varphi(X))+\nu(\hat\psi(Y))$. It is left to show that the triple $(\hat \varphi,\hat\psi,\hat h)$ defines a superhedge.   
\begin{Lemma} \label{Lemma:OptimalDual}
Let $\mu\leq_{cd}\nu$ be such that Assumption \ref{ass:Dispersion_c_appendix} holds. Suppose that the reward function $c:\R^2 \to \R$ satisfy Assumption \ref{Assump:CostFunction}. Assume further that $\int \hat\varphi\vee 0 d\mu +\int \psi\vee0 d\nu < \infty$. 

We have that $(\hat \varphi, \hat \psi, \hat h) \in \Dc_2(\mu, \nu)$.
\end{Lemma}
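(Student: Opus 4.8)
The plan is to verify the defining constraint of $\Dc_2(\mu,\nu)$, namely the pointwise superhedging inequality
$$
c(x,y)\leq \hat\varphi(x)+\hat\psi(y)+\hat h(x)(y-x),\qquad x,y\in\R,
$$
together with the integrability requirement $h\geq 0$ and $(\int(\hat\varphi\vee 0)d\mu)\vee(\int(\hat\psi\vee0)d\nu)<\infty$. The latter is assumed outright, so the core task is the inequality and the sign of $\hat h$. First I would introduce, for each fixed $x\in\R$, the auxiliary one-variable function
$$
G_x(y):=\hat\varphi(x)+\hat\psi(y)+\hat h(x)(y-x)-c(x,y),
$$
and show $G_x\geq 0$ with equality at the points $y$ charged by $\hat\P(\cdot\,|X=x)$; this is exactly the standard ``$c$-cyclical monotonicity via pointwise tangency'' scheme used for the right-curtain coupling in Henry-Labord\`ere and Touzi \cite{HLTouzi16} (Theorems 4.5, 5.1) and for the quantile coupling in Brenier's Theorem \ref{thm:classicalB}. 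The construction of $(\hat\varphi,\hat\psi,\hat h)$ was rigged precisely so that $\E^{\hat\P}[c(X,Y)]=\mu(\hat\varphi)+\nu(\hat\psi)$, i.e. $G_x$ vanishes $\hat\P(\cdot\,|X=x)$-a.s.; the content is that $G_x\geq 0$ everywhere.

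The argument splits according to the three regions of $x$ determined by the decreasing supermartingale coupling: the strict-supermartingale (quantile) region $x<x_1$, the martingale region $x_1\leq x<\overline m$ where the right-curtain transitions $(\hat T_d,\hat T_u)$ act, and the diagonal region $x\geq\overline m$ where $\hat T_d(x)=\hat T_u(x)=x$. In each region I would argue $G_x'(y)$ has the right sign structure. On $x\geq \overline m$: here $\hat h(x)=\hat h((\hat T_u)^{-1}(x))-c_y((\hat T_u)^{-1}(x),x)$ and $\hat\varphi(x)=-\hat\psi(x)$, and the claim $G_x\geq 0$ with equality at $y=x$ follows because $\hat\psi'(y)=c_y(L^{-1}(y),y)-\hat h(L^{-1}(y))$ makes $G_x'(x)=0$ while $c_{xyy}<0$ (Assumption \ref{Assump:CostFunction}) combined with the monotonicity of $L^{-1}$ yields $G_x$ concave-then-convex in a way that forces a global minimum at $y=x$ — this is the calculation in \cite[proof of Thm 4.5]{HLTouzi16}. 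On $x_1\leq x<\overline m$: the two contact points are $\hat T_d(x)\leq x\leq\hat T_u(x)$; I would show $G_x(\hat T_d(x))=G_x(\hat T_u(x))=0$ and $G_x'(\hat T_d(x))\leq 0\leq G_x'(\hat T_u(x))$ using the definition $\hat h'(x)=\frac{c_x(x,\hat T_u(x))-c_x(x,\hat T_d(x))}{\hat T_u(x)-\hat T_d(x)}$ and then invoke that $y\mapsto \hat\psi(y)+\hat h(x)y-c(x,y)$ has at most the right number of sign changes in its derivative because $c_{xyy}<0$; the structure is identical to the MOT case (Theorem \ref{thm:martingaleB}) once one restricts to $\mu\lvert_{[x_1,r_\mu)}\leq_c \nu\lvert_{[y_1,r_\nu)}$ as in \eqref{eq:decreasing_representation2}. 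On $x<x_1$: here $\hat h(x)=0$, $\hat T_d(x)=F_\nu^{-1}(F_\mu(x))$, and $G_x(y)=\hat\varphi(x)+\hat\psi(y)-c(x,y)$; this is precisely the classical OT tangency argument (Theorem \ref{thm:classicalB}) for the quantile coupling under $c_{xy}>0$, so $G_x\geq 0$ with equality at $y=\hat T_d(x)$. Finally, $\hat h\geq 0$: on $(\ell_\mu,x_1]$ it is $0$ by definition; on $(x_1,\overline m)$, $\hat h(x_1^+)=0$ and $\hat h'(x)=\frac{c_x(x,\hat T_u(x))-c_x(x,\hat T_d(x))}{\hat T_u(x)-\hat T_d(x)}\geq 0$ since $c_{xy}>0$ and $\hat T_u(x)>\hat T_d(x)$, so $\hat h\geq 0$ there; on $[\overline m,r_\mu)$ one uses the recursion $\hat h(x)=\hat h((\hat T_u)^{-1}(x))-c_y((\hat T_u)^{-1}(x),x)$ together with $c_y\leq 0$ on $\{y\geq x\}$ (which follows from $c_y(x,x)=0$ and $c_{yy}$... actually from $c(x,x)=c_y(x,x)=0$ and $c_{xyy}<0$ giving $c_y(x,\cdot)$ decreasing hence $\le 0$ above the diagonal), to conclude $\hat h\geq 0$ on the whole line.

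I expect the main obstacle to be the matching at the region boundaries $x_1$ and $\overline m$ and, relatedly, establishing that $G_x$ is genuinely minimized (not merely stationary) at the contact points — this requires a careful sign-change count for $y\mapsto G_x'(y)$ that uses $c_{xyy}<0$ globally and the monotonicity properties of $\hat T_d$ (strictly increasing, \eqref{eq:hatT_d}) and $\hat T_u$ (strictly decreasing on $(x_1,\overline m)$, \eqref{eq:hatT_u}). The cleanest route is to reduce everything to the two already-established building blocks: for $x\geq x_1$ invoke Theorem \ref{thm:martingaleB} applied to the pair $\mu\lvert_{[x_1,r_\mu)}\leq_c\nu\lvert_{[y_1,r_\nu)}$ (legitimate by Lemma \ref{lem:transition_dispersion} and the Dispersion Assumption \ref{ass:Dispersion_c_appendix}), checking that our $(\hat\varphi,\hat\psi,\hat h)$ restricted there coincides up to additive constants with the $(\varphi_*,\psi_*,h_*)$ of \eqref{eq:phi_*}--\eqref{eq:h_*}; for $x<x_1$ invoke Brenier's Theorem \ref{thm:classicalB} for $\mu\lvert_{(\ell_\mu,x_1)}$ and $\nu\lvert_{(\ell_\nu,y_1)}$ under $c_{xy}>0$. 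The remaining work is then purely bookkeeping: checking that the constants are chosen consistently (the definitions fix $\lim_{x\downarrow x_1}\hat h(x)=0$ and continuity of $\hat\psi$, which pins them down) so that the two local superhedges glue into a single global one valid for all $(x,y)\in\R^2$, including the cross terms where $x$ and $y$ lie in different regions — these cross cases follow automatically once $G_x\geq 0$ is known as a statement about the full function $\hat\psi$ on all of $\R$ for each fixed $x$.
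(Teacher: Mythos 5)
Your overall plan is the same as the paper's: treat the superhedging inequality by reducing to the classical and martingale Brenier duality (for $x<x_1$ and $x\geq x_1$, respectively, using the decomposition \eqref{eq:decreasing_representation2}), and then separately verify $\hat h\geq0$ by direct computation across the three $x$-regions. The first two region computations for $\hat h$ are correct and agree with the paper's.

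There is, however, one concrete error in the final step, where you show $\hat h(x)=\hat h((\hat T_u)^{-1}(x))-c_y((\hat T_u)^{-1}(x),x)\geq 0$ on $[\overline m,r_\mu)$. You assert that $c_y\leq 0$ above the diagonal ``from $c(x,x)=c_y(x,x)=0$ and $c_{xyy}<0$ giving $c_y(x,\cdot)$ decreasing.'' This does not follow: making $y\mapsto c_y(a,y)$ decreasing requires $c_{yy}(a,\cdot)\leq 0$, and $c_{xyy}<0$ controls only $\partial_x c_{yy}$, not the sign of $c_{yy}$ itself. The correct route (and the one the paper uses) differentiates in the \emph{first} argument: write $z:=(\hat T_u)^{-1}(x)\leq x$, note $\partial_z c_y(z,x)=c_{xy}(z,x)>0$ so $z\mapsto c_y(z,x)$ is increasing, and conclude $c_y(z,x)\leq c_y(x,x)=0$. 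With that repair, the recursion gives $\hat h\geq0$ on $[\overline m,r_\mu)$ and the rest of your argument goes through. One smaller remark: the tangency analysis of $G_x(y)$ that you sketch, while reasonable, is exactly the content already packaged in Theorems \ref{thm:classicalB} and \ref{thm:martingaleB}; the paper cites those theorems directly and spends its proof only on the sign of $\hat h$, which is the genuinely new piece here.
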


\begin{proof}
To obtain the proof we need to show that $\hat h\geq0$ and that $c(x,y)\leq \hat \varphi(x)+\hat\psi(y)+\hat h (x)(y-x)$ for all $x,y\in\R$. The second point, however, follows immediately from the arguments of the classical and martingale versions of Brenier's theorem (see Theorems \ref{thm:classicalB} and \ref{thm:martingaleB}). We now verify that $h\geq 0$; we will use that $c(\cdot, \cdot)= c_{y}(\cdot, \cdot)=0$ and $c_{xy}>0$. 

First, as $c_{xy} > 0$, we have that when $x_1 \leq x < \overline m$,
$$
\hat h' (x):= \frac{c_x(x, \hat T_u(x))- c_x(x, \hat T_d(x))}{\hat T_u(x)-\hat T_d(x) } > 0.
$$
Hence, since $\hat h(x_1)=0$, we have that $\hat h(x) \geq 0$ for all $x_1\leq x < \overline m$.

For $x \geq \overline m$, as $x_1 \leq (\hat T_u)^{-1}(x) \leq \overline m$, we have that $\hat h((\hat T_u)^{-1}(x)) \geq 0$. In addition, as $c_{y}(x,x)=0$ and $c_{xy}>0$, we have that $c_y( (\hat T_u)^{-1}(x), x ) \leq 0$. It follows that 
$$
\hat h(x) = \hat h((\hat T_u)^{-1}(x)) - c_y( (\hat T_u)^{-1}(x), x ) \geq 0.
$$
Since $\hat h\equiv0$ on $(-\infty,x_1)$, the no short-selling constraint is satisfied.
\end{proof}

\nocite{*}
%\begin{thebibliography}{plain}

\bibliography{references}

%\end{thebibliography}

\end{document}